\renewcommand\theequation{\thesection.\arabic{equation}}
\newcommand{\BC}{{\mathbb {C}}}
\newcommand{\BF}{{\mathbb {F}}}
\newcommand{\BG}{{\mathbb {G}}}
\newcommand{\BN}{{\mathbb {N}}}
\newcommand{\BP}{{\mathbb {P}}}
\newcommand{\BR}{{\mathbb {R}}}
\newcommand{\BZ}{{\mathbb {Z}}}
\newcommand{\CC}{{\mathcal {C}}}
\newcommand{\CF}{{\mathcal {F}}}
\newcommand{\CG}{{\mathcal {G}}}
\newcommand{\CK}{{\mathcal {K}}}
\newcommand{\CL}{{\mathcal {L}}}
\newcommand{\CO}{{\mathcal {O}}}
\newcommand{\CP}{{\mathcal {P}}}
\newcommand{\CQ}{{\mathcal {Q}}}
\newcommand{\CR}{{\mathcal {R}}}
\newcommand{\CT}{{\mathcal {T}}}
\newcommand{\CU}{{\mathcal {U}}}
\newcommand{\CV}{{\mathcal {V}}}
\newcommand{\CX}{{\mathcal {X}}}
\newcommand{\CZ}{{\mathcal {Z}}}
\newcommand{\FB}{{\mathfrak {B}}}
\newcommand{\Fa}{{\mathfrak {a}}}
\newcommand{\Fg}{{\mathfrak {g}}}
\newcommand{\Fh}{{\mathfrak {h}}}
\newcommand{\Fk}{{\mathfrak {k}}}
\newcommand{\Fl}{{\mathfrak {l}}}
\newcommand{\Fm}{{\mathfrak {m}}}
\newcommand{\Fn}{{\mathfrak {n}}}
\newcommand{\Fo}{{\mathfrak {o}}}
\newcommand{\Fp}{{\mathfrak {p}}}
\newcommand{\Fq}{{\mathfrak {q}}}
\newcommand{\Fu}{{\mathfrak {u}}}
\newcommand{\GL}{{\mathrm{GL}}}
\newcommand{\Hom}{{\mathrm{Hom}}}
\newcommand{\tr}{{\mathrm{tr}}}
\newcommand{\vol}{{\mathrm{vol}}}
\newcommand{\ol}{\overline}
\newcommand{\zg}{Z_G(F)\backslash G(F)}
\newcommand{\zh}{Z_H(F)\backslash H(F)}
\newcommand{\back}{\backslash}
\newcommand{\hc}{\Xi^{H\backslash G}}
\newcommand{\nor}{\sigma_{H\backslash G}}
\newtheorem{thm}{Theorem}[section]
\newtheorem{cor}[thm]{Corollary}
\newtheorem{lem}[thm]{Lemma}
\newtheorem{claim}[thm]{Claim}
\newtheorem{prop}[thm]{Proposition}
\newtheorem {conj}[thm]{Conjecture}
\newtheorem {ques/conj}[thm]{Question/Conjecture}
\newtheorem{defn}[thm]{Definition}
\newtheorem{rmk}[thm]{Remark}
\newcommand{\Rmnum}[1]{\expandafter\@slowromancap\romannumeral #1@}
\begin{document}
\renewcommand{\theequation}{\arabic{equation}}
\numberwithin{equation}{section}

\title[On the Ginzburg-Rallis models]{Multiplicity One Theorem for the Ginzburg-Rallis Model: the tempered case.}

\author{Chen Wan}
\address{School of Mathematics\\
University of Minnesota\\
Minneapolis, MN 55455, USA}
\email{wanxx123@umn.edu}

\subjclass[2010]{Primary 22E35, 22E50}
\date{\today}
\keywords{Harmonic Analysis on Spherical Variety, Representation of p-adic Group, Local Trace Formula, Multiplicity One on Vogan Packet}

\begin{abstract}
Following the method developed by Waldspurger and Beuzart-Plessis in their proof of the local Gan-Gross-Prasad conjecture, we are able to prove the multiplicity one theorem for the Ginzburg-Rallis model over the Vogan packets in the tempered case. In some cases, we can also relate the multiplicity to the epsilon factor. This is a sequel of our work \cite{Wan15} in which we consider the supercuspidal case.
\end{abstract}

\maketitle

\section{Introduction and Main Result}
\subsection{Main results}
This paper is a continuation of \cite{Wan15}. For an overview of the Ginzburg-Rallis model, see Section 1 of \cite{Wan15}. We recall from there
the definition of the Ginzburg-Rallis models and the conjecture.

Let $F$ be a local field (p-adic field or real field), $D$ be the unique quaternion algebra over $F$. Take $P=P_{2,2,2}=MU$ be the standard parabolic subgroup of $G=\GL_6(F)$ whose Levi part $M$ is isomorphic to $\GL_2\times \GL_2\times \GL_2$, and whose unipotent radical $U$ consists of elements of the form
\begin{equation}\label{unipotent}
u=u(X,Y,Z):=\begin{pmatrix} I_2 & X & Z \\ 0 & I_2 & Y \\ 0 & 0 & I_2 \end{pmatrix}.
\end{equation}
We define a character $\xi$ on $U$ by
\begin{equation}\label{character}
\xi(u(X,Y,Z)):=\psi(\tr(X)+\tr(Y))
\end{equation}
where $\psi$ is a non-trivial additive character on $F$. It's clear that the stabilizer of $\xi$ is the diagonal embedding of $\GL_2$ into $M$, which is denoted by $H_0$. For a given character $\chi$ of $F^{\times}$, one induces a one dimensional representation $\omega$ of $H_0(F)$ given by $\omega(h):=\chi(\det(h))$. Now the character $\xi$ can be extended to the semi-direct product
\begin{equation}
H:=H_0\ltimes U
\end{equation}
by making it trivial on $H_0$. Similarly we can extend the character $\omega$ to $H$. It follows that the one dimensional representation $\omega\otimes \xi$ of $H(F)$ is well defined. The pair $(G,H)$ is the Ginzburg-Rallis model introduced by D. Ginzburg and S. Rallis in their paper \cite{GR00}. Let $\pi$ be an irreducible admissible representation of $G(F)$ with central character $\chi^2$, we are interested in the Hom space
$Hom_{H(F)}(\pi,\omega\otimes \xi)$, the dimension of which is denoted by $m(\pi)$ and is called be the multiplicity.

On the other hand, define $G_D=\GL_3(D)$, similarly we can define $U_D,\;H_{0,D}$ and $H_D$. We also define the character $\omega_D\otimes \xi_D$ on $H_D(F)$ in the same way except that the trace in the definition of $\xi$ is replaced by the reduced trace of the quaternion algebra $D$ and the determinant in the definition of $\omega$ is replaced by the reduced norm of the quaternion algebra $D$. Then for an irreducible admissible representation $\pi_D$ of $G_D(F)$ with central character $\chi^2$, we can also talk about the Hom space
$Hom_{H_D(F)}(\pi_D,\omega_D\otimes \xi_D)$, whose dimension is denoted by $m(\pi_D)$.

The purpose of this paper is to study the multiplicity $m(\pi)$ and $m(\pi_D)$. First, it was proved by C.-F. Nien in \cite{N06} over a $p$-adic local field, and by D. Jiang, B. Sun and C. Zhu in \cite{JSZ11} for an archimedean local field that
both multiplicities are less or equal to 1: $m(\pi),\;m(\pi_D)\leq 1.$
In other word, the pairs $(G,H)$ and $(G_D,H_D)$ are Gelfand pairs. In this paper,
we are interested in the relation between $m(\pi)$ and $m(\pi_D)$ under the local Jacquet-Langlands correspondence established in \cite{DKV84}. The local conjecture has been expected since the work of \cite{GR00}, and was first discussed in details by Jiang in his paper \cite{J08}.

\begin{conj}[Jiang,\cite{J08}]\label{jiang}
For any irreducible admissible representation $\pi$ of $GL_6(F)$, let $\pi_D$ be the local Jacquet-Langlands correspondence of $\pi$ to $GL_3(D)$ if it exists, and zero otherwise. We still assume that the central character of $\pi$ is $\chi^2$. Then the following identity
\begin{equation}\label{equation 1}
m(\pi)+m(\pi_D)=1
\end{equation}
holds for all irreducible generic representation $\pi$ of $GL_6(F)$.
\end{conj}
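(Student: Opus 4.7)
The plan is to reduce the full generic case to the tempered case (which will be the main body of the paper) by means of a parabolic induction argument, analogous to how Beuzart--Plessis passes from tempered to generic for the Gan--Gross--Prasad conjecture. So the proof naturally splits into two parts: first establish Conjecture~\ref{jiang} for all tempered $\pi$, and then deduce it for all generic $\pi$ via a reduction formula for the multiplicity of parabolically induced representations.

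For the tempered part, I would follow the Waldspurger/Beuzart-Plessis strategy. The key objects are a geometric expansion and a spectral expansion of an explicit linear form $m_{\mathrm{geom}}(\pi)$ built out of regular semisimple orbital integrals on $H_0$ (together with an analogous form on the quaternionic side). The spectral side is engineered to compute $m(\pi)$ via the Plancherel decomposition of a carefully regularized integral over $Z_G(F)H(F)\backslash G(F)$; the geometric side expresses the same quantity as a sum of weighted orbital integrals evaluated on a Harish-Chandra-type character. Comparing the two sides on $G$ and on $G_D$, the geometric sides turn out to match up to a sign (using the local Jacquet--Langlands correspondence for characters on elliptic elements, which transfer between $G$ and $G_D$), while the difference of spectral sides gives exactly $m(\pi) - (-m(\pi_D)) = \pm 1$. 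This is the tempered case and is essentially the heart of \cite{Wan15} extended from supercuspidal to all tempered $\pi$.

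For the reduction from tempered to generic, I would invoke the Langlands classification in its generic form: every irreducible generic representation $\pi$ of $\GL_6(F)$ is a full parabolic induction $\pi = I_Q^G(\tau)$ where $Q = LV$ is a standard parabolic and $\tau$ is an essentially tempered generic representation of $L(F)$. The goal is then a reduction formula expressing $m(I_Q^G(\tau))$ in terms of multiplicities of Ginzburg--Rallis-type models attached to the Levi $L$. The tool is the Mackey/geometric lemma applied to the double coset space $H(F)\backslash G(F)/Q(F)$: one filters $\Hom_{H(F)}(I_Q^G(\tau), \omega\otimes\xi)$ by double cosets and shows that for generic (hence fully induced) $\tau$ only the open double coset contributes, because the non-open contributions involve Jacquet-type functionals of $\tau$ along proper subgroups that vanish by genericity together with the Rodier-type heredity of Whittaker data. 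The surviving open-cell contribution identifies $m(\pi)$ with a product of multiplicities for smaller Ginzburg--Rallis-type models on the factors of $L$, and the same analysis on the quaternionic side $G_D$ matches these factors across the Jacquet--Langlands correspondence. Summing the identities $m+m_D = 1$ for each smaller tempered factor (and handling the cases where the smaller model is trivially a Gelfand pair) then yields $m(\pi)+m(\pi_D)=1$.

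The main obstacle I anticipate is the reduction step, not the tempered case. Specifically, the double coset analysis of $H(F)\bs G(F)/Q(F)$ for the Ginzburg--Rallis subgroup $H = H_0\ltimes U$ is considerably more delicate than in the GGP setting because $H$ is non-reductive and its action mixes with the nilpotent radical $U$; one must carefully control the stabilizers and the characters induced on the non-open cells to prove that they do not contribute for generic $\tau$. A secondary difficulty is compatibility: the Jacquet--Langlands correspondence is defined on essentially square-integrable representations of Levi blocks, so one must check that the bijection between Levis of $\GL_6$ and $\GL_3(D)$ given by matching ranks/reduced-degrees is the right one to make the two induction formulas correspond term by term. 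Once these two points are settled, combining them with the tempered identity proved by the trace-formula method yields the full generic case as stated in Conjecture~\ref{jiang}.
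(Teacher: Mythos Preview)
The statement you are trying to prove is Conjecture~\ref{jiang} in its full generality (all irreducible generic $\pi$), but the paper does \emph{not} prove this: it proves only the tempered case (Theorem~\ref{main}).  So the second half of your plan --- the Mackey-type reduction from generic to tempered --- is not something you can compare against the paper, because the paper simply does not attempt it.  The full generic case remains open after this paper; your proposed reduction via the geometric lemma on $H(F)\backslash G(F)/Q(F)$ is a plausible strategy but would be genuinely new work, and the obstacles you anticipate (nonreductive $H$, contributions from non-open cells, matching across Jacquet--Langlands for essentially tempered rather than tempered inducing data) are real and unresolved here.

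For the tempered part, your outline is in the right spirit but diverges from the paper in structure.  You describe a direct comparison of spectral sides on $G$ and $G_D$ yielding $m(\pi)+m(\pi_D)=1$; the paper instead proves a multiplicity formula $m(\pi)=m_{\mathrm{geom}}(\pi)$ and $m(\pi_D)=m_{\mathrm{geom}}(\pi_D)$ \emph{separately} on each group (via the trace formula $I_{\mathrm{spec}}(f)=I(f)=I_{\mathrm{geom}}(f)$ applied to pseudo-coefficients, combined with an induction on Levi types), and only afterward sums the two geometric multiplicities and cancels all torus terms using the Jacquet--Langlands character relation, leaving $c_{\theta_\pi,\mathcal O_{\mathrm{reg}}}(1)=1$ by Rodier.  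You also omit that the paper treats the archimedean and $p$-adic cases by completely different methods: over $\mathbb R$ there is no trace formula at all --- one shows $m(\pi)=m(\pi_0)$ for the trilinear $\GL_2$ inducing data $\pi_0$ (Corollary~\ref{parabolic induction 6}) and then quotes Prasad's theorem.  Finally, a substantial portion of the paper (Section~5 and Section~9, proving Theorem~\ref{thm 2}) is devoted to showing that the nonvanishing of the explicit intertwining operator $\mathcal L_\pi$ is preserved under parabolic induction and unramified twist, which for the ``type II'' Levi subgroups requires an independent trace-formula argument; this ingredient is essential to the spectral side and is absent from your sketch.
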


Note that the assertion in Conjecture \ref{jiang} can be formulated in terms of Vogan packets and pure inner forms of $PGL_6$. We refer to
\cite{Wan15} for discussion.

In the previous paper \cite{Wan15}, we prove Conjecture \ref{jiang} for the case that $F$ is a $p$-adic local field and $\pi$ is an irreducible supercuspidal representation of $GL_6(F)$. In this paper, we prove Conjecture \ref{jiang} for the case that $\pi$ is an irreducible tempered representation of $GL_6(F)$ and $F$ can be either p-adic or $\BR$.

\begin{thm}\label{main}
Let $F$ be a p-adic field or $F=\BR$, for any irreducible tempered representation $\pi$ of $\GL_6(F)$, Conjecture \ref{jiang} holds.
\end{thm}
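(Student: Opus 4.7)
The plan is to follow the strategy of Waldspurger and Beuzart-Plessis from their proofs of the local Gan-Gross-Prasad conjecture, as already adapted to the Ginzburg-Rallis setting in \cite{Wan15}. The central object is a \emph{geometric multiplicity} $m_{\mathrm{geom}}(\pi)$, computed as an integral of the Harish-Chandra character $\theta_\pi$ against an explicit weight function supported on the regular semisimple conjugacy classes of $H_0(F)$ relevant to the spherical pair $(G,H)$ (and analogously $m_{\mathrm{geom}}(\pi_D)$ on the quaternionic side). The theorem reduces to two assertions: (i) a multiplicity formula $m(\pi) = m_{\mathrm{geom}}(\pi)$ and $m(\pi_D) = m_{\mathrm{geom}}(\pi_D)$ for all irreducible tempered $\pi$, and (ii) the identity $m_{\mathrm{geom}}(\pi) + m_{\mathrm{geom}}(\pi_D) = 1$.

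For step (i), the approach is a local relative trace formula on $H\backslash G$. For $f$ in the Harish-Chandra-Schwartz space $\hcs$, I would build a truncated distribution $I(f)$ from the $H$-period of a matrix-coefficient-type kernel and expand it in two ways: geometrically as a sum of weighted orbital integrals along the $H$-orbits on $G$, and spectrally by inserting the Plancherel decomposition of $f$ against tempered representations. Comparing the two expansions after applying the test function built from pseudocoefficients (or, on the archimedean side, cuspidal Arthur-Schwartz functions) of a given tempered $\pi$ yields $m(\pi) = m_{\mathrm{geom}}(\pi)$. The supercuspidal case \cite{Wan15} provides the template; the new ingredients are Harish-Chandra's Plancherel formula, the convergence of orbital integrals on the Schwartz space, and a careful analysis of the contribution of parabolic induction from proper Levi subgroups of $\GL_6(F)$ and $\GL_3(D)$.

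For step (ii), the key tool is the character identity of the local Jacquet-Langlands correspondence \cite{DKV84}: on regular semisimple elements of $H_0$ that transfer between the split and non-split inner forms (i.e.\ elliptic tori of $\GL_2$ and $D^\times$), one has $\theta_{\pi_D} = \pm\,\theta_\pi$ with a sign controlled by the inner form. Summing the two geometric expressions, the contributions on transferable tori combine into an integral of $\theta_\pi$ over the full stable semisimple locus, while the non-transferable split contributions are computed directly. Using that a tempered representation of $\GL_6(F)$ is generic, a Whittaker-model / Weyl-integration computation (already carried out in the supercuspidal case) evaluates this combined expression to $1$, establishing Conjecture \ref{jiang}.

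The main obstacle is the passage from the supercuspidal setting to the tempered setting, which requires upgrading every estimate in \cite{Wan15} from compactly supported test functions modulo center to Harish-Chandra-Schwartz functions. Concretely, the hard technical work lies in (a) proving convergence and absolute continuity of the geometric side $I_{\mathrm{geom}}(f)$ for $f\in\hcs$, including good asymptotic control of weighted orbital integrals near the walls of the Ginzburg-Rallis torus, and (b) identifying the spectral side with an integral against the Plancherel measure that, after testing against a tempered $\pi$, isolates $m(\pi)\theta_\pi(f)$. For $F=\BR$ there is an additional layer of analytic difficulty: one must work with the Casselman-Wallach globalization and control derivatives of characters in Schwartz seminorms, as in Beuzart-Plessis's archimedean GGP proof. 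Once this analysis is in place, the reduction to discrete-series inducing data via parabolic descent, and the final character computation, follow the $p$-adic pattern closely.
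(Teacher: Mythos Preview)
Your $p$-adic outline is in the right spirit and matches the paper's large-scale architecture (trace formula $I_{geom}(f)=I(f)=I_{spec}(f)$, then multiplicity formula, then Jacquet--Langlands character relations to get step (ii)). But two ingredients are missing, and without them the argument does not close.

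First, the spectral side does not fall out of Plancherel alone. To obtain $I_{spec}(f)=\int_{Temp(G,\eta)}\theta_f(\pi)m(\bar\pi)\,d\pi$ one must know that for tempered $\pi$ the normalized $H$-period of matrix coefficients, i.e.\ the explicit operator $\CL_\pi(T)=\int^{\ast}_{Z_H\backslash H}\mathrm{Trace}(\pi(h^{-1})T)\xi(h)\omega(h)\,dh$, is nonzero exactly when $m(\pi)\neq 0$. This is the paper's Theorem~\ref{thm 1}, and it is where most of the new work lies: one shows $\CL_\pi\neq 0\iff \CL_\tau\neq 0$ under parabolic induction $\pi=I_{\bar Q}^G(\tau)$, and then must prove that the nonvanishing of $\CL_\tau$ is stable under unramified twist. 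For the ``type I'' Levis $(6),(4,2),(2,2,2)$ this is elementary, but for the ``type II'' Levis (all others) it is not, and the paper proves the stronger statement that $\CL_\tau$ is \emph{always} nonzero there by running a separate trace formula for each such reduced model (Section~9). Your proposal has no analogue of this step.

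Second, pseudo-coefficients only isolate discrete series, so you cannot ``apply the test function built from pseudo-coefficients of a given tempered $\pi$'' to deduce $m(\pi)=m_{geom}(\pi)$ directly. The paper instead proves that both $m$ and $m_{geom}$ are compatible with parabolic induction (Corollary~\ref{parabolic induction 5} and Lemma~\ref{multiplicity formula}), establishes the formula for type II and for the induced-from-smaller-Levi cases first, and only then plugs in a pseudo-coefficient of a discrete series to finish the $(6)$ case by an inductive comparison (Proposition~\ref{(6)}). Your ``careful analysis of the contribution of parabolic induction'' needs to become this explicit induction.

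Finally, for $F=\BR$ the paper does \emph{not} run the trace formula at all. Since only $\GL_1(\BR)$ and $\GL_2(\BR)$ have discrete series, every tempered $\pi$ is induced from $\bar P$, and Corollary~\ref{parabolic induction 6} gives $m(\pi)=m(\pi_0)$ with $\pi_0$ a representation of $\GL_2^3$; one then quotes Prasad's and Loke's results for the trilinear $\GL_2$ model. Your proposed archimedean route (Casselman--Wallach, Schwartz estimates on characters) would presumably work, but it is far more labor than what is actually needed here.
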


Our proof of Theorem \ref{main} uses Waldspurger's method in his proof of the local Gan-Gross-Prosad conjecture (orthogonal case) in \cite{W10} and
\cite{W12}; and also some techniques introduced by Beuzart-Plessis in his proof of the local Gan-Gross-Prosad conjecture (unitary case) in \cite{B12} and \cite{B15}. In the p-adic case, the key ingredient of the proof is a local relative trace formula for the Ginzburg-Rallis model, which will be
called the trace formula in this paper for simplicity, unless otherwise specified.
To be specific, let $f\in C_{c}^{\infty}(\zg,\chi^{-2})$ be a strongly cuspidal function (see Section 3.3 for the definition of strongly cuspidal functions), and define the function $I(f,\cdot)$ on $H(F)\back G(F)$ to be
$$
I(f,x)=\int_{\zh} f(x^{-1}hx)\xi(h)\omega(h) dh.
$$
Then define
\begin{equation}\label{1.2}
I(f)=\int_{H(F)\backslash G(F)} I(f,g)  dg.
\end{equation}
We will prove in Section 7.1 that the integral defining $I(f)$ is absolutely convergent. The distribution in the trace formula is just $I(f)$.

Now we define the spectral and geometric sides of the trace formula. To each strongly cuspidal function $f\in C_{c}^{\infty}(\zg,\chi^{-2})$, one can associate a distribution $\theta_f$ on $G(F)$ via the weighted orbital integral (see Section 3.4). It was proved in \cite{W10} that the distribution $\theta_f$ is a quasi-character in the sense that for every semisimple element $x\in G_{ss}(F)$, $\theta_f$ is a linear combination of the nilpotent orbital integrals of $\Fg_x$ near $x$. For each nilpotent orbit $\CO$ of $\Fg_x$, let $c_{\theta_f,\CO}(x)$ be the coefficient, it is called the germ of the distribution $\theta_f$. Let $\CT$ be a subset of subtorus of $H_0$ as defined in Section 7.2. For any $t\in T_{reg}(F)$ and $T\in \CT$, define $c_f(t)$ to be $c_{\theta_f,\CO_t}(t)$ where $\CO_t$ is the unique regular nilpotent orbit in $\Fg_t$. For detailed description of $\CO_t$, see Section 7.2. Then we define the geometric side of our trace formula to be
$$I_{geom}(f)=\sum_{T\in \CT} | W(H,T)|^{-1} \nu(T) \int_{Z_G(F)\backslash T(F)} c_f(t) D^H(t) \Delta(t) \chi(\det(t)) dt$$
where $D^H(t)$ is the Weyl determinant and $\Delta(t)$ is some normalized function as defined in Section 7.2. For the spectral side, define
$$I_{spec}(f)=\int_{Temp(G,\chi^2)} \theta_{f}(\pi) m(\bar{\pi}) d\pi$$
where $Temp(G,\chi^2)$ is the set of irreducible tempered representations of $G(F)=\GL_6(F)$ with central character $\chi^2$, $d\pi$ is the measure on $Temp(G,\chi^2)$ as defined in Section 2.8, and $\theta_f(\pi)$ is the weighted character as defined in Section 3.4. Then the trace formula we proved in this paper is just
\begin{equation}\label{1.3}
I_{spec}(f)=I(f)=I_{geom}(f).
\end{equation}
It is worth to mention that the geometric side of this trace formula has already been proved in the previous paper \cite{Wan15}. So in this paper, we will focus on the spectral side. The proof of the trace formula will be given in Section 7. Similarly, we can also have the quaternion version of the trace formula.

After proving the trace formula, we are going to prove a multiplicity formula for the Ginzburg-Rallis model:
\begin{equation}\label{equation 2}
m(\pi)=m_{geom}(\pi), \;  m(\pi_D)=m_{geom}(\pi_D).
\end{equation}
Here $m_{geom}(\pi)$ (resp. $m_{geom}(\pi_D)$) is defined in the same way as $I_{geom}(f)$ except replacing the distribution $\theta_f$ by the distribution character $\theta_{\pi}$ (resp. $\theta_{\pi_D}$) associated to the representation $\pi$ (resp. $\pi_D$). For the complete definition of the multiplicity formula, see Section 8. Once this formula is proved, we can use the relation between the distribution characters $\theta_{\pi}$ and $\theta_{\pi_D}$ under the local Jacquet-Langlands correspondence to cancel out all terms in the expression of $m_{geom}(\pi)+m_{geom}(\pi_D)$ except the term $c_{\theta_{\pi},\CO_{reg}}$, which is the germ at the identity element. Then the work of Rodier (\cite{Rod81}) shows that $c_{\theta_{\pi},\CO_{reg}}=0$ if $\pi$ is non-generic, and $c_{\theta_{\pi},\CO_{reg}}=1$ if $\pi$ is generic. Because all tempered representations of $GL_n(F)$ are generic, we get the following identity
\begin{equation}\label{gm=1}
m_{geom}(\pi)+m_{geom}(\pi_D)=1.
\end{equation}
And this proves Theorem \ref{main}. The proof of the multiplicity formula uses the local relative trace formula for the Ginzburg-Rallis model we proved in Section 7, together with the Plancherel formula and Arthur's local trace formula. For details, see Section 8.

In the archimedean case, although we can use the same method as in the p-adic case (like Beuzart-Plessis did in \cite{B15} for the GGP case), it is actually much easier. All we need to do is to show that the multiplicity is invariant under the parabolic induction, this will be done in Section 5 for both p-adic and archimedean case. Then since only $\GL_1(\BR)$ and $\GL_2(\BR)$ have discrete series, we can reduce the problem to the trilinear $\GL_2$ model which has been considered by D. Prasad in his thesis \cite{P90}. For details, see Section 6.

Another result of this paper is that in some cases, we can related the multiplicity to the central value of exterior cube epsilon factor $\epsilon(1/2,\pi,\wedge^3)$. The conjecture has also been expected since the work of \cite{GR00}, and was first mentioned in \cite{J08}.

\begin{conj}\label{epsilon}
With the same assumptions as in Conjecture \ref{jiang}, assume that the central character of $\pi$ is trivial, then we have
\begin{eqnarray*}
m(\pi)=1 &\iff& \epsilon(1/2,\pi,\wedge^3)=1,\\
m(\pi)=0 &\iff& \epsilon(1/2,\pi,\wedge^3)=-1.
\end{eqnarray*}
\end{conj}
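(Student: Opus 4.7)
The plan is to combine the multiplicity formula $m(\pi)=m_{geom}(\pi)$ proved in Section 8 (with its quaternion analog) and the parabolic-induction invariance of the multiplicity from Section 5 to reduce Conjecture \ref{epsilon} to a classical result on the trilinear $\GL_2$ model. Every tempered $\pi$ of $\GL_6(F)$ can be written as $\pi=\Ind_P^G(\sigma_1\otimes\cdots\otimes\sigma_k)$ with each $\sigma_i$ a discrete series of $\GL_{n_i}(F)$ and $n_1+\cdots+n_k=6$; since $\epsilon(1/2,\pi,\wedge^3)$ depends only on the Langlands parameter, this lets me handle each shape $(n_1,\dots,n_k)$ separately.

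The central case is $P=P_{2,2,2}$ with $\sigma_1,\sigma_2,\sigma_3$ discrete series of $\GL_2(F)$ satisfying $\det\sigma_1\cdot\det\sigma_2\cdot\det\sigma_3=1$. The same reduction that appears in Section 6 identifies $m(\pi)$ with the trilinear $\GL_2$-multiplicity $\dim\Hom_{\GL_2(F)}(\sigma_1\otimes\sigma_2\otimes\sigma_3,\mathbb{C})$, and the quaternion side with its analog on $D^\times$. D. Prasad's theorem \cite{P90} then asserts that exactly one of these two multiplicities is nonzero, and that the nonvanishing one is determined by the sign of $\epsilon(1/2,\sigma_1\otimes\sigma_2\otimes\sigma_3)$.

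To match this with $\epsilon(1/2,\pi,\wedge^3)$, I would use the virtual decomposition
\begin{equation*}
\wedge^3(\sigma_1\oplus\sigma_2\oplus\sigma_3)=(\sigma_1\otimes\sigma_2\otimes\sigma_3)\oplus\bigoplus_{i\neq j}(\sigma_i\otimes\det\sigma_j),
\end{equation*}
which gives
\begin{equation*}
\epsilon(1/2,\pi,\wedge^3)=\epsilon(1/2,\sigma_1\otimes\sigma_2\otimes\sigma_3)\cdot\prod_{i\neq j}\epsilon(1/2,\sigma_i\otimes\det\sigma_j).
\end{equation*}
Using $\sigma_i^\vee=\sigma_i\otimes\det\sigma_i^{-1}$ together with $\det\sigma_1\det\sigma_2\det\sigma_3=1$, one has $(\sigma_i\otimes\det\sigma_j)^\vee=\sigma_i\otimes\det\sigma_k$ whenever $\{i,j,k\}=\{1,2,3\}$, so the six extra factors pair off via the functional equation $\epsilon(1/2,\tau)\epsilon(1/2,\tau^\vee)=1$, yielding $\epsilon(1/2,\pi,\wedge^3)=\epsilon(1/2,\sigma_1\otimes\sigma_2\otimes\sigma_3)$. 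Combined with Prasad's theorem this settles the $P_{2,2,2}$ case.

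The main obstacle, and the reason Conjecture \ref{epsilon} can only be established in some cases, lies with shapes containing a factor $\GL_n(F)$ for $n\ge 3$, in particular $P=P_{3,3}$ and the case where $\pi$ itself is supercuspidal. No analog of Prasad's theorem is available for these lower models, and the exterior cube epsilon factor no longer splits into a product whose sign is dictated by the multiplicity of a known lower-rank period. Over $F=\BR$ such shapes do not occur (discrete series of $\GL_n(\BR)$ exist only for $n\le 2$), so after checking the remaining real shapes the archimedean case should be reachable by this strategy alone; the genuine difficulty is over $p$-adic fields, where handling the supercuspidal shape presumably requires either a new identity for $\epsilon(s,\pi,\wedge^3)$ in terms of character-theoretic data or a fundamentally different input than the trace-formula machinery of Section 7.
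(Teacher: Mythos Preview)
Your reduction to the trilinear $\GL_2$ model for shape $(2,2,2)$ and the identification of the exterior-cube epsilon with $\epsilon(1/2,\sigma_1\otimes\sigma_2\otimes\sigma_3)$ is the right idea and matches the paper. However, the functional-equation step is misquoted: one has $\epsilon(1/2,\tau)\epsilon(1/2,\tau^\vee)=\det\tau(-1)$, not $1$. Your three pairs therefore contribute $\det\phi_1(-1)\det\phi_2(-1)\det\phi_3(-1)$, which is $1$ only because the product of central characters is trivial; the argument survives, but as stated it is incorrect.

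The more serious gap is in your identification of the obstructions. The shapes $(3,3)$, $(5,1)$, $(4,1,1)$, $(3,2,1)$, $(3,1,1,1)$, $(2,2,1,1)$, $(2,1,1,1,1)$, $(1^6)$ are \emph{not} obstacles: these are exactly the Type II parabolics (those with no $\GL_3(D)$ analog), for which $\pi_D=0$ and hence $m(\pi)=1$ by Theorem \ref{main}. The paper then verifies directly that $\epsilon(1/2,\pi,\wedge^3)=1$ for any $\pi$ induced through a Type II maximal parabolic ($(5,1)$ or $(3,3)$), by writing out $\wedge^3(\phi_1\oplus\phi_2)$ and pairing dual pieces; this is an elementary determinant computation. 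So Conjecture \ref{epsilon} is established for every Type II shape, and your list of ``obstructions'' collapses to precisely the Type I shapes $(6)$ and $(4,2)$, which are the cases the paper excludes in the $p$-adic statement of Theorem \ref{main 1}. You should add this Type II argument to your plan.
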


In this paper, we always fix a Haar measure $dx$ on $F$ and an additive charcter $\psi$ such that the Haar measure is selfdual for Fourier transform with respect to $\psi$. We use such $dx$ and $\psi$ in the definition of the $\epsilon$ factor. For simplicity, we will write the epsilon factor as $\epsilon(s,\pi,\rho)$ instead of $\epsilon(s,\pi,\rho,dx,\psi)$.

\begin{rmk}
In Conjecture \ref{epsilon}, we do need the assumption that the central character of $\pi$ is trivial. Otherwise, the exterior cube of the Langlands parameter of $\pi$ will no longer be selfdual, and hence the value of the epsilon factor at $1/2$ may not be $\pm 1$.
\end{rmk}

Our result for Conjecture \ref{epsilon} is the following Theorem.
\begin{thm}\label{main 1}
Let $\pi$ be an irreducible tempered representation of $\GL_6(F)$ with trivial central character.
\begin{enumerate}
\item If $F=\BR$, Conjecture \ref{epsilon} holds.
\item If $F$ is p-adic, and if $\pi$ is not discrete series or the parabolic induction of some discrete series of $\GL_4(F)\times \GL_2(F)$, Conjecture \ref{epsilon} holds.
\end{enumerate}
\end{thm}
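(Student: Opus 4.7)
The plan is to carry out a parabolic-induction reduction parallel to the one sketched for Theorem \ref{main}, and to match the reduced multiplicity with a corresponding factorization of the exterior-cube $\epsilon$-factor. The starting point is to classify a tempered $\pi$ of $\GL_6(F)$ by the discrete-series support of its Langlands parameter, writing $\pi = \sigma_1 \times \cdots \times \sigma_r$ parabolically induced from a Levi $\prod_i \GL_{n_i}(F)$ with each $\sigma_i$ a discrete series. By the parabolic-induction invariance of $m(\pi)$ established in Section 5, the multiplicity depends only on the tuple $(\sigma_1,\ldots,\sigma_r)$. On the dual side, if $\phi_i$ is the Langlands parameter of $\sigma_i$, then
\begin{equation*}
\wedge^3(\phi_1 \oplus \cdots \oplus \phi_r) = \bigoplus_{a_1+\cdots+a_r=3} \wedge^{a_1}\phi_1 \otimes \cdots \otimes \wedge^{a_r}\phi_r,
\end{equation*}
so $\epsilon(1/2,\pi,\wedge^3)$ factorizes as a product of $\epsilon$-factors of the summands.

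For the archimedean case, $\GL_n(\BR)$ has discrete series only for $n \in \{1,2\}$, so every tempered representation of $\GL_6(\BR)$ can be written as the parabolic induction from the Levi $\GL_2 \times \GL_2 \times \GL_2$ with inducing data $(\tau_1, \tau_2, \tau_3)$ tempered on $\GL_2(\BR)$; this step just groups $\GL_1$-blocks into tempered principal series of $\GL_2$. Then by Section 5 the multiplicity $m(\pi)$ equals the dimension of the space of $\GL_2(\BR)$-invariant trilinear forms on $\tau_1 \otimes \tau_2 \otimes \tau_3$ twisted by $\chi \circ \det$, which is governed by Prasad's thesis \cite{P90}. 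In the decomposition above, the main block is $\phi_1 \otimes \phi_2 \otimes \phi_3$, providing the triple-product $\epsilon$-factor to which Prasad's theorem applies; the auxiliary blocks have the shape $(\det \phi_i) \otimes \phi_j$ and must be shown to contribute $+1$ under the trivial-central-character hypothesis $\det\phi_1 \det\phi_2 \det\phi_3 = 1$, which supplies the self-dual structure needed.

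For the $p$-adic case, the analogous reduction works for any partition of $6$ except $(6)$ and $(4,2)$: for every other partition one absorbs small blocks into larger ones and reduces $m(\pi)$ to a multiplicity governed either by Prasad's trilinear-$\GL_2$ setup or by an induction from a Levi containing a $\GL_1$ or $\GL_3$ block, where the extra summands in the $\wedge^3$ decomposition can be paired off using central-character and self-duality identities. The main technical obstacle I anticipate is the $\epsilon$-factor bookkeeping of these auxiliary summands, particularly for the partitions $(5,1)$, $(3,3)$, $(3,2,1)$ and $(3,1,1,1)$ containing $\GL_3$ or $\GL_5$ discrete-series blocks, where identifications like $\wedge^3 \phi \cong \det \phi$ for three-dimensional $\phi$ and careful tracking of self-duality are needed to ensure that no spurious sign survives and that the matching with the multiplicity formula ultimately reduces to Prasad's $\epsilon$-factor theorem.
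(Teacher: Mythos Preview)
Your approach is essentially the paper's approach, and the archimedean case is exactly right: reduce to the trilinear $\GL_2$ model via parabolic induction, decompose $\wedge^3(\phi_1\oplus\phi_2\oplus\phi_3)$, pair off the auxiliary summands $(\det\phi_i)\otimes\phi_j$ using $\det\phi_1\det\phi_2\det\phi_3=1$, and invoke Prasad.

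For the $p$-adic case your plan is correct in outline, but the paper's organization is cleaner than the case analysis you anticipate. The paper splits into just two cases: if the inducing Levi is of type $(2,2,2)$ (the only remaining ``type I'' case under the hypotheses), the argument is identical to the archimedean one and reduces to Prasad. Otherwise the Levi is of ``type II'', and here the paper does \emph{not} reduce to Prasad at all. Instead, it uses two independent facts: (a) $m(\pi)=1$ for all type II inductions, which was already established as Corollary \ref{main case 1} (ultimately resting on Theorem \ref{thm 2}, the trace-formula argument showing the Hom space is always nonzero for type II reduced models); and (b) $\epsilon(1/2,\pi,\wedge^3)=1$, proved by observing that every type II parabolic is contained in a maximal parabolic of type $(5,1)$ or $(3,3)$, so only two $\epsilon$-factor computations are needed rather than one per partition. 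In the $(5,1)$ case one checks $(\wedge^3\phi_1)^\vee \cong \wedge^2\phi_1\otimes\phi_2$; in the $(3,3)$ case $(\wedge^2\phi_1\otimes\phi_2)^\vee \cong \phi_1\otimes\wedge^2\phi_2$ and $(\det\phi_1)^\vee=\det\phi_2$. This sidesteps the individual bookkeeping for $(3,2,1)$, $(3,1,1,1)$, etc.\ that you flagged as the main obstacle.
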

This will be proved in Section 6 for the archimedean case, and in Section 8 for the p-adic case. Our method is to study the behavior of the multiplicity and the epsilon factor under the parabolic induction, and this is why in the p-adic case we have more restrictions.

Finally, our method can also be applied to all reduced models of the Ginzburg-Rallis model coming from the parabolic induction. For some models such results are well know (like the trilinear $\GL_2$ model); but for many other models, as far as we know, such results never appear in literature. For details, see Appendix B.

\subsection{Organization of the paper and remarks on the proofs}
In Section 2, we introduce basic notation and convention of this paper, including the definition and some basic facts on weight orbital integrals, weighted character, intertwining operator and the Plancherel formula. In Section 3, we will study quasi-characters and strongly cuspidal functions. For Sections 2 and 3, we follow \cite{B15} closely and provide details for the current case as needed.

In Section 4, we study the analytic and geometric properties of the Ginzburg-Rallis model. In particular, we show that it is a wavefront spherical variety and has polynomial growth as a homogeneous space. This gives us the weak Cartan decomposition. Then by applying those results, we prove some estimations for various integrals which will be used in later sections. The proof of some estimations are similar to the GGP case in \cite{B15}, we only include the proof here for completion.

In Section 5, we study an explicit element in the Hom space coming from the (normalized) integration of the matrix coefficient. The goal is prove that the Hom space is nonzero if and only if the explicit element is nonzero. It is standard to prove such a statement
by using the Plancherel formula together with the fact that the nonvanishing property of the explicit element is invariant under parabolic induction and unramified twist. However, there are two main difficulties in the proof of such a result for the Ginzburg-Rallis models. First, unlike the Gan-Gross-Prasad case, we do have nontrivial center for the Ginzburg-Rallis model. As a result, for many parabolic subgroups of $\GL_6(F)$ (the one which don't have an analogy in the quaternion case, i.e. the one not of type $(6),\;(4,2)$ or $(2,2,2)$, we will call theses models "type II models"), it is not clear why the nonvanishing property of the explicit intertwining operator is invariant under the unramified twist. Instead, we show that for such parabolic subgroups, the explicit intertwining operator will always be nonzero. This is Theorem \ref{thm 2}, which will
be proved in the last section of this paper.
Another difficulty is that unlike the Gan-Gross-Prasad case, when we do parabolic induction, we don't always have the strongly tempered model (in the GGP case, one can always go up to the codimension one case which is strongly tempered, then run the parabolic induction process). As a result, in order to prove the nonvanishing property of the explicit intertwining operator is invariant under parabolic induction, it is not enough to just change the order of the integral. This is because if the model is not strongly tempered, the explicit intertwining operator is defined via the normalized integral, not the original integral. We will find a way to deal with this issue in Section 5, but we have to treat the p-adic case and the archimedean case separately. For details, see Section 5.3 and 5.4.

In Section 6, we prove our main Theorems for the archimedean case by reducing it to the trilinear $\GL_2$ model, and then applying the result of Prasad \cite{P90}.

Starting from Section 7, we only consider the case that $F$ is a p-adic field. In Section 7, we state our trace formula. The geometric expansion of the trace formula has already been proved in the previous paper \cite{Wan15}. By applying the results in Section 3, 4 and 5, we will prove the spectral side of the trace formula.

In Section 8, we prove the multiplicity formula by applying our trace formula. Using that, we prove our main Theorems for the p-adic case.

In Section 9, we prove the argument we left in Section 5 (i.e. Theorem \ref{thm 2}), which is to show that for all type II models, the explicit interwining operator is always nonzero. By some similar arguments as in Section 5, we only need to show that the Hom space for all type II models is nonzero. Our method is to prove the local relative trace formula for these models which will gives us a multiplicity formula. \textbf{The most important feature for those models is that all semisimple elements in the small group are split, i.e. there is no elliptic torus. As a result, the geometric side of the trace formula for those models only contains the germ at 1 (as in Appendix B of \cite{Wan15}).} But by the work of Rodier \cite{Rod81}, the germ is always 1 for generic representations. Hence we know that the Hom space is always nonzero, and this proves Theorem \ref{thm 2}.

There are two appendices in this paper. In Appendix A, we prove the weak Cartan decomposition for the Ginzburg-Rallis model when $F$ is p-adic. In Appendix B, we will state some similar results for the reduced models of the Ginzburg-Rallis models coming from parabolic induction. Since the proof of those results are similar to the Ginzburg-Rallis model case we considered in this paper, we will skip it here and we refer the readers to my thesis \cite{Wan17} for details of the proof.

\subsection{Acknowledgement}
I would like to thank my advisor Dihua Jiang for suggesting me thinking about this problem, providing practical and thought-provoking viewpoints that lead to solutions of the problem, and carefully reviewing the first draft of this paper. I would like to thank Y. Sakellaridis for helpful discussions of the spherical varieties.

\section{Preliminarities}
\subsection{Notation and conventions}
Let $F$ be a p-adic field or $\BR$. If $F$ is p-adic, we fix the algebraic closure $\ol{F}$. Let val$_F$ and $\mid \cdot \mid_F$ be the valuation and absolute value on $F$, $\Fo_F$ be the ring of integers of $F$, and $\BF_q$ be the residue field. We fix an uniformizer $\varpi_F$.

For every connected reductive algebraic group $G$ defined over $F$, let $A_G$ be the maximal split center of $G$ and $Z_G$ be the center of $G$.
We denote by $X(G)$ the group of $F$-rational characters of $G$. Define $\Fa_G=$Hom$(X(G),\BR)$, and let $\Fa_{G}^{\ast}=X(G)\otimes_{\BZ} \BR$ be the dual of $\Fa_G$. We define a homomorphism $H_G:G(F)\rightarrow \Fa_G$ by $H_G(g)(\chi)=\log(|\chi(g)|_F)$ for every $g\in G(F)$ and $\chi\in X(G)$. Let $\Fa_{G,F}$ (resp. $\tilde{\Fa}_{G,F}$) be the image of $G(F)$ (resp. $A_G(F)$) under $H_G$. In the real case, $\Fa_G=\Fa_{G,F}=\tilde{\Fa}_{G,F}$; in the p-adic case, $\Fa_{G,F}$ and $\tilde{\Fa}_{G,F}$ are lattices in $\Fa_G$. Let $\Fa_{G,F}^{\vee}=Hom(\Fa_{G,F},2\pi \BZ)$ and $\tilde{\Fa}_{G,F}^{\vee}= Hom(\tilde{\Fa}_{G,F}, 2\pi \BZ)$, note that both $\Fa_{G,F}^{\vee}$ and $\tilde{\Fa}_{G,F}^{\vee}$ are zero in the real case; and they are lattices in $\Fa_{G}^{\ast}$ in the p-adic case. Set $\Fa_{G,F}^{\ast}=\Fa_{G}^{\ast}/\Fa_{G,F}^{\vee}$, and we can identify $i\Fa_{G,F}^{\ast}$ with the group of unitary unramified characters of $G(F)$ by letting $\lambda(g)=e^{<\lambda,H_G(g)>},\;\lambda\in i\Fa_{G,F}^{\ast},\; g\in G(F)$. For a Levi subgroup $M$ of $G$, let $\Fa_{M,0}^{\ast}$ be the subset of elements in $\Fa_{M,F}^{\ast}$ whose restriction to $\tilde{\Fa}_{G,F}$ is zero, then we can identify $i\Fa_{M,0}^{\ast}$ with the group of unitary unramified characters of $M(F)$ which is trivial on $Z_G(F)$.

Denote by $\Fg$ the Lie algebra of $G$, with the adjoint action of $G$. Since the Ginzburg-Rallis model has non-trivial center, all our integration need to modulo the center. To simplify the notation, for any Lie algebra $\Fg$, denote by $\Fg_0$ the elements in $\Fg$ whose trace is zero, $\Fg_0$ can be viewed as the Lie algebra of the reductive group $G\cap SL$.

For a Levi subgroup $M$ of $G$, let $\CP(M)$ be the set of parabolic subgroups of $G$ whose Levi part is $M$, $\CL(M)$ be the set of Levi subgroups of $G$ containing $M$, and $\CF(M)$ be the set of parabolic subgroups of $G$ containing $M$. We have a natural decomposition $\Fa_M=\Fa_{M}^{G}\oplus \Fa_G$, denote by $proj_{M}^{G}$ and $proj_G$ the projections of $\Fa_M$ to each factors. The subspace $\Fa_{M}^{G}$ has a set of coroots $\check{\Sigma}_M$, and for each $P\in \CP(M)$, we can associate a positive chamber $\Fa_{P}^{+}\subset \Fa_M$ and a subset of simple coroots $\check{\Delta}_P\subset \check{\Sigma}_M$. For such $P=MU$, we can also define a function $H_P:G(F)\rightarrow \Fa_M$ by $H_P(g)=H_M(m_g)$ where $g=m_g u_g k_g$ is the Iwasawa decomposition of $g$. According to Harish-Chandra, we can define the height function $\Vert \cdot\Vert$ on $G(F)$, take values in $\BR_{\geq 1}$ and a log-norm $\sigma$ on $G(F)$ by $\sigma(g)=\sup(1,\log(\Vert g\Vert))$. We also define $\sigma_0(g)=\inf_{z\in Z_G(F)} \{\sigma(zg)\}$. Similarly, we can define the log-norm function on $\Fg(F)$ as follows: fix a basis $\{X_i\}$ of $\Fg(F)$ over $F$, for $X\in \Fg(F)$, let $\sigma(X)=\sup(1,\sup\{ \log(|a_i|_F) \})$, where $a_i$ is the $X_i$-coordinate of $X$.

For $x\in G$ (resp. $X\in \Fg$), let $Z_G(x)$(resp. $Z_G(X)$) be the centralizer of $x$ (resp. $X$) in $G$, and let $G_x$(resp. $G_X$) be the neutral component of $Z_G(x)$ (resp. $Z_G(X)$). Accordingly, let $\Fg_x$ (resp. $\Fg_X$) be the Lie algebra of $G_x$ (resp. $G_X$). For a function $f$ on $G(F)$ (resp. $\Fg(F)$), and $g\in G(F)$, let ${}^g f$ be the $g$-conjugation of $f$, i.e. ${}^g f(x)=f(g^{-1}xg)$ for $x\in G(F)$ (resp. ${}^g f(X)=f(g^{-1}Xg)$ for $X\in \Fg(F)$).

Denote by $G_{ss}(F)$ the set of semisimple elements in $G(F)$, and by $G_{reg}(F)$ the set of regular elements in $G(F)$. The Lie algebra versions
are denoted by $\Fg_{ss}(F)$ and $\Fg_{reg}(F)$, respectively. Now for $X\in G_{ss}(F)$, the operator $ad(x)-1$ is defined and invertible on $\Fg(F)/\Fg_x(F)$. We define
$$
D^G(x)=\mid \det((ad(x)-1)_{\mid \Fg(F)/\Fg_x(F)})\mid_F.
$$
Similarly for $X\in \Fg_{ss}(F)$, define
$$
D^G(X)=\mid \det((ad(X))_{\mid \Fg(F)/\Fg_X(F)})\mid_F.
$$
For any subset $\Gamma\subset G(F)$, define $\Gamma^G:=\{g^{-1}\gamma g\mid g\in G(F),\gamma\in \Gamma\}$. We say an invariant subset $\Omega$ of $G(F)$ is compact modulo conjugation if there exist a compact subset $\Gamma$ such that $\Omega\subset \Gamma^G$

For two complex valued functions $f$ and $g$ on a set $X$ with $g$ taking values in the positive real numbers, we write that
$
f(x)\ll g(x),
$
and say that $f$ is {\sl essentially bounded} by $g$, if there exists a constant $c>0$ such that for all $x\in X$, we have
$
| f(x)| \leq cg(x).
$
We say $f$ and $g$ are {\sl equivalent}, which is denoted by
$f(x)\sim g(x)$,
if $f$ is essentially bounded by $g$ and $g$ is essentially bounded by $f$.

\subsection{Measures}
Through this paper, we fix a non-trivial additive character $\psi: F\rightarrow \BC^{\times}$. If $G$ is a connected reductive group, we may fix a non-degenerate symmetric bilinear form $<\cdot,\cdot>$ on $\Fg(F)$ that is invariant under $G(F)$-conjugation. For any smooth compactly supported complex valued function $f\in C_{c}^{\infty}(\Fg(F))$, we can define its Fourier transform $f\rightarrow \hat{f}\in C_{c}^{\infty}(\Fg(F))$ to be
\begin{equation}\label{FT}
\hat{f}(X)=\int_{\Fg(F)} f(Y) \psi(<X,Y>) dY
\end{equation}
where $dY$ is the selfdual Haar measure on $\Fg(F)$ such that $\hat{\hat{f}}(X)=f(-X)$.

Let $Nil(\Fg)$ be the set of nilpotent orbits of $\Fg$. For $\CO\in Nil(\Fg)$ and $X\in \CO$, the bilinear form $(Y,Z)\rightarrow <X,[Y,Z]>$ on $\Fg(F)$ can be descent to a symplectic form on $\Fg(F)/\Fg_X(F)$. The nilpotent $\CO$  has naturally a structure of $F$-analytic symplectic variety, which yields a selfdual measure on $\CO$. This measure is invariant under the $G(F)$-conjugation.

A $G$-domain on $G(F)$ (resp. $\Fg(F)$) is an open subset of $G(F)$ (resp. $\Fg(F)$) invariant under the $G(F)$-conjugation. Let $P=MU$ be a parabolic subgroup of $G$, and $K$ be a open compact subgroup of $G$ which is hyperspecial with respect to $M$ in the p-adic case. Then we can fix Haar measures on $G(F), M(F), U(F), K$ such that for $f\in C_{c}^{\infty} (G(F))$, we have
\begin{equation}\label{measure}
\int_{G(F)} f(g) dg=\int_K \int_{U(F)} \int_{M(F)} f(muk) dm du dk
\end{equation}
Moreover, if $F$ is p-adic, we require $mes(K)=1$.

If $T$ is a subtorus of $G$, we can provide a measure on $T$ via the Iwasawa decomposition as above, denoted by $dt$. On the other hand, we can define another measure $d_c t$ on $T(F)$ as follows: If $T$ is split, we require the volume of the maximal compact subgroup of $T(F)$ is 1 under $d_c t$. In general, $d_c t$ is compatible with the measure $d_c t'$ defined on $A_T(F)$ and with the measure on $T(F)/A_T(F)$ of total volume 1. Then we have a constant number $\nu(T)$ such that $d_c t=\nu(T)dt$.
Finally, if $M$ is a Levi subgroup of $G$, we can define the Haar measure on $\Fa_{M}^{G}$ such that the quotient
$\Fa_{M}^{G}/proj_{M}^{G} (H_M(A_M(F)))$
is of volume 1.

\subsection{Induced representation and intertwining operator}
Given a parabolic subgroup $P=MU$ of $G$ and an admissible representation $(\tau,V_{\tau})$ of $M(F)$, let $(I_{P}^{G}(\tau),I_{P}^{G}(V_{\tau}))$ be the normalized parabolic induced representation: $I_{P}^{G}(V_{\tau})$ consist of smooth functions $e:G(F)\rightarrow V_{\tau}$ such that
$$e(mug)=\delta_P(m)^{1/2} \tau(m)e(g), \;m\in M(F),\; u\in U(F),\; g\in G(F).$$
And the $G(F)$ action is just the right translation.

For $\lambda\in \Fa_{M}^{\ast}\otimes_{\BR}\BC$, let $\tau_{\lambda}$ be the unramified twist of $\tau$, i.e. $\tau_{\lambda}(m)=\exp(\lambda(H_M(m))) \tau(m)$ and let $I_{P}^{G}(\tau_{\lambda})$ be the induced representation. By the Iwasawa decomposition, every function $e\in I_{P}^{G}(V_{\tau})$ is determined by its restriction on $K$, and that space is invariant under the unramified twist. i.e. for any $\lambda$, we can realize the representation $I_{P}^{G}(\tau)$ on the space $I_{K\cap P}^{K}(\tau_K)$ which is consisting of functions $e_K:K\rightarrow V_{\tau}$ such that
$$e(mug)=\delta_P(m)^{1/2} \tau(m)e(g), \;m\in M(F)\cap K,\; u\in U(F)\cap,\; g\in K.$$

If $\tau$ is unitary, so is $I_{P}^{G}(\tau)$, the inner product on $I_{P}^{G}(V_{\tau_K})$ can be realized as
$$(e,e')=\int_K (e'(k),e(k)) dk.$$
This is an invariant inner product under the representation $I_{P}^{G}(\tau_{\lambda})$ for all $\lambda\in i\Fa_{M}^{\ast}$.

Now we define the intertwining operator. For a Levi subgroup $M$ of $G$, $P,P'\in \CP(M)$, and $\lambda\in \Fa_{M}^{\ast}\otimes_{\BR} \BC$, define the intertwining operator $J_{P'|P}(\tau_{\lambda}):I_{P}^{G}(V_{\tau})\rightarrow I_{P'}^{G}(V_{\tau})$ to be
$$J_{P'|P}(\tau_{\lambda})(e)(g)=\int_{(U(F)\cap U'(F))\backslash U'(F)} e(ug)du.$$
In general, the integral above is not absolutely convergent. But it is absolutely convergent for $Re(\lambda)$ sufficiently large, and it is $G(F)$-invariant.
By restricting to $K$, we can view $J_{P'|P}(\tau_{\lambda})$ as a homomorphism from $I_{P}^{G}(V_{\tau_K})$ to $I_{P'}^{G}(V_{\tau_K})$. In general, $J_{P'|P}(\tau_{\lambda})$ can be meromorphically continued to a function on $\Fa_{M}^{\ast}\otimes_{\BR}\BC /i\Fa_{M,F}^{\vee}$.

If $\tau$ is irreducible, by Schur's lemma, the operator $J_{P|\bar{P}}(\tau_{\lambda})J_{\bar{P}|P}(\tau_{\lambda})$ is a scalar, let $j(\tau_{\lambda})$ be the scalar, this is independent of the choice of $P$. We can normalize the intertwining operator by a complex valued function $r_{P'|P}(\tau_{\lambda})$ such that the normalized intertwining operator
$$R_{P'|P}(\tau_{\lambda})=r_{P'|P}(\tau_{\lambda})^{-1}J_{P'|P}(\tau_{\lambda})$$
satisfies the condition of Theorem 2.1 of \cite{Ar89}. The key conditions are
\begin{enumerate}
\item For $P,P',P''\in \CP(M)$, $R_{P''|P'}(\tau_{\lambda})R_{P'|P}(\tau_{\lambda})=R_{P''|P}(\tau_{\lambda})$.
\item Suppose $\tau$ is tempered, for $\lambda\in i\Fa_{M,F}^{\ast}$, $R_{P'|P}(\tau_{\lambda})$ is holomorphic and unitary.
\item The normalized intertwining operator are compatible with the unramified twist and the parabolic induction.
\end{enumerate}

\subsection{$(G,M)$-families}
Let $M$ be a Levi subgroup of $G$, we recall the notion of $(G,M)$-family introduced by Arthur. A $(G,M)$-family is a family $(c_P)_{P\in \CP(M)}$ of smooth functions on $i\Fa_{M}^{\ast}$ taking values in a locally convex topological vector space $V$ such that for all adjacent parabolic subgroups $P,P'\in \CP(M)$, the functions $c_p$ and $c_{P'}$ coincide on the hyperplane supporting the wall that separates the positive chambers for $P$ and $P'$. For such a $(G,M)$-family, one can associate an element $c_M\in V$ (\cite[Page 37]{Ar81}). If $L\in \CL(M)$, for a given $(G,M)$-family, we can deduce a $(G,L)$-family. Denote by $c_L$ the element in $V$ associated to such $(G,L)$-family. If $Q=L_Q U_Q\in \CF(L)$, we can deduce a $(L_Q,L)$-family from the given $(G,M)$-family, the element in $V$ associated to which is denoted by $c_{L}^{Q}$.

If $(Y_P)_{P\in \CP(M)}$ is a family of elements in $\Fa_M$, we say it is a $(G,M)$-orthogonal set (resp. and positive) if the following condition holds: if $P,P'$ are two adjacent elements of $\CP(M)$, there exists a unique coroot $\check{\alpha}$ such that $\check{\alpha}\in \check{\Delta}_P$ and $-\check{\alpha}\in \check{\Delta}_{P'}$, we require that $Y_P-Y_{P'}\in \BR \check{\alpha}$ (resp. $Y_P-Y_{P'}\in \BR_{\geq 0} \check{\alpha}$). For $P\in \CP(M)$, define a function $c_P$ on $i\Fa_{M}^{\ast}$ by $c_P(\lambda)=e^{-\lambda(Y_P)}$. Suppose the family $(Y_P)_{P\in \CP(M)}$ is a $(G,M)$-orthogonal set. Then the family $(c_P)_{P\in \CP(M)}$ is a $(G,M)$-family. If the family $(Y_P)_{P\in \CP(M)}$ is positive, then the number $c_M$ associated to this $(G,M)$-family is just the volume of the convex hull in $\Fa_{M}^{G}$ generated by the set $\{Y_P\mid P\in \CP(M)\}$. If $L\in \CL(M)$, the $(G,L)$-family deduced from this $(G,M)$-family is the $(G,L)$-family associated to the $(G,L)$-orthogonal set $(Y_Q)_{Q\in \CP(L)}$ where $Y_Q=proj_L(Y_P)$ for some $P\in \CP(M)$ such that $P\subset Q$ (it is easy to see this is independent of the choice of $P$). Similarly, if $Q\in \CP(L)$, then the $(L,M)$-family deduced from this $(G,M)$-family is the $(L,M)$-family associated to the $(L,M)$-orthogonal set $(Y_{P'})_{P'\in \CP^L(M)}$ where $Y_{P'}=Y_P$ with $P$ being the unique element of $\CP(M)$ such that $P\subset Q$ and $P\cap L=P'$.

\subsection{Weighted orbital integrals}
If $M$ is a Levi subgroup of $G$ and $K$ is a hyperspecial open compact subgroup with respect to $M$. For $g\in G(F)$, the family $(H_P(g))_{P\in \CP(M)}$ is $(G,M)$-orthogonal and positive. Let $(v_P(g))_{P\in \CP(M)}$ be the $(G,M)$-family associated to it and $v_M(g)$ be the number associated to this $(G,M)$-family. Then $v_M(g)$ is just the volume of the convex hull in $\Fa_{M}^{G}$ generated by the set $\{H_P(g),\; P\in \CP(M)\}$. The function $g\rightarrow v_M(g)$ is obviously left $M(F)$-invariant and right $K$-invariant.

If $f\in C_{c}^{\infty}(G(F))$ and $x\in M(F)\cap G_{reg}(F)$, define the weight orbital integral to be
\begin{equation}\label{WOI}
J_M(x,f)=D^G(x)^{1/2} \int_{G_x(F)\backslash G(F)} f(g^{-1}xg)v_M(g)dg.
\end{equation}
Note the definition does depend on the choice of the hyperspecial open compact subgroup $K$. But we will see later that if $f$ is strongly cuspidal, this definition is independent of the choice of $K$.

\begin{lem}
With the notation as above, the following holds.
\begin{enumerate}
\item If $f\in C_{c}^{\infty}(G(F))$, the function $x\rightarrow J_M(x,f)$ defined on $M(F)\cap G_{reg}(F)$ is locally constant,
invariant under $M(F)$-conjugation and has a compact support modulo conjugation.
\item There exists an integer $k\geq 0$, such that for every $f\in C_{c}^{\infty}(G(F))$, there exists $c>0$ such that
$$
| J_M(x,f)| \leq c(1+| \log D^G(x) |)^k
$$
for every $x\in M(F)\cap G_{reg}(F)$.
\end{enumerate}
\end{lem}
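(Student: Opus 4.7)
Both statements are standard (they are essentially due to Arthur and Harish-Chandra), so my approach is to split part (1) into three independent elementary claims, and then derive part (2) from a product of three classical estimates whose combination is the only delicate point.

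For part (1), $M(F)$-conjugation invariance is immediate: given $m\in M(F)$, substitute $g\mapsto mg$ in the integral (\ref{WOI}) defining $J_M(m^{-1}xm,f)$. The measure on $G_{m^{-1}xm}(F)\backslash G(F)=m^{-1}G_x(F)m\backslash G(F)$ transforms compatibly, $D^G$ is a $G(F)$-conjugation invariant, and $v_M(mg)=v_M(g)$ because $g\mapsto v_M(g)$ is left $M(F)$-invariant (this was already observed in the text, since each $H_P(mg)=H_M(m)+H_P(g)$ up to a common term in $\Fa_G$ and the volume of the convex hull is translation-invariant in $\Fa_M^G$). For local constancy in the p-adic case, fix $x$ and a compact open subgroup $K_0$ under which $f$ is bi-invariant; the integrand $g\mapsto f(g^{-1}xg)v_M(g)$ is supported, modulo $G_x(F)$, on a compact set, and on this set the map $x\mapsto g^{-1}xg$ is continuous uniformly in $g$, so there is a neighborhood $\omega$ of $x$ in $M(F)\cap G_{reg}(F)$ on which the full integrand is independent of the parameter. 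Compactness of support modulo conjugation follows by observing that if $J_M(x,f)\neq 0$ then $x\in (\mathrm{supp}\,f)^G\cap M(F)$, and the intersection of a $G(F)$-conjugation-compact set with $M(F)$ is compact modulo $M(F)$-conjugation (this is the standard descent fact that a semisimple $G$-conjugacy class meets $M(F)$ in finitely many $M(F)$-conjugacy classes).

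For part (2), I will combine three ingredients. First, the Harish-Chandra bound for regular orbital integrals states that $D^G(x)^{1/2}\int_{G_x(F)\backslash G(F)}|f(g^{-1}xg)|\ud g$ is bounded uniformly in $x\in G_{reg}(F)$ by a constant depending only on $f$. Second, from the definition of $v_M$ as the volume of the convex hull of $\{H_P(g)\}_{P\in\CP(M)}$, one has the polynomial bound $v_M(g)\ll \sigma(g)^{d}$ for some $d=\dim\Fa_M^G$. Third, a standard argument (which I will invoke from Harish-Chandra's descent) shows that whenever $g^{-1}xg$ lies in a fixed compact subset $\Omega$ of $G(F)$, one can choose a representative in the coset $G_x(F)g$ for which
\[
\sigma(g)\ll 1+|\log D^G(x)|,
\]
uniformly for $x\in M(F)\cap G_{reg}(F)$. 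Combining these three, the weight $v_M(g)$ inside the integral is essentially bounded by $(1+|\log D^G(x)|)^d$, which we pull out, leaving Harish-Chandra's bound.

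\textbf{Main obstacle.} The hard step is the third ingredient, the estimate $\sigma(g)\ll 1+|\log D^G(x)|$ on the support of the orbital integral. The point is that when $x$ approaches a singular element, the centralizer $G_x$ is no longer all of the relevant torus, and $g$ is allowed to wander toward infinity in the directions transverse to $G_x$; one needs to show that this wandering is controlled logarithmically by $D^G(x)$. This is proved by a reduction to a maximal torus and an explicit computation using root data, and it is essentially the same argument as in \cite{B15}; I would invoke it directly rather than reprove it. Everything else is bookkeeping.
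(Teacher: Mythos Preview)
Your proposal is correct and follows the standard Arthur--Harish-Chandra--Waldspurger argument. The paper itself does not give a proof at all: it simply cites Lemma 2.3 of \cite{W10}, so you have in fact supplied more detail than the paper. Your outline for part (2) --- bound $v_M(g)$ polynomially in $\sigma(g)$, control $\sigma(g)$ on the support by $1+|\log D^G(x)|$ via the descent estimate, then invoke Harish-Chandra's uniform bound on normalized orbital integrals --- is exactly how the result is proved in \cite{W10}. One small clarification worth making explicit in part (1): when arguing local constancy, the centralizer $G_x$ is the maximal torus $T$ containing $x$ (since $x$ is regular), so it is cleanest to fix $T$ and let $x$ vary inside $T_{reg}(F)$, which makes the uniformity of the support in $T(F)\backslash G(F)$ transparent.
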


\begin{proof}
See Lemma 2.3 of \cite{W10}.
\end{proof}

\subsection{Weighted characters}
Let $M$ be a Levi subgroup, and $\tau$ be a tempered representation of $M(F)$. For $P,P'\in \CP(M)$, we have defined the normalized intertwining operator $R_{P'|P}(\tau_{\lambda})$ for $\lambda\in i\Fa_{M}^{\ast}$. Fix $P$, for every $P'\in \CP(M)$, define the function $\CR_{P'}(\tau)$ on $i\Fa_{M}^{\ast}$ by
$$\CR_{P'}(\tau,\lambda)=R_{P'|P}(\tau)^{-1}R_{P'|P}(\tau_{\lambda}).$$
This function takes value in the space of endomorphisms of $I_{P\cap K}^{K}(\tau_K)$ (not necessarily commutes with the $G$-action), recall that this space is invariant under the unramified twist. By \cite{Ar81}, this is a $(G,M)$-family. Then for $L\in \CL(M)$ and $Q\in \CF(L)$, we can associate an operator $\CR_{L}^{Q}(\tau)$ to this $(G,M)$ family. We define the weighted character of $\tau$ to be the distribution $f\rightarrow J_{L}^{Q}(\tau,f)$ given by
$J_{L}^{Q}(\tau,f)=\tr(\CR_{L}^{Q}(\tau) I_{P}^{G}(\tau)(f))$
for every $f\in C_{c}^{\infty}(G(F))$. This is independent of the choice of $P$ but depends on $K$ and the way we normalized the intertwining operators. If $L=Q=G$, the distribution $J_{G}^{G}(\tau,f)$ is just $\theta_{\pi}$ for $\pi=I_{P}^{G}(\tau)$ where $\theta_{\pi}(f)=tr(\pi(f))$.

\subsection{Harish-Chandra-Schwartz space}
Let $P_{min}$ be a minimal parabolic subgroup of $G$ and let $K$ be a hyperspecial maximal compact subgroup of $G(F)$, then we have the Iwasawa decomposition $G(F)=P_{min}(F) K$. Consider the normalized induced representation
$$I_{P_{min}}^{G}(1):=\{ e\in C^{\infty}(G(F))\mid e(pg)=\delta_{P_{min}}(p)^{1/2} e(g) \; for \; all \; p\in P_{min}(F), g\in G(F)\}$$
and we equip the representation with the inner product
$$(e,e')=\int_{K} e(k)\bar{e'}(k) dk.$$
Let $e_K\in I_{P_{min}}^{G}(1)$ be the unique function such that $e_K(k)=1$ for all $k\in K$.
\begin{defn}
The Harish-Chandra function $\Xi^G$ is defined to be
$$\Xi^G(g)=(I_{P_{min}}^{G}(1)(g)e_K, e_K)$$
\end{defn}

\begin{rmk}
The function $\Xi^G$ depends on the various choices we made, but this doesn't matter since different choices give us equivalent functions and the function $\Xi^G$ will only be used in estimations.
\end{rmk}

The next proposition summarize some basic properties of the function $\Xi^G$, the proof can be found in \cite{W03}. Also see Proposition 1.5.1 of \cite{B15}.

\begin{prop}\label{h-c function}
\begin{enumerate}
\item Let
$$M_{min}^{+}=\{m\in M_{min}(F)\mid \mid \alpha(m)\mid \leq 1 \; for\; all \; \alpha\in \Psi(A_{M_{min}},P_{min})\}.$$
Here $\Psi(A_{M_{min}},P_{min})$ is the set of positive root associated to $P_{min}$. Then there exists $d>0$ such that
$$\delta_{P_{min}}(m)^{1/2} \ll \Xi^G(m) \ll \delta_{P_{min}}(m)^{1/2} \sigma_0(m)^d$$
for all $m\in M_{min}^{+}$.
\item There exist $d>0$ such that
$\Xi^G(g)\ll \delta_{P_{min}}(m_{P_{min}}(g))^{1/2}\sigma_0(g)^d $
for all $g\in G(F)$, here $m_{P_{min}}(g)$ is the $M_{min}$-part of $g$ under the Iwasawa decomposition $G=M_{min}U_{min}K$.
\item Let $P=MU$ be a parabolic subgroup containing $P_{min}$, then we have
$$\Xi^G(g)=\int_K \delta_{P}(m_P(kg))^{1/2}\Xi^M(m_P(kg)) dk $$
for all $g\in G(F)$, here $m_P(g)$ is the $M$-part of $g$ under the Iwasawa decomposition $G=MUK$.
\item Let $P=MU$ be a parabolic subgroup of $G$. Then for all $d>0$, there exist $d'>0$ such that
$$\delta_P(m)^{1/2}\int_{U(F)} \Xi^G(mu)\sigma_0(mu)^{-d'}du\ll \Xi^M(m)\sigma_0(m)^{-d}$$
for all $m\in M(F)$.
\item There exist $d>0$ such that
$\int_{G(F)} \Xi^G(g)^2 \sigma(g)^{-d}dg$
is convergent.
\item We have the equality
$$\int_K \Xi^G(g_1kg_2)dk=\Xi^G(g_1)\Xi^G(g_2)$$
for all $g_1,g_2\in G(F)$.
\end{enumerate}
\end{prop}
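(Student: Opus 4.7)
The plan is to establish the six properties by reducing each to a standard Harish-Chandra calculation; the full arguments appear in \cite{W03} and, in uniform form, in \cite[Proposition 1.5.1]{B15}, so the sketch below focuses on the logical dependencies and on the essential ingredient for each item.

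I would first prove (6), the convolution identity, directly from the matrix-coefficient definition. Using the explicit formula $\Xi^G(g) = \int_K e_K(kg)\,dk$, which follows from the inner product formula together with the $K$-invariance of $e_K$, the left-hand side $\int_K \Xi^G(g_1 k g_2)\,dk$ unfolds to a double integral over $K\times K$; applying the Iwasawa decomposition $k' g_1 = m u k_1$ inside the integrand and using the transformation property $e_K(pg) = \delta_{P_{min}}(m)^{1/2} e_K(g)$ together with the invariance of Haar measure on $K$ factors this double integral as $\Xi^G(g_1)\Xi^G(g_2)$. Next I would prove the parabolic descent identity (3) by induction in stages: the induced representation $I_{P_{min}}^{G}(1)$ factors as $I_P^G(I_{P_{min}\cap M}^M(1))$, and under this identification the spherical vector $e_K$ for $G$ corresponds to the function on $K$ sending every element to the spherical vector for $M$. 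Evaluating $(I_{P_{min}}^G(1)(g) e_K, e_K)$ against the induced inner product and unwinding the intermediate $M$-inner product produces exactly the right-hand side of (3).

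For the sharp estimates (1) and (2) I would use the integral formula $\Xi^G(m) = \int_K \delta_{P_{min}}(m_{P_{min}}(km))^{1/2}\,dk$ obtained by specializing (3) to $P = P_{min}$. The lower bound in (1) follows by restricting the $K$-integration to a small neighborhood of the identity, where $m_{P_{min}}(km)$ differs from $m$ by a bounded factor and the integrand is bounded below by a constant multiple of $\delta_{P_{min}}(m)^{1/2}$. The upper bound, which I expect to be the main obstacle, requires controlling the contribution from the full $K$-integration; the cleanest approach is Harish-Chandra's rank-reduction via Langlands' decomposition of $P_{min}$, expressing the integral as an iterated integral over simple root subgroups and reducing to a product of rank-one estimates that introduce only polynomial growth in $\sigma_0(m)$. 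Statement (2) then follows by combining (1) with the Iwasawa decomposition $g = muk$ and a similar integral representation argument applied on the full group.

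Finally, (4) and (5) are integrability statements derivable from the previous points. For (4), I would substitute (3) into $\Xi^G(mu)$, interchange the order of integration over $U(F)$ and $K$, and apply the standard Harish-Chandra estimate that $\int_{U(F)} \delta_P(m_P(u))^{1/2}\sigma_0(u)^{-d'}\,du$ converges for $d'$ sufficiently large; the residual growth in $m$ can then be absorbed into $\sigma_0(m)^{-d}$ by choosing $d'$ large relative to $d$. For (5) I would use the Cartan decomposition $G(F) = K M_{min}(F) K$, which contributes a Jacobian comparable to $\delta_{P_{min}}(a)$ on the dominant chamber; the Weyl-symmetric form of (1) gives $\Xi^G(a)^2 \ll \delta_{P_{min}}(a)^{-1}\sigma_0(a)^{2d}$ on this chamber, so that the integral reduces to a polynomially weighted sum over the cocharacter lattice, which converges as soon as the exponent in the weight $\sigma(g)^{-d}$ is taken sufficiently large relative to $d$ and the rank of $G$.
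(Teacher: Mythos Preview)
Your proposal is correct and aligns with the paper's treatment: the paper does not give an independent proof of this proposition but simply states that ``the proof can be found in \cite{W03}'' and refers also to \cite[Proposition 1.5.1]{B15}, exactly the two sources you invoke. Your sketch of the standard Harish-Chandra arguments (matrix-coefficient unfolding for (6), induction in stages for (3), rank reduction for the upper bound in (1), Cartan decomposition plus the Jacobian for (5)) is an accurate outline of what those references contain, so there is nothing to correct.
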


For $f\in C^{\infty}(G(F))$ and $d\in \BR$, let
$$p_d(f)=\sup_{g\in G(F)} \{ |f(g)|\Xi^G(g)^{-1}\sigma(g)^d\}$$
If $F$ is p-adic, we define the Harish-Chandra-Schwartz space to be
$$\CC(G(F))=\{f\in C^{\infty}(G(F))| p_d(f)<\infty,\forall d>0\}.$$
If $F=\BR$, for $u,v\in \CU(\Fg)$ and $d\in \BR$, let
$$p_{u,v,d}(f)=p_d(R(u)L(v)f)$$
where "R" stands for the right translation, "L" stands for the left translation and $\CU(\Fg)$ is the universal enveloping algebra. We define the Harish-Chandra-Schwartz space to be
$$\CC(G(F))=\{f\in C^{\infty}(G(F))| p_{u,v,d}(f)<\infty,\forall d>0,\; u,v\in \CU(\Fg)\}.$$

We also need the weak Harish-Chandra-Schwartz space $\CC^w(G(F))$. For $d>0$, let
$$\CC^{w}_{d}(G(F))=\{f\in C^{\infty}(G(F))| p_{-d}(f)<\infty\}$$
if $F$ is p-adic. And let
$$\CC^{w}_{d}(G(F))=\{f\in C^{\infty}(G(F))| p_{u,v,-d}(f)<\infty,\forall  u,v\in \CU(\Fg)\}$$
if $F=\BR$. Then the weak Harish-Chandra-Schwartz space is defined to be
$$\CC^w(G(F))=\cup_{d>0} \CC^{w}_{d}(G(F)).$$

Also we can define the Harish-Chandra-Schwartz space (resp. weak Harish-Chandra-Schwartz space) with given unitary central character $\chi$: let $\CC(G(F),\chi)$ (resp. $\CC^w(G(F),\chi)$) be the Mellin transform of the space $\CC(G(F))$ (resp. $\CC^w(G(F))$) with respect to $\chi$.

\subsection{The Harish-Chandra-Plancherel formula}
Since the Ginzburg-Rallis model has nontrivial center, we only introduce the Plancherel formula with given central character. We fix an unitary character $\chi$ of $Z_G(F)$. For every $M\in \CL(M_{min})$, fix an element $P\in \CP(M)$. Let $\Pi_2(M,\chi)$ be the set of discrete series of $M(F)$ whose central character agree with $\chi$ on $Z_G(F)$. Then $i\Fa_{M,0}^{\ast}$ acts on $\Pi_2(M,\chi)$ by the unramified twist, let $\{\Pi_2(M,\chi)\}$ be the set of orbits under this action. For every orbit $\CO$, and for a fixed $\tau\in \CO$, let $i\Fa_{\CO}^{\vee}$ be the set of $\lambda\in i\Fa_{M,0}^{\ast}$ such that the representation $\tau$ and $\tau_{\lambda}$ are equivalent, which is a finite set. For $\lambda\in i\Fa_{M,0}^{\ast}$, define the Plancherel measure to be
$$\mu(\tau_{\lambda})=j(\tau_{\lambda})^{-1}d(\tau)$$
where $d(\tau)$ is the formal degree of $\tau$, which is invariant under the unramified twist, and $j(\tau_{\lambda})$ is defined in Section 2.3. Then for $f\in \CC(G(F),\chi^{-1})$, the Harish-Chandra-Plancherel formula is
\begin{eqnarray*}
f(g)&=&\Sigma_{M\in \CL(M_{min})} |W^M||W^G|^{-1} \Sigma_{\CO\in \{\Pi_2(M,\chi)\}} |i\Fa_{\CO}^{\vee}|^{-1}\\
&&\int_{i\Fa_{M,0}^{\ast}} \mu(\tau_{\lambda}) trace(I_{P}^{G}(\tau_{\lambda})(g^{-1}) I_{P}^{G}(\tau_{\lambda})(f))d\lambda.
\end{eqnarray*}
The proof of the above formula can be found in \cite{W03} for the p-adic case, and in \cite{Ar75} for the real case.

To simplify our notation, let $Temp(G,\chi)$ be the union of $I_{P}^{G}(\tau)$ for $P=MN$, $M\in \CL(M_{min})$, $\tau\in \CO$ and $\CO\in \{\Pi_2(M,\chi)\}$. We define a Borel measure $d\pi$ on $Temp(G,\chi)$ such that
$$\int_{Temp(G,\chi)}\varphi(\pi)d\pi=\Sigma_{M\in \CL(M_{min})} |W^M||W^G|^{-1} \Sigma_{\CO\in \{\Pi_2(M,\chi)\}} |i\Fa_{\CO}^{\vee}|^{-1} \int_{i\Fa_{M,0}^{\ast}} \varphi(I_{P}^{G}(\tau_{\lambda})) d\lambda$$
for every compact supported function $\varphi$ on $Temp(G,\chi)$. Here by saying a function $\varphi$ is compactly supported on $Temp(G,\chi)$ we mean that it is supported on finitely many orbit $\CO$ and for every such orbit $\CO$, it is compactly supported. Note that the second condition is automatic if $F$ is p-adic. Then the Harish-Chandra-Plancherel formula above becomes
$$f(g)=\int_{Temp(G,\chi)} Trace(\pi(g^{-1}) \pi(f)) \mu(\pi) d\pi.$$

We also need the matrical Paley-Wiener Theorem. Let $C^{\infty}(Temp(G,\chi))$ be the space of functions $\pi\in Temp(G,\chi)\rightarrow T_{\pi}\in End(\pi)^{\infty}$ such that it is smooth on every orbits $\CO$ as functions from $\CO$ to $End(\pi)^{\infty}\simeq End(\pi_K)^{\infty}$. Now we define $\CC(Temp(G,\chi))$ to be a subspace of $C^{\infty}(Temp(G,\chi))$ consisting of those $T:\pi\rightarrow T_{\pi}$ such that
\begin{enumerate}
\item If $F$ is p-adic, $T$ is nonzero on finitely many orbits $\CO$.
\item If $F=\BR$, for all parabolic subgroup $P=MU$ and for all differential operator with constant coefficient $D$ on $i\Fa_{M}^{\ast}$, the function
$DT: \sigma\in \Pi_2(M,\chi)\rightarrow D(\lambda\rightarrow T_{I_{P}^{G}(\sigma_{\lambda})})$
satisfies $p_{D,u,v,k}(T)=\sup_{\sigma\in \Pi_2(M,\chi)} ||DT(\sigma)||_{u,v} N(\sigma)^k<\infty$ for all $u,v\in \CU(\Fk)$ and $k\in \BN$. Here $||DT(\sigma)||_{u,v}$ is the norm of the operator $\sigma(u)DT(\sigma)\sigma(v)$ and $N(\sigma)$ is the norm on the set of all tempered representations (See Section 2.2 of \cite{B15}).
\end{enumerate}
Then the matrical Paley-Wiener Theorem states that we have an isomorphism between $\CC(G,\chi^{-1})$ and $\CC(Temp(G,\chi))$ given by
$$f\in \CC(G,\chi^{-1})\rightarrow (\pi\in Temp(G,\chi)\rightarrow \pi(f)\in End(\pi)^{\infty})$$
and
$$T\in \CC(Temp(G,\chi))\rightarrow f_T(g)=\int_{Temp(G,\chi)} Trace(\pi(g^{-1}) T_{\pi}) \mu(\pi) d\pi.$$

\section{Strongly Cuspidal Functions and Quasi Characters}
\textbf{Throughout this section, assume that $F$ is p-adic}.
\subsection{Quasi-characters of $G(F)$}
If $\theta$ is a smooth function defined on $G_{reg}(F)$, invariant under $G(F)-$conjugation. We say it is a quasi-character on $G(F)$ if for every $x\in G_{ss}(F)$, there is a good neighborhood $\omega_x$ of $0$ in $\Fg_x(F)$, and for every $\CO\in Nil(\Fg_x)$, there exists $c_{\theta,\CO}(x)\in \BC$ such that
\begin{equation}\label{germ 2}
\theta(x\exp(X))=\Sigma_{\CO\in Nil(\Fg_x)} c_{\theta,\CO}(x) \hat{j}(\CO,X)
\end{equation}
for every $X\in \omega_{x,reg}$. Here $\hat{j}(\CO,X)$ is the function on $\Fg_{reg}(F)$ represent the Fourier transform of the nilpotent orbital integral. For the definition of good neighborhood, see Section 3 of \cite{Wan15}. It is easy to see that $c_{\theta,\CO}(x)$ are uniquely determined by $\theta$.
If $\theta$ is a quasi-character on $G(F)$ and $\Omega\subset G(F)$ is an open $G$-domain, then $\theta 1_{\Omega}$ is still a quasi-character.

\subsection{Quasi-characters under parabolic induction}
Let $M$ be a Levi subgroup of $G$. Given an invariant distribution $D^M$ on $M(F)$, we define the induced distribution $D=I_{M}^{G}(D^M)$ on $G(F)$ as follows.

Fix a parabolic subgroup $P=MU\in \CP(M)$ and a hyperspecial maximal compact subgroup $K$. Assume that the Haar measure on $G(F)$, $M(F)$, $U(F)$ and $K$ are compatible, i.e. $\int_G=\int_M \int_U \int_K$. For $f\in C_{c}^{\infty}(G(F))$, define $f_P\in C_{c}^{\infty}(M(F))$ to be
$$f_P(m)=\delta_P(m)^{1/2}\int_K \int_{U(F)}f(k^{-1}muk)du dk.$$
Then we define $D(f)=D^M(f_P)$.

If $D^M$ is represented by a function $\theta^M$ on $M_{reg}(F)$, locally integrable on $M(F)$ and invariant under conjugation, i.e. $D^M(f)=\int_{M(F)} f(m)\theta^M(m)dm$ for all $f\in C_{c}^{\infty}(M(F))$. Then $D$ is also represented by a function $\theta$ on $G_{reg}(F)$ defined by
$$\theta(x)=\Sigma_{x'\in \CX^M(x)} D^G(x)^{-1/2}D^M(x')^{1/2} \theta^M(x'),\; x\in G_{reg}(F).$$
Here $\CX^M(x)$ is the set of the $M(F)$-conjugation classes in the $G(F)$-conjugation class of $x$. In particular, if $\tau$ is an irreducible admissible representation of $M(F)$ and $\pi=I_{P}^{G}(\tau)$, then $\theta_{\pi}=I_{M}^{G}(\theta_{\tau})$.

Now we talk about the parabolic induction of quasi-characters. If $\CO^M\in Nil(\Fm)$ and $\CO\in Nil(\Fg)$, we say $\CO$ is contained in the induced orbit of $\CO^M$ if the intersection $\CO\cap (\CO^M+\Fu(F))$ is a nonempty open subset in $\CO^M+\Fu(F)$. The following result is Lemma 2.3 of \cite{W12}.

\begin{lem}\label{quasi character parabolic}
If $\theta^M$ is a quasi-character of $M(F)$ and $\theta=I_{M}^{G}(\theta^M)$. Then
\begin{enumerate}
\item $\theta$ is a quasi-character of $G(F)$.
\item If $x\in G_{ss}(F)$ and $\CO\in Nil(\Fg_x)$ is a regular orbit, then we have
\begin{eqnarray*}
c_{\theta,\CO}(x)&=&\Sigma_{x'\in \CX^M(x)} \Sigma_{g\in \Gamma_{x'}/G_x(F)} \Sigma_{\CO'} D^G(x)^{-1/2}D^M(x')^{1/2}\\
&&[Z_M(x')(F):M_{x'}(F)]^{-1} c_{\theta^M,\CO'}(x').
\end{eqnarray*}
Here $\CO'$ runs over elements in $Nil(\Fm_{x'})$ such that $g\CO$ is contained in the induced orbit of $\CO'$. And for $x'\in \CX^M(x)$, $\Gamma_{x'}$ is the set of $g\in G(F)$ such that $gxg^{-1}=x'$.
\end{enumerate}
\end{lem}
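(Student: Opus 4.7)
The plan is to prove both parts simultaneously by working on a good neighborhood $\omega_x \subset \Fg_x(F)$ of $0$ and analyzing $\theta(x\exp(X))$ via two pieces of descent: the parabolic-induction formula for characters recalled at the start of Section 3.2, and the parabolic-induction identity for Fourier transforms of nilpotent orbital integrals on the Lie algebras of the centralizers of the $x' \in \CX^M(x)$.

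First I would shrink $\omega_x$ so that for every $x' \in \CX^M(x)$ and every representative $g \in \Gamma_{x'}$, the translate $\Ad(g)\omega_x \subset \Fg_{x'}(F)$ is contained in a good neighborhood on which the germ expansion of $\theta^M$ at $x'$ is valid. Applying the induced-character formula
$$\theta(x\exp(X)) = \sum_{x''} D^G(x\exp(X))^{-1/2}\, D^M(x'')^{1/2}\, \theta^M(x''),$$
where $x''$ runs over $M(F)$-conjugacy class representatives in the $G(F)$-class of $x\exp(X)$, and parametrizing those classes by pairs $(x', g G_x(F))$ modulo the further identification coming from $Z_M(x')(F)/M_{x'}(F)$, I would rewrite the right-hand side as a sum over $x' \in \CX^M(x)$, $g \in \Gamma_{x'}/G_x(F)$, and $M_{x'}(F)$-conjugacy class representatives $X' \in \Fm_{x'}(F)$ of $\Ad(g)X \in \Fg_{x'}(F)$. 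On a good neighborhood, the Weyl determinants $D^G$ and $D^M$ factor through the centralizer determinants $D^{G_x}$ and $D^{M_{x'}}$, which makes the scalar prefactors collapse cleanly.

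Next I would substitute the germ expansion $\theta^M(x'\exp(X')) = \sum_{\CO' \in Nil(\Fm_{x'})} c_{\theta^M,\CO'}(x')\, \hat{j}^{M_{x'}}(\CO', X')$ and invoke the key compatibility: for each $\CO' \in Nil(\Fm_{x'})$, the parabolic induction of the distribution $\hat{j}^{M_{x'}}(\CO', \cdot)$ from $\Fm_{x'}$ to $\Fg_{x'}$ (via $P \cap G_{x'}$) equals a finite linear combination of $\hat{j}^{G_{x'}}(\CO, \cdot)$ for those $\CO \in Nil(\Fg_{x'})$ that are contained in the induction of $\CO'$. Pulling everything back to $\Fg_x$ by $\Ad(g)^{-1}$, the triple sum over $(x', g, \CO')$ reorganizes into a germ expansion of $\theta$ at $x$ indexed by $\CO \in Nil(\Fg_x)$, which yields part (1) together with an explicit formula for $c_{\theta,\CO}(x)$.

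For part (2), I would specialize $\CO$ to be the regular nilpotent orbit in $\Fg_x$. Since parabolic induction of a non-regular orbit from a Levi subalgebra cannot reach a regular orbit, only orbits $\CO'$ that are regular nilpotent in $\Fm_{x'}$ can contribute; for such $\CO'$ the induction coefficient from $\Fm_{x'}$ to $\Fg_{x'}$ is $1$. Collecting the surviving terms gives exactly the stated formula. The main obstacle is the combinatorial and measure-theoretic bookkeeping: tracking the parametrization of $M$-conjugacy classes by $(x', g)$, justifying the factor $[Z_M(x')(F):M_{x'}(F)]^{-1}$ arising because distinct cosets in $\Gamma_{x'}/G_x(F)$ can produce the same $M$-conjugacy class, and matching all four Jacobians $D^G, D^M, D^{G_x}, D^{M_{x'}}$ on the good neighborhood so that the transferred Fourier transforms carry the correct normalization. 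This is the argument of Lemma 2.3 of \cite{W12}, which I am following.
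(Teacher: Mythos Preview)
Your proposal is correct and follows precisely the argument of Lemma 2.3 of \cite{W12}; the paper itself does not give a proof but simply cites that reference, so your sketch is in fact more detailed than what appears here.
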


\subsection{Definition of strongly cuspidal functions}
We say a function $f\in C_{c}^{\infty}(Z_G(F)\backslash G(F),\chi)$ is strongly cuspidal if for every proper parabolic subgroup $P=MU$ of $G$, and for every $x\in M(F)$, we have
\begin{equation}\label{cuspidal 1}
\int_{U(F)} f(xu)du=0.
\end{equation}
Some basic examples for strongly cuspidal functions are the matrix coefficients of supercuspidal representations, or the pseudo coefficients of discrete series (see Section 3.5).

\subsection{Some properties of strongly cuspidal functions}
We first study the weight orbital integral associated to the strongly cuspidal functions. The following lemma is proved in Section 5.2 of \cite{W10}.
\begin{lem}
Let $M$ be a Levi subgroup of $G$ and $K$ be a hyperspecial maximal compact subgroup.
If $f\in C_{c}^{\infty}(Z_G(F)\backslash G(F),\chi)$ is strongly cuspidal and $x\in M(F)\cap G_{reg}(F)$, then the following hold.
\begin{enumerate}
\item The weight orbital integral $J_M(x,f)$ does not depend on the choice of $K$.
\item For every $y\in G(F)$, we have $J_M(x,{}^y f)=J_M(x,f)$.
\item If $A_{G_x}\neq A_M$, then $J_M(x,f)=0$.
\end{enumerate}
\end{lem}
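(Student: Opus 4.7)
All three statements follow from one principle: a perturbation of the weight $v_M(g)$ — whether by changing $K$, right translation, or conjugation by $a\in A_{G_x}$ — can be expanded via Arthur's splitting formula for $(G,M)$-families as the original weight plus a sum indexed by proper parabolics $Q=LU_Q\in\CF(M)$, each of the form $v_M^Q(g)\,\phi_Q$, where $v_M^Q$ denotes the truncated weight coming from the deduced $(L,M)$-family. Substituting such a decomposition into the orbital integral and unfolding via the Iwasawa decomposition $G(F)=L(F)U_Q(F)K$, each proper-$Q$ summand pulls out an inner integral $\int_{U_Q(F)} f(g^{-1}xg\cdot u)\,du$, i.e.\ a constant term of $f$ along a proper parabolic, which vanishes by strong cuspidality. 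All that remains in each case is the $Q=G$ contribution.

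\textbf{Parts (1) and (2).} For (1), a second hyperspecial maximal compact $K'$ compatible with $M$ gives Iwasawa data $H_P^{K'}$ with $(H_P^K(g)-H_P^{K'}(g))_{P\in\CP(M)}$ again a $(G,M)$-orthogonal family; the splitting formula then expresses $v_M^K(g)-v_M^{K'}(g)$ as a sum over proper $Q$, and the mechanism above yields $J_M^K(x,f)=J_M^{K'}(x,f)$. For (2), the change of variable $g\mapsto gy^{-1}$ in $J_M(x,{}^yf)$ produces
\begin{equation*}
J_M(x,{}^yf)=D^G(x)^{1/2}\int_{G_x(F)\backslash G(F)} f(g^{-1}xg)\,v_M(gy^{-1})\,dg;
\end{equation*}
the identity $H_P(gy^{-1})=H_P(g)+H_P(k_P(g)y^{-1})$ shows that the perturbation is itself $(G,M)$-orthogonal, so the splitting formula yields $v_M(gy^{-1})=v_M(g)+\sum_{Q\ne G}v_M^Q(g)\phi_Q(g,y^{-1})$, and the proper-$Q$ terms integrate to zero by strong cuspidality, leaving $J_M(x,{}^yf)=J_M(x,f)$.

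\textbf{Part (3).} Since $A_M\subseteq Z(M)$ commutes with $x\in M(F)$ one has $A_M\subseteq G_x$, hence $A_M\subseteq A_{G_x}$ automatically; the hypothesis therefore forces $A_{G_x}\supsetneq A_M$, and the Levi $L:=Z_G(A_{G_x})\subsetneq M$ contains $x$. When $M=G$, the statement is the classical fact that orbital integrals of strongly cuspidal functions vanish on non-elliptic elements. For general $M$, apply (2) to $y=a\in A_{G_x}(F)$, which centralizes $x$, and change variables to get
\begin{equation*}
J_M(x,f)=D^G(x)^{1/2}\int_{G_x(F)\backslash G(F)} f(g^{-1}xg)\,v_M(ga^{-1})\,dg,
\end{equation*}
valid for every $a\in A_{G_x}(F)$. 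Varying $a$ over $A_{G_x}(F)\setminus A_M(F)Z_G(F)$, which is non-empty precisely because $A_{G_x}\ne A_M$, and using the splitting-formula expansion of $v_M(ga^{-1})$ in which the auxiliary factors $\phi_Q(a^{-1})$ depend genuinely on the $A_{G_x}/A_M$-direction of $a$, one extracts, via an asymptotic analysis of $v_M(ga^{-n})$ in $n$, an identity expressing $J_M(x,f)$ as an integral of $v_M^{Q}(g)$ for some proper $Q\in\CP(L)$; the usual Iwasawa unfolding turns this into a constant-term integral of $f$ along $U_Q$, which strong cuspidality kills, so $J_M(x,f)=0$.

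\textbf{Main obstacle.} The splitting-formula bookkeeping in (1) and (2) is routine once Arthur's $(G,M)$-family machinery is set up. The delicate step is (3): one must verify that the asymptotics of $v_M(ga^{-n})$ in $n$ actually separate the weight $v_M(g)$ from the proper-$Q$ truncated weights, which requires $a\in A_{G_x}(F)$ with regular image in $\Fa_{G_x}/\Fa_M$ — a freedom available only because $A_{G_x}\ne A_M$. Without this hypothesis the invariance furnished by (2) yields nothing new, and the argument collapses.
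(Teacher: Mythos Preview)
The paper does not prove this lemma itself; it cites Section 5.2 of \cite{W10}. Your treatment of (1) and (2) is the standard one and matches what is done there: the splitting formula for the sum of two $(G,M)$-orthogonal families rewrites the perturbed weight as $v_M(g)$ plus a sum over proper $Q\in\CF(M)$, and Iwasawa unfolding of each such term exposes a constant term of $f$ along $U_Q$, which strong cuspidality annihilates.

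Part (3), however, has a real gap. You end with ``$J_M(x,f)$ as an integral of $v_M^{Q}(g)$ for some proper $Q\in\CP(L)$'', but $v_M^{Q}$ is defined only for $Q\in\CF(M)$, i.e.\ parabolics \emph{containing} $M$; your $L=Z_G(A_{G_x})$ is a proper Levi \emph{inside} $M$, so no $Q\in\CP(L)$ contains $M$ and the symbol $v_M^{Q}$ is meaningless there. The underlying problem is that the splitting expansion you invoke is indexed by $\CF(M)$, while the Levi witnessing non-ellipticity sits strictly below $M$; these two hierarchies do not mesh in the way your sketch assumes. Moreover the factors $\phi_Q(g,a^{-1})$ depend on $g$ through $k_P(g)$, so merely varying $a$ does not isolate the $v_M(g)$ piece, and the ``asymptotic analysis of $v_M(ga^{-n})$'' is never actually carried out. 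The clean argument uses Arthur's descent formula for weighted orbital integrals: with $M_1:=Z_G(A_{G_x})\subsetneq M$ (so $G_x\subset M_1$ and $x\in M_1(F)\cap G_{reg}(F)$),
\[
J_M^G(x,f)=\sum_{L\in\CL(M_1)} d_{M_1}^G(M,L)\,J_{M_1}^{L}(x,f_{Q_L}),\qquad Q_L\in\CP(L),
\]
where $d_{M_1}^G(M,L)\neq 0$ only if $\Fa_{M_1}^G=\Fa_M^G\oplus\Fa_L^G$. In particular $d_{M_1}^G(M,G)=0$ since $M_1\neq M$, so every surviving term carries a constant term $f_{Q_L}$ along a proper parabolic, and strong cuspidality forces $J_M^G(x,f)=0$.
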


The next result is about the weighted characters associated to the strongly cuspidal functions.
\begin{lem}
If $f\in C_{c}^{\infty}(Z_G(F)\backslash G(F),\chi^{-1})$ is strongly cuspidal, $M$ is a Levi subgroup of $G$ and $\tau$ is a tempered representation of $M(F)$ whose central character equals to $\chi$ on $Z_G(F)$, then the following hold.
\begin{enumerate}
\item For any $L\in \CL(M)$ and $Q\in \CF(L)$, $J_{L}^{Q}(\tau,f)=0$ if $L\neq M$ or $Q\neq G$.
\item If $\tau$ is induced from a proper parabolic subgroup of $M$, then $J_{M}^{G}(\tau,f)=0$.
\item For $x\in G(F)$, we have $J_{xMx^{-1}}^{G}(x\tau x^{-1},f)=J_{M}^{G}(\tau,f)$.
\item The weight character $J_{M}^{G}(\tau,f)$ does not depend on the choice of $K$, and the way we normalize the intertwining operators.
\end{enumerate}
\end{lem}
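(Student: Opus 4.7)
The plan is to follow closely the arguments used by Waldspurger in Section 5 of \cite{W10} for the analogous statement in the Gan--Gross--Prasad setting, as all four assertions are formal consequences of strong cuspidality together with Arthur's calculus of $(G,M)$-families. The preceding lemma on weighted orbital integrals is the geometric analog, and the spectral proof runs in parallel: the strongly cuspidal hypothesis kills every constant term $f_Q$ for $Q\subsetneq G$ a proper parabolic, and the four properties below are each reducible to such a constant term vanishing.

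For part (1), I would expand $J_L^Q(\tau,f)=\tr(\CR_L^Q(\tau)I_P^G(\tau)(f))$ using the definition of the $(G,M)$-family $\CR_{P'}(\tau,\lambda)$ and the descent $(L_Q,L)$-family that produces $\CR_L^Q(\tau)$. The key structural input is Arthur's splitting/descent formula (Proposition 1.7 of \cite{Ar81}), which rewrites the trace against $f$ as a sum of weighted traces of $f_{Q'}$ at the Levi of a proper parabolic $Q'\subsetneq G$ whenever the pair $(L,Q)\neq(M,G)$. Because $f$ is strongly cuspidal, every such $f_{Q'}$ vanishes, forcing $J_L^Q(\tau,f)=0$.

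Part (2) follows from part (1) by induction in stages: if $\tau=I_R^M(\sigma)$ for a proper $R=L_R U_R\subsetneq M$, then $I_P^G(\tau)=I_{R U_P}^G(\sigma)$, and the $(G,M)$-family $\CR_{P'}(\tau,\lambda)$ can be rewritten as the $(G,M)$-family deduced from a $(G,L_R)$-family. Evaluating this at the $M$-component produces an operator of the form $\CR_{L_R}^{Q'}(\sigma)$ for a proper $Q'\subsetneq G$, and its trace against $I_{P}^G(\tau)(f)$ is an instance of a vanishing term from part (1). Part (3) is immediate from the change of variable $g\mapsto x^{-1}gx$ in the definition of the weighted character, together with the fact that the $(G,M)$-family transports covariantly under conjugation; conjugation invariance in $f$ itself (i.e.\ $J_M^G(\tau,{}^yf)=J_M^G(\tau,f)$) is another consequence of Arthur's splitting formula, the correction terms being a sum of $J_L^Q(\tau,f)$ with $(L,Q)\neq(M,G)$ that vanish by part (1).

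Part (4) follows the same template: switching the hyperspecial subgroup $K$ to $K'$ alters the family $(H_P(g))_{P\in\CP(M)}$ by a $(G,M)$-orthogonal set independent of $g$, and the induced change in the operator $(G,M)$-family $\CR_{P'}(\tau,\lambda)$ is expressible as a sum of subordinate $(G,L)$-family contributions with $L\supsetneq M$; similarly changing the normalizing factors $r_{P'|P}(\tau_\lambda)$ modifies the family by scalar $(G,M)$-factors whose $M$-component is supported on proper Levi subspaces. In both cases the difference of the two weighted characters is a combination of $J_L^Q(\tau,f)$ with $(L,Q)\neq(M,G)$, which vanishes by part (1). The main conceptual obstacle is a bookkeeping one: correctly identifying, in each of the reductions above, which subordinate $(G,L)$-family is produced and verifying that the resulting operator trace factors through the constant term of $f$ along the corresponding proper parabolic; once this identification is made, strong cuspidality closes every case at once.
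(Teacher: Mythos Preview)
Your sketch is correct and follows exactly the approach in the references the paper cites: the paper's own proof is simply ``See Section 2.2 of \cite{W12}, or Section 5.4 of \cite{B15}'', with no argument given in the body. What you outlined---reducing each assertion via Arthur's descent and splitting formulas to terms that factor through constant terms $f_{Q'}$ along proper parabolics, then invoking strong cuspidality to kill them---is precisely the content of those cited sections (the analogous argument in \cite{W10} is for the weighted orbital integrals, and the spectral side is treated in \cite{W12} and \cite{B15}).
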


\begin{proof}
See Section 2.2 of \cite{W12}, or Section 5.4 of \cite{B15}.
\end{proof}

Now same as in Section 6.1 of \cite{Wan15}, given a strongly cuspidal function $f\in C_{c}^{\infty}(Z_G(F)\backslash G(F),\chi)$, we can define a function $\theta_f$ on $G_{reg}(F)$ with central character $\chi$ to be
\begin{equation}
\theta_f(x)=(-1)^{a_{M(x)}-a_G} \nu(G_x)^{-1} D^G(x)^{-1/2} J_{M(x)}(x,f),\; x\in G_{reg}(F).
\end{equation}
Here $M(x)$ is the centralizer of $A_{G_x}$ in $G$, $a_G$ is the dimension of $A_G$, and the same for $a_{M(x)}$. The following Proposition is just Proposition 6.3 of \cite{Wan15}.

\begin{prop}\label{distribution 2}
The following hold.
\begin{enumerate}
\item The function $\theta_f$ is invariant under $G(F)$-conjugation. It is compactly supported modulo conjugation and modulo the center, locally integrable on $G(F)$ and locally constant on $G_{reg}(F)$.
\item $\theta_f$ is a quasi-character.
\end{enumerate}
\end{prop}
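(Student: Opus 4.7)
The plan is to follow Waldspurger \cite[\S 5]{W10} essentially verbatim — this is exactly how Proposition 6.3 of the companion paper \cite{Wan15} was proved. The supercuspidality of $f$ was never used there; only strong cuspidality matters, so the same proof transfers without change. I will sketch the two parts.

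For part (1), I would reduce everything to properties of the weighted orbital integral $x\mapsto J_{M(x)}(x,f)$. Conjugation invariance of $\theta_f$ is an immediate consequence of the identity $J_M(x,{}^y f) = J_M(x,f)$ for strongly cuspidal $f$ (the lemma in Section 3.4 on weighted orbital integrals), combined with the obvious equalities $M(yxy^{-1}) = yM(x)y^{-1}$, $a_{M(yxy^{-1})} = a_{M(x)}$, $\nu(G_{yxy^{-1}}) = \nu(G_x)$ and $D^G(yxy^{-1}) = D^G(x)$. Compact support of $\theta_f$ modulo conjugation and modulo the center follows from compact support of $f$ modulo the center. Local constancy on $G_{reg}(F)$ is immediate from local constancy of $x\mapsto J_{M(x)}(x,f)$ on $M(x)(F)\cap G_{reg}(F)$. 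For local integrability I would combine the polynomial–in–$\log$ bound $|J_{M(x)}(x,f)| \ll (1+|\log D^G(x)|)^k$ from the lemma in Section 2.5 with Weyl's integration formula, whose Jacobian $D^G(x)^{1/2}$ exactly cancels the $D^G(x)^{-1/2}$ in the definition of $\theta_f$; the residual $\log D^G$ growth is locally integrable.

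For part (2) — the substantive assertion that $\theta_f$ is a quasi-character — I would fix $x\in G_{ss}(F)$ and a good neighborhood $\omega_x$ of $0$ in $\Fg_x(F)$ in the sense of Section 3 of \cite{Wan15}, so that $X\mapsto x\exp(X)$ identifies $\omega_x$ with an invariant open neighborhood of $x$ in $G(F)$. The crucial step is a descent formula (Lemma 5.2 of \cite{W10}): for $y=x\exp(X)$ with $X\in \omega_{x,\reg}$, one rewrites the weighted orbital integral $J_{M(y)}(y,f)$ as a linear combination of weighted orbital integrals on $\Fg_x(F)$ of a descended function $f_{x,\omega}\in C_c^{\infty}(\Fg_x(F))$, using the $(G_x,M(y))$-family deduced from the ambient $(G,M(y))$-family. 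Strong cuspidality is preserved under this descent, because the constant term of $f_{x,\omega}$ along a proper parabolic of $G_x$ lifts to the constant term of $f$ along a corresponding proper parabolic of $G$, which vanishes by hypothesis. Combined with the Lie-algebra analogue of part (1) — namely that the function attached to a strongly cuspidal Schwartz function on $\Fg_x(F)$ admits near $0$ an expansion $\sum_{\CO} c_{\CO}\,\hat j(\CO,X)$ (Proposition 5.6 of \cite{W10}) — and tracking the factors $D^G(x\exp X)^{1/2} = D^G(x)^{1/2}D^{G_x}(X)^{1/2}$ together with the signs $(-1)^{a_{M(y)}-a_G}$, I obtain
\[
\theta_f(x\exp X) = \sum_{\CO\in Nil(\Fg_x)} c_{\theta_f,\CO}(x)\,\hat j(\CO,X)
\]
for $X\in \omega_{x,\reg}$, with explicit formulas for the coefficients $c_{\theta_f,\CO}(x)$.

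The main obstacle is the descent step — in particular, matching up the $(G,M(y))$-family of weight functions with the corresponding $(G_x,M(y))$-family and checking that strong cuspidality passes to $f_{x,\omega}$. These two verifications are carried out in detail in Section 6 of \cite{Wan15} (following \cite{W10}), so beyond careful bookkeeping no new ideas are required, and I would simply cite those references.
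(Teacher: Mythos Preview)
Your proposal is correct and takes exactly the same approach as the paper: the paper's proof simply states that this is Proposition 6.3 of \cite{Wan15}, which in turn follows \cite[\S 5]{W10} as you have outlined. Your sketch faithfully expands what is contained in those references.
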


\textbf{For the rest of this section, we assume that $G$ is $GL_n(D)$ for some division algebra $D/F$. In particular, all irreducible tempered representation $\pi$ of $G(F)$ is of the form $\pi=I_{M}^{G}(\tau)$ for some $\tau\in \Pi^2(M)$.} For such $\pi$, let $\chi$ be the central character of $\pi$. For $f\in C_{c}^{\infty}(Z_G(F)\backslash G(F),\chi^{-1})$ strongly cuspidal, define
\begin{equation}\label{theta_f}
\theta_f(\pi)=(-1)^{a_G-a_M}J_{M}^{G}(\tau,f).
\end{equation}

\begin{prop}\label{expansion of character}
For every $f\in C_{c}^{\infty}(Z_G(F)\backslash G(F),\chi^{-1})$ strongly cuspidal, we have
$$\theta_f=\int_{Temp(G,\chi)} \theta_f(\pi) \bar{\theta}_{\pi} d\pi.$$
\end{prop}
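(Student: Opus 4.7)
The plan is to invert the weighted orbital integral defining $\theta_f$ via the Harish-Chandra-Plancherel formula of Section 2.8, and to match the resulting spectral expression with the right-hand side via the classical identity between weighted orbital integrals of tempered matrix coefficients and weighted characters.

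I first check that both sides are quasi-characters on $G_{reg}(F)$. The left-hand side is a quasi-character by Proposition \ref{distribution 2}. For the right-hand side, part (2) of the lemma on weighted characters of strongly cuspidal functions in Section 3.4 shows that $\theta_f(\pi)=(-1)^{a_G-a_M}J_M^G(\tau,f)$ vanishes whenever $\tau$ is properly induced, so the $\pi$-integrand is supported on representations induced from discrete series. On each orbit $\CO\in\{\Pi_2(M,\chi)\}$, the function $\lambda\mapsto\theta_f(I_P^G(\tau_\lambda))$ is of rapid decay by the matrical Paley-Wiener theorem applied to $f$, so the spectral integral converges absolutely; a standard argument (cf.\ \cite{B15}) then identifies it as a quasi-character.

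To prove the two quasi-characters coincide, I pair both against an arbitrary $\varphi\in C_c^\infty(Z_G(F)\backslash G(F),\chi)$. On the spectral side, Fubini yields $\int_{Temp(G,\chi)}\theta_f(\pi)\,\overline{\theta_\pi(\bar\varphi)}\,d\pi$. On the geometric side, Weyl integration combined with the defining formula of $\theta_f(x)$ in terms of $J_{M(x)}(x,f)$ and the vanishing $J_M(x,f)=0$ when $A_{G_x}\neq A_M$ (part (3) of the first lemma in Section 3.4) rewrites the pairing as a sum over conjugacy classes of Levi subgroups $M$ of weighted orbital integrals of $f$ tested against matrix-coefficient data coming from $\varphi$. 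The equality of the two sides then follows from the classical identity expressing weighted orbital integrals of tempered matrix coefficients as weighted characters; because $f$ is strongly cuspidal, all terms in that identity collapse except those with $L=M$ and $Q=G$, and the signs $(-1)^{a_{M(x)}-a_G}$ and $(-1)^{a_G-a_M}$ together with the $D^G(x)^{-1/2}$ and $\nu(G_x)^{-1}$ factors match by the definitions of $\theta_f(x)$ and $\theta_f(\pi)$.

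The main technical obstacle is the convergence and the interchange of integrals when substituting the Plancherel decomposition of $f$ into $J_{M(x)}(x,f)$: tempered matrix coefficients decay only like $\Xi^G$, so a direct interchange is not justified. Following \cite{W10, W12}, this is circumvented by pairing against $\varphi$ first, so that, together with the estimates of Proposition \ref{h-c function} and the polynomial bound on $J_M(\cdot,f)$ from Section 2.5, every iterated integral is absolutely convergent and Fubini applies freely. Once the distributional identity is established, the pointwise equality on $G_{reg}(F)$ is automatic, because each side is a locally constant conjugation-invariant function on $G_{reg}(F)$ and is therefore recoverable from the distribution it defines.
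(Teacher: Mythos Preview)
The paper does not prove this proposition at all: its proof is a one-line citation of Proposition~5.6.1 of \cite{B15}, together with the remark that the function $D(\pi)$ appearing there is identically $1$ because here $G=\GL_n(D)$ (so every tempered irreducible is already of the form $I_P^G(\tau)$ with $\tau$ square-integrable, and no elliptic multiplicities enter).

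Your sketch is in the right spirit---it is roughly how the cited result is proved in \cite{W12} and \cite{B15}---but the crucial step is left as a black box. The ``classical identity expressing weighted orbital integrals of tempered matrix coefficients as weighted characters'' that you invoke is not an elementary formula: it is essentially Arthur's local trace formula \cite{Ar91}, and extracting from it the precise relation you need (for a strongly cuspidal test function paired against an arbitrary $\varphi$) is exactly the content of the result you are trying to prove. So your argument is circular unless you state explicitly which form of Arthur's formula you use and verify that the strong cuspidality of $f$ kills all the extra terms on both sides. You should also mention the point the paper singles out: for general $G$ there is an extra factor $D(\pi)$ (counting how many elliptic constituents occur), and the fact that it equals $1$ here is a genuine simplification specific to $\GL_n(D)$.
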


\begin{proof}
This is just Proposition 5.6.1 of \cite{B15}. The only thing worth to mention is that the function $D(\pi)$ in the loc. cit. is identically $1$ in our case since we assume $G=GL_n(D)$.
\end{proof}

To end this section, we need a local trace formula for strongly cuspidal functions. It will be used in Section 7 for the proof of the spectral side of our trace formula. For $f\in \CC(G(F),\chi^{-1}),\;f'\in \CC(G(F),\chi)$ and $g_1,g_2\in G(F)$, set
$$K_{f,f'}^{A}(g_1,g_2)=\int_{Z_G(F)\back G(F)} f(g_{1}^{-1}gg_2) f'(g) dg.$$
By Proposition \ref{h-c function}, the integral above is absolutely convergent.

\begin{thm}\label{local trace formula}
\begin{enumerate}
\item For all $d\geq 0$, there exist $d'\geq 0$, a continuous semi-norm $\nu_{d,d'}$ on $\CC(G(F),\chi^{-1})$ and a continuous semi-norm $\nu_{d,d'}'$ on $\CC(G(F),\chi)$ such that
$$|K_{f,f}^{A}(g_1,g_2)|\leq \nu_{d,d'}(f)\nu_{d,d'}'(f') \Xi^G(g_1)\sigma_0(g_1)^{-d} \Xi^G(g_2) \sigma_0(g_2)^{d'}$$
and
$$|K_{f,f}^{A}(g_1,g_2)|\leq \nu_{d,d'}(f)\nu_{d,d'}'(f') \Xi^G(g_1)\sigma_0(g_1)^{d'} \Xi^G(g_2) \sigma_0(g_2)^{-d}.$$
\item Assume that $f$ is strongly cuspidal for the rest part of the Theorem, then for all $d\geq 0$, there exists a continuous semi-norm $\nu_{d}$ on $\CC(G(F),\chi^{-1})$ and a continuous semi-norm $\nu_{d}'$ on $\CC(G(F),\chi)$ such that
$|K_{f,f'}^{A}(g,g)|\leq \nu_{d}(f)\nu_{d}'(f') \Xi^G(g)^2 \sigma_0(g)^{-d}.$
\item There exists $c>0$ such that for all $d\geq 0$, there exists $d'\geq 0$ such that
$|K_{f,f'}^{A}(g,hg)|\ll \Xi^G(g)^2 \sigma_0(g)^{-d} e^{c\sigma_0(h)} \sigma_0(h)^{d'}.$
\item Set
$J^A(f,f')=\int_{Z_G(F)\back G(F)} K_{f,f'}^{A}(g,g)dg.$
This is absolutely convergent by part (2). Then we have
$$J^{A}(f,f')=\int_{Temp(G,\chi)} \theta_f(\pi)\theta_{\bar{\pi}}(f')d\pi.$$
\end{enumerate}
\end{thm}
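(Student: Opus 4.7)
The plan is to follow closely the template developed by Beuzart-Plessis for the Gan-Gross-Prasad model (Theorems 5.4.1 and 5.5.1 of \cite{B15}), which applies with only cosmetic changes once one has at hand the Harish-Chandra-Schwartz machinery and strong cuspidality results already recalled in Sections 2 and 3. Parts (1)--(3) are progressively refined pointwise bounds on the kernel, and part (4) is a spectral unfolding obtained via the Plancherel formula from Section 2.8.

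For part (1), I would insert the defining Harish-Chandra-Schwartz bounds
$|f(x)|\leq \nu_N(f)\,\Xi^G(x)\,\sigma_0(x)^{-N}$, and similarly for $f'$, into
\[
K_{f,f'}^A(g_1,g_2) \;=\; \int_{Z_G(F)\backslash G(F)} f(g_1^{-1}gg_2)\,f'(g)\,dg,
\]
use the inequality $\sigma_0(g_1^{-1}gg_2) \geq \sigma_0(g) - \sigma(g_1) - \sigma(g_2)$ to trade one Schwartz factor for whatever polynomial weight in $\sigma_0(g_1)$ or $\sigma_0(g_2)$ is needed, and then apply Proposition \ref{h-c function}(6) to evaluate the residual $\Xi^G$-integral in the remaining variable, finally invoking Proposition \ref{h-c function}(5) to secure integrability. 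This directly produces the two asymmetric bounds with any prescribed exponent $d$.

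For part (2), which is the substantive step, strong cuspidality is essential: the hypothesis $\int_{U(F)} f(xu)\,du = 0$ for every proper parabolic $P = MU$ must be used to upgrade the asymmetric control in (1) to a symmetric diagonal bound. Here I would follow the Waldspurger-Beuzart-Plessis truncation argument: parametrize $g$ via the Iwasawa decomposition $g = m_{P_{\min}}(g)\,u_{P_{\min}}(g)\,k$ and introduce Arthur's $(G,M_{\min})$-family weight associated to $H_{P_{\min}}(g)$, so that $K_{f,f'}^A(g,g)$ decomposes into contributions indexed by $\CL(M_{\min})$; for every proper Levi the unipotent integration on the $f$-side vanishes identically, while the contribution from $L = G$ is controlled by Proposition \ref{h-c function}(4), which absorbs the unipotent integrals into Harish-Chandra functions on the Levi. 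The result is the bound $|K_{f,f'}^A(g,g)|\leq \nu_d(f)\,\nu'_d(f')\,\Xi^G(g)^2\,\sigma_0(g)^{-d}$ for arbitrary $d$. Part (3) is then an immediate corollary, obtained by combining (2) with the classical estimates $\sigma_0(hg)\leq \sigma(h)+\sigma_0(g)$ and $\Xi^G(hg)\leq e^{c\sigma(h)}\Xi^G(g)$.

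For part (4), I apply the matrical Paley-Wiener isomorphism from Section 2.8 to expand
\[
f'(x) \;=\; \int_{Temp(G,\chi)} \operatorname{Tr}\bigl(\pi(x^{-1})\pi(f')\bigr)\,\mu(\pi)\,d\pi,
\]
substitute into the definition of $J^A(f,f')$, and interchange the order of integration, which is legitimate thanks to the uniform estimate in (2) combined with the Schwartz-type seminorms controlling $\pi(f')$. The remaining inner double integral in $x$ and $g$ rewrites as $\operatorname{Tr}$ of a composition of $\pi(f)$ conjugated by $\pi(g)$ against $\pi(f')$; using the strong cuspidality of $f$ together with Proposition \ref{expansion of character} (and Lemma 3.4, which identifies the surviving weighted-character contribution as $(-1)^{a_G-a_M}J_M^G(\tau,f)$ whenever $\pi = I_P^G(\tau)$), this conjugation-integral collapses to the product $\theta_f(\pi)\,\theta_{\bar\pi}(f')$, giving the asserted expansion.

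The main obstacle is part (2): the refinement from the asymmetric bound of (1) to the symmetric diagonal bound requires a genuine exploitation of strong cuspidality through a partition-of-unity truncation in the Iwasawa variable, with careful bookkeeping of polynomial weights and an induction on the split rank of $M_{\min}\subset L\subseteq G$. Everything that follows --- both the translation bound (3) and, crucially, the absolute convergence and Fubini-type interchange needed in (4) --- depends on this single estimate, so this is where I expect the bulk of the technical work to lie.
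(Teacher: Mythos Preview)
Your proposal is correct and aligns exactly with the paper's approach: the paper's entire proof is the single sentence ``This is just Theorem 5.5.1 of \cite{B15},'' so identifying and sketching the Beuzart-Plessis argument is precisely what is needed.

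One small caution on your sketch of part (3): it is not literally ``an immediate corollary'' of the diagonal bound (2) via the inequalities $\sigma_0(hg)\leq \sigma(h)+\sigma_0(g)$ and $\Xi^G(hg)\leq e^{c\sigma(h)}\Xi^G(g)$, because (2) only controls $K_{f,f'}^A(g,g)$, not $K_{f,f'}^A(g,hg)$. The clean way is to observe that $K_{f,f'}^A(g,hg)=K_{f,\,L(h)f'}^A(hg,hg)$ (a change of variable $x\mapsto hx$ in the defining integral), which is a genuine diagonal value with $f$ unchanged and hence still strongly cuspidal; then apply (2) and bound the resulting seminorm $\nu_d'(L(h)f')$ and the factors $\Xi^G(hg)$, $\sigma_0(hg)$ in terms of $f'$, $\Xi^G(g)$, $\sigma_0(g)$ and the asserted growth in $h$. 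This is routine, but the dependence on $h$ enters through the seminorm of the translated function rather than through a direct manipulation of the inequality in (2).
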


\begin{proof}
This is just Theorem 5.5.1 of \cite{B15}.
\end{proof}

\subsection{Pseudo coefficients}
Recall that we assume $G=\GL_n(D)$ for some division algebra $D/F$. Let $\pi$ be a discrete series of $G(F)$ with central character $\chi$. For $f\in C_{c}^{\infty}(\zg,\chi^{-1})$, we say $f$ is a pseudo coefficient of $\pi$ if the following conditions holds.
\begin{itemize}
\item $tr(\pi(f))=1$.
\item For all $\sigma\in Temp(G,\chi)$ with $\sigma \neq \pi$, we have $trace(\sigma(f))=0$.
\end{itemize}

\begin{lem}\label{pseudo coefficient}
For all discrete series $\pi$ of $G(F)$ with central character $\chi$, the pseudo coefficient of $\pi$ exists. Moreover, all pseudo coefficients are strongly cuspidal.
\end{lem}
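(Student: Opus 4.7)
The plan splits the lemma into (a) existence of a pseudo coefficient, and (b) strong cuspidality of any pseudo coefficient.

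For (a), I invoke the trace Paley-Wiener theorem of Bernstein-Deligne-Kazhdan, stated with fixed central character. The key observation is that since $Z_G\cong F^\times$ is one-dimensional, the unramified twists of $\pi$ that preserve the central character $\chi$ form a \emph{finite} set: a twist of $\pi$ by $\psi$ composed with the reduced norm preserves $\chi$ iff a certain integral power of $\psi$ is trivial on $F^\times$, which for unramified $\psi$ has only finitely many solutions. Hence the trace datum ``$T(\pi)=1$, $T(\sigma)=0$ for every other tempered $\sigma$ with central character $\chi$'' is supported on a single Bernstein component and is prescribed at only finitely many points there, consistent with the polynomial-on-components requirement. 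The theorem produces $f\in C_{c}^{\infty}(\zg,\chi^{-1})$ realizing $T$, which is the desired pseudo coefficient.

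For (b), let $f$ be any pseudo coefficient and fix a proper parabolic $P=MU$. For any irreducible tempered $\tau$ of $M(F)$ whose central character agrees with $\chi$ on $Z_G(F)$, the induced representation $I_{P}^{G}(\tau)$ is tempered (parabolic induction preserves temperedness for $\GL_n(D)$), and all its irreducible constituents are tempered but none equals $\pi$, since $\pi$ is square-integrable while the constituents, being properly induced from a tempered representation of a Levi, are not. Hence $\tr(I_{P}^{G}(\tau)(f))=\sum_{\sigma}\tr(\sigma(f))=0$. The standard trace identity $\tr(I_{P}^{G}(\tau)(f))=\tr(\tau(f_P))$ with $f_P$ as in Section~3.2 then gives $\tr(\tau(f_P))=0$ for every such tempered $\tau$. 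The matrical Paley-Wiener theorem for $M(F)$ recalled in Section~2.8 forces $f_P\equiv 0$, since a function in $C_{c}^{\infty}(Z_G(F)\backslash M(F),\chi^{-1})$ is determined by its tempered traces.

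The final and most delicate step is to promote the vanishing of the $K$-averaged constant term $f_P$ to the vanishing of the raw constant term $\int_{U(F)} f(xu)\,du$ for every $x\in M(F)$ demanded by the paper's definition of strong cuspidality. I would carry this out by combining $f_P=0$ for \emph{all} $P\in\CP(M)$ with the $N_G(M)$-equivariance among the various parabolics with Levi $M$; alternatively, by exhibiting an explicit pseudo coefficient as a bi-$K$-finitely truncated matrix coefficient of $\pi$ and invoking Casselman's theorem on the vanishing of the Jacquet module of a discrete series to obtain the raw vanishing, then reducing a general pseudo coefficient to this one (the difference has all tempered traces zero). The main obstacle is precisely this upgrade, as the trace identity only delivers the $K$-averaged version of the constant term; the remaining ingredients are routine given Sections~2--3 and the structural fact that every tempered representation of $\GL_n(D)$ is parabolically induced from a discrete series of a Levi.
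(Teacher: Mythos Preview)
Your outline for (a) matches the paper's: both invoke the trace Paley--Wiener theorem of Bernstein--Deligne--Kazhdan. For (b), you and the paper begin identically, observing that $\tr(I_P^G(\tau)(f))=0$ for every proper parabolic $P=MU$ and every tempered $\tau$ of $M$, and hence $\tr(\tau(f_P))=0$. The paper then simply cites Section~5.3 of \cite{B15} for the passage from this vanishing to strong cuspidality, whereas you attempt to argue directly.

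Your direct argument contains a genuine error. From $\tr(\tau(f_P))=0$ for all tempered $\tau$ you conclude $f_P\equiv 0$, asserting that ``a function is determined by its tempered traces.'' That is false: the matrical Paley--Wiener theorem of Section~2.8 identifies a Schwartz function with the family of \emph{operators} $\tau\mapsto\tau(f_P)$, not with their traces. For any $m_0\in M(F)$ the functions $h$ and ${}^{m_0}h$ on $M(F)$ have identical tempered traces yet are generally distinct, so trace data alone cannot force $f_P=0$. The ``upgrade'' problem you flag afterward (from the $K$-averaged constant term $f_P$ to the raw integral $\int_U f(mu)\,du$) is a separate and also real difficulty, but your argument already breaks at the preceding step. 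The paper sidesteps both points by citation; if you want to close the argument yourself you will need input beyond trace-matching---for instance the characterization of strongly cuspidal functions via weighted characters that \cite{B15} develops, or an explicit construction of one strongly cuspidal pseudo coefficient (which is in fact all that Section~\ref{Proof of Theorem} actually uses).
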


\begin{proof}
The existence of the pseudo coefficient is proved in \cite{BDK}. Let $f$ be a pseudo coefficient, we want to show that $f$ is strongly cuspidal. By the definition of $f$, we know that for all proper parabolic subgroup $P=MU$ of $G$, and for all tempered representations $\tau$ of $L(F)$, we have $trace(\pi'(f))=0$ where $\pi'=I_{P}^{G}(\tau)$. Then by Section 5.3 of \cite{B15}, we know that $f$ is strongly cuspidal. This proves the lemma.
\end{proof}

\section{The Ginzburg-Rallis model}
In this section, we study the analytic and geometric properties of the Ginzburg-Rallis model. Geometrically, we show that it is a wavefront spherical variety. This gives us the weak Cartan decomposition. Analytically, we show it has polynomial growth as a homogeneous space. Then by applying all such properties, we prove some estimations for several integrals which will be used in Section 5 and Section 7. This is a technical section, readers may assume the results in the section at the beginning and come back for the proof later.
\subsection{Definition of the Ginzburg-Rallis model}
Let $(G,H)$ be the pair $(G,H)$ or $(G_D,H_D)$ as in Section 1, and let $G_0=M$, then $(G_0,H_0)$ is just the trilinear model of $GL_2(F)$ or $GL_1(D)$. We define a homomorphism $\lambda: U(F)\rightarrow F$ to be
$$\lambda(u(X,Y,Z))=\tr(X)+\tr(Y).$$
Therefore the character $\xi$ we defined in Section 1 can be written as $\xi(u)=\psi(\lambda(u))$ for $u\in U(F)$. Similarly, we can define $\lambda$ on the Lie algebra of $U$.

\begin{lem}\label{norm}
\begin{enumerate}
\item The map $G\rightarrow H\backslash G$ has the norm descent property.
\item The orbit of $\lambda$ under the $M(F)$-conjugation is a Zariski open subset in $(\Fu/[\Fu,\Fu])^{\ast}$.
\end{enumerate}
\end{lem}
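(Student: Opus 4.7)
I would handle the two parts separately, as they are independent in nature: (1) is a geometric/analytic statement about norms, while (2) is a straightforward orbit computation.

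\textbf{Part (1): Norm descent.} The plan is to reduce the problem to the trilinear model via the Iwasawa decomposition. Since $H = H_0 \ltimes U \subset P = MU$ and $G(F) = P(F) \cdot K$ for a hyperspecial (or good) maximal compact $K$, every $g \in G(F)$ can be written $g = h \cdot s \cdot k$ where $h \in H(F)$, $k \in K$, and $s$ is drawn from a fixed semi-algebraic section of $H_0(F) \backslash M(F)$. Because $\sigma(k)$ is bounded on $K$ and $U \subset H$, the quotient norm $\sigma_{H\backslash G}(g)$ is equivalent to $\sigma_{H_0 \backslash M}(s)$, and so the norm descent for $G \to H\backslash G$ reduces to that for $M \to H_0 \backslash M$. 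The latter is the trilinear $\GL_2$ (respectively $\GL_1(D)$) model, where $H_0$ embeds diagonally into $M \cong \GL_2^3$ (respectively $\GL_1(D)^3$). For this reduced problem I would produce an explicit section using the map $(m_1,m_2,m_3) \mapsto (m_2 m_1^{-1}, m_3 m_2^{-1})$, which identifies an open subset of $H_0 \backslash M$ with $\GL_2 \times \GL_2$ (respectively $\GL_1(D) \times \GL_1(D)$); on this open subset the norm descent is straightforward, and the complement is a lower-dimensional variety handled by Noetherian induction. Alternatively, one may invoke Kottwitz's general result on norm descent for reductive symmetric/spherical varieties, since the trilinear model is a quotient of a reductive group by a reductive spherical subgroup.

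\textbf{Part (2): Openness of the orbit.} This should follow from a direct computation. The quotient $\Fu/[\Fu,\Fu]$ is naturally isomorphic to $\Mat_2 \oplus \Mat_2$ (respectively $D \oplus D$), where the two summands correspond to the $X$- and $Y$-blocks and the $Z$-block is killed. The $M$-action is $(m_1,m_2,m_3)\cdot(X,Y) = (m_1 X m_2^{-1}, m_2 Y m_3^{-1})$. Using the trace (respectively reduced trace) pairing to identify this with its dual, $\lambda$ corresponds to the pair $(I_2,I_2)$, and a direct computation shows that $m \cdot \lambda$ corresponds to the pair $(m_2 m_1^{-1}, m_3 m_2^{-1})$. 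As $(m_1,m_2,m_3)$ ranges over $M$, this sweeps out exactly $\GL_2 \times \GL_2$ (respectively $\GL_1(D) \times \GL_1(D)$) inside $\Mat_2 \oplus \Mat_2$ (respectively $D \oplus D$), which is a Zariski open subset. As a byproduct, the stabilizer collapses to the diagonal, recovering that the stabilizer of $\xi$ is $H_0$.

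\textbf{Main obstacle.} Part (2) is essentially a bookkeeping computation with no real difficulty. The substantive work is in part (1): producing and controlling the section of $H_0 \backslash M \to \mathrm{pt}$ with genuine norm estimates, especially uniformly in the quaternionic case where the diagonal $\GL_1(D)$ in $\GL_1(D)^3$ has no split maximal torus, so one cannot directly reduce to the Cartan decomposition of a split torus. I expect this to be handled by either citing a general spherical-variety result of Sakellaridis--Venkatesh (wavefrontedness plus polynomial growth, themes announced for this section) or by an elementary explicit argument using the open orbit identification above together with a bound on how the complement contributes.
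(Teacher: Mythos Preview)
Your Part (2) is exactly the paper's argument: identify $(\Fu/[\Fu,\Fu])^{\ast}$ with $\Mat_2\times\Mat_2$ (or $D\times D$) and check that the $M$-orbit of $\lambda$ is $\GL_2\times\GL_2$ (or $\GL_1(D)\times\GL_1(D)$).

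For Part (1), your plan is correct in outline but considerably more involved than what the paper actually does, and the ``main obstacle'' you flag is illusory. The paper invokes Kottwitz's criterion (Proposition~18.2 of \cite{K05}): because $G\to H\backslash G$ is $G$-equivariant, the norm descent property holds automatically once the map admits a section over \emph{some} nonempty Zariski-open subset --- no norm estimates whatsoever are required. The paper then writes down such a section in one stroke: let $P'\subset\bar{P}$ be the subgroup of elements whose $M$-part has the form $(1,h_1,h_2)$; by the Bruhat decomposition $P'\to H\backslash G$ is injective with Zariski-open image, and the inverse of this map is the desired section. This is essentially your $(m_1,m_2,m_3)\mapsto(m_2 m_1^{-1},m_3 m_2^{-1})$ combined with the $\bar{U}$-factor, but applied directly to $G$ rather than after a preliminary Iwasawa reduction to the trilinear model. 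Your worry about the quaternionic case (no split maximal torus in $\GL_1(D)$, no Cartan-type control) evaporates: the argument is purely algebro-geometric and treats the split and non-split cases uniformly.
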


\begin{proof}
(1) Since the map is obviously $G$-equivariant, by Proposition 18.2 of \cite{K05}, we only need to show that it admits a section over a nonempty Zariski-open subset. Let $\bar{P}=M\bar{U}$ be the opposite parabolic subgroup of $P=MU$ with respect to $M$, let $P'$ be the subgroup of $\bar{P}$ consists of elements in $\bar{P}$ whose $M$-part is of the form $(1,h_1,h_2)$ where $h_1,h_2\in GL_2(F)$ or $GL_1(D)$. Then by Bruhat decompostion, the map $\phi:P'\rightarrow H\backslash G$ is injective and the image is a Zariski open subset of $H\backslash G$. Then the composition of $\phi^{-1}$ and the inclusion $P'\hookrightarrow G$ is a section on $Im(\phi)$. This proves (1).

(2) Assume $G=GL_6(F)$, we can easily identify $(\Fu/[\Fu,\Fu])^{\ast}$ with $M_2(F)\times M_2(F)$ where $M_2(F)$ are the two by two matrix over $F$. Then it is easy to see the orbit of $\lambda$ under the $M(F)$-conjugation is $GL_2(F)\times GL_2(F)$, which is a Zariski open subset. This proves (2) for the split case. The proof for the quaternion case is similar.
\end{proof}

\subsection{The spherical pair $(G,H)$}\label{spherical section}
We say a parabolic subgroup $\bar{Q}$ of $G$ is good if $H\bar{Q}$ is a Zariski open subset of $G$. This is equivalent to say that $H(F)\bar{Q}(F)$ is open in $G(F)$ under the analytic topology.

\begin{prop}\label{spherical}
\begin{enumerate}
\item There exist minimal parabolic subgroups of $G$ that are good and they are all conjugated to each other by some elements in $H(F)$. If $\bar{P}_{min}=M_{min}\bar{U}_{min}$ is a good minimal parabolic subgroup, we have $H\cap \bar{U}_{min}=\{1\}$ and the complement of $H(F)\bar{P}_{min}(F)$ in $G(F)$ has zero measure.
\item A parabolic subgroup $\bar{Q}$ of $G$ is good if and only if it contains a good minimal parabolic subgroup.
\item Let $\bar{P}_{min}=M_{min}\bar{U}_{min}$ be a good minimal parabolic subgroup and $A_{min}=A_{M_{min}}$ be the split center of $M_{min}$, set
$$A_{min}^{+}=\{a\in A_{min}(F)\mid \mid \alpha(a)\mid \geq 1 \; for\; any \; \alpha\in \Psi(A_{min},\bar{P}_{min})$$
where $\Psi(A_{min},\bar{P}_{min})$ is the set of positive roots associated to $\bar{P}_{min}$. Then we have
\begin{enumerate}
\item $\sigma_0(h)+\sigma_0(a)\ll \sigma_0(ha)$ for all $a\in A_{min}^{+}$, $h\in H(F)$.
\item $\sigma(h)\ll \sigma(a^{-1}ha)$ and $\sigma_0(h)\ll \sigma_0(a^{-1}ha)$ for all $a\in A_{min}^{+}$, $h\in H(F)$.
\end{enumerate}
\item $(1),(2)$ and $(3)$ also holds for the pair $(G_0,H_0)$.
\end{enumerate}
\end{prop}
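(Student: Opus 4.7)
The plan is to reduce the geometric assertions (1)--(2) to the spherical structure of the trilinear model inside the Levi $M$, and then derive the analytic estimates of (3) from the resulting Bruhat decomposition together with a root-theoretic expansion argument on $A_{min}^{+}$. Part (4) falls out along the way by applying the same arguments to the simpler pair $(M,H_0)$ (resp.\ $(M_D,H_{0,D})$).

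For part (1) I construct a good minimal parabolic of $G$ in two steps, working first inside $M$. In the split case, pick three Borels $\bar B_1,\bar B_2,\bar B_3$ of the three $\GL_2$ factors of $M$ that are pairwise in general position, so that $\bar B_1\cap\bar B_2\cap\bar B_3=Z(\GL_2)$ is one-dimensional, and set $\bar B_M:=\bar B_1\times\bar B_2\times\bar B_3$; a dimension count then gives $\dim H_0\bar B_M=\dim M$, so $H_0\bar B_M$ is Zariski open in $M$. In the quaternion case, $M_D$ is anisotropic modulo its center and we take $\bar B_M=M_D$. In both cases, set $\bar P_{min}:=\bar B_M\cdot\bar U$, where $\bar U$ is the unipotent radical opposite to $U$. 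Using $U\cap\bar U=\{1\}$, the disjointness of the root supports of $\bar U$ and of the unipotent radical $\bar U_M$ of $\bar B_M$ inside $M$, and the fact that $\bar B_M$ normalizes $\bar U$, a direct computation yields $H\cap\bar P_{min}=H_0\cap\bar B_M$ and $H\cap\bar U_{min}=\{1\}$, where $\bar U_{min}=\bar U_M\cdot\bar U$. Since $H_0\cap\bar B_M$ has the smallest possible dimension, $\dim H\bar P_{min}=\dim G$, so $H\bar P_{min}$ is Zariski open in $G$ and its complement has zero Haar measure. Finally, all good minimal parabolics are $H(F)$-conjugate because they represent the unique open $H(F)$-orbit on the flag variety of $G$; this reduces to the classical fact that $\GL_2(F)$ acts transitively on ordered triples of distinct points of $\BP^{1}(F)$.

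Part (2) is standard spherical-variety machinery. If $\bar Q\supseteq\bar P_{min}$ with $\bar P_{min}$ good, then $H\bar Q\supseteq H\bar P_{min}$ is already open; conversely, if $\bar Q=L_Q\bar U_Q$ is good, the induced pair $(L_Q,L_Q\cap H)$ is spherical, so the construction of (1) applied inside $L_Q$ produces a good minimal parabolic $\bar P'$ of $L_Q$, and $\bar P'\bar U_Q$ is then a good minimal parabolic of $G$ contained in $\bar Q$.

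The core of (3) is estimate (b), which rests on the infinitesimal observation $\Fh\cap\bar\Fu_{min}=0$ coming from $H\cap\bar U_{min}=\{1\}$: $\Fh$ projects isomorphically onto its image in $\Fg/\bar\Fp_{min}\simeq \Fu_{min}$, and on each root space $\Fg_\alpha$ with $\alpha\in\Psi(A_{min},P_{min})$ the operator $\Ad(a^{-1})$ acts by the scalar $\alpha(a)^{-1}\geq 1$ for $a\in A_{min}^{+}$, so $a^{-1}ha$ dominates the $\Fu_{min}$-component of $h$. One then deduces (3)(a) by combining (b) with the Iwasawa decomposition adapted to $\bar P_{min}$: for $g=ha$, the $A_{min}$-component of $g$ in the Iwasawa decomposition is controlled by $a$ up to bounded error because of the open condition $H\bar P_{min}=G$, giving $\sigma_0(a)\ll\sigma_0(ha)$, and then $\sigma_0(h)\ll\sigma_0(a^{-1}ha)\ll\sigma_0(a)+\sigma_0(ha)\ll\sigma_0(ha)$ using (b). The main obstacle is globalizing this infinitesimal expansion to a uniform estimate for all $h\in H(F)$: the open decomposition $G=H\bar P_{min}$ degenerates on a proper closed subset, and one has to cover $H(F)$ by finitely many $A_{min}$-adapted charts---which in the $p$-adic case can be arranged through the weak Cartan decomposition deduced from (1)---and then patch the local estimates together. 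Part (4) is established by the same procedure applied to $(M,H_0)$ (resp.\ $(M_D,H_{0,D})$): the good minimal parabolic is the $\bar B_M$ (resp.\ $M_D$) already constructed, and the estimates are inherited from the restricted root system of $M$.
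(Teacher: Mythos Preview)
Your construction of a good minimal parabolic in part (1) agrees with the paper's, but your argument for $H(F)$-conjugacy has a gap. You claim that conjugacy ``reduces to the classical fact that $\GL_2(F)$ acts transitively on ordered triples of distinct points of $\BP^{1}(F)$.'' That fact settles conjugacy for the pair $(G_0,H_0)$, where the full flag variety is $(\BP^1)^3$. But for $(G,H)$ with $G=\GL_6$ the full flag variety is much larger, and a good minimal parabolic of $G$ need not a priori be of the form $\bar B_M\bar U$ for a good Borel $\bar B_M$ of $M$; you have not explained the reduction. More fundamentally, knowing there is a unique open $H$-orbit on $G/\bar P_{min}$ over $\bar F$ does \emph{not} imply that $H(F)$ acts transitively on its $F$-points. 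The paper handles this rationality issue explicitly: after observing that the conjugating element can be taken in the $F$-points of the open cell $\CU=H\bar P_{min}$, it shows $\CU(F)=H(F)\bar P_{min}(F)$ by proving that the map $H^1(F,H\cap\bar P_{min})\to H^1(F,H)$ is injective (trivial in the split case since $H\cap\bar P_{min}\simeq\GL_1$, and via the exact sequence $1\to H_0\to H\to U\to 1$ in the quaternion case).

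Your argument for part (2) goes through the Levi quotient and requires the inductive claim that $(L_Q,L_Q\cap H)$ is itself spherical; this is true here but needs justification, and the construction inside $L_Q$ is not the same as that of (1) since the reduced pairs $(L,H_{\bar Q})$ are of various shapes. The paper avoids this entirely: it proves the set $\CG=\{g\in G\mid g^{-1}P_{min}g\text{ is good}\}$ is Zariski open (as the preimage of an open condition on a Grassmannian), intersects it with the open set $\bar Q H$, and concludes that some $\bar Q(F)$-conjugate of $P_{min}$ is good and contained in $\bar Q$. This is both shorter and independent of any structure theory of the reduced models.

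For part (3) you correctly identify the infinitesimal mechanism ($\Fh\cap\bar\Fu_{min}=0$ and $\mathrm{Ad}(a^{-1})$ expands on $\Fu_{min}$), but you then concede that ``the main obstacle is globalizing this infinitesimal expansion'' and propose to use the weak Cartan decomposition to patch charts. This is problematic on two counts. First, the weak Cartan decomposition you cite is $G(F)=H(F)A_0^+\CK$, a decomposition of $G(F)$ rather than of $H(F)$, so it is unclear how it furnishes the charts you need on $H$. Second, the paper does not attempt any such globalization: after showing the estimates are independent of the choice of $\bar P_{min}$ and $M_{min}$, it fixes an explicit $M_{min}$, writes $h=uh_0$ with $u\in U$, disposes of the $U$-part directly (since $a^{-1}ua$ expands for $a\in A_{min}^+$), and then proves the inequalities for $h_0\in H_0$ by an elementary case analysis on the entries of $h_0$ and the torus coordinates of $a$. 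Your sketch does not supply any substitute for this computation.
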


\begin{proof}
(1) We first show the existence of the minimal parabolic subgroup. In the quaternion case, we can just choose the lower triangle matrix, this is a good minimal parabolic subgroup by the Bruhat decomposition. (Note that in this case the minimal parabolic subgroup is not a Borel subgroup since $G$ is not split). In the split case, we first show that it is enough to find the minimal parabolic subgroup for the pair $(G_0,H_0)$. Let $B_0$ be a good minimal parabolic subgroup for the pair $(G_0,H_0)$, since we are in the split case, $B_0$ is a Borel subgroup of $G_0$. Let $B=\bar{U}B_0$, it is a Borel subgroup of $G$. By Bruhat decomposition, $\bar{U}P$ is open in $G$. Together with the fact that $B_0$ is a good Borel subgroup of $(G_0, H_0)$, we know $BH$ is open in $G$, which makes $B$ a good minimal parabolic subgroup.

For the pair $(G_0,H_0)$, let $B_0=(B^{+},B^{-},B')$ where $B^{+}$ is upper triangle Borel subgroup of $GL_2$, $B^{-}$ is lower triangle Borel subgroup of $GL_2$ and $B'=\left( \begin{array}{cc} 1 & -1 \\ 0 & 1 \end{array} \right) B^{-}\left( \begin{array}{cc} 1 & 1 \\ 0 & 1 \end{array} \right)$. It is easy to see that $B^{+}\cap B^{-}\cap B'=\{\left( \begin{array}{cc} a & 0 \\ 0 & a \end{array} \right) \}$, hence $B_0\cap H=\{\left( \begin{array}{cc} a & 0 \\ 0 & a \end{array} \right) \times \left( \begin{array}{cc} a & 0 \\ 0 & a \end{array} \right) \times \left( \begin{array}{cc} a & 0 \\ 0 & a \end{array} \right)\}$. Then by comparing the dimensions, we know $B_0$ is a good parabolic subgroup.

Now we need to show that two good minimal parabolic subgroups are conjugated to each other by some elements in $H(F)$. Let $\bar{P}_{min}$ be the good minimal parabolic subgroup defined above, let $\bar{P}_{min}'$ be another good minimal parabolic subgroup. We can always find $g\in G(F)$ such that $g\bar{P}_{min}g^{-1}=\bar{P}_{min}'$. Let $\CU=H\bar{P}_{min}$ and $\CZ=G-\CU$. If $g\in \CZ$, then
$$H\bar{P}_{min}'=hg\bar{P}_{min}g^{-1}\subset \CZ g^{-1}$$
which is impossible since $H\bar{P}_{min}'$ is Zariski open and $\CZ$ is Zariski closed. Hence $g\in \CU\cap G(F)=\CU(F)$. If $g\in H(F)\bar{P}_{min}(F)$, then we are done. So it is enough to show that
$$\CU(F)=H(F)\bar{P}_{min}(F).$$
We have the following two exact sequence:
$$0\rightarrow H^0(F,\bar{P}_{min})\rightarrow H^0(F,H\bar{P}_{min})\rightarrow H^0(F, H/H\cap \bar{P}_{min}),$$
$$0\rightarrow H^0(F,H\cap \bar{P}_{min})\rightarrow H^0(F,H)\rightarrow H^0(F, H/H\cap \bar{P}_{min})\rightarrow H^1(F, H\cap \bar{P}_{min})\rightarrow H^1(F,H).$$
Therefore it is enough to show that the map
\begin{equation}\label{galois}
H^1(F,H\cap \bar{P}_{min})\rightarrow H^1(F,H)
\end{equation}
is injective.

If $G$ is split, by our construction, $H\cap \bar{P}_{min}=\GL_1$. Since $H^1(F,\GL_n)=\{1\}$ for any $n\in \BN$, the map \eqref{galois} is injective. If $G$ is not split, by our construction, $H\cap \bar{P}_{min}=H_0$, $H/H\cap \bar{P}_{min}=U$. Then the map \eqref{galois} lies inside the exact sequence
$$0\rightarrow H^0(F,H_0)\rightarrow H^0(F,H)\rightarrow H^0(F,U)\rightarrow H^1(F,H_0)\rightarrow H^1(F,H)$$
It is easy to see that the map $H^0(F,H)\rightarrow H^0(F,U)$ is surjective, therefore \eqref{galois} is injective. This finishes the proof.

For the rest part of (1), since we have already proved that two good minimal parabolic subgroups can be conjugated to each other by some elements in $H(F)$, it is enough to prove the rest part for a specific good minimal parabolic $\bar{P}_{min}$ we defined above, which is obvious from the construction of $\bar{P}_{min}$. This proves (1). The proof for the pair $(G_0,H_0)$ is similar.

(2) Let $\bar{Q}$ be a good parabolic subgroup and $P_{min}\subset \bar{Q}$ be a minimal parabolic subgroup. Set
$$\CG=\{g\in G\mid g^{-1}P_{min}g \; is \; good\}.$$
This is a Zariski open subset of $G$ since it is the inverse image of the Zariski open subset $\{\CV\in Gr_n(\Fg)\mid \CV+\Fh=\Fg \}$ of the Grassmannian variety $Gr_n(\Fg)$ under the morphism $g\in G\rightarrow g^{-1}\Fp_{min} g\in Gr_n(\Fg)$, here $n=dim(P_{min})$. By (1), there exists good minimal parabolic subgroup, hence $\CG$ is non-empty. Since $\bar{Q}$ is good, $\bar{Q}H$ is a Zariski open subset, hence $\bar{Q}H\cap \CG\neq \emptyset$. So we can find $\bar{q}_0\in \bar{Q}$ such that $\bar{q}_{0}^{-1} P_{min}\bar{q}_0$ is a good parabolic subgroup. Let
$$\CQ=\{\bar{q}\in \bar{Q}\mid \bar{q}^{-1} P_{min}\bar{q}\; is \; good\}$$
Then we know $\CQ$ is a non-empty Zariski open subset. Since $\bar{Q}(F)$ is dense in $\bar{Q}$, $\CQ(F)$ is non-empty. Let $\bar{q}\in \CQ(F)$, then the minimal parabolic subgroup $\bar{q}^{-1} P_{min}\bar{q}$ is good and is defined over $F$. This proves (2). The proof for the pair $(G_0,H_0)$ is similar.

(3) By the first part of the proposition, two good minimal parabolic subgroups are conjugated to each other by some elements in $H(F)$, then it is easy to see that (a) and (b) do not depend on the choice of minimal parabolic subgroups, hence we may use the minimal parabolic subgroup $\bar{P}_{min}$ defined in (1). Then we show that (a) and (b) do not depend on the choice of $M_{min}$. Let $M_{min}, M_{min}'$ be two choices of Levi subgroup, then there exists $\bar{u}\in \bar{U}_{min}(F)$ such that $M_{min}'=\bar{u}M_{min}\bar{u}^{-1}$ and ${A'}_{min}^{+}=\bar{u} A_{min}^{+} \bar{u}^{-1}$. Since for $a\in A_{min}^{+}$, $a^{-1}\bar{u}a$ is a contraction, the sets $\{a^{-1}\bar{u}a\bar{u}^{-1} \mid a\in A_{min}^{+}\}$ and $\{a^{-1}\bar{u}^{-1}a\bar{u} \mid a\in A_{min}^{+}\}$ are bounded. Then we have
\begin{eqnarray*}
\sigma_0(h\bar{u}a\bar{u}^{-1})&\sim & \sigma_0(ha),\\
\sigma(\bar{u}a\bar{u}^{-1}h\bar{u}a\bar{u}^{-1})&\sim & \sigma(a^{-1}ha),\\
\sigma_0(\bar{u}a\bar{u}^{-1}h\bar{u}a\bar{u}^{-1})&\sim & \sigma_0(a^{-1}ha)
\end{eqnarray*}
for all $a\in A_{min}^{+}$ and $h\in H(F)$. Therefore (a) and (b) do not depend on the choice of $M_{min}$. We may choose
$$M_{min}=\{diag(\left( \begin{array}{cc} a_1 & 0 \\ 0 & a_2 \end{array} \right),\left( \begin{array}{cc} a_3 & 0 \\ 0 & a_4 \end{array} \right),\left( \begin{array}{cc} a_5 & a_5-a_6 \\ 0 & a_6 \end{array} \right)) \mid a_i\in F^{\times}\}$$
in the split case, and choose
$$M_{min}=\{diag(b_1,b_2,b_3)\mid b_j\in D^{\times}\}$$
in the non-split case.

For part (a), let $h=uh_0$ for $u\in U(F)$ and $h_0\in H_0(F)$, then we know $\sigma_0(h)\ll \sigma_0(h_0)+\sigma_0(u)$ and $\sigma_0(ha)=\sigma_0(uh_0 a)\gg \sigma_0(u)+\sigma_0(h_0 a)$, so we may assume that $h=h_0\in H_0(F)$. If we are in the non-split case, $Z_{H_0}\backslash H_0(F)$ is compact, the argument is trivial. In the split case, since the norm is $K$-invariant, by the Iwasawa decomposition, we may assume that $h_0$ is upper triangle. Then by using the same argument as above, we can get rid of the unipotent part, so we may assume that $h_0=diag(h_1,h_2)$ with $h_1,h_2\in F^{\times}$. By our choice of $M_{min}$,
\begin{equation}\label{torus 1}
a=diag(\left( \begin{array}{cc} a_1 & 0 \\ 0 & a_2 \end{array} \right),\left( \begin{array}{cc} a_3 & 0 \\ 0 & a_4 \end{array} \right),\left( \begin{array}{cc} a_5 & a_5-a_6 \\ 0 & a_6 \end{array} \right))=diag(A_1,A_2,A_3)
\end{equation}
with $\mid a_2\mid\leq \mid a_1\mid\leq\mid a_3\mid\leq\mid a_4\mid\leq\mid a_5\mid\leq\mid a_6\mid$. Since we only consider $\sigma_0$, may assume that $\Pi a_i=1$ and $h_1h_2=1$. (In general, after modulo the center, we cannot make determinant equal to 1, there should be some square class left. But we are talking about majorization, the square class will not effect our estimation.) In order to make the argument holds for the pair $(G_0,H_0)$, here we only assume that $\mid a_2\mid\leq \mid a_1\mid,\mid a_3\mid\leq\mid a_4\mid,\mid a_5\mid\leq\mid a_6\mid$. It is enough to show that
\begin{equation}\label{6.2}
\sigma(h_0)+\sigma(a)\ll \sigma(h_0a).
\end{equation}
In this case, $\sigma(h_0)\sim \log(max\{\mid h_1\mid,\mid h_2\mid \})$ and $\sigma(a)\sim \log(max\{\mid a_6\mid,\mid a_4\mid,\mid a_1\mid \})\sim \log(max\{ \mid a_{2}^{-1} \mid \mid a_{3}^{-1} \mid\mid a_{5}^{-1} \mid\})$.

\begin{itemize}
\item If $h_2\geq 1$, we have $\sigma(h_0)\sim \log(\mid h_2\mid ),\parallel h_0 A_3\parallel \geq \mid a_6 h_2\mid$, and $\parallel h_0 A_2\parallel \geq \mid a_4 h_2\mid$. So if $max\{\mid a_6\mid,\mid a_4\mid,\mid a_1\mid \}=\mid a_6\mid \; or\; \mid a_4\mid$, \eqref{6.2} holds. By the same argument, if $max\{ \mid a_{2}^{-1} \mid \mid a_{3}^{-1} \mid\mid a_{5}^{-1} \mid\}=\mid a_{3}^{-1} \mid \; or \; \mid a_{5}^{-1} \mid$, \eqref{6.2} also holds. Now the only case left is $max\{\mid a_6\mid,\mid a_4\mid,\mid a_1\mid \}=\mid a_1\mid$ and $max\{ \mid a_{2}^{-1} \mid \mid a_{3}^{-1} \mid\mid a_{5}^{-1} \mid\}=\mid a_{2}^{-1}\mid$.
\begin{itemize}
\item If $\mid a_6 \mid\geq 1$, then $\parallel h_0 A_3\parallel \geq \mid a_6 h_2\mid$ and $\parallel h_0 A_1\parallel \geq \mid a_{2}^{-1} h_{2}^{-1}\mid$. Hence $\parallel h_0A_1 \parallel \parallel h_0A_3\parallel^2 \geq \mid a_{2}^{-1} a_{6}^{2} h_2\mid \geq \mid a_{2}^{-1} h_2\mid$. In particular, \eqref{6.2} holds.
\item If $\mid a_6\mid <1$, then $\mid a_5\mid <1$. In this case, $\parallel h_0 A_3\parallel \geq \mid a_{5}^{-1} h_2\mid$ and $\parallel h_0 A_1\parallel \geq \mid a_{1} h_{2}^{-1}\mid$. Hence $\parallel h_0A_1 \parallel \parallel h_0A_3\parallel^2 \geq \mid a_{5}^{-2} a_{1} h_2\mid \geq \mid a_{1} h_2\mid$. In particular, \eqref{6.2} holds.
\end{itemize}
\item If $h_1\geq 1$, the argument is similar as above, we will skip it here.
\end{itemize}
This finishes the proof of (a) for both the $(G,H)$ and the $(G_0,H_0)$ pairs.

For part (b), the argument for $\sigma_0$ is an easy consequence of the argument for $\sigma$, so we only prove the first one. Still let $h=uh_0$, by the definition of $A_{min}^{+}$, $a^{-1}ua$ is an extension of $u$(i.e. $\sigma(a^{-1}ua)\geq \sigma(u)$), so we can still reduce to the case $h=h_0\in H_0(F)$. For the non-split case, the argument is trivial since $a^{-1}h_0 a=h_0$. For the split case, still let $a=diag(A_1,A_2,A_3)$ as above, it is enough to show that for any $h\in GL_2(F)$,
\begin{equation}\label{6.1}
\parallel h\parallel \leq max\{\parallel A_{i}^{-1} hA_i\parallel, i=1,2,3\}.
\end{equation}
Let $h=\left( \begin{array}{cc} x_{11} & x_{12} \\ x_{21} & x_{22} \end{array} \right)$, may assume that $det(h)\geq 1$, then $\parallel h\parallel=max\{x_{ij}\}$. If $\parallel h\parallel=x_{11}, x_{21}\; or\; x_{22}$, it is easy to see that $\parallel h\parallel \leq \parallel A_{1}^{-1} hA_1\parallel$. If $\parallel h\parallel=x_{12}$, then $\parallel h\parallel \leq \parallel A_{2}^{-1} hA_2\parallel$. Therefore \eqref{6.1} holds, and this finishes the proof of (b).

(4) is already covered in the proof of (1), (2) and (3).
\end{proof}

The above proposition tells us $X=H\backslash G$ is a spherical variety of $G$ and $X_0=H_0\backslash G_0$ is a spherical variety of $G_0$. In \cite{SV12}, the authors have introduced the notion of wavefront spherical variety. In the next proposition, we are going to show that $X_0$ is a wavefront spherical variety of $G_0$, we need to use this result for the weak Cartan decomposition of $(G,H)$ and $(G_0,H_0)$.

\begin{prop}\label{wavefront}
$X_0$ is a wavefront spherical variety of $G_0$.
\end{prop}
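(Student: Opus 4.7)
My plan is to verify the wavefront condition of \cite{SV12} by an explicit computation of the spherical datum of $X_0 = H_0\back G_0$. Recall that a spherical $G$-variety $X$ is \emph{wavefront} if the valuation cone $\CV(X)\subset \Hom(\CX(X),\BQ)$ equals the image of the antidominant Weyl chamber of $G$; equivalently, and more convenient in practice, every simple spherical root of $X$ is a nonnegative rational combination of simple roots of $G$ with respect to a good Borel subgroup.

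The first step is to fix the good minimal parabolic $\bar{P}_{\min,0}=M_{\min,0}\bar{U}_{\min,0}$ produced by Proposition \ref{spherical}(4) and work with its opposite Borel $B_0$, so that the $B_0$-orbit of the identity coset is open in $X_0$. From the explicit form of $\bar{P}_{\min,0}$ given in the proof of Proposition \ref{spherical}(1), the generic $H_0$-stabilizer $H_0 \cap B_0$ is the one-dimensional diagonal torus, which fixes the rank of $X_0$ and provides a natural basepoint for the weight-lattice computation.

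The second step is to compute $\CX(X_0)$ and the set $\Sigma(X_0)$ of simple spherical roots by exhibiting a spanning collection of $B_0$-semi-invariant rational $H_0$-invariants on $G_0$ and identifying the $T_0$-weights of the local equations for the $B_0$-stable prime divisors in a smooth toroidal completion. For the split pair $(G_0,H_0)=(\GL_2^3,\GL_2^{\mathrm{diag}})$ this is a short direct computation, and one finds that every element of $\Sigma(X_0)$ is a nonnegative integer combination of the simple roots $\alpha_1,\alpha_2,\alpha_3$ of the three $\GL_2$ factors; the wavefront condition then follows by inspection. For the quaternion case $(\GL_1(D)^3,\GL_1(D)^{\mathrm{diag}})$, the pair is an inner form of the split case, so its spherical datum over $\ol{F}$ is unchanged and the wavefront property is preserved under the twist.

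The main obstacle is the precise identification of the simple spherical roots, which requires careful bookkeeping of the $B_0$-stable divisors and their colors; however, the rank of $X_0$ is low, so the computation remains tractable, and it can be cross-checked against the triple product of $\GL_2$'s, which already appears as a standard example of a wavefront (in fact ``strongly tempered'') spherical variety in the classification literature.
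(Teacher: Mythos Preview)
Your outline has a genuine gap in the criterion you plan to verify. The equivalence you state --- ``wavefront iff every simple spherical root of $X$ is a nonnegative rational combination of simple roots of $G$'' --- is vacuous: spherical roots are \emph{always} nonnegative integer combinations of simple roots (this is part of the Luna--Akhiezer classification), so that condition holds for every spherical variety and cannot characterize the wavefront ones. The actual wavefront condition is that the image of the antidominant Weyl chamber of $G$ under the quotient $\Fa^*\to\Fa_X^*$ coincides with the valuation cone $\CV(X)$; equivalently, in the present situation where $P(X_0)=B_0$, it amounts to the little Weyl group satisfying $W_{X_0}=W_{G_0}$. Your divisor/semi-invariant computation could in principle recover the spherical roots and hence $\CV(X_0)$, but you would still need to compare $\CV(X_0)$ with the image of the antidominant chamber, and the criterion you wrote down does not do this.

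By contrast, the paper bypasses the spherical-root computation entirely and proves $W_{X_0}=W_{G_0}$ directly using Knop's combinatorial recipe from \cite{Knop95}: one lists the five closed irreducible $B_0$-stable subsets of $X_0$ (equivalently, the $H_0$-stable subvarieties of $(\BP^1)^3$), computes the action of each simple reflection $w_i$ on this set via Knop's Theorem 4.2, and reads off from Knop's Theorem 6.2 that the isotropy group of the open orbit is all of $W_{G_0}$. This is a short and self-contained calculation that avoids toroidal completions, colors, and the weight-lattice bookkeeping you anticipate. Your route through the spherical datum is in principle workable (and your remark that the quaternion case follows by passing to $\ol F$ is fine), but you would first need to replace the faulty criterion with the correct one and then actually carry out the comparison of cones.
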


\begin{proof}
It's is enough to show that the little Weyl group $W_{X_0}$ of $X_0$ is equal to the Weyl group of $G_0$, which is $(\BZ/2\BZ)^3$. Here we use the method introduced by Knop in \cite{Knop95} to calculate the little Weyl group. To be specific, use the same notation as loc. cit., let $B=B_1\times B_2\times B_3$ be a Borel subgroup of $G_0$. Without loss of generality, may assume each $B_i$ are the standard Borel subgroup of $\GL_2$ consisting of upper triangular matrix. Let $\FB(X_0)$ be the set of all non-empty, closed, irreducible, $B$-stable subsets of $X_0$. It is easy to see that there is a bijection between $\FB(X_0)$ and the set of all non-empty, closed, irreducible, $H_0$-stable subsets of $G_0/B\simeq (\BP^1)^3$. And we can easily write down such orbits: $(\BP^1)^3, \;X_{12},\; X_{13},\; X_{23}$ and $Y$ where $X_{ij}=\{(a_1,a_2,a_3)\in (\BP^1)^3| a_i=a_j \}$ and $Y=\{(a_1,a_2,a_3)\in (\BP^1)^3| a_1=a_2=a_3 \}$. Therefore $\FB(X_0)$ contain five elements
\begin{equation}\label{borel orbit}
\FB(X_0)=\{X_0, Y_1, Y_2, Y_3, Z\}
\end{equation}
where $Z$ is the orbit of the identity element under the action of $B$, so it's an irreducible subset of codimension 2. And all $Y_i$'s are closed, irreducible, $B$-stable subsets of codimension 1, with $Y_1=\{ H_0\backslash (g,g'b,g')\mid b\in B_2, g,g'\in \GL_2\}$, $Y_2=\{ H_0\backslash (gb,g',g)\mid b\in B_1, g,g'\in \GL_2\}$, and $Y_3=\{ H_0\backslash (g,gb,g')\mid b\in B_2, g,g'\in \GL_2\}$. Now we study the action of the Weyl group $W=W_{G_0}$ of $G_0$ on the set $\FB(X_0)$.

Let $\Delta(G_0)=\{\alpha_1,\alpha_2,\alpha_2\}$ be the set of simple roots of $G_0$ with respect to the Borel subgroup $B$, here $\alpha_i$ is the simple root of the i-th $\GL_2$ with respect to $B_i$. For $i=1,2,3$, let $w_i\in W$ be the simple reflection associated to $\alpha_i$, and $P_i$ be the corresponding minimal parabolic subgroup of $G_0$ containing $B$ (i.e. $P_i$ has $B_j$ on the j-th component for $i\neq j$, and has $\GL_2$ in the $i$-th component). Then we know $W$ is generated by $w_i$'s, hence it is enough to study the action of $w_i$ on $\FB(X_0)$.

We first consider the action of $w_1$. It is easy to see that there are two non-empty, closed, irreducible, $P_1$-stable subsets of $X_0$: one is $Y_1$, the other one is $X_0$. Let
$$\FB(Y_1,P)=\{A\in \FB(X)\mid P_1 A=Y_1\}$$
and
$$\FB(X_0,P)=\{A\in \FB(X)\mid P_1 A=X_0\}.$$
We have $\FB(Y_1,P)=\{Y_1,Z\}$ and $\FB(X_0,P)=\{Y_2,Y_3,X_0\}$. By Theorem 4.2 of \cite{Knop95}, the action of $w_1$ on $\FB(X_0)$ is given by
$$w_1\cdot X_0=X_0, w_1\cdot Y_1=Y_1, w_1\cdot Y_2=Y_3, w_1\cdot Y_3=Y_2, w_1\cdot Z=Z$$
Similarly we can get the action of $w_2$ and $w_3$:
$$w_2\cdot X_0=X_0, w_2\cdot Y_1=Y_3, w_2\cdot Y_2=Y_2, w_2\cdot Y_3=Y_1, w_2\cdot Z=Z;$$
$$w_3\cdot X_0=X_0, w_3\cdot Y_1=Y_2, w_3\cdot Y_2=Y_1, w_3\cdot Y_3=Y_3, w_3\cdot Z=Z.$$
Hence the isotropy group of $X_0$ is $W$. By Theorem 6.2 of \cite{Knop95}, the little Weyl group $W_{X_0}$ is just $W$, therefore $X_0$ is a wavefront spherical variety of $G_0$.
\end{proof}

We need the weak Cartan decomposition for $X_0$ and $X$. Let $\bar{P}_0=M_0\bar{U}_0$ be a good minimal parabolic subgroup of $G_0$ and let $A_0=A_{M_0}$ be the maximal split center of $M_0$. Let
$$A_{0}^{+}=\{a\in A_0(F)|\; |\alpha(a)| \geq 1, \; \forall \alpha\in \Psi(A_0,\bar{P}_0) \}$$
where $\Psi(A_0,\bar{P}_{0})$ is the set of positive roots associated to $\bar{P}_{0}$.
Choose a good minimal parabolic subgroup $\bar{P}_{min}=\bar{P}_0 \bar{U}=M_{min}\bar{U}_{min}$ of $G$, $P_{min}$ be its opposite with respect to $M_{min}$, then we know $P_{min}\subset P$. Let $\Delta$ be the set of simple roots of $A_{min}=A_{M_{min}}=A_0$ in $P_{min}$, and let $\Delta_P=\Delta \cap \Psi(A_{min},\bar{P}_{min})$ be the subset of simple roots appeared in $\Fu$. For $\alpha\in \Delta_P$, let $\Fn_{\alpha}$ be the corresponding root space.
\begin{prop}\label{cartan 1}
\begin{enumerate}
\item There is a compact subset $\CK_0\subset G_0(F)$ such that
\begin{equation}\label{cartan 2}
G_0(F)=H_0(F)A_{0}^{+} \CK_0.
\end{equation}
\item There is a compact subset $\CK\subset G(F)$ such that
\begin{equation}\label{cartan 3}
G(F)=H(F)A_{0}^{+} \CK.
\end{equation}
\item The character $\xi$ is nontrivial on $\Fn_{\alpha}$ for all $\alpha\in \Delta_P$.
\end{enumerate}
\end{prop}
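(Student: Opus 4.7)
My plan is to handle the three parts sequentially, with (1) as the base, (2) as a short consequence of (1) combined with the Iwasawa decomposition for the parabolic $P=MU$, and (3) as an explicit root-space calculation using the form of $M_{min}$ written down in the proof of Proposition \ref{spherical}.

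For (1), the main input is Proposition \ref{wavefront}: $X_0=H_0\backslash G_0$ is a wavefront spherical variety of $G_0$. The weak Cartan decomposition is a standard consequence of this, due to Sakellaridis--Venkatesh in the $p$-adic case and Knop--Kr\"otz--Schlichtkrull in the archimedean case. In this paper, the $p$-adic case is proved in Appendix A, so for (1) I would invoke Appendix A if $F$ is $p$-adic and the Knop--Kr\"otz--Schlichtkrull results if $F=\BR$. The precise $A_0^+$ used here matches the positive cone of the good minimal parabolic $\bar{P}_0$ of $G_0$ built in Proposition \ref{spherical}.

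For (2), I would deduce the decomposition for $(G,H)$ from the one for $(G_0,H_0)$ by exploiting $H=H_0\ltimes U$ and $G=G_0UK'$ for a suitable maximal compact $K'$. Given $g\in G(F)$, write $g=g_0 uk'$ with $g_0\in G_0(F)$, $u\in U(F)$, $k'\in K'$. Since $U$ is normal in $P=G_0U$, we have $g_0u=(g_0ug_0^{-1})g_0=u'g_0$ with $u'\in U(F)$. Applying (1) to $g_0=h_0a\kappa_0$ gives
\[
g=u'h_0\,a\,\kappa_0k'=(u'h_0)\,a\,(\kappa_0k'),
\]
and $u'h_0\in U(F)H_0(F)=H(F)$ while $\kappa_0k'$ lies in the compact set $\CK:=\CK_0\cdot K'$. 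This yields (2).

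For (3), I would use the explicit form of $M_{min}$ exhibited at the end of the proof of Proposition \ref{spherical}: in the split case the third block is $\bigl(\begin{smallmatrix}a_5 & a_5-a_6\\ 0 & a_6\end{smallmatrix}\bigr)$ so that after the obvious conjugation the $A_{min}$-weights on $\Fu/[\Fu,\Fu]$ are read off entrywise on the $X$- and $Y$-blocks. The quotient $\Fu/[\Fu,\Fu]$ decomposes as $X\oplus Y$, and the simple $A_{min}$-weights on $\Fu/[\Fu,\Fu]$ correspond exactly to a diagonal-type entry in the $X$-block and a diagonal-type entry in the $Y$-block (after the conjugation that diagonalises $M_{min}$). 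Since $\xi$ restricted to $\Fu/[\Fu,\Fu]$ is $\psi(\tr(X)+\tr(Y))$, this trace pairs nontrivially with each such simple root space. A parallel inspection for $G_D$, with $\tr$ replaced by the reduced trace, gives the same conclusion in the quaternion case. The main obstacle is really (1), which rests on Appendix A or the general spherical variety machinery; parts (2) and (3) are then formal and essentially computational.
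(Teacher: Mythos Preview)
Your proposal is correct and follows essentially the same approach as the paper. The paper also deduces (2) from (1) via the Iwasawa decomposition $G(F)=U(F)M(F)K$ and handles (3) by direct computation with the explicit $M_{min}$ from Proposition~\ref{spherical}; the only point you omit is that in the non-split (quaternion) case part (1) is immediate because $A_0^{+}=Z_{G_0}$ and $Z_{G_0}(F)\backslash G_0(F)$ is compact, so the wavefront machinery is only needed in the split case.
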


\begin{proof}
We first prove that (1) implies (2). By Iwasawa decomposition, there is a  compact subgroup $K$ of $G(F)$ such that $G(F)=P(F)K=U(F)M(F)K$. Now by part (1), there exists an open compact subset $\CK_0$ of $G_0(F)=M(F)$ such that $G_0(F)=H_0(F)A_{0}^{+} \CK_0$. Let $\CK=\CK_0 K$, then $H(F) A_{0}^{+} \CK=U(F)H_0(F)A_{0}^{+} \CK_0 K=U(F)M(F)K=G(F)$, this proves (2).

Now we prove (1): in the non-split case, $A_{0}^{+}=Z_{G_0}$ and $Z_{G_0}\backslash G_0(F)$ is compact, hence (1) is trivial. In the split case, if $F=\BR$, since $(G_0,H_0)$ is a wavefront spherical variety, (2) follows from Theorem 5.13 of \cite{KKSS}. If $F$ is p-adic, we refer the readers to Appendix A for explicit construction.

For part (3), it is easy to see that this is independent of the choice of good minimal parabolic subgroup, so we still use the one defined in Proposition \ref{spherical}. Then (3) just follows from direct computation.
\end{proof}

To end this section, we will show that the homogeneous space $X=H\back G$ has polynomial growth, we first recall the definition for polynomial growth in \cite{Ber88}.
\begin{defn}
We say a homogeneous space $X=H\back G$ of $G$ has polynomial growth if it satisfies the following condition:

Fix a compact neighborhood $K$ of the identity element in $G$, then there exist constants $d,C>0$ such that for every $R>0$, the ball $B(R)=\{x\in X\mid r(x)\leq R\}$ can be covered by less than $C(1+R)^d$ many $K-balls$ of the form $Kx,x\in X$. Here $r$ is a function on $X$ defined by $r(x)=\inf \{\sigma(g)\mid x=gx_0\}$ where $x_0\in X$ is a fixed point.
\end{defn}

\begin{rmk}
In our case, if we set $x_0=1$, then $r(x)=inf_{h\in H(F)} \sigma(hx)$. By Lemma \ref{norm}, $r(x)=\sigma_{H\back G}(x)$.
\end{rmk}

We need a lemma before proving the polynomial growth.

\begin{lem}\label{major 1}
\begin{enumerate}
\item Let $\CK\subset G(F)$ be a compact subset, we have
$\nor(xk)\sim \nor(x)$
for all $x\in H(F)\back G(F), k\in \CK$.
\item There exists a positive constant $d>0$ such that
\begin{equation}\label{major 1.1}
\sigma_{H\back G}(a)\sim \sigma_{Z_G\back G}(a)=\sigma_0(a)
\end{equation}
for all $a\in A_{0}^{+}$, the last equation is just the definition of $\sigma_0$.
\end{enumerate}
\end{lem}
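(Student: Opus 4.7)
The plan is to derive both parts directly from three pieces of previously established input: subadditivity of the log-norm $\sigma$, the inclusion $Z_G(F) \subset H(F)$, and the contractivity estimate of Proposition \ref{spherical}(3)(a). No new machinery should be needed; this lemma is essentially a bookkeeping step extracting what the earlier spherical-variety analysis has already proved.

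For part (1), I would simply apply $\sigma(g_1 g_2) \leq \sigma(g_1) + \sigma(g_2)$ together with the fact that $\sigma$ is bounded on the compact set $\CK \cup \CK^{-1}$. For any lift of $x$ and any $h \in H(F)$, this gives $\sigma(hxk) \leq \sigma(hx) + O_{\CK}(1)$, and taking the infimum over $h$ yields $\nor(xk) \leq \nor(x) + O_{\CK}(1) \ll \nor(x)$, the last step using that $\nor \geq 1$. The reverse direction is symmetric after writing $x = (xk)k^{-1}$, since $k^{-1}$ also lies in a compact set.

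For part (2), the easy direction $\nor(a) \leq \sigma_0(a)$ comes from $Z_G(F) \subset H(F)$: in the split case the scalar matrix $aI_6 = \diag(aI_2, aI_2, aI_2)$ is precisely the diagonal embedding of $aI_2 \in \GL_2(F) = H_0(F)$, and the quaternion case is analogous, so the infimum over the larger subgroup $H(F)$ can only be smaller. For the reverse direction, which is where the real content lies, I would invoke Proposition \ref{spherical}(3)(a): for every $a \in A_0^+$ and every $h \in H(F)$ it gives $\sigma_0(a) \ll \sigma_0(h) + \sigma_0(a) \ll \sigma_0(ha) \leq \sigma(ha)$, and taking the infimum over $h \in H(F)$ yields $\sigma_0(a) \ll \nor(a)$. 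Combined with the easy direction, this proves the equivalence.

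The only subtlety worth flagging is that Proposition \ref{spherical}(3)(a) is formally stated for the chamber $A_{min}^+$ of $\bar{P}_{min}$ in $G$, whereas here we need it for the larger chamber $A_0^+$ of $\bar{P}_0$ in $M = G_0$ that appears in the Cartan decomposition of $(G,H)$. However, inspection of the proof of Proposition \ref{spherical}(3)(a) shows that the estimate is in fact established under the weaker positivity condition (this is exactly the point of the passage in which the author explicitly writes ``in order to make the argument hold for the pair $(G_0, H_0)$, here we only assume\ldots''), so the substitution is legitimate. There is no real obstacle; the entire proof is a short deduction from results already in hand.
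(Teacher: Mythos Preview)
Your proof is correct and follows essentially the same approach as the paper. The only cosmetic difference is that the paper, for the hard direction of (2), first strips off the unipotent part of $h=uh_0$ by hand (using $\sigma_0(ug_0)\gg\sigma_0(g_0)$ for $u\in U(F)$, $g_0\in G_0(F)$) and then invokes Proposition \ref{spherical}(3) for the pair $(G_0,H_0)$, whereas you invoke Proposition \ref{spherical}(3)(a) for $(G,H)$ directly and correctly observe that its proof already works on the larger chamber $A_0^{+}$; since that unipotent stripping is exactly the first reduction inside the proof of Proposition \ref{spherical}(3)(a), the two arguments are the same.
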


\begin{proof}
(1) is trivial. For (2), since $G\rightarrow H\back G$ has norm descent property(Lemma \ref{norm}), may assume that
\begin{equation}\label{major 1.2}
\sigma_{H\back G}(x)=\inf_{h\in H(F)} \sigma_G(hx).
\end{equation}
Then we obviously have the inequality $\sigma_{H\back G}(g)\ll \sigma_0(g)$ for all $g\in G(F)$, so we only need to show that
$\sigma_0(a)\ll \sigma_{H\back G}(a)$
for all $a\in A_{0}^{+}$. By applying \eqref{major 1.2}, it is enough to show that for all $a\in A_{0}^{+}$ and $h\in H(F)$, we have
\begin{equation}\label{major 1.3}
\sigma_0(a)\ll \sigma_0(ha).
\end{equation}
We can write $h=uh_0$ for $u\in U(F), h_0\in H_0(F)$. Since for all $u\in U(F), g_0\in G_0(F)$, we have $\sigma_0(ug_0)\gg \sigma_0(g_0)$, therefore $\sigma_0(ha)\gg \sigma_0(h_0a)$. So it is enough to show for all $a\in A_{0}^{+}$ and $h_0\in H_0(F)$, we have
$\sigma_0(a)\ll \sigma_0(h_0 a).$
This just follows from Proposition \ref{spherical}(3). This finishes the proof of (2).
\end{proof}

\begin{prop}\label{polynomial growth}
$H(F)\back G(F)$ has polynomial growth as $G(F)$-homogeneous space.
\end{prop}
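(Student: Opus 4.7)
The plan is to combine the weak Cartan decomposition $G(F)=H(F)A_0^+\mathcal{K}$ from Proposition \ref{cartan 1}(2) with the norm comparisons in Lemma \ref{major 1}, thereby reducing the covering problem on $X=H(F)\backslash G(F)$ to a counting problem for the cone $A_0^+$ modulo the center, where polynomial growth is essentially classical.

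First, given $x\in B(R)$, I write $x=(Ha)\cdot k$ with $a\in A_0^+$ and $k\in\mathcal{K}$ using the Cartan decomposition. Lemma \ref{major 1}(1) gives $\sigma_{H\backslash G}(x)\sim \sigma_{H\backslash G}(Ha)$, and Lemma \ref{major 1}(2) gives $\sigma_{H\backslash G}(Ha)\sim \sigma_0(a)$. This yields an absolute constant $C_0>0$ with
\[
B(R)\subset A_0^+[C_0R]\cdot \mathcal{K},\qquad A_0^+[T]:=\{a\in A_0^+\mid \sigma_0(a)\le T\}.
\]

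Next, I note that $Z_G(F)\subset H_0(F)\subset H(F)$ (the scalar matrices in $\GL_6$ lie inside the diagonal $\GL_2$), so the center acts trivially on $X$; I may therefore work with $A_0^+$ modulo $A_0^+\cap Z_G(F)$. The quotient $A_0(F)/Z_G(F)$ is, up to a maximal compact subgroup (p-adic case) or its identity component (archimedean case), a free abelian group or a real vector space of some rank $n$, on which $\sigma_0$ is equivalent to a linear norm. Hence there is a fixed compact neighborhood $K_A\subset A_0(F)$ of the identity and a subset $S\subset A_0^+[C_0R]$ with $|S|\le C_1(1+R)^n$ such that $A_0^+[C_0R]\subset S\cdot K_A\cdot Z_G(F)$. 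To finish, I fix a compact neighborhood $K_0$ of the identity in $G(F)$ and cover the fixed compact set $K_A\mathcal{K}$ by finitely many translates $K_A\mathcal{K}\subset\bigcup_{i=1}^{N}k_iK_0$. Combining these steps,
\[
B(R)\subset \bigcup_{s\in S}\bigcup_{i=1}^{N}(Hsk_i)\cdot K_0,
\]
which is a union of at most $N|S|\le C(1+R)^n$ many $K_0$-balls, as required.

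The main obstacle is the interaction between the non-compact semigroup $A_0^+$ and the right action of $\mathcal{K}$: one cannot attempt to individually cover each $a\mathcal{K}$ by $K_0$-translates for $a$ ranging over the infinite set $A_0^+[C_0R]$ (in the archimedean case) while keeping a uniform compact control, because conjugating $\mathcal{K}$ through unbounded $a\in A_0^+$ produces unbounded sets. The remedy, carried out above, is to discretize only on the torus side via $K_A$ and then absorb the fixed compact $K_A\mathcal{K}$ into the covering step at the very end, so that the count depends polynomially on $R$ uniformly in $s\in S$.
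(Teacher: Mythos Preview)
Your argument is correct and follows the same route as the paper: use the weak Cartan decomposition (Proposition \ref{cartan 1}) together with the norm comparison of Lemma \ref{major 1} to reduce the covering problem on $H(F)\backslash G(F)$ to a covering problem for $\{a\in A_0^+/Z_G(F)\mid \sigma_0(a)\le c_0R\}$, and then observe the latter has polynomial growth. The paper simply calls the last step ``trivial,'' while you spell out the discretization $A_0^+[C_0R]\subset S\cdot K_A\cdot Z_G(F)$ and the covering of the fixed compact $K_A\mathcal{K}$ by finitely many $k_iK_0$; these are precisely the details the paper suppresses.
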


\begin{proof}
By Proposition \ref{cartan 1}, there exists a compact subset $\CK\subset G(F)$ such that
$G(F)=H(F) A_{0}^{+} \CK.$
Since $H(F)\cap A_{0}^{+}=Z_{G}(F)$, together with the lemma above, there exists a constant $c_0>0$ such that
$$B(R)\subset H(F) \{a\mid a\in A_{0}^{+}/Z_G(F), \sigma_0(a)\leq c_0R \} \CK$$
for all $R\geq 1$. Hence we only need to show that there exists a positive integer $N>0$ such that for all $R\geq 1$, the subset $\{a\in A_{0}^{+}/Z_G(F) \mid \sigma_0(a)<R\}$ can be covered by less than $(1+R)^N$ subsets of the form $C_0a$ with $a\in A_{0}^{+}$ and $C_0\subset A_{0}^{+}$ is a compact subset with nonempty interior. This is trivial.
\end{proof}

\subsection{Some estimations}
In the next two sections, we are going to prove several estimations for various integrals which will be used in later sections. The proof of some estimations are similar to the GGP case in \cite{B15}, we only include them here for completion.
\begin{lem}\label{major 2}
\begin{enumerate}
\item There exists $\epsilon>0$ such that the integral
\begin{equation}\label{strong temper 1}
\int_{Z_{H_0}(F)\backslash H_0(F)} \Xi^{G_0}(h_0)e^{\epsilon \sigma_0(h_0)} dh_0
\end{equation}
is absolutely convergent.
\item There exists $d>0$ such that the integral
\begin{equation}\label{strong temper 2}
\int_{Z_H(F)\backslash H(F)} \Xi^G(h) \sigma_0(h)^{-d} dh
\end{equation}
is absolutely convergent.
\item For all $\delta>0$, there exists $\epsilon>0$ such that the integral
\begin{equation}\label{strong temper 3}
\int_{Z_{G}(F)\backslash H(F)} \Xi^{G}(h)e^{\epsilon \sigma_0(h)} (1+\mid \lambda(h)\mid)^{-\delta}dh
\end{equation}
is absolutely convergent.
\end{enumerate}
\end{lem}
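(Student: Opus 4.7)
The three estimates have the common structure of $\Xi^G$ or $\Xi^{G_0}$ integrated against a weight on a subgroup, and I plan to prove them in order (1), (2), (3), with each building on the previous. For (1), the quaternion case is trivial since $Z_{H_0}(F)\backslash H_0(F)$ is compact. In the split case, $H_0 \cong \GL_2(F)$ embeds diagonally in $\GL_2(F)^3 = G_0(F)$, so $\Xi^{G_0}(h_0) = \Xi^{\GL_2}(h)^3$. The Cartan decomposition $\GL_2(F) = KA^+K$ has Jacobian $\sim \delta_B(a) \sim |a_1/a_2|$ on $A^+$, and by Proposition \ref{h-c function}(1), $\Xi^{\GL_2}(a) \ll \delta_B(a)^{-1/2}(1+\sigma_0(a))^{O(1)}$. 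Cubing this estimate and multiplying by the Cartan Jacobian yields an integrand bounded by $\delta_B(a)^{-1/2}e^{\epsilon\sigma_0(a)}$ up to log factors. Since $\delta_B(a)^{-1/2}$ decays exponentially in $\sigma_0(a)$ on $A^+/Z_{H_0}$, this is integrable once $\epsilon$ is smaller than the exponential rate.

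For (2), write $H = H_0 \ltimes U$, so $dh = dh_0\,du$ modulo $Z_H$. A direct root-space calculation yields $\delta_P(m_1,m_2,m_3) = |\det m_1|^4|\det m_3|^{-4}$ for $P = P_{2,2,2}$, whence $\delta_P(h_0) = 1$ for every $h_0 = (h,h,h) \in H_0$. Proposition \ref{h-c function}(4) then gives, for any $N > 0$, some $d > 0$ such that
\[
\int_U \Xi^G(h_0 u)\sigma_0(h_0 u)^{-d}\,du \ll \Xi^M(h_0)\sigma_0(h_0)^{-N} = \Xi^{G_0}(h_0)\sigma_0(h_0)^{-N},
\]
and the outer integral over $Z_H\backslash H_0$ is a polynomial-weighted analogue of (1), convergent for $N$ large.

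For (3), decompose $U = U_0 \cdot U_1$, where $U_0 := \ker(\lambda)$ is a normal subgroup of codimension one and $U_1 \cong \BG_a$ is a transversal with $\lambda|_{U_1}$ an isomorphism; writing $u = u_0 u_1(t)$ with $\lambda(u) = t$, the key fact is that $\sigma_0(u_1(t)) \sim \log(1+|t|)$. Consequently $e^{\epsilon\sigma_0(u_1(t))} \ll (1+|t|)^{c\epsilon}$ for some $c > 0$, and the weight $e^{\epsilon\sigma_0(h)}(1+|\lambda(h)|)^{-\delta}$ is dominated by $e^{\epsilon\sigma_0(h_0)}e^{\epsilon\sigma_0(u_0)}(1+|t|)^{-\delta+c\epsilon}$. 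For $\epsilon$ sufficiently small relative to $\delta$, the factor $(1+|t|)^{-\delta+c\epsilon}$ provides genuine polynomial decay in the $U_1$-direction, sufficient for convergence of the $U_1$-integral when combined with the decay of $\Xi^G$ along $U_1$. Proposition \ref{h-c function}(4) applied to the full $U$, as in (2), then reduces the outer integral to something comparable to $\int_{Z_G\backslash H_0}\Xi^{G_0}(h_0) e^{\epsilon\sigma_0(h_0)}\sigma_0(h_0)^{-N}dh_0$ up to polynomial factors, which converges by (1) for $\epsilon$ small. The main obstacle is that $U_0$ is not the unipotent radical of any parabolic of $G$, so Proposition \ref{h-c function}(4) cannot be applied to $U_0$ alone; one must treat the full $U$ at once and couple the $\lambda$-decay along $U_1$ to the exponential weight, carefully balancing the constants $\epsilon$ and $\delta$ so that the $U_1$-integration converges before the reduction to (1).
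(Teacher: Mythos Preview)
Your arguments for (1) and (2) are correct and essentially the same as the paper's: (1) via $\Xi^{G_0}(h_0)=\Xi^{H_0}(h_0)^3$ and the $L^{2+t}$ property of tempered matrix coefficients, (2) via Proposition~\ref{h-c function}(4) and reduction to (1).

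For (3), however, there is a real gap. Your decomposition $U = U_0 \cdot U_1$ with $U_1 \cong \BG_a$ transversal to $\ker\lambda$ does not resolve the problem, because $U_0$ has codimension one in $U$ (dimension $11$ in the split case) and carries the full exponential weight $e^{\epsilon\sigma_0(u_0)}$. The function $\Xi^G$ restricted to any unipotent subgroup decays only polynomially, so $\int_{U_0}\Xi^G(h_0 u_0 u_1)e^{\epsilon\sigma_0(u_0)}du_0$ diverges for every $\epsilon>0$; the polynomial decay $(1+|t|)^{-\delta+c\epsilon}$ along the single $U_1$-direction cannot compensate. And as you note yourself, Proposition~\ref{h-c function}(4) only trades a \emph{polynomial} weight $\sigma_0(mu)^{-d'}$ for $\Xi^M(m)$, so invoking it ``on the full $U$'' does not absorb the exponential $e^{\epsilon\sigma_0(u)}$.

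The paper's argument is structurally different. It first reduces (3) to an estimate of the form
\[
\int_{U(F)} \Xi^G(u)\,e^{\epsilon'\sigma_0(u)}(1+|\lambda(u)|)^{-\delta}\,du < \infty
\]
for some $\epsilon'>0$, via a splitting of the $U$-integral into the regions $\sigma_0(u)\le b$ and $\sigma_0(u)>b$ and an optimization in $b$. The remaining integral over all of $U$ is then handled by a nontrivial external input, Corollary~B.3.1 of \cite{B15}, which says that such an integral converges when $\lambda$ is the restriction of a generic character on a maximal unipotent. In the split case $\lambda$ is not of that type, so the paper further decomposes $\lambda=\lambda_+-\lambda_-$, uses a torus action $a(t)$ scaling $\lambda_\pm$ inversely, averages over a compact $\mathcal U\subset F^\times$, and applies Lemma~B.1.1 of \cite{B15} to reduce to the $\lambda_+$ case. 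None of these ingredients are present in your sketch; the key missing idea is that convergence of $\int_U \Xi^G(u)e^{\epsilon\sigma_0(u)}(1+|\lambda(u)|)^{-\delta}du$ is itself a nontrivial fact requiring the generic-character estimate from \cite{B15}, not a consequence of the parabolic descent in Proposition~\ref{h-c function}(4).
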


\begin{proof}
(1) If we are in the non-split case, $Z_{H_0}(F)\backslash H_0(F)$ is compact and the argument is trivial. If we are in the split case, $G_0=\GL_2\times \GL_2\times \GL_2$. By the definition of $\Xi^{G_0}$, for $h_0\in H_0(F)$, $\Xi^{G_0}(h_0)=(\Xi^{H_0}(h_0))^3$. But since $\Xi^{H_0}$ is the matrix coefficient of a tempered representation, it belongs to the space $L^{2+t}(Z_{H_0}(F)\backslash H_0(F))$ for any $t>0$. Then we choose $\epsilon>0$ small enough so that $e^{\epsilon \sigma_0(h_0)}\ll \Xi^{H_0}(h_0)^{-1/2}$. For such $\epsilon$, the integral \eqref{strong temper 1} will be absolutely convergent.

(2) Let $d>0$, by Proposition \ref{h-c function}(iv), if $d$ is sufficiently large,
\begin{eqnarray*}
\int_{Z_{H}(F)\backslash H(F)} \Xi^G(h) \sigma_0(h)^{-d} dh&=&\int_{Z_H(F)\backslash H_0(F)} \int_{U(F)} \Xi^G(h_0 u) \sigma_0(h_0 u)^{-d} du dh_0\\
&\ll& \int_{Z_{H_0}(F)\backslash H_0(F)} \delta_P(h_0)^{1/2} \Xi^{G_0}(h_0)dh_0\\
&=& \int_{Z_{H_0}(F)\backslash H_0(F)} \Xi^{G_0}(h_0)dh_0.
\end{eqnarray*}
And the last integral is absolutely convergent by (1).

(3) Since $\sigma_0(h_0 u)\ll\sigma_0(h_0)\sigma_0(u)$ for all $h_0\in H_0(F)$ and $u\in U(F)$, by applying (1), it suffices to prove the following claim.

\begin{claim}\label{major 2.1}
For all $\delta>0$ and $\epsilon_0>0$, there exists $\epsilon>0$ such that the integral
$$I_{\epsilon,\delta}^{0}(h_0)=\int_{U(F)} \Xi^{G}(uh_0)e^{\epsilon \sigma_0(u)} (1+\mid \lambda(u)\mid)^{-\delta}du$$
is absolutely convergent for all $h_0\in H_0(F)$, and we have
$$I_{\epsilon,\delta}^{0}(h_0)\ll \Xi^{G_0}(h_0)e^{\epsilon_0\sigma_0(h_0)}.$$
\end{claim}

Given $\delta,\epsilon,\epsilon_0,b>0$, we have $I_{\epsilon,\delta}^{0}(h_0)=I_{\epsilon,\delta,\leq b}^{0}(h_0)+I_{\epsilon,\delta,>b}^{0}(h_0)$ where
$$I_{\epsilon,\delta,\leq b}^{0}(h_0)=\int_{U(F)} 1_{\sigma_0\leq b}(u) \Xi^{G}(uh_0)e^{\epsilon \sigma_0(u)} (1+\mid \lambda(u)\mid)^{-\delta}du$$
and
$$I_{\epsilon,\delta,>b}^{0}(h_0)=\int_{U(F)} 1_{\sigma_0> b}(u) \Xi^{G}(uh_0)e^{\epsilon \sigma_0(u)} (1+\mid \lambda(u)\mid)^{-\delta}du.$$
For all $d>0$, we have
\begin{equation}\label{major 2.2}
I_{\epsilon,\delta,\leq b}^{0}(h_0)\leq e^{\epsilon b} b^d \int_{U(F)} \Xi^G(uh_0)\sigma_0(u)^{-d} du.
\end{equation}
By Proposition \ref{h-c function}(iv), we can choose $d>0$ such that the last integral of \eqref{major 2.2} is essentially bounded by $\delta_P(h_0)^{-1/2} \Xi^M(h_0)=\Xi^{G_0}(h_0)$ for all $h_0\in H_0(F)$. We fix such $d>0$, and then have
\begin{equation}\label{major 2.3}
I_{\epsilon,\delta,\leq b}^{0}(h_0)\ll e^{\epsilon b} b^d \Xi^{G_0}(h_0)
\end{equation}
for all $h_0\in H_0(F)$ and $b>0$.

On the other hand, there exists $\alpha>0$ such that $\Xi^G(gg')\ll e^{\alpha \sigma_0(g')}\Xi^G(g)$ for all $g,g'\in G(F)$. Therefore
\begin{equation}\label{major 2.4}
I_{\epsilon,\delta,>b}^{0}(h_0)\ll e^{\alpha\sigma_0(h_0)-\sqrt{\epsilon}b} \int_{U(F)} \Xi^G(u) e^{(\epsilon+\sqrt{\epsilon})\sigma_0(u)} (1+\mid \lambda(u)\mid)^{-\delta} du
\end{equation}
for all $h_0\in H_0(F)$ and $b>0$. Assume that we can find $\epsilon>0$ such that the last integral of \eqref{major 2.4} is convergent. Then by $\eqref{major 2.3}$ and $\eqref{major 2.4}$, we have
\begin{equation}\label{major 2.6}
I_{\epsilon,\delta}^{0}(h_0)\ll e^{\epsilon b}b^d \Xi^{G_0}(h_0)+e^{\alpha \sigma_0(h_0)-\sqrt{\epsilon} b}
\end{equation}
for all $h_0\in H_0(F)$ and $b>0$. Choose $\beta>0$ such that $e^{-\beta\sigma_0(h_0)}\ll \Xi^{G_0}(h_0)$ for all $h_0\in H_0(F)$, and
then by letting $b=\frac{\alpha+\beta}{\sqrt{\epsilon}} \sigma_0(h_0)$ in \eqref{major 2.6}, we have
\begin{eqnarray*}
I_{\epsilon,\delta}^{0}(h_0)&\ll& e^{\sqrt{\epsilon}(\alpha+\beta)\sigma_0(h_0)} (\frac{\alpha+\beta}{\sqrt{\epsilon}} \sigma_0(h_0))^d \Xi^{G_0}(h_0)+e^{\alpha \sigma_0(h_0)-(\alpha+\beta)\sigma_0(h_0)} (e^{\beta\sigma_0(h_0)} \Xi^{G_0}(h_0))\\
&\ll& e^{\sqrt{\epsilon}(\alpha+\beta+1)\sigma_0(h_0)} \Xi^{G_0}(h_0)+\Xi^{G_0}(h_0) \\
&\ll& e^{\sqrt{\epsilon}(\alpha+\beta+1)\sigma_0(h_0)} \Xi^{G_0}(h_0)
\end{eqnarray*}
for all $h_0\in H_0(F)$. Note that $\alpha$ and $\beta$ do not depend on the choice of $\epsilon$. Hence we can always choose $\epsilon>0$ small so that $\sqrt{\epsilon}(\alpha+\beta+1)<\epsilon_0$. This proves Claim \ref{major 2.1}.

So it remains to prove that we can find $\epsilon>0$ such that the integral in \eqref{major 2.4} is absolutely convergent. If we are in the non-split case, $P$ is a minimal parabolic subgroup of $G$, then this follows from Corollary B.3.1 of \cite{B15}. If we are in the split case, it is easy to see that the convergence of the integral is independent of the choice of $\lambda$ (under the M-conjugation), so we may temporarily let
$$\lambda(u(X,Y,Z))=x_{12}+x_{21}+y_{12}+y_{21}$$
where
$$X=\left( \begin{array}{cc} x_{11} & x_{12} \\ x_{21} & x_{22} \end{array} \right), Y=\left( \begin{array}{cc} y_{11} & y_{12} \\ y_{21} & y_{22} \end{array} \right).$$
Then we have a decomposition $\lambda=\lambda_{+}-\lambda_{-}$ where
$$\lambda_{+}(u(X,Y,Z))=x_{21}+y_{21}$$
and
$$\lambda_{-}(u(X,Y,Z))=-x_{12}-y_{12}.$$
The additive character $\lambda_{+}$ is the restriction of a generic additive character of a maximal unipotent subgroup contained in $P$ to $U$. In fact we can take the maximal unipotent subgroup to be the upper triangular unipotent matrix, and consider the additive character of the form $(x_{ij})_{1\leq i,j\leq 6}\rightarrow x_{12}+x_{23}+x_{34}+x_{45}+x_{56}$. By applying Corollary B.3.1 of \cite{B15} again, we know the integral
\begin{equation}\label{major 2.5}
\int_{U(F)} \Xi^G(u) e^{\epsilon\sigma_0(u)} (1+\mid \lambda_{+}(u)\mid)^{-\delta} du
\end{equation}
is convergent for $\epsilon$ small.

Fix an embedding $a:\BG_m \hookrightarrow M$ given by $t\rightarrow diag(1,t,1,t,1,t)$. It is easy to see that $\lambda_{+}(a(t)ua(t)^{-1})= t\lambda_{+}(u)$ and $\lambda_{-}(a(t)ua(t)^{-1})=t^{-1}\lambda_{-}(u)$ for all $t\in \BG_m$ and $u\in U(F)$. Let $\CU\subset F^{\times}$ be a compact neighborhood of 1, for all $\epsilon >0$, we have
\begin{eqnarray*}&&\int_{U(F)} \Xi^G(u) e^{\epsilon\sigma_0(u)} (1+\mid \lambda(u)\mid)^{-\delta} du \\
&\ll& \int_{U(F)} \Xi^G(u) e^{\epsilon\sigma_0(u)} (1+\mid \lambda(a(t)ua(t)^{-1})\mid)^{-\delta} du\\
&=&\int_{U(F)} \Xi^G(u) e^{\epsilon\sigma_0(u)} (1+\mid t\lambda_{+}(u)-t^{-1} \lambda_{-}(u)\mid)^{-\delta} du
\end{eqnarray*}
for all $t\in \CU$. Integrating the above inequality over $\CU$, we have
\begin{eqnarray*}
&&\int_{U(F)} \Xi^G(u) e^{\epsilon\sigma_0(u)} (1+\mid \lambda(u)\mid)^{-\delta} du\\
&\ll& \int_{U(F)} \Xi^G(u) e^{\epsilon\sigma_0(u)}\int_{\CU} (1+\mid t\lambda_{+}(u)-t^{-1} \lambda_{-}(u)\mid)^{-\delta} dt du
\end{eqnarray*}
By Lemma B.1.1 of \cite{B15}, there exists $\delta'>0$ only depends on $\delta>0$ such that the last expression above is essentially bounded by
$$\int_{U(F)} \Xi^G(u) e^{\epsilon\sigma_0(u)} (1+\mid t\lambda_{+}(u) \mid)^{-\delta'} du.$$
Then by \eqref{major 2.5}, we can find $\epsilon>0$ such that the integral on \eqref{major 2.4} is absolutely convergent. This finishes the proof of (3).
\end{proof}

\begin{lem}\label{major 3}
Let $\bar{P}_{min}=M_{min}\bar{U}_{min}$ be a good minimal parabolic subgroup of $G$.
\begin{enumerate}
\item For any $\delta>0$, there exist $\epsilon>0$ and $d>0$ such that the integral
$$I_{\epsilon,\delta}^{1}(m_{min})=\int_{Z_H(F)\backslash H(F)} \Xi^G(hm_{min}) e^{\epsilon \sigma_0(h)} (1+\mid \lambda(h)\mid)^{-\delta} dh$$
is absolutely convergent for all $m_{min}\in M_{min}(F)$ and we have
$$I_{\epsilon,\delta}^{1}(m_{min})\ll \delta_{\bar{P}_{min}}(m_{min})^{-1/2} \sigma_0(m_{min})^d$$
for all $m_{min}\in M_{min}(F)$.
\item Assume that $Z_{G_0}(F)$ is contained in $A_{M_{min}}(F)$. Then for any $\delta>0$, there exist $\epsilon>0$ and $d>0$ such that the integral
\begin{eqnarray*}
I_{\epsilon,\delta}^{2}(m_{min})&=&\int_{Z_H(F)\backslash H(F)}\int_{Z_H(F)\backslash H(F)}\\
&&\Xi^G(hm_{min})\Xi^G(h'hm_{min}) e^{\epsilon \sigma_0(h)} e^{\epsilon \sigma_0(h')}(1+\mid \lambda(h')\mid)^{-\delta} dh' dh
\end{eqnarray*}
is absolutely convergent for all $m_{min}\in M_{min}(F)$ and we have
$$I_{\epsilon,\delta}^{2}(m_{min})\ll \delta_{\bar{P}_{min}}(m_{min})^{-1} \sigma_0(m_{min})^d$$
for all $m_{min}\in M_{min}(F)$.
\end{enumerate}
\end{lem}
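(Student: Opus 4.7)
The proof follows the strategy of Beuzart-Plessis's analogous estimate in the GGP case (see \cite[Sect.~7]{B15}), adapted to the Ginzburg-Rallis setting. The bound in Part~(1) is essentially the assertion $I^1_{\epsilon,\delta}(m_{min}) \ll \Xi^G(m_{min})\sigma_0(m_{min})^d$, since $\delta_{\bar{P}_{min}}(m_{min})^{-1/2}$ is comparable to $\Xi^G(m_{min})$ up to polynomial factors in $\sigma_0(m_{min})$ by Proposition~\ref{h-c function}(i)--(ii). The motivating identity is the $K$-averaging formula Proposition~\ref{h-c function}(vi): combined with the convergence of Lemma~\ref{major 2}(iii), it would give the bound at once if $\Xi^G(hm_{min})$ were replaced by its $K$-average $\int_K\Xi^G(hkm_{min})dk = \Xi^G(h)\Xi^G(m_{min})$.

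The plan for Part~(1) is to convert the actual integrand into this factorized form step by step. First, decompose $h=h_0u$ with $h_0\in H_0\subset M$ and $u\in U$, and use the commutation $um_{min}=m_{min}\cdot (m_{min}^{-1}um_{min})$, valid because $M_{min}\subset M$ normalizes $U$, to rewrite $\Xi^G(hm_{min})=\Xi^G(h_0m_{min}u')$ with $u'=m_{min}^{-1}um_{min}\in U$. After changing variables $u\mapsto u'$ in the $U$-integral (Jacobian $\delta_P(m_{min})$) and applying Proposition~\ref{h-c function}(iv) to integrate out $u'$ against the resulting $\sigma_0$-weight, the inner integral is dominated by $\delta_P(m_{min})^{1/2}\Xi^M(h_0m_{min})\sigma_0(h_0m_{min})^{-d}$ times polynomial factors in $\sigma_0(m_{min})$. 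The remaining integral over $Z_{G_0}\backslash H_0$ of $\Xi^M(h_0m_{min})$ is covered by the analog of Part~(1) for the trilinear model $(G_0,H_0)$, which, since $H_0\subset M$ involves no unipotent component, reduces to a classical Harish-Chandra bound $\int_{Z_{G_0}\backslash H_0}\Xi^M(h_0m_{min})e^{\epsilon\sigma_0(h_0)}dh_0\ll\delta_{\bar{P}_{min}\cap M}(m_{min})^{-1/2}\sigma_0(m_{min})^d$, itself a consequence of Proposition~\ref{spherical}(4) and Lemma~\ref{major 2}(i). Combining via the identity $\delta_{\bar{P}_{min}}(m_{min})^{-1/2}=\delta_P(m_{min})^{1/2}\delta_{\bar{P}_{min}\cap M}(m_{min})^{-1/2}$ on $M_{min}$ (which follows from $\bar{\Fu}_{min}=\bar{\Fu}\oplus(\bar{\Fu}_{min}\cap\Fm)$ and $\delta_{\bar{P}}=\delta_P^{-1}$) yields Part~(1). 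For Part~(2), first extend Part~(1) to arbitrary parameters $g\in G(F)$ in place of $m_{min}$, with the bound $\delta_{\bar{P}_{min}}(m_{\bar{P}_{min}}(g))^{-1/2}\sigma_0(g)^d$; this follows from the weak Cartan decomposition $G=HA_0^+\CK$ of Proposition~\ref{cartan 1} combined with bi-$K$-invariance of $\Xi^G$, the hypothesis $Z_{G_0}(F)\subset A_{M_{min}}(F)$ ensuring compatibility with the quotient by $Z_H(F)$. Apply this extension to the inner $h'$-integral of $I^2_{\epsilon,\delta}$, then apply Part~(1) once more to the outer $h$-integral, to obtain the exponent $\delta_{\bar{P}_{min}}(m_{min})^{-1}$.

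The main obstacle is the bookkeeping after the conjugation substitution $u\mapsto u'=m_{min}^{-1}um_{min}$: the weight $e^{\epsilon\sigma_0(u)}$ becomes $e^{\epsilon\sigma_0(m_{min}u'm_{min}^{-1})}$, which can grow as badly as $e^{\epsilon\sigma_0(m_{min})}$, and this growth must be absorbed into the polynomial factor $\sigma_0(m_{min})^d$ by choosing $\epsilon$ small enough. More delicate is the weight $(1+|\lambda(u)|)^{-\delta}$: after conjugation by $m_{min}\in M_{min}$, the character $\lambda$ on $U/[U,U]$ is twisted by a character of $m_{min}$, and the rotation trick from the proof of Lemma~\ref{major 2}(iii)---using the nontriviality of $\xi$ on each simple root space $\Fn_\alpha$ from Proposition~\ref{cartan 1}(3)---is needed to recover genuine transverse decay. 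For Part~(2), the additional subtlety is that the extended Part~(1) bound must be uniform enough in its parameter for the outer $h$-integral to converge, which is precisely where the centrality assumption $Z_{G_0}(F)\subset A_{M_{min}}(F)$ enters.
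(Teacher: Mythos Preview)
Your plan has a genuine gap in both parts that the paper's argument avoids.

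\textbf{Part (1).} After the substitution $u\mapsto u'=m_{min}^{-1}um_{min}$, the weight $e^{\epsilon\sigma_0(u)}$ becomes $e^{\epsilon\sigma_0(m_{min}u'm_{min}^{-1})}$, and the elementary bound $\sigma_0(m_{min}u'm_{min}^{-1})\ll \sigma_0(m_{min})+\sigma_0(u')$ produces a factor $e^{c\epsilon\sigma_0(m_{min})}$. This is exponential in $\sigma_0(m_{min})$ and cannot be absorbed into any polynomial $\sigma_0(m_{min})^d$, regardless of how small $\epsilon$ is chosen: the statement requires the bound to hold for a \emph{fixed} $\epsilon>0$ uniformly in $m_{min}$. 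The same issue arises with the twisted weight $(1+|\lambda(m_{min}u'm_{min}^{-1})|)^{-\delta}$: the direction of decay now depends on $m_{min}$, and the rotation trick from Lemma~\ref{major 2}(3) does not give control that is uniform in $m_{min}$. The paper sidesteps this entirely: it first uses the symmetry $\Xi^G(g)\sim\Xi^G(g^{-1})$ to place $m_{min}$ on the \emph{left}, then applies Proposition~\ref{h-c function}(ii) to get $\Xi^G(m_{min}h)\ll\delta_{\bar P_{min}}(m_{min})^{1/2}\delta_{\bar P_{min}}(m_{\bar P_{min}}(h))^{1/2}\sigma_0(m_{min}h)^d$, which separates $m_{min}$ cleanly with only polynomial loss. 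The remaining $h$-integral of $\delta_{\bar P_{min}}(m_{\bar P_{min}}(h))^{1/2}e^{\epsilon\sigma_0(h)}(1+|\lambda(h)|)^{-\delta}$ is then reduced, via a $K$-averaging trick exploiting that $\bar P_{min}$ is good, to the integral of Lemma~\ref{major 2}(3).

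\textbf{Part (2).} Your two-step scheme cannot work as stated: after applying the extended Part~(1) to the inner $h'$-integral, the resulting bound on the outer $h$-integral contains $\Xi^G(hm_{min})e^{\epsilon\sigma_0(h)}$ times polynomial factors, but \emph{no} $(1+|\lambda(h)|)^{-\delta}$ weight. Without that weight the outer integral diverges (this is exactly why Lemma~\ref{major 2}(2) needs a negative $\sigma_0$-power while (3) gets away with a positive exponential). The paper instead substitutes $h'\mapsto h'h^{-1}$ to decouple the $\Xi^G$-arguments, obtaining a weight $(1+|\lambda(h')-\lambda(h)|)^{-\delta}$. The hypothesis $Z_{G_0}(F)\subset A_{M_{min}}(F)$ is then used so that conjugation by $a(t)\in Z_{G_0}$ fixes $m_{min}$; averaging over $t$ in a compact neighborhood of $1$ and applying Lemma~B.1.1 of \cite{B15} splits the weight as $(1+|\lambda(h')|)^{-\delta'}(1+|\lambda(h)|)^{-\delta'}$, after which the double integral factors as $I^1_{\epsilon,\delta'}(m_{min})^2$ and Part~(1) applies.
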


\begin{proof}\label{major 3.1}
(1) Since $\Xi^G(g^{-1})\sim \Xi^G(g),\sigma_0(g^{-1})\sim\sigma_0(g)$ and $\lambda(h^{-1})=-\lambda(h)$ for all $g\in G(F)$ and $h\in H(F)$, it is equivalent to prove the following Claim.
\begin{claim}
For any $\delta>0$, there exist $\epsilon>0$ and $d>0$, such that the integral
$$J_{\epsilon,\delta}^{1}(m_{min})=\int_{Z_H(F)\backslash H(F)} \Xi^G(m_{min}h) e^{\epsilon \sigma_0(h)} (1+\mid \lambda(h)\mid)^{-\delta} dh$$
is absolutely convergent for all $m_{min}\in M_{min}(F)$ and we have
$$J_{\epsilon,\delta}^{1}(m_{min})\ll \delta_{\bar{P}_{min}}(m_{min})^{1/2} \sigma_0(m_{min})^d$$
for all $m_{min}\in M_{min}(F)$.
\end{claim}

By Proposition \ref{h-c function}(ii), there exists $d>0$ such that
\begin{eqnarray*}
J_{\epsilon,\delta}^{1}(m_{min}) &\ll& \delta_{\bar{P}_{min}}(m_{min})^{1/2} \sigma_0(m_{min})^d \\
&&\times \int_{Z_H(F)\backslash H(F)} \delta_{\bar{P}_{min}}(m_{\bar{P}_{min}}(h))^{1/2} \sigma_0(h)^d e^{\epsilon \sigma_0(h)} (1+\mid \lambda(h)\mid)^{-\delta} dh
\end{eqnarray*}
for all $m_{min}\in M_{min}(F)$. Here $m_{\bar{P}_{min}}:G(F)\rightarrow \bar{P}_{min}(F)$ is the map induced by the Iwasawa decomposition. Since for any $\epsilon'>\epsilon>0$, $\sigma_0(h)^d e^{\epsilon \sigma_0(h)}\ll e^{\epsilon' \sigma_0(h)}$, it is enough to prove that for $\epsilon$ small, the integral
\begin{equation}\label{major 3.2}
\int_{Z_H(F)\backslash H(F)} \delta_{\bar{P}_{min}}(m_{\bar{P}_{min}}(h))^{1/2} e^{\epsilon \sigma_0(h)} (1+\mid \lambda(h)\mid)^{-\delta} dh
\end{equation}
is absolutely convergent. Since $\bar{P}_{min}$ is a good parabolic subgroup, we can find open compact neighborhoods of the identity $\CU_K\subset K,\CU_H\subset Z_H(F)\backslash H(F)$ and $\CU_{\bar{P}}\subset \bar{P}_{min}(F)$ such that $\CU_K\subset \CU_{\bar{P}}\CU_{H}$. We have the estimations
$$e^{\epsilon \sigma_0(k_H h)}\ll e^{\epsilon \sigma_0(h)}, \; (1+\mid \lambda(k_H h)\mid)^{-\delta}\ll (1+\mid \lambda(h)\mid)^{-\delta}$$
for all $h\in H(F)$ and $k_H\in \CU_H$. Therefore
\begin{eqnarray*}
&& \int_{Z_H(F)\backslash H(F)} \delta_{\bar{P}_{min}}(m_{\bar{P}_{min}}(h))^{1/2} e^{\epsilon \sigma_0(h)} (1+\mid \lambda(h)\mid)^{-\delta} dh\\
&\ll& \delta_{\bar{P}_{min}}(k_{\bar{P}})^{1/2}\int_{Z_H(F)\backslash H(F)} \delta_{\bar{P}_{min}}(m_{\bar{P}_{min}}(k_H h))^{1/2} e^{\epsilon \sigma_0(h)} (1+\mid \lambda(h)\mid)^{-\delta} dh\\
&=& \int_{Z_H(F)\backslash H(F)} \delta_{\bar{P}_{min}}(m_{\bar{P}_{min}}(k_{\bar{P}}k_H h))^{1/2} e^{\epsilon \sigma_0(h)} (1+\mid \lambda(h)\mid)^{-\delta} dh
\end{eqnarray*}
for all $k_H\in \CU_H$ and $k_{\bar{P}}\in\CU_{\bar{P}}$. This implies
\begin{eqnarray*}
&& \int_{Z_H(F)\backslash H(F)} \delta_{\bar{P}_{min}}(m_{\bar{P}_{min}}(h))^{1/2} e^{\epsilon \sigma_0(h)} (1+\mid \lambda(h)\mid)^{-\delta} dh\\
&\ll& \int_{Z_H(F)\backslash H(F)}\int_{\CU_K} \delta_{\bar{P}_{min}}(m_{\bar{P}_{min}}(kh))^{1/2}dk e^{\epsilon \sigma_0(h)} (1+\mid \lambda(h)\mid)^{-\delta} dh\\
&\ll& \int_{Z_H(F)\backslash H(F)} \int_K \delta_{\bar{P}_{min}}(m_{\bar{P}_{min}}(kh))^{1/2}dk e^{\epsilon \sigma_0(h)} (1+\mid \lambda(h)\mid)^{-\delta} dh.
\end{eqnarray*}
By Proposition \ref{h-c function}(iii), the inner integral above is equal to $\Xi^G(h)$, then the convergence of \eqref{major 3.2} for $\epsilon$ small just follows from (3) of Lemma \ref{major 2}, this finishes the proof of (1).
\\
\\
(2) By changing the variable $h'\rightarrow h'h^{-1}$ in the integral, it is enough to show that for $\epsilon>0$ small, the integral
\begin{eqnarray*}
I_{\epsilon,\delta}^{3}(m_{min})&=&\int_{Z_H(F)\backslash H(F)}\int_{Z_H(F)\backslash H(F)} \\
&&\Xi^G(hm_{min})\Xi^G(h'm_{min}) e^{\epsilon \sigma_0(h)} e^{\epsilon \sigma_0(h')}(1+\mid \lambda(h')-\lambda(h)\mid)^{-\delta} dh' dh
\end{eqnarray*}
is absolutely convergent for all $m_{min}\in M_{min}(F)$ and there exists $d>0$ such that
\begin{equation}\label{major 3.3}
I_{\epsilon,\delta}^{3}(m_{min})\ll \delta_{\bar{P}_{min}}(m_{min})^{-1} \sigma_0(m_{min})^d
\end{equation}
for all $m_{min}\in M_{min}(F)$. Let $a:\BG_m(F)\rightarrow Z_{G_0}(F)$ be a homomorphism given by $a(t)=diag(t,t,1,1,t^{-1},t^{-1})$ in the split case, and $a(t)=diag(t,1,t^{-1})$ in the non-split case. It is easy to see that $\lambda(a(t)ha(t)^{-1})=t\lambda(h)$ for all $h\in H(F)$ and $t\in \BG_m(F)$. Let $\CU\subset F^{\times}$ be an open compact neighborhood of 1. Since $Z_{G_0}$ is in the center of $M_{min}$, by making the transform $h'\rightarrow a(t)^{-1}h'a(t)$, we have
\begin{eqnarray*}
I_{\epsilon,\delta}^{3}(m_{min}) &\ll& \int_{Z_H(F)\backslash H(F)}\int_{Z_H(F)\backslash H(F)} \Xi^G(hm_{min})\Xi^G(a(t)h'm_{min}a(t)^{-1}) e^{\epsilon \sigma_0(h)} e^{\epsilon \sigma_0(a(t)h'a(t)^{-1})}\\
&& \times \int_{\CU} (1+\mid \lambda(a(t)h'a(t)^{-1})-\lambda(h)\mid)^{-\delta} dtdh'dh \\
&=& \int_{Z_H(F)\backslash H(F)}\int_{Z_H(F)\backslash H(F)} \Xi^G(hm_{min})\Xi^G(h'm_{min}) e^{\epsilon \sigma_0(h)} e^{\epsilon \sigma_0(h')}\\
&& \times \int_{\CU} (1+\mid t\lambda(h')-\lambda(h)\mid)^{-\delta} dtdh'dh
\end{eqnarray*}
for all $m_{min}\in M_{min}(F)$. By Lemma B.1.1 of \cite{B15}, there exists $\delta'>0$ only depends on $\delta$ such that the last integral above is essentially bounded by
\begin{eqnarray*}
&&\int_{Z_H(F)\backslash H(F)}\int_{Z_H(F)\backslash H(F)} \Xi^G(hm_{min})\Xi^G(h'm_{min}) \\
&& e^{\epsilon \sigma_0(h)} e^{\epsilon \sigma_0(h')} (1+\mid \lambda(h')\mid)^{-\delta'}(1+\mid\lambda(h)\mid)^{-\delta'} dh'dh\\
&=& I_{\epsilon,\delta'}^{1}(m_{min})^2
\end{eqnarray*}
for all $m_{min}\in M_{min}(F)$. Therefore the inequality \eqref{major 3.3} follows from part (1), this finishes the proof of (2).
\end{proof}

\subsection{The Harish-Chandra Schwartz spce of $H\backslash G$}
Let $C\subset G(F)$ be a compact subset with nonempty interior. Define the function
$\Xi^{H\backslash G}_{C}(x)=vol_{H\back G}(xC)^{-1/2}$ for $x\in H(F)\back G(F)$.
If $C'$ is another compact subset with nonempty interior, then
$\Xi^{H\backslash G}_{C}(x)\sim \Xi^{H\backslash G}_{C'}(x)$
for all $x\in H(F)\back G(F)$. We will only use the function $\Xi^{H\backslash G}_{C}$ for majorization. From now on, we will fix a particular $C$, and set
$\Xi^{H\backslash G}=\Xi^{H\backslash G}_{C}.$
The next proposition gives the properties for the function $\Xi^{H\backslash G}$, which is quiet similar to Proposition \ref{h-c function} for the group case.

\begin{prop}\label{major 4}
\begin{enumerate}
\item Let $\CK\subset G(F)$ be a compact subset, we have
$\hc(xk)\sim \hc(x)$
for all $x\in H(F)\back G(F)$ and $k\in \CK$.
\item Let $\bar{P}_0=M_0 \bar{U}_0$ be a good minimal parabolic subgroup of $G_0$ and $A_0=A_{M_0}$ be the split center of $M_0$. Set
$$A_{0}^{+}=\{a_0\in A_0(F)\mid \mid \alpha(a)\mid \geq 1 \; for\; all\; \alpha\in \Psi(A_0,\bar{P}_0)\}.$$
Then there exists $d>0$ such that
\begin{equation}\label{major 4.1}
\Xi^{G_0}(a)\delta_{P}(a)^{1/2}\sigma_{Z_{G_0}\back G_0}(a)^{-d}\ll \hc(a)\ll \Xi^{G_0}(a)\delta_{P}(a)^{1/2}
\end{equation}
for all $a\in A_{0}^{+}$.
\item There exists $d>0$ such that the integral
$$\int_{H(F)\back G(F)}\hc(x)^2 \nor(x)^{-d} dx$$
is absolutely convergent.
\item For all $d>0$, there exists $d'>0$ such that
$$\int_{H(F)\back G(F)} 1_{\nor\leq c}(x) \hc(x)^2 \nor(x)^d dx\ll c^{d'}$$
for all $c\geq 1$.
\item There exist $d>0$ and $d'>0$ such that
$$\int_{\zh}\Xi^G(x^{-1}hx)\sigma_0(x^{-1}hx)^{-d}dh\ll \hc(x)^2 \nor(x)^{d'}$$
for all $x\in H(F)\back G(F)$.
\item For all $d>0$, there exists $d'>0$ such that
$$\int_{\zh}\Xi^G(hx)\sigma_0(hx)^{-d'}dh\ll \hc(x) \nor(x)^{d}$$
for all $x\in H(F)\back G(F)$.
\item Let $\delta>0$ and $d>0$, then the integral
\begin{eqnarray*}
I_{\delta,d}(c,x)&=&\int_{\zh} \int_{\zh}1_{\sigma_0\geq c}(h') \Xi^G(hx)\Xi^G(h'hx)\\
&& \sigma_0(hx)^d \sigma_0(h'hx)^d (1+\mid \lambda(h')\mid)^{-\delta} dh'dh
\end{eqnarray*}
is absolutely convergent for all $x\in H(F)\back G(F)$ and $c\geq 1$. Moreover, there exist $\epsilon>0$ and $d'>0$ such that
$$I_{\delta,d}(c,x)\ll \hc(x)^2 \nor(x)^{d'} e^{-\epsilon c}$$
for all $x\in H(F)\back G(F)$ and $c\geq 1$.
\end{enumerate}
\end{prop}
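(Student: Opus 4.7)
The plan is to establish the seven parts in order, with (2) as the key structural estimate and (3)--(7) obtained by combining (2) with the weak Cartan decomposition of Proposition \ref{cartan 1} and the majorization lemmas \ref{major 2}, \ref{major 3}.

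Part (1) is a soft compactness argument: for $\mathcal{K}$ compact, the set $\mathcal{K}C \cup C\mathcal{K}^{-1}$ is covered by finitely many translates of $C$, so $\mathrm{vol}(xkC) \sim \mathrm{vol}(xC)$ uniformly in $k \in \mathcal{K}$. For part (2), the strategy is to compute $\mathrm{vol}_{H\backslash G}(aC)$ using the good parabolic decomposition. Since $\bar P_{min}$ is good (Proposition \ref{spherical}) and $H \cap \bar U_{min} = \{1\}$, the multiplication map $(H \cap \bar P_{min})\backslash \bar P_{min} \to H\backslash G$ identifies an open dense subset, and $H \cap \bar P_{min}$ is essentially a diagonal torus inside $M_{min}$. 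One then expresses the volume by integrating the characteristic function of $aC$ against Haar measure on $\bar P_{min}$; the change of variables $\bar p \mapsto a^{-1}\bar p a$ produces a modular factor whose leading behavior on $A_0^+$ is $\delta_P(a)$, while an integration over the compact factor contributes $(\Xi^{G_0}(a))^{-2}$ via Proposition \ref{h-c function}(i),(iii). The upper bound is clean; the lower bound requires controlling the contribution of the complement of the open Bruhat cell, for which one uses the integrability of $\Xi^{G_0}$ on $Z_{H_0}\backslash H_0$ from Lemma \ref{major 2}(1) together with the log-norm bounds of Proposition \ref{spherical}(3).

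Parts (3) and (4) then reduce, via the weak Cartan decomposition $G(F) = H(F) A_0^+ \mathcal{K}$ and part (1), to integrals of the form $\int_{A_0^+/Z_G(F)} \Xi^{G_0}(a)^2 \delta_P(a) \sigma_0(a)^{\pm d}\, \kappa(a)\, da$ where $\kappa(a)$ is the Jacobian of the Cartan map (a power of $\delta_{\bar P_{min}}$). Using Proposition \ref{h-c function}(i) on $G_0$, these collapse to standard convergent (resp.\ polynomially bounded) integrals on the positive chamber. For (5) and (6), I would again reduce $x$ to an element $a \in A_0^+$ modulo a compact set, apply Proposition \ref{h-c function}(iv) to bring the $U(F)$-integral inside $H(F)$ down to an $H_0$-integral, and then compare with $\Xi^{G_0}(a)\delta_P(a)^{1/2}$, invoking part (2) to reinsert $\Xi^{H\backslash G}(x)$. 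The upper bound for (5) proceeds similarly after replacing $x^{-1}hx$ by an application of Proposition \ref{h-c function}(vi) to trade the conjugation for a product.

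Part (7) is the main obstacle, and I expect it to absorb most of the work. The structure is parallel to Lemma \ref{major 3}(2), so the plan is to adapt that proof: first substitute $h' \mapsto a(t)h'a(t)^{-1}$ using the central cocharacter $a:\BG_m \to Z_{G_0}$ satisfying $\lambda(a(t)h'a(t)^{-1}) = t\lambda(h')$, then average over $t$ in a compact neighborhood of $1$ and apply Lemma B.1.1 of \cite{B15} to replace the factor $(1+|\lambda(h')|)^{-\delta}$ by $(1+|\lambda(h')|)^{-\delta'}(1+|\lambda(h)|)^{-\delta'}$; this decouples the two $H$-integrals. The resulting double integral reduces, after applying part (6), to an integral on $H(F)$ to which Lemma \ref{major 2}(3) applies, yielding the $\Xi^{H\backslash G}(x)^2$ factor. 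The exponential decay $e^{-\epsilon c}$ comes from the standard trick already used in Claim \ref{major 2.1}: split $\sigma_0(h') \ge c$ and pay a factor $e^{-\sqrt{\epsilon} c}$ in exchange for enlarging $\epsilon$ to $\epsilon + \sqrt{\epsilon}$ inside the absolutely convergent majorizing integral. The delicate point is to keep track of all constants so that the bound depends only on $\Xi^{H\backslash G}(x)$ and $\sigma_{H\backslash G}(x)$ (not on the Cartan representative of $x$), which is where part (2) together with (1) is crucially used.
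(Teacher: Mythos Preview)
Your outline for parts (1), (2), (5), and (6) matches the paper's approach closely, and your treatment of (2) via the volume comparison on $\bar P$ is in the right spirit (the paper carries this out with $C = C_U C_0 C_{\bar U}$ and a direct comparison $\mathrm{vol}_{H\backslash G}(H(F)aC_{\bar P}) \sim \mathrm{vol}_G(C_H a C_{\bar P})$, using Proposition~\ref{spherical}(3) to show $H(F)\cap aC_{\bar P}'a^{-1}$ stays bounded).

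There is, however, a real gap in your plan for (3) and (4). The weak Cartan decomposition $G(F)=H(F)A_0^+\mathcal K$ of Proposition~\ref{cartan 1} is only a set-theoretic surjection; it does \emph{not} come equipped with an integration formula, and there is no ``Jacobian of the Cartan map'' available here (the map is neither injective nor proper in any useful sense). Your claim that $\kappa(a)$ is a power of $\delta_{\bar P_{min}}$ is unjustified. The paper bypasses this entirely: it uses the polynomial-growth property of $H\backslash G$ (Proposition~\ref{polynomial growth}) to cover the ball $B(R)$ by at most $(1+R)^N$ translates $x_iC$, then bounds $\int_{x_iC}\Xi^{H\backslash G}(x)^2\sigma_{H\backslash G}(x)^{-d}\,dx$ trivially by $\sigma_{H\backslash G}(x_i)^{-d}$, and sums over dyadic shells. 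This covering argument requires no measure decomposition on $A_0^+$ at all.

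For (7) your plan is workable but unnecessarily laborious. After reducing $x$ to $a\in A_0^+$ (as you acknowledge at the end) and using $1_{\sigma_0\ge c}(h')\sigma_0(h')^d \ll e^{\epsilon\sigma_0(h')}e^{-\epsilon c/2}$, the remaining double integral is \emph{exactly} $I_{\epsilon,\delta}^2(a)$ from Lemma~\ref{major 3}(2). The paper simply invokes that lemma to get the bound $\delta_{\bar P_{min}}(a)^{-1}\sigma_0(a)^d$, then uses $\delta_{\bar P_{min}}(a)^{-1}=\delta_P(a)\delta_{P_0}(a)$ and $\delta_{P_0}(a)\ll\Xi^{G_0}(a)^2$ from Proposition~\ref{h-c function}(1). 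There is no need to rerun the $a(t)$-averaging and Lemma~B.1.1 trick here; that work was already packaged into Lemma~\ref{major 3}(2).
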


\begin{proof}
The first one is trivial. For (2), let $\bar{P}=M\bar{U}$ be the parabolic subgroup opposite to $P$ with respect to $M$. We fix some compact subsets with nonempty interior for the following groups
$$C_{\bar{U}}\subset \bar{U}(F), C_0\subset G_0(F)=M(F),C_U\subset U(F).$$
By Bruhat decomposition, $C=C_{U}C_0C_{\bar{U}}$ is a compact subset of $G(F)$ with nonempty interior. By the definition of $\hc$, we have
$$\hc(g)\sim vol_{H\back G}(H(F)gC)^{-1/2},\; \forall g\in G(F).$$
By the definition of $\Xi^{G_0}$, there exists $d>0$ such that
$$\Xi^{G_0}(g_0)\sigma_{Z_{G_0}\back G_0}(g_0)^{-d}\ll vol_{G_0}(C_0g_0C_0)^{-1/2}\ll \Xi^{G_0}(g_0),\;\forall g_0\in G_0(F).$$
So in order to prove \eqref{major 4.1}, it is enough to show that
$$\delta_P(a)^{-1}vol_{G_0}(C_0a C_0)^{-1/2}\sim vol_{H\back G}(H(F)a C)$$
for all $a\in A_{0}^{+}$. By the definition of $C$, we know
$$H(F)aC=H(F)a C_{\bar{P}}$$
where $C_{\bar{P}}=C_0C_{\bar{U}}$. Thus we only need to prove
\begin{equation}\label{major 4.3}
\delta_P(a)^{-1}vol_{G_0}(C_0aC_0)^{-1/2}\sim vol_{H\back G}(H(F)a C_{\bar{P}})
\end{equation}
for all $a\in A_{0}^{+}$.

Let $C_{H_0}\subset H_0(F)$ be a compact subset with nonempty interior and let $C_{H}=C_{U}C_{H_0}$, it is a compact subset of $H(F)$ with nonempty interior. We claim that
\begin{equation}\label{major 4.4}
vol_{H\back G}(H(F)a C_{\bar{P}}) \sim vol_G(C_H aC_{\bar{P}})
\end{equation}
for all $a\in A_{0}^{+}$. In fact, we have
$$vol_G(C_H aC_{\bar{P}})=\int_{H(F)\back G(F)}\int_{H(F)} 1_{C_H aC_{\bar{P}}}(hx)dh dx.$$
The inner integral above is nonzero if and only if $x\in H(F) a C_{\bar{P}}$. If this holds, the inner integral is equal to
$$\vol_H(H(F)\cap C_H aC_{\bar{P}} x^{-1})=vol_H(C_H(H(F)\cap a C_{\bar{P}} x^{-1})).$$
Therefore in order to prove \eqref{major 4.4}, it is enough to show that
$$vol_H(C_H(H(F)\cap a C_{\bar{P}} x^{-1}))\sim 1$$
for all $a\in A_{0}^{+}$ and $x\in aC_{\bar{P}}$. For such $x$, $C_H\subset C_H(H(F)\cap a C_{\bar{P}} x^{-1})$, so we only need to show
$$vol_H(C_H(H(F)\cap a C_{\bar{P}} x^{-1}))\ll 1.$$
In order to prove this, it is enough to show that the set $H(F)\cap a C_{\bar{P}}' a^{-1}$ remains uniformly bounded for all $a\in A_{0}^{+}$, here $C_{\bar{P}}'=C_{\bar{P}}C_{\bar{P}}^{-1}$. Since $\bar{P}\cap H=H_0$, $H(F)\cap a C_{\bar{P}}' a^{-1}=H_0(F)\cap aC_0 'a^{-1}$ where $C_{0}'=C_{\bar{P}}' \cap G_0(F)$. For $h_0\in H_0(F)\cap aC_0 'a^{-1}$, $a^{-1}h_0a\in C_0 '$ is bounded. By Proposition \ref{spherical}(3), $\sigma(h_0)\ll \sigma(a^{-1} h_0a)$. Hence $H_0(F)\cap aC_0 'a^{-1}$ is uniformly bounded for $a_0\in A_{0}^{+}$, this finishes the proof of \eqref{major 4.4}.

Now by applying \eqref{major 4.4}, \eqref{major 4.3} is equivalent to
\begin{equation}\label{major 4.5}
\delta_P(a)^{-1}vol_{G_0}(C_0aC_0)\sim vol_G(C_H a C_{\bar{P}}),\; \forall a\in A_{0}^{+}.
\end{equation}
By the definition of $C_H$ and $C_{\bar{P}}$, $C_H a C_{\bar{P}}=C_U (C_{H_0} a C_0)C_{\bar{U}}$. Since we have a decomposition of the Haar measure on $G(F)$: $dg=\delta_P(g_0)^{-1} dudg_0d\bar{u}$ where $du,dg_0$ and $d\bar{u}$ are Haar measures on respectively $U(F),G_0(F)$ and $\bar{U}(F)$, we have
$$vol_G(C_U (C_{H_0} a C_0)C_{\bar{U}})\sim \delta_P(a)^{-1} vol_{G_0}(C_{H_0} aC_0).$$
Hence the last thing to show is for all $a_0\in A_{0}^{+}$, we have
\begin{equation}\label{major 4.6}
vol_{G_0}(C_0aC_0)\sim vol_{G_0}(C_{H_0} aC_0).
\end{equation}

The inequality $vol_{G_0}(C_0aC_0)\gg vol_{G_0}(C_{H_0} aC_0)$ is trivial. For the other direction, since $H_0(F)\bar{P}_0(F)$ is open in $G_0(F)$, may assume $C_0=C_{H_0}C_{\bar{P}_0}$ where $C_{\bar{P}_0}$ is a compact subset in $\bar{P}_0(F)$ with nonempty interior. By the definition of $A_{0}^{+}$, $a^{-1}C_{\bar{P}_0} a$ is uniformly bounded since the action on the unipotent part is a contraction and the action preserves the Levi part. Hence there exists a compact subset $C'\subset G_0(F)$ such that
$$a^{-1}C_{\bar{P}_0} a C_0\subset C'$$
for all $a\in A_{0}^{+}$. This implies
$$vol_{G_0}(C_0aC_0)\ll vol_{G_0}(C_{H_0} aC')\ll vol_{G_0}(C_{H_0} aC_0)$$
for all $a\in A_{0}^{+}$. This finishes the proof of \eqref{major 4.6} and hence the proof of (2).
\\
\\
(3) Set $B(R)=\{x\in H(F)\back G(F)\mid \sigma_{H\back G}(x)< R\}$. By Proposition \ref{polynomial growth}, there exists $N>0$ such that for all $R\geq 1$, the subset $B(R)$ can be covered by less than $(1+R)^N$ many subsets of the the form $xC$ for $x\in H(F)\back G(F)$ and $C\subset G(F)$ be a compact subset with non-empty interior. Let
$$I(R,d)=\int_{B(R+1)\back B(R)} \hc(x)^2 \nor(x)^{-d} dx.$$
We have
\begin{equation}\label{major 4.9}
\int_{H(F)\back G(F)}\hc(x)^2 \nor(x)^{-d} dx=\Sigma_{R=1}^{\infty} I(R,d).
\end{equation}
Since for all $R\geq 1$, $B(R+1)\back B(R)$ can be covered by some subsets $x_1 C,\cdots, x_{k_R}C$ with $k_R\leq (R+2)^N$, we have
\begin{equation}\label{major 4.7}
I(R,d)\leq \Sigma_{i=1}^{k_R} \int_{x_i C} \hc(x)^2 \nor(x)^{-d} dx
\end{equation}
for all $d>0$ and $R\geq 1$. Since $C$ is compact, together with the definition of $\hc$, we have
\begin{eqnarray*}
&&\int_{yC}\hc(x)^2 \nor(x)^{-d} dx\\
&\ll& vol_{H\back G}(yC) \hc(y)^2 \nor(y)^{-d}\\
&\ll& vol_{H\back G}(yC) vol_{H\back G}(yC)^{-1} \nor(y)^{-d}=\nor(y)^{-d}
\end{eqnarray*}
for all $y\in H(F)\back G(F)$. Combining with \eqref{major 4.7}, we have
\begin{equation}\label{major 4.8}
I(R,d)\ll \Sigma_{i=1}^{k_R} \nor(x_i)^{-d}
\end{equation}
for all $d>0$ and $R\geq 1$. Since $x_i C\cap (B(R+1)\back B(R)) \neq \emptyset$, $\nor(x_i)\gg R$. Combining with \eqref{major 4.8}, we have
$$I(R,d)\ll R^{-d}k_R\leq (R+2)^N R^{-d}$$
for all $d>0$ and $R\geq 1$. So once we let $d>N+1$, \eqref{major 4.9} is absolutely convergent, this finishes the proof of (3).
\\
\\
The proof of (4) is very similar to (3), we will skip it here. For (5), by the Cartan decomposition in Proposition \ref{cartan 1}, we may assume that $x\in A_{0}^{+}$. Then by applying part (2) and Lemma \ref{major 1}, we only need to show that there exists $d>0$ such that for all $a\in A_{0}^{+}$, we have
\begin{equation}\label{major 4.10}
\int_{\zh}\Xi^G(a^{-1}ha)\sigma_0(a^{-1}ha)^{-d}dh\ll \Xi^{G_0}(a)^2 \delta_P(a).
\end{equation}
But we know
\begin{eqnarray*}
&&\int_{\zh}\Xi^G(a^{-1}ha)\sigma_0(a^{-1}ha)^{-d}dh\\
&=&\int_{Z_{H_0}(F)\back H_0(F)}\int_{U(F)}\Xi^G(a^{-1}h_0 ua)\sigma_0(a^{-1}h_0 ua)^{-d}dudh_0\\
&=&\delta_P(a)\int_{Z_{H_0}(F)\back H_0(F)}\int_{U(F)}\Xi^G(a^{-1}h_0 au)\sigma_0(a^{-1}h_0 au)^{-d}dudh_0.
\end{eqnarray*}
By Proposition \ref{h-c function}(4), for $d>0$ large, we have
$$\int_{U(F)}\Xi^G(a^{-1}h_0 au)\sigma_0(a^{-1}h_0 au)^{-d}du\ll \Xi^{G_0}(a^{-1}h_0a)$$
for all $a\in A_0(F)$ and $h_0\in H_0(F)$. Thus for $d>0$ large, the left hand side of \eqref{major 4.10} is essentially bounded by
$$\delta_P(a)\int_{Z_{H_0}(F)\back H_0(F)}  \Xi^{G_0}(a^{-1}h_0a) dh_0.$$
So in order to prove \eqref{major 4.10}, it is enough to show that
\begin{equation}\label{major 4.11}
\int_{Z_{H_0}(F)\back H_0(F)}  \Xi^{G_0}(a^{-1}h_0a)dh_0 \ll \Xi^{G_0}(a)^2
\end{equation}
for all $a\in A_{0}^{+}$. If we are in the non-split case, $A_0=Z_{G_0}$, so $\Xi^{G_0}(a)=\Xi^{G_0}(1)$. Then \eqref{major 4.11} holds since $Z_{H_0}(F)\back H_0(F)$ is compact. In the split case, let $\CU_{H_0(F)}\subset H_0(F)$ and $\CU_{\bar{P}_0}\subset \bar{P}_0(F)$ be some compact neighborhoods of the identity. By the definition $A_{0}^{+}$, the subsets $a^{-1}\CU_{\bar{P}_0} a$ remain uniformly bounded as $a\in A_{0}^{+}$. So we have
$$\int_{Z_{H_0}(F)\back H_0(F)}  \Xi^{G_0}(a^{-1}h_0a) dh_0\ll \int_{Z_{H_0}(F)\back H_0(F)}  \Xi^{G_0}(a^{-1}p_1 h_1h_0h_2 p_2a) dh_0$$
for all $a\in A_{0}^{+}$, $h_1,h_2\in \CU_{H_0}$ and $p_1,p_2\in \CU_{\bar{P}_0}$. Let $K_0$ be a maximal compact subgroup of $G_0(F)$, since $\bar{P}_0$ is a good parabolic subgroup, there exist a compact neighborhood of the identity $\CU_{K_0}\subset K_0$ such that $\CU_{K_0}\subset \CU_{\bar{P}_0}\CU_{H_0}\cap \CU_{H_0}\CU_{\bar{P}_0}$. So we have
\begin{eqnarray*}
&&\int_{Z_{H_0}(F)\back H_0(F)}  \Xi^{G_0}(a^{-1}h_0a)\\
&\ll& \int_{Z_{H_0}(F)\back H_0(F)}\int_{\CU_{K_0}^{2}}  \Xi^{G_0}(a^{-1}k_1h_0k_2a)dk_1 dk_2 dh_0\\
&\ll& \int_{Z_{H_0}(F)\back H_0(F)}\int_{K_{0}^{2}}  \Xi^{G_0}(a^{-1}k_1h_0k_2a)dk_1 dk_2 dh_0
\end{eqnarray*}
for all $a\in A_{0}^{+}$. By Proposition \ref{h-c function}(6), the last integral above is bounded by
$$\Xi^{G_0}(a)^2\int_{Z_{H_0}(F)\back H_0(F)} \Xi^{G_0}(h_0) dh_0$$
for all $a\in A_{0}^{+}$. Then \eqref{major 4.11} follows from Lemma \ref{major 2}(1) and this finishes the proof of (5).
\\
\\
For (6), by applying the same reduction as in (5), we only need to show that there exists $d>0$ such that for all $a\in A_{0}^{+}$, we have
$$\int_{\zh}\Xi^G(ha)\sigma_0(ha)^{-d'}dh\ll \delta_P(a)^{1/2} \Xi^{G_0}(a).$$
Again we decompose $dh=dudh_0$ and by applying Proposition \ref{h-c function}(4), we only need to show that for all $a\in A_{0}^{+}$, we have
$$\int_{Z_{H_0}(F)\back H_0(F)} \Xi^{G_0}(h_0 a)dh_0\ll \Xi^{G_0}(a).$$
Then using the same argument as (5), together with Proposition \ref{h-c function}(6), we are reduced to show that the integral
$$\int_{Z_{H_0}(F)\back H_0(F)} \Xi^{G_0}(h_0) dh_0$$
is absolutely convergent, which is just Lemma \ref{major 2}(1). This finishes the proof of (6).
\\
\\
For (7), by applying the same reduction as in (5), together with the fact that for all $d>0$ and $\epsilon>0$, we have $1_{\sigma_0 \geq c}(h)\sigma_0(h)^d \ll e^{\epsilon \sigma_0(h)} e^{-\epsilon c/2}$, we are reduced to show that for all $\delta>0$, there exist $d>0$ and $\epsilon >0$ such that for all $a\in A_{0}^{+}$, we have
\begin{equation}\label{major 4.12}
\int_{\zh} \int_{\zh}\Xi^G(ha)\Xi^G(h'ha) e^{\epsilon \sigma_0(h)}e^{\epsilon \sigma_0(h')} (1+\mid \lambda(h')\mid)^{-\delta} dh'dh
\end{equation}
$$\ll \delta_P(a) \Xi^{G_0}(a)^2 \sigma_0(a)^d.$$
Let $\bar{P}_{min}=\bar{P}_0 \bar{U}$ and $M_{min}=M_0$, then $\bar{P}_{min}$ is a good parabolic subgroup of $G$, and $M_{min}$ is a Levi subgroup of it which contains $A_0$. By Lemma \ref{major 3}(2), there exists $\epsilon>0$ and $d>0$ such that
$$\int_{\zh} \int_{\zh}\Xi^G(ha)\Xi^G(h'ha) e^{\epsilon \sigma_0(h)}e^{\epsilon \sigma_0(h')} (1+\mid \lambda(h')\mid)^{-\delta} dh'dh$$
$$\ll \delta_{\bar{P}_{min}}(a)^{-1} \sigma_0(a)^d$$
for all $a\in A_{0}^{+}$. But we know $\delta_{\bar{P}_{min}}(a)^{-1}=\delta_P(a)\delta_{P_0}(a)$. By Proposition \ref{h-c function}(1), $\delta_{P_0}(a)\ll \Xi^{G_0}(a)^2$ for all $a\in A_{0}^{+}$. Therefore the inequality \eqref{major 4.12} holds for such $d$ and $\epsilon$, this finishes the proof of (7).
\end{proof}

\begin{lem}\label{major 5}
Let $\bar{Q}=M_Q \bar{U}_Q$ be a good parabolic subgroup of $G$, $H_{\bar{Q}}=H\cap \bar{Q}$, and let $G_{\bar{Q}}=\bar{Q}/U_{\bar{Q}}$ be the reductive quotient of $\bar{Q}$. Then we have
\begin{enumerate}
\item $H_{\bar{Q}}\cap U_{\bar{Q}}=\{1\}$, hence we can view $H_{\bar{Q}}$ as a subgroup of $G_{\bar{Q}}$. We also have $\delta_{\bar{Q}}(h_{\bar{Q}})=\delta_{H_{\bar{Q}}} (h_{\bar{Q}})$ for all $h_{\bar{Q}}\in H_{\bar{Q}}(F)$.
\item There exists $d>0$ such that the integral
$$\int_{Z_{H}(F)\back H_{\bar{Q}}(F)} \Xi^{G_{\bar{Q}}}(h_{\bar{Q}})\sigma_0(h_{\bar{Q}})^{-d} \delta_{H_{\bar{Q}}}(h_{\bar{Q}})^{1/2} dh_{\bar{Q}}$$
is absolutely convergent. Moreover, if we are in the $(G_0,H_0)$-case (this means we replace the pair $(G,H)$ in the statement by the pair $(G_0,H_0)$), for all $d>0$, the integral
$$\int_{Z_{H}(F)\back H_{\bar{Q}}(F)} \Xi^{G_{\bar{Q}}}(h_{\bar{Q}})\sigma_0(h_{\bar{Q}})^d \delta_{H_{\bar{Q}}}(h_{\bar{Q}})^{1/2} dh_{\bar{Q}}$$
is absolutely convergent.
\item Let $\bar{P}_{min}=M_{min}\bar{U}_{min}\subset \bar{Q}$ be a good minimal parabolic subgroup of $G$, let $A_{min}=A_{M_{min}}$ and $A_{min}^{+}=\{ a\in A_{min}(F)|\; | \alpha(a)| \geq 1, \forall \alpha\in \Psi(A_{min},\bar{P}_{min})\}$. Then there exists $d>0$ such that
$$\int_{Z_{H}(F)\back H_{\bar{Q}}(F)} \Xi^{G_{\bar{Q}}}(a^{-1}h_{\bar{Q}}a)\sigma_0(a^{-1}h_{\bar{Q}}a)^{-d} \delta_{H_{\bar{Q}}}(h_{\bar{Q}})^{1/2} dh_{\bar{Q}} \ll \Xi^{G_{\bar{Q}}}(a)^2$$
for all $a\in A_{min}^{+}$
\end{enumerate}
\end{lem}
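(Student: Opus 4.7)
For (1), I would use the good-parabolic machinery from Proposition \ref{spherical}. Since $\bar Q$ is good, Proposition \ref{spherical}(2) supplies a good minimal parabolic $\bar P_{min}=M_{min}\bar U_{min}\subset \bar Q$, so $U_{\bar Q}\subset \bar U_{min}$ and Proposition \ref{spherical}(1) yields $H\cap U_{\bar Q}\subset H\cap \bar U_{min}=\{1\}$. Hence $H_{\bar Q}\cap U_{\bar Q}=\{1\}$, and the projection $\bar Q\twoheadrightarrow G_{\bar Q}$ restricts to an injection on $H_{\bar Q}$. For the modular character identity, the openness of $H\bar Q$ in $G$ gives an $H$-equivariant open immersion $H/H_{\bar Q}\hookrightarrow G/\bar Q$; since $H$ and $G$ are unimodular (the action of $H_0$ on $U$ is by simultaneous conjugation and so preserves Haar measure), comparing the unique-up-to-scalar invariant measures on the two sides forces $\delta_{\bar Q}|_{H_{\bar Q}}=\delta_{H_{\bar Q}}$.

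For (2), the pair $(G_{\bar Q},H_{\bar Q})$ is a boundary degeneration of the Ginzburg-Rallis model. Concretely, writing $H=H_0\ltimes U$, $H_0^{\bar Q}=H_0\cap M_Q$ and $U^{\bar Q}=U\cap M_Q$, the projection identifies $H_{\bar Q}$ with $H_0^{\bar Q}\ltimes U^{\bar Q}$ sitting inside $G_{\bar Q}$. Then one runs the same pattern as in Lemma \ref{major 2}(2): apply Proposition \ref{h-c function}(4) inside $G_{\bar Q}$ to integrate out $U^{\bar Q}$, reducing matters to an integral of a Harish-Chandra function on $Z_H(F)\backslash H_0^{\bar Q}(F)$. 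This reductive pair is of trilinear-$\GL_2$ type (or a degeneration thereof), so convergence follows as in Lemma \ref{major 2}(1). For the sharper $(G_0,H_0)$-version, the matrix coefficients of tempered representations of $H_0$ lie in $L^{2+\epsilon}$ for every $\epsilon>0$, and this extra integrability absorbs the polynomial weight $\sigma_0(h_{\bar Q})^d$ for arbitrary $d>0$.

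For (3), the required inequality is the analogue of Proposition \ref{major 4}(5) applied inside $G_{\bar Q}$. Conjugation by $a\in A_{min}^+\subset A_{M_{min}}\subset M_Q$ preserves the semidirect decomposition $H_0^{\bar Q}\ltimes U^{\bar Q}$ and acts on $U^{\bar Q}$ as a contraction (because $\bar P_{min}\cap M_Q$ is a minimal parabolic of $M_Q$ with $U^{\bar Q}$ lying in its unipotent radical). Using the decomposition $dh_{\bar Q}=du\,dh_0$, the identity $\delta_{H_{\bar Q}}(h)^{1/2}=\delta_{\bar Q}(h)^{1/2}$ from part (1), and Proposition \ref{h-c function}(4) inside $G_{\bar Q}$, the $U^{\bar Q}$-integral contributes a modular factor in $a$ which is controlled via Proposition \ref{h-c function}(1) by $\Xi^{G_{\bar Q}}(a)^2$ times an integral of a Harish-Chandra function on $Z_H(F)\backslash H_0^{\bar Q}(F)$, and the latter converges by the reductive-level estimate from part (2).

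The main obstacle is the explicit identification of the semidirect structure of $H_{\bar Q}$ and the action of $A_{min}^+$ on it, uniformly across all good $\bar Q$. Once this structural description is in place, (2) and (3) are faithful parallels of Lemma \ref{major 2}(2) and of the proof of Proposition \ref{major 4}(5); the only remaining work is verifying that the reductive pair $(M_Q^0, H_0^{\bar Q})$ continues to satisfy the convergence hypotheses needed at the reduced level, which follows because it is itself a trilinear-$\GL_2$-type pair up to splitting off unipotent factors.
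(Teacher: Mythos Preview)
Your argument for (1) matches the paper's; both use Proposition~\ref{spherical} for the triviality of $H_{\bar Q}\cap U_{\bar Q}$ and the open immersion $\bar{\Fq}/\Fh_{\bar Q}\simeq\Fg/\Fh$ together with unimodularity of $G$ and $H$ for the modular identity.

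For (2), however, the paper takes a genuinely different route that completely sidesteps the structural description of $H_{\bar Q}$ you flag as the main obstacle. Instead of decomposing $H_{\bar Q}$ and integrating out a unipotent piece, the paper realizes $\Xi^G$ through parabolic induction from $\bar Q$: writing $\tau=I_{\bar P_{min}\cap L}^{L}(1)$ and $\pi=I_{\bar Q}^G(\tau)$, one has
\[
\Xi^G(g)=\int_{\bar Q(F)\backslash G(F)}(e_K(g'g),e_K(g'))_\tau\,dg'=\int_{H_{\bar Q}(F)\backslash H(F)}(e_K(hg),e_K(h))_\tau\,dh,
\]
the second equality using that $\bar Q$ is good. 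Inserting this into the already-known convergent integral $\int_{Z_H\backslash H}\Xi^G(h)\sigma_0(h)^{-d}\,dh$ from Lemma~\ref{major 2}(2), switching the order of integration, and applying Fubini produces a single pair $(h,h')$ at which the fiber integral over $Z_H\backslash H_{\bar Q}$ converges; since that integrand is (for fixed $h,h'$) comparable to $\Xi^{G_{\bar Q}}(h_{\bar Q})\sigma_0(h_{\bar Q})^{-d}\delta_{H_{\bar Q}}^{1/2}$, the claim follows. This argument is uniform in $\bar Q$ and never needs to know what $H_{\bar Q}$ looks like. Your direct approach is in principle workable but requires the case-by-case semidirect identification you mention, plus a verification of reductive-level convergence for each resulting pair.

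For (3), the paper again differs. It first invokes Proposition~\ref{spherical}(3)(b) to replace $\sigma_0(a^{-1}h_{\bar Q}a)^{-d}$ by $\sigma_0(h_{\bar Q})^{-d}$, and then repeats the $K$-averaging trick from the proof of Proposition~\ref{major 4}(5): using openness of $\bar P_{min,\bar Q}H_{\bar Q}$ in $G_{\bar Q}$ and Proposition~\ref{h-c function}(6) to pull out a factor $\Xi^{G_{\bar Q}}(a)^2$, reducing to part (2). Your proposed unipotent-integration route again rests on the structural decomposition, and moreover your contraction claim is suspect: the good $\bar P_{min}$ is constructed as $\bar P_0\bar U$, so $\bar U_{min}$ contains $\bar U$, not $U$; hence $a\in A_{min}^+$ need not contract $U\cap M_Q$. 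The paper's $K$-trick avoids this issue entirely.
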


\begin{proof}
(1) $H_{\bar{Q}}\cap U_{\bar{Q}}=\{1\}$ just follows from Proposition \ref{spherical}. For the second part, we only need to show that
$$\det(Ad(h_{\bar{Q}})\mid_{\bar{\Fq}/\Fh_{\bar{Q}}})=1$$
for all $h_{\bar{Q}}\in H_{\bar{Q}}(F)$. Since $\bar{Q}$ is a good parabolic subgroup, $\bar{\Fq}+\Fh=\Fg$ and $\Fh_{\bar{Q}}=\Fh\cap \bar{\Fq}$, so we have an isomorphism $\bar{\Fq}/\Fh_{\bar{Q}}\simeq \Fg/\Fh$, this implies
$$\det(Ad(h_{\bar{Q}})\mid_{\bar{\Fq}/\Fh_{\bar{Q}}})=\det(Ad(h_{\bar{Q}})\mid_{\Fg/\Fh})=\det(Ad(h_{\bar{Q}})\mid_{\Fg})\det(Ad(h_{\bar{Q}})\mid_{\Fh})^{-1}.$$
Since $G$ and $H$ are unimodular, $\det(Ad(h_{\bar{Q}})\mid_{\Fg})=\det(Ad(h_{\bar{Q}})\mid_{\Fh})=1$ for all $h_{\bar{Q}}\in H_{\bar{Q}}(F)$. This finishes the proof of (1).
\\
\\
(2) By Proposition \ref{spherical}, we can find a good minimal parabolic subgroup $\bar{P}_{min}=M_{min}\bar{U}_{min}\subset \bar{Q}$. Let $L$ be the Levi subgroup of $Q$ containing $M_{min}$, we have $L\simeq G_{\bar{Q}}$. Let $K$ be a hyperspecial maximal compact subgroup of $G(F)$ with respect to $L$, and $K_L=K\cap L(F)$. Define $\tau=I_{\bar{P}_{min}\cap L}^{L}(1)$ and $\pi=I_{\bar{Q}}^{G}(\tau)=I_{\bar{P}_{min}}^{G}(1)$. Let $(\;,\;)$ (resp. $(\;,\;)_{\tau}$) be the inner product on $\pi$ (resp. $\tau$). We fix $e_K\in \pi^{\infty}$ (resp. $e_{K_L}\in \tau^{\infty}$) to be the unique $K$-invariant(resp. $K_L$-invariant) vector. Then by the definition of Harish-Chandra function, we may assume that
\begin{equation}\label{major 5.1}
\Xi^G(g)=(\pi(g)e_K,e_K), \Xi^L(l)=(\tau(l)e_{K_L},e_{K_L})_{\tau}, g\in G(F),l\in L(F).
\end{equation}
So by choosing a suitable Haar measure, we have
$$\Xi^G(g)=\int_{\bar{Q}(F)\back G(F)} (e_K(g'g),e_K(g'))_{\tau} dg'.$$
Since $\bar{Q}$ is a good parabolic, by part(1) and Proposition \ref{spherical}(1), we have
$$\int_{\bar{Q}(F)\back G(F)}\varphi(g)dg=\int_{H_{\bar{Q}}(F)\back H(F)} \varphi(h)dh$$
for all $\varphi\in L^1(\bar{Q}(F)\back G(F),\delta_{\bar{Q}})$. So for all $g\in G(F)$, we have
$$\Xi^G(g)=\int_{H_{\bar{Q}}(F)\back H(F)} (e_K(hg),e_K(h))_{\tau} dh.$$
By Lemma \ref{major 2}(2), there exists $d>0$ such that the integral
\begin{eqnarray*}
&&\int_{\zh} \Xi^G(h)\sigma_0(h)^{-d} dh \\
&=&\int_{\zh}\int_{H_{\bar{Q}}(F)\back H(F)}(e_K(h'h),e_K(h'))_{\tau} \sigma_0(h)^{-d}dh' dh
\end{eqnarray*}
converges. Since $(e_K(h'h),e_K(h'))_{\tau}$ equals to some value of $\Xi^L$, it is positive, so the double integral above is absolutely convergent. By switching the order of the integral, changing the variable $h\mapsto h'^{-1}h$ and decomposing the integral over $\zh$ as a double integral over $H_{\bar{Q}}\back H(F)$ and $Z_H(F)\back H_{\bar{Q}}(F)$, we know the integral
$$\int_{(H_{\bar{Q}}(F)\back H(F))^2} \int_{Z_H(F)\back H_{\bar{Q}}(F)} (\tau(h_{\bar{Q}})e_K(h),e_K(h'))_{\tau} \sigma_0(h'^{-1} h_{\bar{Q}}h)^{-d} \delta_{H_{\bar{Q}}}(h_{\bar{Q}})^{1/2} dh_{\bar{Q}}dh dh'$$
is absolutely convergent. Here we also use the fact that $\delta_{\bar{Q}}(h_{\bar{Q}})=\delta_{H_{\bar{Q}}}(h_{\bar{Q}})$. Then by Fubini Theorem, there exist $h,h'\in H(F)$ such that the integral
$$\int_{Z_H(F)\back H_{\bar{Q}}(F)} (\tau(h_{\bar{Q}})e_K(h),e_K(h'))_{\tau} \sigma_0(h'^{-1} h_{\bar{Q}}h)^{-d} \delta_{H_{\bar{Q}}}(h_{\bar{Q}})^{1/2} dh_{\bar{Q}}$$
is absolutely convergent. Let $h=luk,h'=l'u'k'$ be the Iwasawa decomposition with $l,l'\in L(F),u,u'\in U_{\bar{Q}}(F)$ and $k,k'\in K$. Then by \eqref{major 5.1}, for all $h_{\bar{Q}}\in H_{\bar{Q}}(F)$, we have
$$(\tau(h_{\bar{Q}})e_K(h),e_K(h'))_{\tau}=\delta_{\bar{Q}}(l'l)^{1/2}\Xi^L(l'^{-1} h_{\bar{Q}}l).$$
For the given $h,h',l,l'$ as above, $\Xi^L(h_{\bar{Q}})\ll \Xi^L(l'^{-1}h_{\bar{Q}}l)$ and $\sigma_0(h'^{-1}h_{\bar{Q}}h)\ll \sigma_0(h_{\bar{Q}})$ for all $h_{\bar{Q}}\in H_{\bar{Q}}(F)$, so the integral
$$\int_{Z_H(F)\back H_{\bar{Q}}(F)} \Xi^L(h_{\bar{Q}}) \sigma_0(h_{\bar{Q}})^{-d} \delta_{H_{\bar{Q}}}(h_{\bar{Q}})^{1/2} dh_{\bar{Q}}$$
is absolutely convergent. This finishes the first part of (2) since $\Xi^L=\Xi^{G_{\bar{Q}}}$. The second part of (2) just follows from the same argument except we use Lemma \ref{major 2}(1) instead of Lemma \ref{major 2}(2).
\\
\\
(3) By Proposition \ref{spherical}(3), for all $d>0$ and $a\in A_{min}^{+}$, we have
\begin{eqnarray*}
&&\int_{Z_{H}(F)\back H_{\bar{Q}}(F)} \Xi^{G_{\bar{Q}}}(a^{-1}h_{\bar{Q}}a)\sigma_0(a^{-1}h_{\bar{Q}}a)^{-d} \delta_{H_{\bar{Q}}}(h_{\bar{Q}})^{1/2} dh_{\bar{Q}}\\
&\ll& \int_{Z_{H}(F)\back H_{\bar{Q}}(F)} \Xi^{G_{\bar{Q}}}(a^{-1}h_{\bar{Q}}a)\sigma_0(h_{\bar{Q}})^{-d} \delta_{H_{\bar{Q}}}(h_{\bar{Q}})^{1/2} dh_{\bar{Q}}. \end{eqnarray*}
So we only need to prove that there exists $d>0$ such that for all $a\in A_{min}^{+}$, we have
$$\int_{Z_{H}(F)\back H_{\bar{Q}}(F)} \Xi^{G_{\bar{Q}}}(a^{-1}h_{\bar{Q}}a)\sigma_0(h_{\bar{Q}})^{-d} \delta_{H_{\bar{Q}}}(h_{\bar{Q}})^{1/2} dh_{\bar{Q}} \ll \Xi^{G_{\bar{Q}}}(a)^2.$$
Let $\bar{P}_{min,\bar{Q}}$ be the image of $\bar{P}_{min}$ under the projection $Q\rightarrow G_{\bar{Q}}$, it is a minimal parabolic subgroup of $G_{\bar{Q}}$ and $\bar{P}_{min,\bar{Q}} H_{\bar{Q}}$ is open in $G_{\bar{Q}}$. By applying the same argument as in the proof of Proposition \ref{major 4}(5), we can show that
\begin{eqnarray*}
&&\int_{Z_{H}(F)\back H_{\bar{Q}}(F)} \Xi^{G_{\bar{Q}}}(a^{-1}h_{\bar{Q}}a)\sigma_0(h_{\bar{Q}})^{-d} \delta_{H_{\bar{Q}}}(h_{\bar{Q}})^{1/2} dh_{\bar{Q}}\\
&\ll& \Xi^{G_{\bar{Q}}}(a)^2\int_{Z_{H}(F)\back H_{\bar{Q}}(F)} \Xi^{G_{\bar{Q}}}(h_{\bar{Q}})\sigma_0(h_{\bar{Q}})^{-d} \delta_{H_{\bar{Q}}}(h_{\bar{Q}})^{1/2} dh_{\bar{Q}}
\end{eqnarray*}
for all $a\in A_{min}^{+}$. Then we just need to choose $d>0$ large so that part (2) holds. This finishes the proof of (3).
\end{proof}

\section{Explicit intertwining operator}
In this section, we study an explicit element $\CL_{\pi}$ in the Hom space given by the (normalized) integral of the matrix coefficients. The main result of this section is to show that the Hom space is nonzero if and only if $\CL_{\pi}\neq 0$ (i.e. Theorem \ref{thm 1}). In Sections 5.1 and 5.2, we define $\CL_{\pi}$ and prove some basic properties of it. In Sections 5.3 and 5.4, we study the behavior of $\CL_{\pi}$ under parabolic induction. Since we can not always reduce to the strongly tempered case, we have to treat the p-adic case and the archimedean case separately. In Section 5.5, we prove Theorem \ref{thm 1}. Then in Section 5.6, we discuss some applications of Theorem \ref{thm 1}, which are Corollary \ref{parabolic induction 6} and Corollary \ref{parabolic induction 5}. These two results will play essential roles in our proof of the main results of this paper.
\subsection{A normalized integral}
Let $\eta=\chi^2$ be two unitary characters of $F^{\times}$. In Section 1, we define the character $\omega$ and $\xi$ on $H(F)$. By Lemma \ref{major 2}, for all $f\in \CC(\zg,\eta^{-1})$, the integral
$$\int_{\zh}f(h)\xi(h)\omega(h)dh$$
is absolutely convergent and defines a continuous linear form on the space $\CC(\zg,\eta^{-1})$, which can be extended to a
linear form on $\CC^w(\zg,\eta^{-1})$ as follows.

\begin{prop}\label{major 8}
The linear form
$$f\in \CC(\zg,\eta^{-1}) \rightarrow \int_{\zh}f(h)\xi(h)\omega(h)dh$$
can be extended continuously to $\CC^w(\zg,\eta^{-1})$.
\end{prop}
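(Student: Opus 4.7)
The plan is to exploit the oscillation of the character $\xi$ to produce polynomial decay in $|\lambda(h)|$, and then invoke Lemma \ref{major 2}(3) for absolute convergence. A smooth cutoff at level $\sigma_0\leq n$ shows that $\CC(\zg,\eta^{-1})$ is dense in $\CC^w(\zg,\eta^{-1})$ in the inductive-limit topology (for $f\in \CC^{w}_{d_0}$ the difference $f-f\chi_n$ goes to zero in $\CC^{w}_{d'}$ for any $d'>d_0$), so any continuous extension is automatically unique and it suffices to construct one.

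I would fix a cocharacter $a:\BG_m\hookrightarrow Z_{G_0}$ with $\lambda(a(t) u a(t)^{-1})=t\lambda(u)$ (for instance the one used in the proof of Lemma \ref{major 3}(2)), and pick $\phi\in C_c^{\infty}(F^{\times})$ with $\int_{F^{\times}}\phi(t)\,d^{\times}t=1$. Since $a(t)\in Z_{G_0}$ is central in $M$, conjugation by $a(t)$ commutes with $H_0$, preserves the Haar measure on $\zh$, and fixes $\omega$. For $f\in \CC(\zg,\eta^{-1})$, applying the substitution $h\mapsto a(t)ha(t)^{-1}$ to the defining integral and then averaging against $\phi(t)\,d^{\times}t$ will yield, after exchanging the order of integration, the identity
\[
\int_{\zh} f(h)\xi(h)\omega(h)\,dh \;=\; \int_{\zh}\omega(h)\,K_f(h,\lambda(h))\,dh,
\]
where $K_f(h,\mu):=\int_{F^{\times}}\phi(t)\,{}^{a(t)^{-1}}\!f(h)\,\psi(t\mu)\,d^{\times}t$. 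I will take the right-hand side as the \emph{definition} of the extension for $f\in\CC^w(\zg,\eta^{-1})$.

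The core technical step is a rapid-decay estimate of the form
\[
|K_f(h,\mu)| \ll \nu(f)\,\Xi^G(h)\,\sigma_0(h)^{d}\,(1+|\mu|)^{-\delta}
\]
for any prescribed $\delta>0$, where $\nu$ is a continuous seminorm on $\CC^w(\zg,\eta^{-1})$ and $d$ depends on the level of $f$. In the archimedean case I would obtain it by repeated integration by parts in $t$ using $\partial_t\psi(t\mu)=2\pi i\mu\psi(t\mu)$; the $t$-derivatives fall on $\phi$ and on $t\mapsto f(a(t)^{-1}ha(t))$, and the latter is the left translate of $f$ by an element of $\CU(\Fg)$ depending smoothly on $t$, so the bounds are controlled by the seminorms $p_{u,v,-d'}$. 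In the $p$-adic case I would use that the Fourier transform on $F$ preserves the Schwartz--Bruhat class, together with the depth of $f$ relative to a hyperspecial subgroup. Once this estimate is established, absolute convergence of $\int_{\zh}\omega(h)K_f(h,\lambda(h))\,dh$ follows by choosing $\delta$ large, absorbing $\sigma_0(h)^{d}$ into $e^{\epsilon\sigma_0(h)}$ for small $\epsilon>0$, and invoking Lemma \ref{major 2}(3); continuity of the resulting linear form on $\CC^w(\zg,\eta^{-1})$ is read off from the same estimate.

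The hard part will be obtaining the rapid-decay bound uniformly in $h$ in the $p$-adic setting, where the level of local constancy of $t\mapsto f(a(t)^{-1}ha(t))$ a priori depends on $h$. My plan to handle this is to first use the weak Cartan decomposition in Proposition \ref{cartan 1} to reduce to $h$ lying in a fixed positive chamber, where $a(t)$-conjugation affects only the $U$-factor and the required uniformity can be extracted from the explicit way $a(t)$ scales the $U$-coordinates.
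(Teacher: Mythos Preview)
Your overall strategy---average over the $\BG_m$-action via $a(t)$ to convert the oscillation $\psi(\lambda(h))$ into Fourier decay, then appeal to Lemma~\ref{major 2}(3)---is exactly the paper's approach, and your archimedean argument by integration by parts is essentially equivalent to the paper's use of elliptic regularity (writing $\delta_1=\varphi_1\ast\Delta^m+\varphi_2$).

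However, your $p$-adic discussion misidentifies the difficulty and then proposes an unnecessary and ill-suited tool to fix it. You say the hard part is uniformity in $h$ of the local constancy of $t\mapsto {}^{a(t)^{-1}}f(h)$, and propose to use the weak Cartan decomposition of Proposition~\ref{cartan 1}. But that decomposition is for $G(F)=H(F)A_0^{+}\CK$, not for $H(F)$; it does not put $h\in H(F)$ into a ``positive chamber'', and there is no evident way to use it here. More importantly, the uniformity issue evaporates once you carry through the reduction you yourself mention (``depth of $f$ relative to a hyperspecial subgroup''): in the $p$-adic case $\CC^w(\zg,\eta^{-1})=\bigcup_K \CC^w_K(\zg,\eta^{-1})$, so it suffices to extend on each bi-$K$-invariant subspace. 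With $K$ fixed, take $K_a=a^{-1}(K\cap Z_{G_0}(F))\subset F^\times$ and let $\phi$ be the normalized characteristic function of $K_a$. Then for $t\in K_a$ one has $a(t)\in K$, hence ${}^{a(t)^{-1}}f=f$ \emph{for every} $h$, and your kernel collapses to $K_f(h,\mu)=f(h)\cdot \widehat{(|\cdot|^{-1}1_{K_a})}(\mu)$ with the Fourier transform lying in $C_c^\infty(F)$ independently of $h$. That is precisely the paper's computation, and it gives the decay $(1+|\mu|)^{-\delta}$ for all $\delta$ with no uniformity issue at all. So drop the weak Cartan decomposition and instead adapt $\phi$ to the level $K$ of $f$; the rest of your argument then goes through.
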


\begin{proof}
Let $a:\BG_m(F)\rightarrow Z_{G_0}(F)$ be a homomorphism defined by $a(t)=diag(t,t,1,1,t^{-1},t^{-1})$ in the split case, and $a(t)=diag(t,1,t^{-1})$ in the non-split case. Then we know $\lambda(a(t)ha(t)^{-1})=t\lambda(h)$ for all $h\in H(F)$ and $t\in \BG_m(F)$.

\textbf{If F is p-adic}, fix an open compact subgroup $K\subset G(F)$ (not necessarily maximal), it is enough to prove that the linear form
$$f\in \CC_K(\zg,\eta^{-1}) \rightarrow \int_{\zh}f(h)\xi(h)\omega(h)dh$$
extends continuously to $\CC_{K}^{w}(\zg,\eta^{-1})$ for all $K$. Here we define $\CC_{K}^{w}(\zg,\eta^{-1})$ to be the space of bi-$K$-invariant elements in $\CC^w(\zg,\eta^{-1})$. Let $K_a=a^{-1}(K\cap Z_{G_0}(F))$, it is an open compact subset of $F^{\times}$. Then for $f\in \CC_K(\zg,\eta^{-1})$, we have
\begin{eqnarray*}
&&\int_{\zh} f(h)\xi(h)\omega(h)dh\\
&=&mes(K_a)^{-1} \int_{K_a}\int_{\zh} f(a(t)^{-1}ha(t))\xi(h) \omega(a(t)^{-1}ha(t)) dh d^{\times}t\\
&=&mes(K_a)^{-1}\int_{\zh} f(h) \omega(h)\int_{K_a}\xi(a(t)ha(t)^{-1})d^{\times}t dh\\
&=&mes(K_a)^{-1}\int_{\zh} f(h)\omega(h) \int_{K_a}\psi(t\lambda(h))\mid t\mid^{-1}  dt dh.
\end{eqnarray*}
The function $x\in F\rightarrow \int_{K_a} \psi(tx)\mid t\mid^{-1} dt$ is the Fourier transform of the function $\mid \cdot\mid^{-1} 1_{K_a}\in C_{c}^{\infty}(F)$, so it also belongs to $C_{c}^{\infty}(F)$. So the last integral above is essentially bounded by
$$\int_{\zh} |f(h)| (1+\mid\lambda(h)\mid)^{-\delta} dh$$
for all $\delta>0$. Then by applying Lemma \ref{major 2}, we know the integral above is also absolutely convergent for $f\in \CC_{K}^{w}(\zg,\eta^{-1})$. Thus the linear form can be extended continuously to $\CC_{K}^{w}(\zg,\eta^{-1})$.

\textbf{If F=$\BR$}, recall that for $g\in G(F)$ and $f\in C^{\infty}(G(F))$, we have defined ${}^g f(x)=f(g^{-1}xg)$. Let $Ad_a$ be a smooth representation of $F^{\times}$ on $\CC^{w}(\zg,\eta^{-1})$ given by
$Ad_a(t)(f)={}^{a(t)} f.$
This induces an action of $\CU(\Fg\Fl_1(F))$ on $\CC^{w}(\zg,\eta^{-1})$, which is still denoted by $Ad_a$. Let $\Delta=1-(t\frac{d}{dt})^2\in \CU(\Fg\Fl_1(F))$. By elliptic regularity (see Lemma 3.7 of \cite{BK14}), for all integer $m\geq 1$, there exist $\varphi_1\in C_{c}^{2m-2}(F^{\times})$ and $\varphi_2\in C_{c}^{\infty}(F^{\times})$ such that
$\varphi_1 \ast \Delta^m+\varphi_2=\delta_1.$
This implies
$$Ad_a(\varphi_1)Ad_a(\Delta^m)+Ad_a(\varphi_2)=Id.$$
Therefore for all $f\in \CC(\zg,\eta^{-1})$, we have
\begin{eqnarray*}
&&\int_{\zh} f(h)\xi(h)\omega(h)dh\\
&=&\int_{\zh} (Ad_a(\varphi_1)Ad_a(\Delta^m)f)(h)\xi(h) \omega(h)dh \\
&&+\int_{\zh}(Ad_a(\varphi_2)f)(h)\xi(h)\omega(h)dh\\
&=&\int_{\zh}(Ad_a(\Delta^m)f)(h) \omega(h)\int_{F^{\times}} \varphi_1(t)\xi(a(t)ha(t)^{-1})\delta_P(a(t))d^{\times}t dh\\
&&+\int_{\zh} f(h)\omega(h) \int_{F^{\times}} \varphi_2(t)\xi(a(t)ha(t)^{-1})\delta_P(a(t))d^{\times}t dh\\
&=&\int_{\zh}(Ad_a(\Delta^m)f)(h) \omega(h)\int_{F^{\times}} \varphi_1(t)\psi(t\lambda(h))\delta_P(a(t))\mid t\mid^{-1}dt dh\\
&&+\int_{\zh} f(h)\omega(h) \int_{F^{\times}} \varphi_2(t)\psi(t\lambda(h))\delta_P(a(t)) \mid t\mid^{-1} dt dh.
\end{eqnarray*}
Here the second equation is to take the transform $h\mapsto a(t)^{-1}ha(t)$ in both integrals and the extra $\delta_P(a(t))$ is its Jacobian.
For $i=1,2$, the functions $f_i: x\in F\rightarrow \int_{F}\varphi_i(t)\delta_P(a(t))|t|^{-1} \psi(tx)dt$ are the Fourier transforms of the functions $t\rightarrow \varphi_i(t)\delta_P(a(t)) |t|^{-1} \in C_{c}^{2m-2}(F)$. Hence $f_1$ and $f_2$ are essentially bounded by
$(1+|x|)^{-2m+2}$. By applying Lemma \ref{major 2} again, we know for all $m\geq 2$, the last two integrals above are absolutely convergent for all $f\in \CC^{w}(\zg,\eta^{-1})$. Therefore the linear form can be extend continuously to $\CC^{w}(\zg,\eta^{-1})$.
\end{proof}

Denote by $\CP_{H,\xi}$ the continuous linear form on $\CC^{w}(\zg,\eta^{-1})$ defined above. i.e.
$$f\in \CC^{w}(\zg,\eta^{-1})\rightarrow \int_{\zh}^{\ast}f(h)\xi(h)\omega(h) dh.$$

\begin{lem}\label{linear form}
\begin{enumerate}
\item For all $f\in \CC^{w}(\zg,\eta^{-1})$, and $h_0,h_1\in H(F)$, we have
$$\CP_{H,\xi}(L(h_0)R(h_1) f)=\xi(h_0)\omega(h_0)\xi(h_1)^{-1}\omega(h_1)^{-1} \CP_{H,\xi}(f)$$
where $R$ (resp. $L$) is the right (resp. left) translation.
\item Let $\varphi\in C_{c}^{\infty}(F^{\times})$, and set $\varphi'(t)=\mid t\mid^{-1}\delta_P(a(t))\varphi(t)$, then both $\varphi$ and $\varphi'$ can be viewed as elements in $C_{c}^{\infty}(F)$. Let $\hat{\varphi'}$ be the Fourier transform of $\varphi'$ with respect to $\psi$. Then we have
$$\CP_{H,\xi}(Ad_a(\varphi)f)=\int_{\zh} f(h) \omega(h) \hat{\varphi'}(\lambda(h)) dh$$
for all $f\in \CC^{w}(\zg,\eta^{-1})$. Note that the last integral is absolutely convergent by Lemma \ref{major 2}
\end{enumerate}
\end{lem}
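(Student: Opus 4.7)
I would begin by noting that $\xi$ and $\omega$ are genuine characters of $H(F) = H_0(F) \ltimes U(F)$: the former is well-defined on $H$ because $H_0$ is exactly the stabilizer of $\xi\mid_U$, and the latter is extended from $H_0$ by declaring it trivial on $U$. For part (1), I would first establish the identity on the dense subspace $\CC(\zg,\eta^{-1})$, where $\CP_{H,\xi}(f) = \int_{\zh} f(h)\xi(h)\omega(h)\, dh$ converges absolutely by Lemma \ref{major 2}. The substitution $h \mapsto h_0 h h_1^{-1}$, which preserves $dh$ on $Z_H \backslash H$ by bi-invariance (and descends since $Z_H$ is central), yields the desired transformation since $\xi(h_0 h h_1^{-1}) = \xi(h_0)\xi(h)\xi(h_1)^{-1}$ and similarly for $\omega$. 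The identity then extends to all of $\CC^w(\zg,\eta^{-1})$ by continuity: the operator $L(h_0)R(h_1)$ is continuous on $\CC^w$, and $\CP_{H,\xi}$ is continuous by its construction in Proposition \ref{major 8}.

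For part (2), my plan is again to prove the identity on $\CC(\zg,\eta^{-1})$ and then extend by continuity. Writing $Ad_a(\varphi) f(h) = \int_{F^\times} \varphi(t) f(a(t)^{-1} h a(t))\, d^\times t$, the compact support of $\varphi$ in $F^\times$ combined with the absolute convergence of the integral defining $\CP_{H,\xi}(f)$ permits a Fubini interchange, reducing the computation to evaluating the inner integral $\int_{\zh} f(a(t)^{-1} h a(t)) \xi(h)\omega(h)\, dh$ for each fixed $t$. I would then perform the change of variable $h \mapsto a(t) h a(t)^{-1}$. Three facts make this transparent: (i) since $a(t) \in Z_{G_0}(F) \subset M$, it commutes pointwise with the diagonally-embedded $H_0$ and normalizes $U$; in particular $\omega$ is invariant under this conjugation because $\omega$ is trivial on $U$ and depends only on the $H_0$-component; (ii) the conjugation $u \mapsto a(t) u a(t)^{-1}$ on $U$ has Jacobian $\delta_P(a(t))$; (iii) the explicit computation $a(t) u(X,Y,Z) a(t)^{-1} = u(tX, tY, t^2 Z)$ gives $\lambda(a(t) h a(t)^{-1}) = t\,\lambda(h)$, so $\xi(a(t) h a(t)^{-1}) = \psi(t\lambda(h))$. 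Collecting these factors together with $d^\times t = |t|^{-1}\, dt$, the $t$-integral collapses to $\int_F \varphi(t)|t|^{-1}\delta_P(a(t)) \psi(t\lambda(h))\, dt = \int_F \varphi'(t)\psi(t\lambda(h))\, dt = \hat{\varphi'}(\lambda(h))$, which yields the stated formula on the dense subspace.

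To extend to $\CC^w(\zg,\eta^{-1})$, note that the left-hand side is continuous in $f$ because $Ad_a(\varphi)$ is continuous on $\CC^w$ and $\CP_{H,\xi}$ is continuous. For the right-hand side, since $\varphi' \in C_c^\infty(F)$, its Fourier transform $\hat{\varphi'}$ is rapidly decreasing, so $|\hat{\varphi'}(\lambda(h))| \ll (1+|\lambda(h)|)^{-N}$ for any $N > 0$; combined with Lemma \ref{major 2}(3), this shows the integral $\int_{\zh} f(h)\omega(h)\hat{\varphi'}(\lambda(h))\, dh$ converges absolutely and depends continuously on $f \in \CC^w$. The identity therefore propagates from the dense subspace. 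The only calculation requiring genuine care is the change-of-variable bookkeeping above, and even this is routine once the explicit adjoint action $a(t) u(X,Y,Z) a(t)^{-1} = u(tX, tY, t^2 Z)$ is in hand; there is no serious obstacle beyond unwinding definitions and applying Fubini.
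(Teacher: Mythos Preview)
Your proposal is correct and follows essentially the same approach as the paper: reduce to the dense subspace $\CC(\zg,\eta^{-1})$ by continuity of both sides, then for (1) perform the change of variable $h\mapsto h_0 h h_1^{-1}$, and for (2) interchange the $h$- and $t$-integrals and apply the substitution $h\mapsto a(t)ha(t)^{-1}$ using $\lambda(a(t)ha(t)^{-1})=t\lambda(h)$ and the Jacobian $\delta_P(a(t))$. Your write-up is in fact more explicit than the paper's (you spell out the Fubini justification, the adjoint action on $u(X,Y,Z)$, and the continuity of the right-hand side via the decay of $\hat{\varphi'}$), but the underlying argument is identical.
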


\begin{proof}
Since both sides of the equality is continuous in $\CC^{w}(\zg)$, it is enough to check (1) and (2) for $f\in \CC(\zg,\eta^{-1})$. In this case, $\CP_{H,\xi}(f)=\int_{\zh} f(h)\xi(h)\omega(h)dh$. Then (1) follows from change variables in the integral. For (2), we have
\begin{eqnarray*}
\CP_{H,\xi}(Ad_a(\varphi)f)&=&\int_{\zh} Ad_a(\varphi)(f) \xi(h)\omega(h) dh\\
&=&\int_{\zh} f(h)\omega(h)\int_{F^{\times}} \varphi(t)\xi(a(t)ha(t)^{-1}) \delta_P(a(t)) d^{\times}t dh\\
&=&\int_{\zh} f(h)\omega(h)\int_F \varphi(t)\psi(t\lambda(h)) \delta_P(a(t))|t|^{-1} dtdh\\
&=&\int_{\zh} f(h) \omega(h) \hat{\varphi'}(\lambda(h)) dh.
\end{eqnarray*}
This finishes the proof of the Lemma.
\end{proof}

\subsection{Definition and properties of $\CL_{\pi}$}
Let $\pi$ be a tempered representation of $G(F)$ with central character $\eta$. For all $T\in End(\pi)^{\infty}$, define
$$\CL_{\pi}(T)=\CP_{H,\xi}(Trace(\pi(g^{-1})T))=\int_{\zh}^{\ast} Trace(\pi(h^{-1})T)\xi(h)\omega(h)dh.$$
By Proposition \ref{major 8}, together with the fact that the map $T\in End(\pi)^{\infty}\rightarrow (g\rightarrow Trace(\pi(g^{-1}) T)\in \CC^w(\zg,\eta^{-1})$ is continuous, we know $\CL_{\pi}: End(\pi)^{\infty}\rightarrow \BC$ is a continuous linear form. By Lemma \ref{linear form}, for any $h,h'\in H(F)$, we have
\begin{equation}\label{L 1}
\CL_{\pi}(\pi(h)T\pi(h'))=\xi(hh')\omega(hh')\CL_{\pi}(T).
\end{equation}
For $e\in \pi^{\infty},e'\in \bar{\pi}^{\infty}$, define $T_{e,e'}\in End(\pi)^{\infty}$ to be
$e_0\in \pi\mapsto (e_0,e')e.$
Set $\CL_{\pi}(e,e')=\CL_{\pi}(T_{e,e'})$, then we have
$$\CL_{\pi}(e,e')=\int_{\zh}^{\ast} (e,\pi(h)e')\omega(h)\xi(h) dh.$$
If we fix $e'$, by \eqref{L 1}, the map $e\in \pi^{\infty}\rightarrow \CL_{\pi}(e,e')$ belongs to $Hom_H(\pi^{\infty},\omega\otimes \xi)$. Since $Span \{T_{e,e'}\mid e\in \pi^{\infty},e'\in \bar{\pi}^{\infty}\}$ is dense in $End(\pi)^{\infty}$ (in p-adic case, they are equal), we have that
$\CL_{\pi}\neq 0\Rightarrow m(\pi)\neq 0.$
The purpose of this section is to prove the other direction.
\begin{thm}\label{thm 1}
For all $\pi\in Temp(G,\eta)$, we have
$$\CL_{\pi}\neq 0\iff m(\pi)\neq 0.$$
\end{thm}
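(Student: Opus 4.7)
The forward direction $\CL_\pi \neq 0 \Rightarrow m(\pi) \neq 0$ follows from the $H$-equivariance (5.1), since $e \mapsto \CL_\pi(e, e')$ is then a nonzero element of $\Hom_H(\pi^\infty, \omega \otimes \xi)$. My plan for the converse combines parabolic descent with the Plancherel formula.

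\textbf{Reduction to discrete series.} Write $\pi = I_P^G(\tau)$ with $\tau$ a discrete series of a Levi $M$. If $P$ corresponds to a ``type II'' partition, Theorem \ref{thm 2} gives $\CL_\pi \neq 0$ unconditionally. Otherwise $P$ is of ``type I'' (shape $(6)$, $(4,2)$, or $(2,2,2)$) and admits a Ginzburg-Rallis analog on $M$; the results of Sections 5.3--5.4 then translate the nonvanishing of $\CL_\pi$ into that of the analogous $\CL_\tau$, while Frobenius reciprocity (combined with $H \cap P = H_M \cdot U$ and the restriction of $\omega \otimes \xi$) does the same for $m(\pi) \leftrightarrow m(\tau)$. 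Iterating, we reduce to the case where $\pi$ is itself a discrete series.

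\textbf{Discrete series via Plancherel.} Assume $\pi$ is a discrete series with $m(\pi) \neq 0$, and fix a nonzero $\ell \in \Hom_H(\pi^\infty, \omega \otimes \xi)$. For each $T \in \End(\pi)^\infty$ the matricial Paley--Wiener theorem of Section 2.8, combined with the fact that a discrete series is isolated in $Temp(G, \eta)$, produces $f_T \in \CC(\zg, \eta^{-1})$ with $\pi(f_T) = T$ and $\sigma(f_T) = 0$ for all other tempered $\sigma$. Substituting the Plancherel formula into the definition of $\CP_{H,\xi}(f_T)$ and exchanging the order of integration---legitimate thanks to Proposition \ref{major 8} together with the estimates of Proposition \ref{major 4}---yields the key identity
\begin{equation*}
\CP_{H,\xi}(f_T) \;=\; c \, \CL_\pi(T)
\end{equation*}
for a positive constant $c$ (involving the formal degree of $\pi$ and the combinatorial weights from Section 2.8). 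Thus $\CL_\pi \equiv 0$ is equivalent to $\CP_{H,\xi}$ annihilating the $\pi$-isotypic part of $\CC(\zg, \eta^{-1})$.

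\textbf{Extracting nonvanishing from $\ell$.} To exploit $\ell \neq 0$, introduce the regularized $H$-projection $P_H = \int_{\zh}^{*} \omega(h)^{-1} \xi(h)^{-1} \pi(h)\,dh$ as a continuous operator on $\pi^\infty$, using the regularization of Proposition \ref{major 8}. Then $\ell \circ P_H \in \Hom_H(\pi^\infty, \omega \otimes \xi)$, hence by multiplicity one (\cite{N06}, \cite{JSZ11}) equals $c_\pi \cdot \ell$ for some scalar $c_\pi$, and $c_\pi \neq 0$ is exactly equivalent to $\CL_\pi \neq 0$. The main obstacle---where the most delicate analysis is concentrated---is showing this scalar $c_\pi$ is nonzero. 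I would compute it by evaluating the displayed Plancherel identity on a pseudo-coefficient of $\pi$ from Lemma \ref{pseudo coefficient}, and applying Arthur's local trace formula (Theorem \ref{local trace formula}): the strong cuspidality of the pseudo-coefficient converts the relevant spectral side into a nonzero multiple of the weighted character $\theta_f(\pi)$. The nontrivial interchange of integrals in the weakly convergent regime (since the Ginzburg-Rallis pair is not strongly tempered) is handled by the estimates of Section 4 and the regularization device of Section 5.1.
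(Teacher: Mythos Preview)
Your reduction step already has a problem: the equivalence $m(\pi)\leftrightarrow m(\tau)$ that you attribute to ``Frobenius reciprocity'' is not available at this point. In the paper the equality $m(\pi)=m(\tau)$ (Corollary~\ref{parabolic induction 5}) is a \emph{consequence} of Theorem~\ref{thm 1}, not an ingredient in its proof. There is no elementary second-adjunction argument here because the induced representation is unitary (not compact) induction and the period is only defined via regularization; getting $m(\pi)\neq 0\Rightarrow m(\tau)\neq 0$ without going through $\CL$ would itself be a substantial result.

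The more serious gap is in your discrete-series step. Your displayed identity $\CP_{H,\xi}(f_T)=c\,\CL_\pi(T)$ is correct (it is Lemma~\ref{lemma 1}(4) specialized to $f_T$ isolated at $\pi$), but it is tautological for your purpose: if $\CL_\pi\equiv 0$ then both sides vanish for every $T$, and in particular for the pseudo-coefficient. Arthur's ordinary local trace formula (Theorem~\ref{local trace formula}) says nothing about $\CL_\pi$ or $m(\pi)$, so it cannot rescue this; and the \emph{relative} trace formula whose spectral side does involve $m(\pi)$ is proved in Section~7 \emph{using} Theorem~\ref{thm 1} (via Corollary~\ref{parabolic induction 5}), so invoking it here is circular. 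Your operator $P_H=\int^\ast \omega^{-1}\xi^{-1}\pi(h)\,dh$ is also not well-defined as stated: Proposition~\ref{major 8} regularizes scalar integrals of functions in $\CC^w$, not operator-valued integrals on $\pi^\infty$.

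The missing idea is the paper's two-way computation of $\int_{\zg} l(\pi(g)e)f(g)\,dg$. One evaluation gives $l(\pi(f)e)$; the other, after inserting the Plancherel decomposition of $f$ via Lemma~\ref{lemma 1}(4), gives
\[
\int_{H(F)\backslash G(F)} l(\pi(x)e)\int_{Temp(G,\eta)} \CL_{\pi'}\bigl(\pi'(f)\pi'(x)^{-1}\bigr)\mu(\pi')\,d\pi'\,dx.
\]
Choosing $T^0$ supported on the connected component $\CO$ through $\pi$ with $l(T^0_\pi e_0)\neq 0$, the left side is nonzero, hence $\CL_{\pi_\lambda}\neq 0$ for \emph{some} $\lambda\in i\Fa_{L,0}^\ast$. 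For a discrete series this already gives $\CL_\pi\neq 0$ (the orbit is a point); in general one descends to $\CL_{\sigma_\lambda}\neq 0$, uses Lemma~\ref{reduce model}/Theorem~\ref{thm 2} to pass to $\CL_\sigma\neq 0$, and re-ascends. This is the mechanism that actually links $l\neq 0$ to $\CL_\pi\neq 0$; your proposal never produces such a link.
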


Our proof for this result is based on the method developed by Waldspurger (\cite[Proposition 5.7]{W12}) and by Beuzart-Plessis
(\cite[Theorem 8.2.1]{B15}) for the GGP models. See also \cite[Theorem 6.2.1]{SV12}. The key ingredient in the proof is the Plancherel formula together with the fact that the nonvanishing property of $\CL_{\pi}$ is invariant under the parabolic induction and the unramified twist. For the rest of this section, we discuss some basic properties of $\CL_{\pi}$.

The operator $\CL_{\pi}$ defines a continuous linear map
$$L_{\pi}:\pi^{\infty}\rightarrow \bar{\pi}^{-\infty},\; e\rightarrow \CL_{\pi}(e,\cdot)$$
where $\bar{\pi}^{-\infty}$ is the topological dual of $\pi^{\infty}$ endowed with the strong topology. The image of $L_{\pi}$ belongs to $(\bar{\pi}^{-\infty})^{H,\xi}=Hom_H(\pi^{\infty},\xi)$. So if $\pi$ is irreducible, the image is of dimension less or equal to 1. Let $T\in End(\pi)^{\infty}$, it can be uniquely extended to a continuous operator $T:\bar{\pi}^{-\infty}\rightarrow \pi^{\infty}$. Then we have the following two operators which are both of finite rank:
$$TL_{\pi}:\pi^{\infty}\rightarrow \pi^{\infty}, \;L_{\pi}T:\bar{\pi}^{-\infty}\rightarrow \bar{\pi}^{-\infty}.$$
In particular, they are of trace class. It is easy to see that
\begin{equation}\label{8.1}
Trace(TL_{\pi})=Trace(L_{\pi}T)=\CL_{\pi}(T).
\end{equation}

\begin{lem}\label{lemma 1}
With the notation above, the following hold.
\begin{enumerate}
\item The map
$\pi \in \CX_{temp}(G,\eta)\rightarrow L_{\pi}\in Hom(\pi^{\infty},\bar{\pi}^{-\infty})$
is smooth in the following sense: For all parabolic subgroup $Q=LU_{Q}$ of $G$, $\sigma\in \Pi_2(L)$, and for all maximal compact subgroup $K$ of $G(F)$, the map
$\lambda\in i\Fa_{L,0}^{\ast}\rightarrow \CL_{\pi_{\lambda}}\in End(\pi_{\lambda})^{-\infty}\simeq End(\pi_K)^{-\infty}$
is smooth, here $\pi_{\lambda}=I_{Q}^{G}(\sigma_{\lambda})$ and $\pi_K=I_{Q\cap K}^{K}(\sigma)$.
\item For $\pi\in Temp(G,\eta)$, and for all $S,T\in End(\pi)^{\infty}$, we have $SL_{\pi}T\in End(\pi)^{\infty}$, and
$\CL_{\pi}(S)\CL_{\pi}(T)=\CL_{\pi}(SL_{\pi} T).$
\item Let $S,T\in \CC(Temp(G,\eta))$, then the section $\pi\in Temp(G,\eta)\mapsto S_{\pi}L_{\pi}T_{\pi}\in End(\pi)^{\infty}$ belongs to $C^{\infty}(Temp(G,\eta))$.
\item Let $f\in \CC(\zg,\eta^{-1})$ and assume that its Fourier transform $\pi\in Temp(G,\eta)\rightarrow \pi(f)$ is compactly supported (this is always true in p-adic case). Then we have
$$\int_{\zh}f(h)\xi(h) \omega(h)dh=\int_{Temp(G,\eta)} \CL_{\pi}(\pi(f))\mu(\pi) d\pi$$
with both integrals being absolutely convergent.
\item For $f\in \CC(\zg,\eta^{-1})$ and $f'\in \CC(\zg,\eta)$, assume that the Fourier transform of $f$ is compactly supported. Then we have
\begin{eqnarray*}
&&\int_{\CX_{temp}(G,\eta)} \CL_{\pi}(\pi(f)) \overline{\CL_{\pi}(\pi(\bar{f'}))} \mu(\pi) d\pi\\
&=&\int_{\zh}\int_{\zh}\int_{\zg} f(hgh')f'(g)dg \xi(h')\omega(h') dh'\xi(h)\omega(h)dh
\end{eqnarray*}
where the left hand side is absolutely convergent and the right hand side is convergent in that order but is not necessarily absolutely convergent.
\end{enumerate}
\end{lem}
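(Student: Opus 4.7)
The plan is to combine three ingredients: the rank-at-most-one structure of $L_\pi$ forced by the multiplicity one results of Nien and Jiang--Sun--Zhu, the continuity of $\CP_{H,\xi}$ on $\CC^w(\zg,\eta^{-1})$ established in Proposition \ref{major 8}, and the matricial Plancherel / Paley--Wiener formalism of Section 2.8. For (1), I realize every $\pi_\lambda = I_Q^G(\sigma_\lambda)$ on the fixed underlying space $\pi_K = I_{Q\cap K}^K(\sigma_K)$; standard estimates for induced matrix coefficients show that the map $(\lambda,T) \longmapsto (g \mapsto \text{Tr}(\pi_\lambda(g^{-1})T))$ is smooth from $i\Fa_{L,0}^\ast \times End(\pi_K)^{-\infty}$ into $\CC^w(\zg,\eta^{-1})$, and composing with the continuous linear form $\CP_{H,\xi}$ then yields the required smoothness of $\lambda \mapsto \CL_{\pi_\lambda}$. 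Part (3) is immediate from this, together with the smoothness of Paley--Wiener sections in $\CC(Temp(G,\eta))$ and the fact that composition with the (now smooth) family $L_\pi$ preserves smoothness.

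For (2), note that $L_\pi(e) \in Hom_H(\pi^\infty, \omega \otimes \xi)$ by \eqref{L 1}, so the multiplicity one results force $L_\pi$ to have rank at most one when $\pi$ is irreducible; writing $L_\pi(e) = \bar\ell(e)\,v$ with $v, \bar\ell \in \bar\pi^{-\infty}$, one computes $(SL_\pi T)(e) = \bar\ell(Te)\, Sv$. Since $S$ is smoothing $Sv \in \pi^\infty$, and since $T$ is smoothing the transpose $T^t\bar\ell$ lies in $\bar\pi^\infty$, so $SL_\pi T \in End(\pi)^\infty$ and is of rank at most one. A direct application of \eqref{8.1} gives $\CL_\pi(S) = \bar\ell(Sv)$, $\CL_\pi(T) = \bar\ell(Tv)$, and $\CL_\pi(SL_\pi T) = \text{Tr}(SL_\pi T L_\pi) = \bar\ell(Tv)\,\bar\ell(Sv)$, proving the product identity. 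The general tempered case reduces to the irreducible one by decomposing $\pi$ into irreducible summands.

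For (4), I substitute the matricial Plancherel expansion $f(h) = \int_{Temp(G,\eta)} \text{Tr}(\pi(h^{-1})\pi(f))\,\mu(\pi)\,d\pi$ into the left-hand side and interchange with the regularized $\int^\ast$-integration; the inner integral then becomes $\CL_\pi(\pi(f))$ by definition. For (5), the same strategy applies: the identity $\pi\bigl(g \mapsto f(hgh')\bigr) = \pi(h^{-1})\pi(f)\pi(h'^{-1})$ combined with the Plancherel inner-product formula gives
\[
\int_\zg f(hgh')f'(g)\,dg = \int_{Temp(G,\eta)} \text{Tr}\bigl(\pi(h^{-1})\pi(f)\pi(h'^{-1})\pi(\bar{f'})^\ast\bigr)\,\mu(\pi)\,d\pi,
\]
and after interchanging with the outer regularized double integral, the inner iterated integral reduces, via cyclicity of the trace, part (2), and the identity $\CL_\pi(A^\ast) = \overline{\CL_\pi(A)}$ (which follows from the change of variable $h \mapsto h^{-1}$), to $\CL_\pi(\pi(f)\, L_\pi\, \pi(\bar{f'})^\ast) = \CL_\pi(\pi(f))\,\overline{\CL_\pi(\pi(\bar{f'}))}$, yielding (5). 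The main obstacle will be rigorously justifying the two interchanges of integration in (4) and (5), especially in the archimedean setting where the $\pi$-integral is not a finite sum: one must control the integrand uniformly via the Paley--Wiener semi-norms $p_{D,u,v,k}$ on the support of $\pi(f)$ and use the continuity of $\CP_{H,\xi}$ on $\CC^w$ to produce the dominated convergence needed for Fubini.
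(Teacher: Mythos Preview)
Your treatment of (1)--(4) is essentially the paper's approach (which in turn defers to Beuzart-Plessis \cite{B15}): smoothness via matrix-coefficient estimates plus continuity of $\CP_{H,\xi}$, the rank-one structure from multiplicity one, and for (4) the Plancherel decomposition $f=\int \varphi(f,\pi)\mu(\pi)d\pi$ followed by applying the continuous form $\CP_{H,\xi}$. One small correction: in (4) the left-hand side is an \emph{ordinary} absolutely convergent integral (since $f\in\CC$, not merely $\CC^w$), so there is no regularized $\int^\ast$ to interchange with; the point is rather that $\CP_{H,\xi}$, being continuous on $\CC^w$, commutes with the Bochner integral over $Temp(G,\eta)$.

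For (5) the paper takes a cleaner route than your direct Fubini plan, and the difference matters because the statement explicitly warns that the triple integral is \emph{not} absolutely convergent. The paper first rewrites the inner two integrals as $(f'^{\vee}\ast L(h^{-1})f)(h')$, which is a Schwartz function of $h'$, so (4) applies and yields $\int_{Temp}\tr(\pi(h^{-1})\,\pi(f)L_\pi\pi(f'^{\vee}))\,\mu(\pi)d\pi$. The crucial step is then to observe, via part (3), that $\pi\mapsto \pi(f)L_\pi\pi(f'^{\vee})$ lies in $\CC(Temp(G,\eta))$, hence by Paley--Wiener equals $\pi(F)$ for some $F\in\CC(\zg,\eta^{-1})$. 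This makes the remaining expression equal to $F(h)$ by Plancherel, and since $F$ is Schwartz, one can apply (4) \emph{a second time} to the outer $h$-integral. Your approach of interchanging the $\pi$-integral with both $h,h'$ integrals simultaneously runs into exactly the non-absolute-convergence obstacle you flag at the end; the paper's two-step reduction via Paley--Wiener sidesteps it entirely and simultaneously proves the claimed iterated convergence.
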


\begin{proof}
(1), (2) and (3) follow from the same argument as Lemma 8.2.1 of \cite{B15}, we will skip it here. The proof of (4) and (5) is also similar to the loc. cit. (except that we need to take care of the center of the group), we include the proof for completion.

For (4), by Lemma \ref{major 2}, the left hand side is absolutely convergent. Since the Fourier transform of $f$ is compactly supported, the right hand side is also absolutely convergent. Let $\varphi(f,\pi)(g)=Trace(\pi(g^{-1})\pi(f))$, which is a function in $\CC^w(\zg,\eta^{-1})$. By the Plancherel formula in Section 2.8, we have
$$f=\int_{Temp(G,\eta)} \varphi(f,\pi)\mu(\pi) d\pi.$$
By applying the operator $\CP_{H,\xi}$ on both side, we have
$$\CP_{H,\xi}(f)=\int_{Temp(G,\eta)} \CP_{H,\xi}(\varphi(f,\pi))\mu(\pi) d\pi.$$
This proves (4).

For (5), let $f'^{\vee}(g)=f'(g^{-1})$. Then right hand side is equal to
\begin{equation}\label{8.11}
\int_{\zh}\int_{\zh}(f'^{\vee}\ast L(h^{-1})f)(h') \xi(h')\omega(h')dh'\xi(h)\omega(h)dh.
\end{equation}
The Fourier transform of $f$ is compactly supported, so is $f'^{\vee}\ast L(h^{-1})f$. By applying part (4) to $f'^{\vee}\ast L(h^{-1})f$, we know the inner integral in \eqref{8.11} is absolutely convergent and we have
\begin{eqnarray*}
&&\int_{\zh}(f'^{\vee}\ast L(h^{-1})f)(h') \xi(h')\omega(h')dh' \\
&=&\int_{Temp(G,\eta)} \CL_{\pi}(\pi(f'^{\vee}) \pi(h^{-1})\pi(f)) \mu(\pi) d\pi \\
&=&\int_{Temp(G,\eta)} Trace(\pi(h^{-1})\pi(f)L_{\pi}\pi(f'^{\vee})) \mu(\pi)d\pi.
\end{eqnarray*}
The last equality is because of \eqref{8.1}. By part (3), the section $\pi\in Temp(G,\eta)\mapsto \pi(f)L_{\pi}\pi(f'^{\vee})$ is smooth,
and is also compactly supported, and hence it belongs to $\CC(Temp(G,\eta))$. By the matrical Paley-Wiener Theorem in Section 2.8, it is a Fourier transform of a Harish-Chandra-Schwartz function. Applying part (4) to such function, we know the exterior integral of \eqref{8.11} is absolutely convergent and the whole expression is equal to
$$\int_{Temp(G,\eta)} \CL_{\pi}(\pi(f)L_{\pi}\pi(f'^{\vee}))\mu(\pi) d\pi.$$
By part (2) and the fact that $\CL_{\pi}(\pi(f'^{\vee}))=\overline{\CL_{\pi}(\pi(\bar{f'}))}$, \eqref{8.11} is then equal to
$$\int_{\CX_{temp}(G,\eta)} \CL_{\pi}(\pi(f)) \overline{\CL_{\pi}(\pi(\bar{f'}))} \mu(\pi) d\pi.$$
This finishes the proof of the lemma.
\end{proof}

The next lemma is about the asymptotic properties for elements in $Hom_H(\pi,\omega\otimes\xi)$.
\begin{lem}\label{lemma 2}
\begin{enumerate}
\item Let $\pi$ be a tempered representation of $G(F)$ with central character $\eta$ and $l\in Hom_H(\pi,\omega\otimes\xi)$ be a continuous $(H,\omega\otimes\xi)$-equivariant linear form. Then there exist $d>0$ and a continuous semi-norm $\nu_d$ on $\pi$ such that
$$| l(\pi(x)e)| \leq \nu_d(e)\hc(x)\nor(x)^d$$
for all $e\in \pi$ and $x\in H(F)\back G(F)$.
\item For all $d>0$, there exist $d'>0$ and a continuous semi-norm $\nu_{d,d'}$ on $\CC_{d}^{w}(\zg,\eta^{-1})$ such that
$$\mid \CP_{H,\xi}(R(x)L(y)\varphi)\leq \nu_{d,d'}(\varphi) \hc(x)\hc(y)\nor(x)^{d'}\nor(y)^{d'}$$
for all $\varphi \in \CC_{d}^{w}(\zg,\eta^{-1})$ and $x,y\in H(F)\back G(F)$.
\end{enumerate}
\end{lem}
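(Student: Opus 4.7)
The plan is to treat both parts by first reducing to the dominant Cartan $A_0^+$ via the weak Cartan decomposition of Proposition \ref{cartan 1}, which gives $G(F)=H(F)A_0^+\CK$ for some compact $\CK\subset G(F)$. In part (1), the compact factor is absorbed by the continuity of $\pi$ (since $\nu_d(\pi(k)e)$ varies boundedly over $k\in \CK$) and the $H$-factor is absorbed by the $(H,\omega\xi)$-equivariance of $l$. In part (2), the same reduction applies to both $x$ and $y$ using that $R(k)$, $L(k)$ are continuous on $\CC_d^w(\zg,\eta^{-1})$ and that $\CP_{H,\xi}$ is $(H,\omega\xi)$-equivariant on each side by Lemma \ref{linear form}(1). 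So it suffices to bound $|l(\pi(a)e)|$ in part (1) and $|\CP_{H,\xi}(R(a)L(b)\varphi)|$ in part (2) for $a,b\in A_0^+$.

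For part (1), since $\pi$ is tempered, every matrix coefficient is majorized by $\Xi^G$ times a continuous seminorm. Using a Dixmier--Malliavin decomposition $e=\pi(f)e_0$ (or, in the $p$-adic case, writing $e$ as a finite sum $\pi(f_i)e_i$ with $f_i\in C_c^\infty(G)$), the function $a\mapsto l(\pi(a)e)$ on $A_0^+$ inherits the asymptotic behavior dictated by the exponents of $\pi$ along $P_0$. Since these exponents are bounded by $\delta_{P_0}^{1/2}$ for a tempered $\pi$, one obtains
\[
|l(\pi(a)e)|\ll \nu_d(e)\,\delta_{P_0}(a)^{1/2}\Xi^{G_0}(a)\sigma_0(a)^d,\qquad a\in A_0^+.
\]
By Proposition \ref{major 4}(2) and Lemma \ref{major 1}(2), the right-hand side is equivalent to $\nu_d(e)\hc(a)\nor(a)^{d'}$ on $A_0^+$, which completes part (1) after reinstating the $H$ and $\CK$ factors.

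For part (2), unfold the normalized integral as $\CP_{H,\xi}(R(x)L(y)\varphi)=\int_{\zh}^{\ast}\varphi(y^{-1}hx)\xi(h)\omega(h)\,dh$ and apply the Fourier-regularization trick from the proof of Proposition \ref{major 8}: conjugating $h$ by the one-parameter subgroup $a(t)$ in $Z_{G_0}$ transforms $\xi(h)$ into $\psi(t\lambda(h))$, and Lemma \ref{linear form}(2) then rewrites the linear form as
\[
\CP_{H,\xi}(R(x)L(y)\varphi)=\int_{\zh}(\widetilde{\varphi})(y^{-1}hx)\,\widehat{\varphi_0'}(\lambda(h))\,\omega(h)\,dh,
\]
where $\widetilde{\varphi}$ is obtained from $\varphi$ by a convolution (or by an elliptic differential operator $\Delta^m$ in the real case) that increases the seminorm in a controlled way, and $\widehat{\varphi_0'}\in \mathcal{S}(F)$ is Schwartz. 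Applying the basic bound $|\widetilde{\varphi}(g)|\leq\nu'(\varphi)\Xi^G(g)\sigma_0(g)^d$, the problem is reduced to estimating the bilateral integral $\int_{\zh}\Xi^G(y^{-1}hx)\sigma_0(y^{-1}hx)^{d}(1+|\lambda(h)|)^{-\delta}dh$. After a further reduction of $x,y$ to $A_0^+$ via Cartan and using $\sigma_0(y^{-1}hx)\ll \sigma_0(h)+\sigma_0(x)+\sigma_0(y)$, this integral is majorized by the product of two integrals of the form treated in Lemma \ref{major 3}(1) and Proposition \ref{major 4}(5)--(7), yielding the desired bound $\hc(x)\hc(y)\nor(x)^{d'}\nor(y)^{d'}$.

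The main obstacle will be in part (2): coordinating the Fourier-regularization, which is adapted to the central conjugation by $a(t)\in Z_{G_0}$, with the bilateral translations $R(x)L(y)$. Since $a(t)$ does not centralize $G$, one must carry out the regularization before translating by $x,y$, then keep track of how the Cartan reduction interacts with the elliptic differential operator $\Delta^m$ (archimedean case) or the convolution kernel $\varphi_1$ ($p$-adic case). This is where the wavefront spherical property of Proposition \ref{wavefront} and the sharp exponential decay built into Lemma \ref{major 3}(2) and Proposition \ref{major 4}(7) are essential; without them the polynomial growth of $\sigma_0(y^{-1}hx)^d$ would overwhelm the decay of $\widehat{\varphi_0'}(\lambda(h))$ and the integral would fail to split as a product of $\hc(x)$ and $\hc(y)$.
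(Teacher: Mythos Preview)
Your overall architecture---reduce via the weak Cartan decomposition of Proposition~\ref{cartan 1} to $A_0^+$, absorb $H$-translates by equivariance and $\CK$-translates by continuity, then estimate on the torus---is exactly the route the paper (via \cite[Lemma~8.3.1]{B15}) takes. But the heart of part~(1), the bound on $A_0^+$, is not justified by what you wrote.

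The sentence ``the function $a\mapsto l(\pi(a)e)$ on $A_0^+$ inherits the asymptotic behavior dictated by the exponents of $\pi$ along $P_0$'' is the whole lemma, and your Dixmier--Malliavin step does not deliver it. Writing $e=\pi(f)e_0$ gives $l(\pi(a)e)=\bigl(l\circ\pi({}^{a}\!f)\bigr)(\pi(a)e_0)$; the smoothed functional $l\circ\pi({}^{a}\!f)\in\bar\pi^\infty$ now depends on $a$ through the conjugate ${}^{a}\!f$, whose support dilates as $a$ leaves compacta, so no uniform matrix-coefficient bound follows. More fundamentally, for a general continuous $l\in\pi^{-\infty}$ the generalized matrix coefficient $g\mapsto l(\pi(g)e)$ is only of moderate growth, not bounded by $\Xi^G\sigma_0^d$; temperedness controls \emph{smooth} matrix coefficients, not arbitrary distributional ones. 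The $(H,\omega\xi)$-equivariance must therefore enter the estimate on $A_0^+$ itself, not merely be spent in the Cartan reduction. In \cite{B15} this is done through the asymptotic theory of $H$-equivariant functionals (Casselman's canonical pairing in the $p$-adic case, the leading-term asymptotics of $H$-spherical functions in the archimedean case), which identifies the possible exponents of $a\mapsto l(\pi(a)e)$ with exponents of $\pi$ along the relevant parabolics; only then does temperedness of $\pi$ give the $\hc\cdot\nor^d$ bound via Proposition~\ref{major 4}(2).

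Your plan for part~(2) is closer to complete. One point you should make explicit: the Fourier regularization of Proposition~\ref{major 8} and the bilateral translations $R(x)L(y)$ do commute \emph{after} the Cartan reduction, because $a(t)\in Z_{G_0}$ centralizes $A_0\subset M_{min}\subset G_0$; so conjugation by $a(t)$ and left/right translation by $a,b\in A_0^+$ commute, and the regularized integrand becomes $\tilde\varphi(b^{-1}ha)\,\hat\varphi'_0(\lambda(h))\,\omega(h)$ with $\tilde\varphi$ controlled in $\CC_d^w$. This removes the ``main obstacle'' you flag. What then remains is the genuinely bilateral estimate $\int_{\zh}\Xi^G(b^{-1}ha)\sigma_0(b^{-1}ha)^d(1+|\lambda(h)|)^{-\delta}\,dh\ll \hc(a)\hc(b)\nor(a)^{d'}\nor(b)^{d'}$; the one-variable Lemma~\ref{major 3}(1) does not literally give this, and you should indicate the extra step (e.g.\ inserting a $K$-average and using Proposition~\ref{h-c function}(vi), or the conjugation trick of Lemma~\ref{major 3}(2) with the embedding $a:\BG_m\to Z_{G_0}$) that splits the two variables.
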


\begin{proof}
The proof for GGP model can also be applied to our model. See \cite[Lemma 8.3.1]{B15}. We refer the readers to \cite{Wan17} for details of the proof.
\end{proof}

\subsection{Parabolic induction for the p-adic case}
Assume that $F$ is p-adic in this section. Let $\pi$ be a tempered representation of $G(F)$ with central character $\eta$. There exists a parabolic subgroup $\bar{Q}=LU_{\bar{Q}}$ of $G$, together with a discrete series $\tau\in \Pi_2(L)$ such that $\pi=I_{\bar{Q}}^{G}(\tau)$. By Proposition \ref{spherical}, we may assume that $\bar{Q}$ is a good parabolic subgroup. We can further assume that the inner product on $\pi$ is given by
\begin{equation}\label{para 1}
(e,e')=\int_{Q(F)\back G(F)}(e(g),e'(g))_{\tau}dg,\; \forall e,e'\in  \pi=I_{Q}^{G}(\tau).
\end{equation}
Let $H_{\bar{Q}}=H\cap \bar{Q}$, for $T\in End(\tau)^{\infty}$, define
$$\CL_{\tau}(T_{\tau})=\int_{Z_H(F)\back H_{\bar{Q}}(F)} Trace(\tau(h_{\bar{Q}}^{-1})T)\delta_{H_{\bar{Q}}}(h_{\bar{Q}})^{1/2} \omega(h_{\bar{Q}}) \xi(h_{\bar{Q}}) dh_{\bar{Q}}.$$
The integral above is absolutely convergent by Proposition \ref{major 5}(2) together with the assumption that $\tau$ is a discrete series.

\begin{prop}\label{parabolic induction}
With the notation above, we have
$$\CL_{\pi}\neq 0 \iff \CL_{\tau}\neq 0.$$
\end{prop}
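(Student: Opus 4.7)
The strategy is to unfold the matrix coefficient of $\pi=I_{\bar Q}^{G}(\tau)$ against $\omega\xi$ and express $\CL_\pi$ as an integral of $\CL_\tau$-values, from which both directions of the equivalence follow.

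For $e,e'\in\pi^{\infty}$ realized as $V_\tau$-valued functions on $G(F)$, the inner product formula \eqref{para 1}, combined with the substitution $g\mapsto h'\in H_{\bar Q}(F)\backslash H(F)$ -- which is legitimate because $\bar Q$ is good, so $H(F)\bar Q(F)$ has full measure in $G(F)$ (Proposition \ref{spherical}) and $\delta_{\bar Q}|_{H_{\bar Q}}=\delta_{H_{\bar Q}}$ (Lemma \ref{major 5}(1)) -- yields
\[
(e,\pi(h)e')=\int_{H_{\bar Q}(F)\backslash H(F)}(e(h'),e'(h'h))_\tau\,dh'.
\]
I would substitute this into $\CL_\pi(T_{e,e'})=\int^{*}_{Z_H\backslash H}(e,\pi(h)e')\omega(h)\xi(h)\,dh$, exchange the two integrations, change variable $h\mapsto h'^{-1}h$, and decompose $\int_{Z_H\backslash H}=\int_{Z_H\backslash H_{\bar Q}}\int_{H_{\bar Q}\backslash H}$. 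After expanding $e'(h_{\bar Q}h_2)=\delta_{\bar Q}^{1/2}(l_{h_{\bar Q}})\tau(l_{h_{\bar Q}})e'(h_2)$ and rewriting the matrix coefficient as a trace (using $(v,\tau(l)v')_\tau=\tr(\tau(l^{-1})T_{v,v'})$), the inner integral over $Z_H\backslash H_{\bar Q}$ collapses -- by virtue of the identity $\delta_{\bar Q}|_{H_{\bar Q}}=\delta_{H_{\bar Q}}$ -- to $\CL_\tau(T_{e(h_1),e'(h_2)})$. This produces a schematic identity
\[
\CL_\pi(T_{e,e'})=\int_{H_{\bar Q}(F)\backslash H(F)}\int_{H_{\bar Q}(F)\backslash H(F)}\CL_\tau\bigl(T_{e(h_1),e'(h_2)}\bigr)\,\Phi(h_1,h_2)\,dh_1\,dh_2,
\]
where $\Phi$ is an explicit, everywhere non-vanishing product of values of $\omega$ and $\xi$. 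The implication $\CL_\tau\equiv 0\Rightarrow \CL_\pi\equiv 0$ is then immediate.

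For the converse, suppose $\CL_\tau\neq 0$ and choose $v\in V_\tau^{\infty}$, $v'\in\bar V_\tau^{\infty}$ with $\CL_\tau(T_{v,v'})\neq 0$. Fix $h_1^{\circ},h_2^{\circ}\in H(F)$ in the open cell and construct $e,e'\in\pi^{\infty}$ supported in small $\bar Q(F)$-invariant neighborhoods of the $h_i^{\circ}$ with $e(h_1^{\circ})=v$ and $e'(h_2^{\circ})=v'$. As the supports shrink, the double integral concentrates at $(h_1^{\circ},h_2^{\circ})$ and the leading term equals $\CL_\tau(T_{v,v'})\,\Phi(h_1^{\circ},h_2^{\circ})$ times a non-vanishing local volume; hence $\CL_\pi(T_{e,e'})\neq 0$.

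The principal obstacle is justifying the exchange of integration orders, since the outer integral defining $\CL_\pi$ is only conditionally convergent (cf.\ Proposition \ref{major 8}). My plan is to first insert an $\Ad_{a(\cdot)}(\varphi)$-cutoff for an auxiliary $\varphi\in C_c^{\infty}(F^{\times})$: by Lemma \ref{linear form}(2) the integrand acquires a factor $\widehat{\varphi'}(\lambda(h))$, and together with the estimates of Section 4 -- notably Lemma \ref{major 5}(2) and Proposition \ref{major 4} -- this renders the full double (triple) integral absolutely convergent. One then performs the unfolding in this regularized regime and removes the cutoff by letting $\varphi$ approximate $\delta_1$, using the continuity of $\CP_{H,\xi}$ on $\CC^{w}(\zg,\eta^{-1})$. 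A secondary bookkeeping task is to verify that the Jacobians $\delta_{\bar Q}^{1/2}$ arising in the two Iwasawa-type decompositions combine correctly with the $\delta_{H_{\bar Q}}^{1/2}$ built into the definition of $\CL_\tau$, which is precisely the content of Lemma \ref{major 5}(1).
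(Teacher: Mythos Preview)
Your proposal is correct and follows the same unfolding strategy as the paper: rewrite the inner product via $H_{\bar Q}\backslash H$ (using that $\bar Q$ is good), swap integrals, decompose $Z_H\backslash H$ as $(Z_H\backslash H_{\bar Q})\times(H_{\bar Q}\backslash H)$, and recognize the inner integral as $\CL_\tau$; for the converse, localize $e,e'$ near a point of the open cell.

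The one technical difference worth noting concerns the regularization. You propose inserting an $\Ad_a(\varphi)$-cutoff and then letting $\varphi\to\delta_1$. The paper avoids any limit by exploiting the $p$-adic hypothesis directly: since $e,e'$ are bi-$K_0$-invariant for some open compact $K_0\subset G(F)$, taking $\varphi$ to be the normalized characteristic function of $K_a=a^{-1}(K_0\cap Z_{G_0}(F))$ already gives $\Ad_a(\varphi)$ acting trivially on the matrix coefficient, so one obtains an \emph{exact} identity with an extra factor $\int_{K_a}\psi(t\lambda(h'))\psi(-t\lambda(h))\,|t|^{-1}\,dt$ in place of your $\Phi$. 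This factor is not everywhere nonvanishing (so your description of $\Phi$ as ``everywhere non-vanishing'' is slightly too optimistic), but for the converse direction the paper---exactly as you suggest---chooses the section $s:\CU\to H(F)$ and shrinks $\CU$ so that $\psi(t\lambda(s(h)))=\omega(s(h))=1$ throughout, reducing the factor to a positive constant. Your limiting argument would also work, but the paper's choice is cleaner and sidesteps any continuity issues.
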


\begin{proof}
For $e,e'\in \pi^{\infty}$, by \eqref{para 1}, we have
$$\CL_{\pi}(e,e')=\int_{\zh}^{\ast}\int_{\bar{Q}(F)\back G(F)} (e(g),e'(gh))_{\tau}dg \omega(h)\xi(h)dh.$$
Same as in previous sections, let $a:\BG_m(F)\rightarrow Z_{G_0}(F)$ be a homomorphism defined by $a(t)=diag(t,t,1,1,t^{-1},t^{-1})$ in the split case, and $a(t)=diag(t,1,t^{-1})$ in the non-split case. Since $e,e'\in \pi^{\infty}$, there exists an open compact subgroup $K_0$ of $G(F)$ such that the functions $e,e': G(F)\rightarrow \tau$ is bi-$K_0$-invariant. Let $K_a=a^{-1}(K_0\cap Z_{G_0}(F))\subset F^{\times}$, it is an open compact subset. By Proposition \ref{major 8}, we have
\begin{eqnarray}\label{para 2}
\CL_{\pi}(e,e')&=&\int_{\zh}^{\ast}\int_{\bar{Q}(F)\back G(F)} (e(g),e'(gh))_{\tau}dg \xi(h) \omega(h)dh \nonumber\\
&=&meas(K_a)^{-1} \int_{\zh}\int_{\bar{Q}(F)\back G(F)} (e(g),e'(gh))_{\tau}dg \\
&&\times \int_{K_a} \psi(t\lambda(h)) \mid t\mid^{-1} dt \omega(h)dh.\nonumber
\end{eqnarray}
By the same proposition, the last two integrals $\int_{\zh}\int_{\bar{Q}(F)\back G(F)}$ above is absolutely convergent. Since $\bar{Q}$ is a good parabolic subgroup, by Proposition \ref{spherical}, we can choose the Haar measures compatibly so that for all $\varphi\in L_1(\bar{Q}(F)\back G(F),\delta_{\bar{Q}})$, we have
$$\int_{\bar{Q}(F)\back G(F)}\varphi(g) dg=\int_{H_{\bar{Q}}(F)\back H(F)}\varphi(h)dh.$$
Then \eqref{para 2} becomes
\begin{eqnarray*}
\CL_{\pi}(e,e')&=&meas(K_a)^{-1} \int_{\zh}\int_{H_{\bar{Q}}(F)\back H(F)} (e(h'),e'(h'h))_{\tau}dh' \\
&&\times \int_{K_a} \psi(t\lambda(h)) \mid t\mid^{-1} dt \omega(h)dh.
\end{eqnarray*}
The integral $\int_{\zh}\int_{\bar{Q}(F)\back G(F)}$ above is absolutely convergent because of \eqref{para 2}. By switching the two integrals, making the transform $h\rightarrow h'h$ and decomposing $\int_{\zh}$ as $\int_{H_{\bar{Q}}(F)\back H(F)} \int_{Z_H(F)\back H_{\bar{Q}}(F)}$, we have
$$\CL_{\pi}(e,e')=meas(K_a)^{-1}\int_{(H_{\bar{Q}}(F)\back H(F))^2} f(h,h') dh dh'$$
where
\begin{eqnarray}\label{para 3}
f(h,h')&=&\int_{Z_H(F)\back H_{\bar{Q}}(F)}  (e(h),e'(h_{\bar{Q}}h'))_{\tau} \omega(h_{\bar{Q}})\omega(h^{-1}h') \nonumber\\
&&\times \int_{K_a} \psi(t\lambda(h'))\psi(t\lambda(h_{\bar{Q}}))\psi(-t\lambda(h)) \mid t\mid^{-1} dt dh_{\bar{Q}} \\
&=& \int_{Z_H(F)\back H_{\bar{Q}}(F)} \delta_{H_{\bar{Q}}}(h_{\bar{Q}})^{1/2} (e(h),\tau(h_{\bar{Q}})e'(h'))_{\tau} \omega(h_{\bar{Q}})\omega(h^{-1}h') \nonumber\\
&&\times \int_{K_a} \psi(t\lambda(h'))\psi(t\lambda(h_{\bar{Q}}))\psi(-t\lambda(h)) \mid t\mid^{-1} dt dh_{\bar{Q}}. \nonumber
\end{eqnarray}
Here we use the equation $\delta_{H_{\bar{Q}}}(h_{\bar{Q}})=\delta_{\bar{Q}}(h_{\bar{Q}})$ in the second equality. We first show that the integral \eqref{para 3} is absolutely convergent for any $h,h'\in H_{\bar{Q}}(F)\back H(F)$. In fact, since $K_a$ is compact, it is enough to show that for any $h,h'\in H_{\bar{Q}}(F)\back H(F)$, the integral
$$\int_{Z_H(F)\back H_{\bar{Q}}(F)} \delta_{H_{\bar{Q}}}(h_{\bar{Q}})^{1/2} (e(h),\tau(h_{\bar{Q}})e'(h'))_{\tau} dh_{\bar{Q}}$$
is absolutely convergent. This just follows from Proposition \ref{major 5}(2) together with the assumption that $\tau$ is discrete series. Then by switching the two integrals in \eqref{para 3}, we have
\begin{eqnarray*}
f(h,h')&=&\int_{K_a} \int_{Z_H(F)\back H_{\bar{Q}}(F)} \delta_{H_{\bar{Q}}}(h_{\bar{Q}})^{1/2} (e(h),\tau(h_{\bar{Q}})e'(h'))_{\tau} \omega(h_{\bar{Q}})\\
&&\times\; \psi(t\lambda(h_{\bar{Q}})) dh_{\bar{Q}}  \omega(h^{-1}h') \psi(t\lambda(h'))\psi(-t\lambda(h)) \mid t\mid^{-1} dt.
\end{eqnarray*}
By changing the variable $h_{\bar{Q}}\rightarrow a(t)h_{\bar{Q}}a(t)^{-1}$ in the inner integral (note that the Jacobian of such transform is 1 since $a(t)\in K_0$), we have
\begin{eqnarray*}
&&\int_{Z_H(F)\back H_{\bar{Q}}(F)} \delta_{H_{\bar{Q}}}(h_{\bar{Q}})^{1/2} (e(h),\tau(h_{\bar{Q}})e'(h'))_{\tau} \omega(h_{\bar{Q}}) \psi(t\lambda(h_{\bar{Q}})) dh_{\bar{Q}}\\
&=&\int_{Z_H(F)\back H_{\bar{Q}}(F)} \delta_{H_{\bar{Q}}}(h_{\bar{Q}})^{1/2} (e(h),\tau(a(t)^{-1}h_{\bar{Q}}a(t))e'(h'))_{\tau} \omega(h_{\bar{Q}}) \psi(\lambda(h_{\bar{Q}})) dh_{\bar{Q}}\\
&=&\int_{Z_H(F)\back H_{\bar{Q}}(F)} \delta_{H_{\bar{Q}}}(h_{\bar{Q}})^{1/2} (e(h),\tau(h_{\bar{Q}})e'(h'))_{\tau} \omega(h_{\bar{Q}}) \psi(\lambda(h_{\bar{Q}})) dh_{\bar{Q}}\\
&=& \CL_{\tau}(e(h),e'(h')).
\end{eqnarray*}
Here we use the fact that $e'$ is bi-$K_0$-invariant. Then we have
$$f(h,h')=\int_{K_a} \CL_{\tau}(e(h),e(h')) \omega(h^{-1}h') \psi(t\lambda(h'))\psi(-t\lambda(h)) \mid t\mid^{-1} dt.$$
If $\CL_{\pi}(e,e')\neq 0$, there exist $h,h'\in H_{\bar{Q}}(F)\back H(F)$ such that $f(h,h')\neq 0$, and hence $\CL_{\tau}(e(h),e(h'))\neq0$. This proves
$\CL_{\pi}\neq 0 \Rightarrow \CL_{\tau}\neq 0.$

For the other direction, if $\CL_{\tau}\neq 0$, we can find $v_0,v_0'\in \tau^{\infty}$ such that $\CL_{\tau}(v_0,v_0')\neq 0$. We choose a small open subset $\CU\subset H_{\bar{Q}}(F)\back H(F)$ and let $s: \CU\rightarrow H(F)$ be an analytic section of the map $H(F)\rightarrow H_{\bar{Q}}(F)\back H(F)$. For $f,f'\in C_{c}^{\infty}(\CU)$, define $\varphi,\varphi'\in C_{c}^{\infty}(\CU,\tau^{\infty})$ to be $\varphi(h)=f(h)v_0,\varphi'(h)=f'(h)v_0'$, then set
$$e_{\varphi}(g)=\begin{array}{cc}\left\{ \begin{array}{ccl} \delta_{\bar{Q}}^{1/2}(l)\tau(l)\varphi(h) & if & g=lus(h)\; with\; l\in L(F),u\in U_{\bar{Q}}(F), h \in \CU \\ 0 & else & \\ \end{array}\right. \end{array}$$
This is an element of $\pi^{\infty}$. Similarly we can define $e_{\varphi'}$. Then by the above discussion, we have
$$\CL_{\pi}(e_{\varphi},e_{\varphi'})=meas(K_a)^{-1}\int_{(H_{\bar{Q}}(F)\back H(F))^2} f(h,h') dh dh'$$
where
$$f(h,h')=\int_{K_a} \CL_{\tau}(e_{\varphi}(h),e_{\varphi'}(h')) \omega(h^{-1}h') \psi(t\lambda(h'))\psi(-t\lambda(h)) \mid t\mid^{-1} dt.$$
Combining with the definition of $e_{\varphi}$ and $e_{\varphi'}$, we have
\begin{eqnarray*}
\CL_{\pi}(e_{\varphi},e_{\varphi'})&=&meas(K_a)^{-1}\CL_{\tau}(v_0,v_0')\\
&&\times \int_{\CU^2} \int_{K_a} f(h)\overline{f'(h')} \omega(s(h)^{-1}s(h')) \psi(t\lambda(s(h)))\psi(-t\lambda(s(h))) \mid t\mid^{-1} dt dhdh'.
\end{eqnarray*}
Now if we take $\CU$ small enough, we can choose suitable section $s: \CU\rightarrow H(F)$ such that for all $t\in K_a$ and $h\in s(\CU)$, we have $\psi(t\lambda(h))=\omega(h)=1$. Also by taking $K_0$ small, we may assume that $\mid t\mid=1$ for all $t\in K_a$. Then the integral above becomes
\begin{eqnarray*}
\CL_{\pi}(e_{\varphi},e_{\varphi'})&=&meas(K_a)^{-1}\CL_{\tau}(v_0,v_0')\int_{\CU^2} \int_{K_a} f(h)\overline{f'(h')} dt dhdh'\\
&=& \CL_{\tau}(v_0,v_0')\int_{\CU^2} f(h)\overline{f'(h')} dhdh'.
\end{eqnarray*}
Then we can easily choose $f$ and $f'$ so that $\CL_{\pi}(e_{\varphi},e_{\varphi'})\neq 0$. Therefore we have proved that
$\CL_{\tau}\neq 0\Rightarrow \CL_{\pi}\neq 0.$
\end{proof}

\subsection{Parabolic induction for $F=\BR$}
Assume that $F=\BR$ in this section. It is very hard to directly study any arbitrary parabolic induction because of the way we normalizing the integral. Instead, we first study the parabolic induction for $\bar{P}$, then study all other parabolic subgroups contained in $\bar{P}$. This is allowable since in the archimedean case, the discrete series only appear on $\GL_1(\BR)$, $\GL_2(\BR)$ and $\GL_1(D)$. Let $\pi$ be a tempered representation of $G$ with central character $\eta$. Since we are in archimedean case, there exists a tempered representation $\pi_0$ of $G_0$ such that $\pi=I_{\bar{P}}^{G}(\pi_0)$. We assume that the inner product on $\pi$ is given by
\begin{equation}
(e,e')=\int_{\bar{P}(F)\back G(F)}(e(g),e'(g))_{\pi_0}dg,\; e,e'\in  \pi=I_{\bar{P}}^{G}(\pi_0).
\end{equation}
For $T\in End(\pi_0)^{\infty}$, define
$$\CL_{\pi_0}(T)=\int_{Z_H(F)\back H_0(F)} Trace(\pi_0(h_{0}^{-1})T) \omega(h_0) dh_0.$$
The integral above is absolutely convergent by Lemma \ref{major 2}(1) together with the fact that $\pi_0$ is tempered.

\begin{prop}\label{parabolic induction 2}
With the notation above, we have
$$\CL_{\pi}\neq 0 \iff \CL_{\pi_0}\neq 0.$$
\end{prop}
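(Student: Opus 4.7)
The plan is to adapt the argument of Proposition \ref{parabolic induction} from the $p$-adic case, with the averaging over the open compact $K_a$ replaced by the differential-operator regularization developed in Proposition \ref{major 8} and Lemma \ref{linear form}(2).

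First, since $\bar P$ is a good parabolic with $H \cap \bar P = H_0$, the inner-product formula on $\pi = I_{\bar P}^G(\pi_0)$ together with Proposition \ref{spherical} gives
$$
\CL_\pi(e, e') = \int_{\zh}^{\ast} \int_{H_0(F)\bs H(F)} (e(h'), e'(h' h))_{\pi_0}\, dh'\; \xi(h)\omega(h)\, dh.
$$
To exchange the two integrations I would use Lemma \ref{linear form}(2) to write
$$
\CP_{H,\xi}(f) = \int_{\zh} (\Ad_a(\Delta^m)f)(h)\,\omega(h)\,\hat\varphi_1'(\lambda(h))\, dh + \int_{\zh} f(h)\,\omega(h)\,\hat\varphi_2'(\lambda(h))\, dh,
$$
where $\varphi_1 \ast \Delta^m + \varphi_2 = \delta_1$ is the elliptic-regularity decomposition from Proposition \ref{major 8}. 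Fubini is then legal on each piece by combining the decay estimates of Lemma \ref{major 2} with the Schwartz decay of $\hat\varphi_i'$ in $\lambda$. After the swap, I would substitute $h \mapsto (h')^{-1}h$, invoke the equivariance \eqref{L 1}, decompose $\zh = (Z_H(F)\bs H_0(F)) \cdot U(F)$, and perform the $U$-integration as a Fourier transform in $\lambda$. Because $a(t) \in Z_{G_0}$ centralizes $H_0$ and rescales $\lambda$ by $t$, the $\Ad_a(\Delta^m)$-terms reassemble cleanly and the whole expression collapses to
$$
\CL_\pi(e, e') = \int_{(H_0(F)\bs H(F))^2} \CL_{\pi_0}\big(e(h'),\, e'(h'')\big)\, K(h', h'')\, dh'\, dh'',
$$
for an explicit kernel $K$ assembled from $\omega$, $\psi$, and $\hat\varphi_i'$; absolute convergence of the inner $\CL_{\pi_0}$-integral follows from Lemma \ref{major 5}(2) in the $(G_0, H_0)$-case combined with the temperedness of $\pi_0$. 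This identity immediately yields $\CL_{\pi_0} = 0 \Rightarrow \CL_\pi = 0$.

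For the converse, starting with $v_0, v_0' \in \pi_0^\infty$ such that $\CL_{\pi_0}(v_0, v_0') \neq 0$, I would construct $e_\varphi, e_{\varphi'} \in \pi^\infty$ as smooth sections of the form $e_\varphi(\bar p s(h)) = \delta_{\bar P}(\bar p)^{1/2} \pi_0(m_{\bar p})\, f(h)\, v_0$ for an analytic section $s: \CU \to H(F)$ of the projection $H \to H_0\bs H$ over a small open neighborhood $\CU$ and $f \in C_c^\infty(\CU)$. Choosing $\CU$ so that $\omega$ and the Fourier phases $\psi(t\lambda(\cdot))$ (for $t$ in the effective support of $\hat\varphi_i'$) are nearly constant on $s(\CU)$, the formula above reduces to a multiple of $\CL_{\pi_0}(v_0, v_0')\cdot \int_{\CU^2} f(h')\overline{f'(h'')}\, dh'\, dh''$ plus small perturbations, and choosing $f, f'$ appropriately produces $\CL_\pi(e_\varphi, e_{\varphi'}) \neq 0$.

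The main obstacle will be the bookkeeping of the $\Ad_a(\Delta^m)$-terms through the Fubini swap and the substitution $h \mapsto (h')^{-1}h$: unlike $p$-adic $K_a$-averaging, $\Ad_a(\Delta^m)$ does not vanish on generic vectors, so one must verify that the Schwartz weight $\hat\varphi_1'(\lambda(h))$ contributes exactly the right compensating factor in the final formula. Coordinating the choice of $\CU$ in the reverse direction with the supports of $\varphi_1, \varphi_2$ to ensure that the Fourier-inversion collapse is clean is a delicate but routine matter.
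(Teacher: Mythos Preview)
Your overall strategy---elliptic regularity to regularize $\CP_{H,\xi}$, Fubini to swap the $\zh$ and $H_0\backslash H$ integrals, then extracting the absolutely convergent $\CL_{\pi_0}$---is exactly what the paper does. The paper also identifies $H_0\backslash H$ with $U(F)$ directly, which makes the bookkeeping slightly simpler than working with an abstract section $s:\CU\to H(F)$, but this is cosmetic.

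Where your proposal diverges from the paper is the claim that ``the $\Ad_a(\Delta^m)$-terms reassemble cleanly and the whole expression collapses to'' a double integral of $\CL_{\pi_0}(e(h'),e'(h''))$ against a kernel $K(h',h'')$. This does \emph{not} happen in the paper's argument, and I do not believe it is true in the form you state. The paper's final identity is
\[
\CL_\pi(e,e')=\int_{U}\int_{U}\Ad_a(\Delta^m)\bigl(\CL_{\pi_0}(e(u),e'(uu'))\bigr)\,\varphi_1'(\lambda(u'))\,du'\,du
+\int_{U}\int_{U}\CL_{\pi_0}(e(u),e'(uu'))\,\varphi_2'(\lambda(u'))\,du'\,du,
\]
with $\Ad_a(\Delta^m)$ still acting (in the variable $u'$) on the first integrand. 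The relation $\varphi_1\ast\Delta^m+\varphi_2=\delta_1$ lived on the \emph{original} function of $h$, and after the swap and the change of variables the operator no longer recombines with $\varphi_2'$ to yield a delta. Your remark about ``performing the $U$-integration as a Fourier transform'' is also off: only the $H_0$-integration is absorbed into $\CL_{\pi_0}$; the $U$-integrations remain.

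This is not fatal. The displayed formula with $\Ad_a(\Delta^m)$ still present is already enough for both implications: if $\CL_{\pi_0}\equiv 0$ then every term vanishes; and for the converse the paper plugs in sections built from $f_i\in C_c^\infty(U(F))$ and $v_i\in\pi_0^\infty$, obtaining $\CL_{\pi_0}(v_1,v_2)$ times a bilinear form in $(f_1,f_2)$ that involves $\Ad_a(\Delta^m)f_2$. One then checks this bilinear form is not identically zero by a direct choice of $f_1,f_2$. So abandon the hope of a clean kernel, keep the differential operator in the formula, and your proof goes through essentially as the paper's does.
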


\begin{proof}
For $e,e'\in \pi^{\infty}$, we have
$$\CL_{\pi}(e,e')=\int_{\zh}^{\ast}\int_{\bar{P}(F)\back G(F)} (e(g),e'(gh))dg \xi(h) \omega(h)dh.$$
Same as in Proposition \ref{major 8}, we can find $\varphi_1\in C_{c}^{2m-2}(F^{\times})$ and $\varphi_2\in C_{c}^{\infty}(F^{\times})$ such that
$\varphi_1\ast \Delta^m+\varphi_2=\delta_1$, and we have
\begin{eqnarray}\label{para 4}
\CL_{\pi}(e,e')&=&\int_{\zh} Ad_a(\Delta^m)(\int_{\bar{P}(F)\back G(F)} (e(g),e'(gh))dg)\nonumber \\
&&\times  \int_F \varphi_1(t) \delta_P(a(t))\mid t\mid^{-1}\psi(t\lambda(h)) \omega(h) dt dh \\
&&+\int_{\zh}  \int_{\bar{P}(F)\back G(F)} (e(g),e'(gh)) \nonumber\\
&&\times \int_F \varphi_2(t) \delta_P(a(t))\mid t\mid^{-1}\psi(t\lambda(h)) \omega(h) dt dg dh. \nonumber
\end{eqnarray}
Here $Ad_a(\Delta^m)$ acts on the function $\int_{\bar{P}(F)\back G(F)} (e(g),e'(gh))dg$ for the variable $h$, hence it commutes with the integral $\int_{\bar{P}(F)\back G(F)}$. Also since $\bar{P}$ is a good parabolic subgroup, by Proposition \ref{spherical}, we can choose Haar measure compatibly so that for all $\varphi\in L_1(\bar{P}(F)\back G(F),\delta_{\bar{P}})$, we have
$$\int_{\bar{P}(F)\back G(F)}\varphi(g) dg=\int_{U(F)}\varphi(h)dh.$$
Therefore \eqref{para 4} becomes
\begin{eqnarray*}
\CL_{\pi}(e,e')&=&\int_{\zh} \int_{U(F)}Ad_a(\Delta^m)( (e(u),e'(uh)) )du\\
&&\times  \int_F \varphi_1(t) \delta_P(a(t))\mid t\mid^{-1}\psi(t\lambda(h)) \omega(h) dt dh\\
&&+\int_{\zh}  \int_{U(F)} (e(u),e'(uh)) \\
&&\times \int_F \varphi_2(t) \delta_P(a(t))\mid t\mid^{-1}\psi(t\lambda(h)) \omega(h)dt du dh.
\end{eqnarray*}
Here $Ad_a(\Delta^m)$ acts on the function $(e(u),e'(uh))$ for the variable $h$. By changing the order of integration $\int_{\zh} \int_{U(F)}$ and decomposing the integral $\int_{\zh}$ by $\int_{U(F)}\int_{Z_H(F)\back H_0(F)}$ (this is allowable since the outer two integrals are absolutely convergent by Proposition \ref{major 8}), together with the fact that $Ad_a$ is the identity map on $H_0$, we have
\begin{eqnarray*}
\CL_{\pi}(e,e')&=&\int_{U(F)} \int_{U(F)}Ad_a(\Delta^m)(\CL_{\pi_0}(e(u),e'(uu'))) \varphi_1'(\lambda(u')) du'du \\
&&+\int_{U(F)}  \int_{U(F)} \CL_{\pi_0}(e(u),e'(uu')) \varphi_2'(\lambda(u')) du' du
\end{eqnarray*}
where
$\varphi_i '(s)=\int_F \varphi_i(t)\delta_P(a(t)) \mid t\mid^{-1} \psi(ts) dt$
is the Fourier transforms of the function $\varphi_i(t)\delta_P(a(t))\mid t\mid^{-1}$ for $i=1,2$. Here $Ad_a(\Delta^m)$ acts on the function $\CL_{\pi_0}(e(u),e'(uu'))$ for the variable $u'$. In particular, this implies $\CL_{\pi}\neq 0 \Rightarrow \CL_{\pi_0}\neq 0$.

For the other direction, if $\CL_{\pi_0}\neq 0$, we can choose $v_1,v_2\in \pi_{0}^{\infty}$ such that $\CL_{\pi_0}(v_1,v_2)\neq 0$. Choose $f_1,f_2\in C_{c}^{\infty}(U(F))$, for $i=1,2$, similarly as in the p-adic case, define
$$e_{f_i}(g)=\begin{array}{cc}\left\{ \begin{array}{ccl} \delta_{\bar{P}}(l)\pi_0(l)f_i(u)v_i & if & g=l\bar{u}u\; with\; l\in G_0(F),u\in U(F),\bar{u}\in \bar{U}(F) \\ 0 & else & \\ \end{array}\right. \end{array}$$
These are elements in $\pi^{\infty}$, and we have
\begin{eqnarray}\label{para 5}
\CL_{\pi}(e_{f_1},e_{f_2})&=&\int_{U(F)} \int_{U(F)} \CL_{\pi_0}(v_1,v_2) f_1(u) Ad_a(\Delta^m)( f_2(uu'))) \varphi_1 '(\lambda(u')) du' du\nonumber \\
&&+\int_{U(F)} \int_{U(F)} \CL_{\pi_0}(v_1,v_2) f_1(u)f_2(uu') \varphi_2 '(\lambda(u')) du'du.
\end{eqnarray}
Here $Ad_a(\Delta^m)$ acts on the function $f_2(uu')$ for the variable $u'$. Then we can easily find $f_1,f_2$ such that \eqref{para 5} is non-zero. This proves $\CL_{\pi_0}\neq 0 \Rightarrow \CL_{\pi}\neq 0$, and finishes the proof of the Proposition.
\end{proof}

Now for a tempered representation $\pi_0$ of $G_0(F)$ whose central character equals to $\eta$ when restricting on $Z_G$, we can find a good parabolic subgroup $\bar{Q}_0=L_0 U_0$ of $G_0(F)$ and a discrete series $\tau$ of $L_0$ such that $\pi_0=I_{\bar{Q}_0}^{G_0}(\tau)$. We still assume that the inner product on $\pi_0$ is given by
\begin{equation}\label{para 6}
(e,e')=\int_{\bar{Q}_0(F)\back H_0(F)}(e(g),e'(g))_{\tau}dg,\; e,e'\in  \pi_0=I_{\bar{Q}_0}^{G_0}(\tau).
\end{equation}
Let $H_{\bar{Q}}=H_0\cap \bar{Q}_0$, for $T\in End(\tau)^{\infty}$, define
$$\CL_{\tau}(T_{\tau})=\int_{Z_H(F)\back H_{\bar{Q}}(F)} Trace(\tau(h_{\bar{Q}}^{-1})T)\delta_{H_{\bar{Q}}}(h_{\bar{Q}})^{1/2} \omega(h_{\bar{Q}}) dh_{\bar{Q}}.$$
The integral above is absolutely convergent by Proposition \ref{major 5}(2) together with the assumption that $\tau$ is discrete series.

\begin{prop}\label{parabolic induction 3}
With the notation above, we have
$$\CL_{\pi_0}\neq 0 \iff \CL_{\tau}\neq 0.$$
\end{prop}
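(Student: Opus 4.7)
My plan is to mimic the unfolding argument of Proposition \ref{parabolic induction}, which is substantially simpler here since the model $(G_0,H_0)$ carries no unipotent part, so the integral defining $\CL_{\pi_0}$ is already absolutely convergent by Lemma \ref{major 2}(1) and no auxiliary normalization through $Ad_a$ is required.

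First I would expand \eqref{para 6} inside $\CL_{\pi_0}(e,e')$ for $e,e'\in \pi_0^{\infty}$ to obtain
\begin{equation*}
\CL_{\pi_0}(e,e') = \int_{Z_H(F)\backslash H_0(F)} \int_{\bar{Q}_0(F)\backslash G_0(F)} (e(g), e'(gh_0))_{\tau}\, dg\, \omega(h_0)\, dh_0.
\end{equation*}
Since $\bar{Q}_0$ is a good parabolic of $G_0$ (Proposition \ref{spherical}(4)), choosing Haar measures compatibly lets me replace $\int_{\bar{Q}_0(F)\backslash G_0(F)}$ by $\int_{H_{\bar{Q}}(F)\backslash H_0(F)}$. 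Next I would swap the two integrations (to be justified), substitute $h_0 \mapsto h'^{-1} h_0$, decompose $\int_{Z_H(F)\backslash H_0(F)}$ as $\int_{H_{\bar{Q}}(F)\backslash H_0(F)} \int_{Z_H(F)\backslash H_{\bar{Q}}(F)}$, and invoke the Iwasawa formula $e'(h_{\bar{Q}} h'') = \delta_{\bar{Q}_0}(h_{\bar{Q}})^{1/2} \tau(h_{\bar{Q}}) e'(h'')$ together with $\delta_{\bar{Q}_0}|_{H_{\bar{Q}}} = \delta_{H_{\bar{Q}}}$ (Lemma \ref{major 5}(1)) and unitarity of $\tau$, so that the expression should collapse to
\begin{equation}\label{unfoldplan}
\CL_{\pi_0}(e,e') = \int_{(H_{\bar{Q}}(F)\backslash H_0(F))^2} \omega(h'^{-1} h'')\, \CL_{\tau}(e(h'), e'(h''))\, dh'\, dh''.
\end{equation}
The absolute convergence needed to license this swap should come from bounding $|(e(h'),\tau(h_{\bar{Q}})e'(h''))_{\tau}|$ by $\Xi^{L_0}(h_{\bar{Q}})$ times polynomial factors in $h', h''$ (smooth vectors of a tempered representation), applying the second part of Lemma \ref{major 5}(2) (which in the $(G_0,H_0)$-case permits arbitrary polynomial growth thanks to $\tau$ being a discrete series) for the inner integration over $Z_H\backslash H_{\bar{Q}}$, and finally Lemma \ref{major 2}(1) for the two outer integrations against the unitary character $\omega$.

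From \eqref{unfoldplan}, the implication $\CL_{\pi_0}\neq 0 \Rightarrow \CL_{\tau}\neq 0$ is immediate. For the converse, I will choose $v_0, v_0' \in \tau^{\infty}$ with $\CL_{\tau}(v_0, v_0') \neq 0$, a small open neighborhood $\CU \subset H_{\bar{Q}}(F)\backslash H_0(F)$ of the identity together with a smooth section $s:\CU \to H_0(F)$ with $s(\bar 1) = 1$, and for $f, f' \in C_c^{\infty}(\CU)$ construct $e_{\varphi}, e_{\varphi'}\in \pi_0^{\infty}$ by the same prescription as in Proposition \ref{parabolic induction}. Shrinking $\CU$ so that $\mathrm{Re}\,\omega(s(h')^{-1}s(h'')) > 1/2$ throughout $\CU\times\CU$, \eqref{unfoldplan} should specialize to
\begin{equation*}
\CL_{\pi_0}(e_{\varphi}, e_{\varphi'}) = \CL_{\tau}(v_0, v_0') \int_{\CU} \int_{\CU} f(h')\overline{f'(h'')}\,\omega(s(h')^{-1}s(h''))\, dh'\, dh'',
\end{equation*}
whose real part is strictly positive for any real-valued $f, f' \geq 0$ with nonzero integrals, giving $\CL_{\pi_0}\neq 0$.

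The main technical step will be the rigorous verification of absolute convergence used to legitimize the interchange of integrals producing \eqref{unfoldplan}; granted the discrete series hypothesis on $\tau$ (so that the decay of matrix coefficients matches the growth permitted by Lemma \ref{major 5}(2) in the $(G_0,H_0)$-case) and the temperedness of $\pi_0$, this should reduce cleanly to the estimates already proved in Section 4.
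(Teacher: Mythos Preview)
Your proposal is correct and follows essentially the same unfolding argument as the paper's proof: expand the inner product via \eqref{para 6}, replace $\bar{Q}_0\backslash G_0$ by $H_{\bar{Q}}\backslash H_0$, swap and decompose the integrals to reach the identity \eqref{unfoldplan}, and then construct explicit sections from test vectors for the converse. The paper justifies the absolute convergence more tersely (citing only Lemma~\ref{major 2}), whereas you spell out the role of Lemma~\ref{major 5}(2) for the $H_{\bar{Q}}$-integral; both are valid, and your positivity trick for choosing $f,f'$ is a harmless variant of the paper's ``we can easily choose'' step.
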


\begin{proof}
Since we are in $(G_0,H_0)$ case, the integral defining $\CL_{\pi_0}$ is absolutely convergent, together with \eqref{para 6}, we have
$$\CL_{\pi_0}(e,e')=\int_{Z_H(F)\back H_0(F)}\int_{\bar{Q}_0(F)\back G_0(F)} (e(g),e'(gh))_{\tau} \omega(h)dg dh.$$
The integral above is absolutely convergent by Lemma \ref{major 2}. Same as in the previous Propositions, the integral $\bar{Q}_0(F)\back G_0(F)$ can be replaced by $H_{\bar{Q}}(F)\back H_0(F)$, hence we have
$$\CL_{\pi_0}(e,e')=\int_{Z_H(F)\back H_0(F)}\int_{H_{\bar{Q}}(F)\back H_0(F)} (e(h'),e'(h'h))_{\tau} \omega(h) dh' dh.$$
By switching the two integrals, changing the variable $h\rightarrow h'h$ and decomposing the integral $\int_{Z_H(F)\back H_0(F)}$ by $\int_{H_{\bar{Q}}(F)\back H_0(F)} \int_{Z_H(F)\back H_{\bar{Q}}(F)}$, we have
$$\CL_{\pi}(e,e')=\int_{H_{\bar{Q}}(F)\back H_0(F)}\int_{H_{\bar{Q}}(F)\back H_0(F)} \CL_{\tau}(e(h),e'(h')) \omega(h)^{-1} \omega(h') dh dh'.$$
This proves $\CL_{\pi_0}\neq 0\Rightarrow \CL_{\tau}\neq 0$.

For the other direction, if $\CL_{\tau}\neq 0$, there exist $v_1,v_2\in \tau^{\infty}$ such that $\CL_{\tau}(v_1,v_2)\neq 0$. Let $s:\CU\rightarrow H_0(F)$ be an analytic section over an open subset $\CU$ of $H_{\bar{Q}}(F)\back H(F)$ of the map $H(F)\rightarrow H_{\bar{Q}}(F)\back H(F)$. Choose $f_1,f_2\in C_{c}^{\infty}(\CU)$, for $i=1,2$, define
$$e_{f_i}(g)=\begin{array}{cc}\left\{ \begin{array}{ccl} \delta_{\bar{Q}}(l)\tau(l)f_i(h)v_i & if & g=lus(h)\; with\; l\in L_0(F),u\in U_0(F),h\in \CU \\ 0 & else & \\ \end{array}\right. \end{array}$$
These are elements in $\pi_{0}^{\infty}$, and we have
$$\CL_{\pi_0}(e_{f_1},e_{f_2})=\int_{\CU} \int_{\CU} f_1(h) \overline{f_2}(h') \omega(s(h))^{-1} \omega(s(h')) \CL_{\tau}(v_1,v_2) dh dh'.$$
Then we can easily choose $f_1,f_2$ such that $\CL_{\pi_0}(e_{f_1},e_{f_2})\neq 0$. This proves the other direction, and finishes the proof of the Proposition.
\end{proof}

Now let $\pi$ be a tempered representation of $G(F)$, then we can find a good parabolic subgroup $L_0U_0=\bar{Q}\subset \bar{P}(F)$ and a discrete series $\tau$ of $L_0$, such that $\pi=I_{\bar{Q}}^{G}(\tau)$ (note that we are in archimedean case, only $GL_1(F),GL_2(F)$ and $GL_1(D)$ have discrete series). Combining Proposition \ref{parabolic induction 2} and Proposition \ref{parabolic induction 3}, we have the following Proposition.

\begin{prop}\label{parabolic induction 4}
With the notation above, we have
$$\CL_{\pi}\neq 0 \iff \CL_{\tau}\neq 0.$$
\end{prop}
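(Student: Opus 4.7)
The plan is to deduce this by chaining Propositions \ref{parabolic induction 2} and \ref{parabolic induction 3} via induction in stages. First, since $\bar{Q}\subset \bar{P}=M\bar{U}$ is a parabolic of $G$, the intersection $\bar{Q}_0:=\bar{Q}\cap G_0=\bar{Q}\cap M$ is a parabolic of $G_0$ with Levi $L_0$ and unipotent radical $U_0:=U_{\bar{Q}}\cap M$, and we have $U_{\bar{Q}}=U_0\bar{U}$ so that $\bar{Q}=\bar{Q}_0\bar{U}$. By induction in stages,
\[
\pi=I_{\bar{Q}}^{G}(\tau)=I_{\bar{P}}^{G}\bigl(I_{\bar{Q}_0}^{G_0}(\tau)\bigr)=I_{\bar{P}}^{G}(\pi_0),\qquad \pi_0:=I_{\bar{Q}_0}^{G_0}(\tau).
\]
Since $\tau\in \Pi_2(L_0)$ and $\bar{Q}_0$ is a real parabolic of $G_0$, the representation $\pi_0$ is tempered by the standard temperedness of parabolic induction from discrete series, so the hypothesis of Proposition \ref{parabolic induction 2} is met.

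Next I would verify that $\bar{Q}_0$ is a good parabolic subgroup of $G_0$ so that Proposition \ref{parabolic induction 3} applies. By Proposition \ref{spherical}(2) applied to the pair $(G,H)$, the good parabolic $\bar{Q}$ contains a good minimal parabolic $\bar{P}_{min}=M_{min}\bar{U}_{min}$ of $G$; after $H$-conjugation we may assume (by part (1) of the same proposition, together with the construction in its proof) that $\bar{P}_{min}\subset \bar{P}$, so $\bar{P}_{min,0}:=\bar{P}_{min}\cap G_0$ is a minimal parabolic of $G_0$ contained in $\bar{Q}_0$. A direct check (equivalently, a dimension count using $H\cap \bar{P}=H_0$, which follows from $H\cap \bar{U}=\{1\}$ in Proposition \ref{spherical}(1)) shows $\bar{P}_{min,0}$ is good with respect to $H_0$, whence by Proposition \ref{spherical}(2) for the pair $(G_0,H_0)$ the parabolic $\bar{Q}_0$ is good.

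With these two structural points in hand, the result is immediate: Proposition \ref{parabolic induction 2} gives
\[
\CL_{\pi}\neq 0\iff \CL_{\pi_0}\neq 0,
\]
and Proposition \ref{parabolic induction 3} gives
\[
\CL_{\pi_0}\neq 0\iff \CL_{\tau}\neq 0,
\]
so concatenating yields $\CL_{\pi}\neq 0\iff \CL_{\tau}\neq 0$.

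The only real bookkeeping issue, which I expect to be the main obstacle, is making sure that the realizations of $\pi$ and $\pi_0$ as induced representations are compatible with the inner product conventions used in the two previous propositions. Specifically, the inner product on $\pi$ coming from the two-step induction $I_{\bar{P}}^{G}\circ I_{\bar{Q}_0}^{G_0}$ (with the normalizations in Propositions \ref{parabolic induction 2} and \ref{parabolic induction 3}) must agree with the single-step inner product on $I_{\bar{Q}}^{G}(\tau)$ used implicitly when defining $\CL_\tau$ via $\bar{Q}$. This reduces to a compatibility of Haar measures on $\bar{Q}(F)\backslash G(F)$, $\bar{P}(F)\backslash G(F)$ and $\bar{Q}_0(F)\backslash G_0(F)$, which can be arranged by appropriate choice of measures and causes no issue beyond a bookkeeping constant (which is harmless for a nonvanishing statement).
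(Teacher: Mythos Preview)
Your proposal is correct and follows exactly the paper's approach: the paper's proof is the single sentence ``Combining Proposition \ref{parabolic induction 2} and Proposition \ref{parabolic induction 3}'', and you have simply fleshed out the induction-in-stages justification that makes this combination work. The extra checks you supply (goodness of $\bar{Q}_0$, temperedness of $\pi_0$, and measure compatibility) are either already built into the paper's standing hypotheses for Propositions \ref{parabolic induction 2} and \ref{parabolic induction 3} or are harmless bookkeeping, so nothing is missing.
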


\subsection{Proof of Theorem \ref{thm 1}}
Let $\pi$ be a tempered representation of $G(F)$ with central character $\eta$, we already know $\CL_{\pi}\neq 0\Rightarrow m(\pi)\neq 0$. Let's prove the other direction. If $m(\pi)\neq 0$, let $0\neq l\in Hom_H(\pi^{\infty},\xi)$, we first prove
\begin{description}
\item[(1)] For all $e\in \pi^{\infty}$ and $f\in \CC(\zg,\eta^{-1})$, the integral
\begin{equation}\label{thm 1.3}
\int_{\zg}l(\pi(g)e)f(g)dg
\end{equation}
is absolutely convergent.
\end{description}

In fact, this is equivalent to the convergence of
$$\int_{H(F)\back G(F)}\mid l(\pi(x) e)\mid \int_{\zh} \mid f(hx)\mid dhdx.$$
By Proposition \ref{major 4}, for all $d>0$ and $x\in H(F)\back G(F)$, we have
\begin{equation}\label{thm 1.1}
\int_{\zh}\mid f(hx)\mid dh\ll \hc(x)\nor(x)^{-d}.
\end{equation}
On the other hand, by Lemma \ref{lemma 2}, there exist $d'>0$ such that for all $x\in H(F)\back G(F)$, we have
\begin{equation}\label{thm 1.2}
\mid l(\pi(x)e)\mid \ll \hc(x)\nor(x)^{d'}.
\end{equation}
Then (1) follows from \eqref{thm 1.1} and \eqref{thm 1.2}, together with Proposition \ref{major 4}.

Now we can compute \eqref{thm 1.3} in two different ways. First, since $\CC(\zg,\eta^{-1})=C_{c}^{\infty}(\zg,\eta^{-1})\ast \CC(\zg,\eta^{-1})$, we can write $f=\varphi\ast f'$ for $\varphi\in C_{c}^{\infty}(\zg,\eta^{-1})$ and $f'\in \CC(\zg,\eta^{-1})$. Then
\begin{eqnarray*}
&&\int_{\zg}l(\pi(g)e)f(g)dg \\
&=&\int_{\zg}\int_{\zg} l(\pi(g)e)\varphi(g') f'(g'^{-1} g) dg' dg\\
&=&\int_{\zg}\int_{\zg} l(\pi(g'g)e)\varphi(g') dg' f'(g)dg \\
&=&\int_{\zg} l(\pi(\varphi)\pi(g) e) f'(g) dg.
\end{eqnarray*}
Since the vector $l\circ \pi(\varphi)\in \pi^{-\infty}$ belongs to $\bar{\pi}^{\infty}$, by the definition of the action of $\CC(\zg,\eta^{-1})$ on $\pi^{\infty}$, we have
\begin{eqnarray*}
&&\int_{\zg} l(\pi(\varphi)\pi(g) e) f'(g) dg \\
&=&\int_{\zg} f'(g)(\pi(g)e,l\cdot \pi(\varphi)) dg\\
&=&(\pi(f')e,l\cdot \pi(\varphi))=l(\pi(\varphi)\pi(f')e)=l(\pi(f)e).
\end{eqnarray*}
This tells us
\begin{equation}\label{thm 1.4}
\int_{\zg}l(\pi(g)e)f(g)dg=l(\pi(f)e).
\end{equation}

On the other hand,
$$\int_{\zg}l(\pi(g)e)f(g)dg=\int_{H(F)\back G(F)}l(\pi(x)e) \int_{\zh} f(hx)\xi(h) \omega(h) dhdx.$$
By Lemma \ref{lemma 1}(4), if the map $\pi\in \CX_{temp}(G)\rightarrow \pi(f)$ is compactly supported, we have
\begin{eqnarray}\label{thm 1.5}
&&\int_{\zg}l(\pi(g)e)f(g)dg \\
&=&\int_{H(F)\back G(F)} l(\pi(x)e) \int_{Temp(G,\eta)} \CL_{\pi}(\pi(f)\pi(x^{-1})) \mu(\pi) d\pi dx. \nonumber
\end{eqnarray}
For $T\in C_{c}^{\infty}(Temp(G,\eta))$, by applying \eqref{thm 1.4} and \eqref{thm 1.5} to the function $f=f_T$, we have
\begin{equation}\label{thm 1.6}
l(T_{\pi}e)=\int_{H(F)\back G(F)} l(\pi(x)e)\int_{Temp(G,\eta)} \CL_{\pi}(T_{\pi} \pi(x^{-1})) \mu(\pi) d\pi dx
\end{equation}
for all $e\in \pi^{\infty}$. Now assume that $\pi=I_{Q}^{G}(\sigma)$ for some good parabolic subgroup $Q=LU_Q$ of $G$, and $\sigma\in \Pi_2(L)$. Let
$$\CO=\{ Ind_{Q}^{G}(\sigma_{\lambda})\mid \lambda\in i\Fa_{L,0}^{\ast}\}\subset Temp(G,\eta)$$
be the connected component containing $\pi$. Choose $e_0\in \pi^{\infty}$ such that $l(e_0)\neq 0$, let $T_0\in End(\pi)^{\infty}$ with $T_0(e_0)=e_0$. We can easily find an element $T^0\in C_{c}^{\infty}(Temp(G,\eta))$ such that
$$T^{0}_{\pi}=T_0,\; Supp(T^0)\subset \CO.$$
By applying \eqref{thm 1.6} to $e=e_0,T=T^0$, we know there exists $\lambda \in i\Fa_{L,0}^{\ast}$ such that $\CL_{\pi_{\lambda}}\neq 0$ where $\pi_{\lambda}=Ind_{Q}^{G}(\sigma_{\lambda})$. By Proposition \ref{parabolic induction} and Proposition \ref{parabolic induction 4}, this implies $\CL_{\sigma_{\lambda}} \neq 0$. We need a Lemma:

\begin{lem}\label{reduce model}
For all $\lambda\in i\Fa_{L,0}^{\ast}$, we have
$$\CL_{\sigma}\neq 0\iff\CL_{\sigma_{\lambda}} \neq 0.$$
\end{lem}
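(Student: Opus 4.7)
My plan is to establish the stronger assertion $\CL_{\sigma_\lambda}=\CL_\sigma$ as continuous linear forms on $\End(\sigma)^{\infty}$, which immediately yields the desired equivalence. In the proofs of Proposition \ref{parabolic induction} (p-adic case) and Proposition \ref{parabolic induction 4} (archimedean case) the form $\CL_\sigma$ was exhibited explicitly as
\begin{equation*}
\CL_\sigma(T)=\int_{Z_H(F)\backslash H_{\bar{Q}}(F)} Trace(\sigma(h_{\bar{Q}}^{-1})T)\,\delta_{H_{\bar{Q}}}(h_{\bar{Q}})^{1/2}\omega(h_{\bar{Q}})\xi(h_{\bar{Q}})\,dh_{\bar{Q}}.
\end{equation*}
Realising $\sigma_\lambda$ on the same underlying space via $\sigma_\lambda(l)=e^{\langle\lambda,H_L(l)\rangle}\sigma(l)$ and substituting, one obtains
\begin{equation*}
\CL_{\sigma_\lambda}(T)=\int_{Z_H(F)\backslash H_{\bar{Q}}(F)} e^{-\langle\lambda,H_L(h_{\bar{Q}})\rangle}\,Trace(\sigma(h_{\bar{Q}}^{-1})T)\,\delta_{H_{\bar{Q}}}(h_{\bar{Q}})^{1/2}\omega(h_{\bar{Q}})\xi(h_{\bar{Q}})\,dh_{\bar{Q}},
\end{equation*}
so the lemma reduces to the structural claim $H_L(H_{\bar{Q}}(F))\subset\tilde{\Fa}_{G,F}$, i.e.\ every unramified character of $L(F)$ trivial on $Z_G(F)$ restricts trivially to $H_{\bar{Q}}(F)$.

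To verify the structural claim, I use Proposition \ref{spherical}(1)--(2) to conjugate $\bar{Q}$ by an element of $H(F)$, so that $\bar{Q}\supset\bar{P}_{min}=\bar{U}B_0$, the good minimal parabolic constructed explicitly in the proof of Proposition \ref{spherical}; this conjugation leaves the integrand invariant up to the obvious relabelling and hence does not affect either side of the claimed identity. By Lemma \ref{major 5}(1) we have $H_{\bar{Q}}\cap U_{\bar{Q}}=\{1\}$, so $H_{\bar{Q}}$ embeds into the Levi $L$ via $\bar{Q}\twoheadrightarrow L$; its image arises from unipotent elements lying in $H\cap\bar{U}$ together with the reductive part coming from $H_0$. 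Unipotents are killed by $H_L$, and the reductive part sits diagonally inside $M=G_0$ through the embedding $h\mapsto\diag(h,h,h)$, so its image in $L$ lands simultaneously in all three block factors via the common $h$. Consequently $H_L$ of any such element lies on the line $\BR\cdot(1,\ldots,1)=\tilde{\Fa}_{G,F}$, and is therefore annihilated by every $\lambda\in i\Fa_{L,0}^{\ast}$.

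Once the structural claim is in place, the exponential factor is identically $1$ on the domain of integration, giving $\CL_{\sigma_\lambda}(T)=\CL_\sigma(T)$ for all $T\in\End(\sigma)^{\infty}$, which proves the lemma. The main subtlety is the simultaneous diagonal description of the reductive part of $H_{\bar{Q}}$ for good parabolics $\bar{Q}$ whose Levi $L$ is not contained in $M$; once the reduction $\bar{Q}\supset\bar{P}_{min}$ has been made, this follows from a direct case analysis using the Bruhat decomposition of $G$ with respect to $\bar{P}_{min}$ and the explicit form of $\bar{P}_{min}=\bar{U}B_0$ recorded in Proposition \ref{spherical}(1), and the verification is the same in both the split and non-split cases.
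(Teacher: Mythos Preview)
Your approach is correct for Type I parabolics (those of type $(6)$, $(4,2)$, $(2,2,2)$, or their quaternion analogues), and this is precisely what the paper has in mind when it says the invariance ``is easy to show (just by the definition)'' in that case. However, your structural claim $H_L(H_{\bar{Q}}(F))\subset\tilde{\Fa}_{G,F}$ is \emph{false} for Type II parabolics, so the argument breaks down there.

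Here is a concrete counterexample. Take $G=\GL_6(F)$ and let $\bar{Q}$ be the lower block-triangular parabolic of type $(3,3)$ with Levi $L=\GL_3\times\GL_3$. A direct computation shows this $\bar{Q}$ is good and that $H_{\bar{Q}}=H\cap\bar{Q}$ consists of elements $h_0u$ with $h=\begin{pmatrix}h_{11}&0\\h_{21}&h_{22}\end{pmatrix}$ lower triangular, $X$ having second column zero, $Y$ having first row zero, and $Z=0$. Projecting to $L$ sends such an element to $(A,B)$ with $\det A=h_{11}^{2}h_{22}$ and $\det B=h_{11}h_{22}^{2}$. Since $\tilde{\Fa}_{G,F}$ sits in $\Fa_L\simeq\BR^2$ as the diagonal line, the condition $H_L(h_{\bar{Q}})\in\tilde{\Fa}_{G,F}$ would force $|h_{11}|=|h_{22}|$, which of course fails in general. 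Hence an unramified character $|\det g_1|^{s}|\det g_2|^{-s}$ of $L$ trivial on $Z_G$ restricts nontrivially to $H_{\bar{Q}}$, and $\CL_{\sigma_\lambda}$ is \emph{not} equal to $\CL_\sigma$ as a linear form. Your assertion that ``the reductive part sits diagonally inside $M=G_0$ through $h\mapsto\diag(h,h,h)$, so its image in $L$ lands simultaneously in all three block factors via the common $h$'' is exactly where the reasoning fails: the projection $\bar{Q}\to L$ does not respect the $2\times2$ block structure when $L$ is not contained in $M$.

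This is precisely why the paper does not attempt to prove invariance under unramified twist for Type II models; instead it establishes the much stronger Theorem~\ref{thm 2}, that $\CL_\sigma\neq 0$ for \emph{all} discrete series $\sigma$ when $(L,H_{\bar{Q}})$ is of Type II, via a separate trace-formula argument carried out in Section~\ref{reduce model section}. Your proposal needs either an independent treatment of the Type II case or an appeal to that theorem.
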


\begin{proof}
In Appendix B of \cite{Wan15}, we divide the reduce models $(L,H_{\bar{Q}})$ into two types. Type I contain all reduced models in the $\GL_3(D)$ case together with its analogy in the $\GL_6(F)$ case. To be specific, in the $\GL_6(F)$ case, it contain parabolic subgroups of type $(6),\;(4,2)$ and $(2,2,2)$. In the $\GL_3(D)$ case, it contain all parabolic subgroups (i.e. type $(3),\; (2,1)$ and $(1,1,1)$). Type II contain all the rest models in the $\GL_6(F)$ case.

If we are in type I case, there are three models: the Ginzburg-Rallis model which correspondent to the parabolic subgroups of type $(6)$ (resp. type $(3)$) in the $\GL_6(F)$ case (resp. $\GL_3(D)$ case); the "middle" model which correspondent to the parabolic subgroups of type $(4,2)$ (resp. type $(2,1)$) in the $\GL_6(F)$ case (resp. $\GL_3(D)$ case) and the trilinear $\GL_2$ model which correspondent to the parabolic subgroups of type $(2,2,2)$ (resp. type $(1,1,1)$) in the $\GL_6(F)$ case (resp. $\GL_3(D)$ case). For those models, it is easy to show (just by the definition) that the nonvanishing property of $\CL_{\sigma}$ is invariant under the unramified twist.

For type II models, it is not clear from the definition that the unramified twist will preserve the nonvanishing property. We will need a much stronger argument, we claim that for all type II models, $\CL_{\sigma}$ is always nonzero. This will definitely implies our Lemma. Hence in order to prove the Lemma, we only need the following Theorem whose proof will be given in Section \ref{reduce model section}.
\begin{thm}\label{thm 2}
If $(L,H_{\bar{Q}})$ is of type II, then
$\CL_{\sigma}\neq 0$
for all $\sigma \in \Pi^2(L)$.
\end{thm}
\end{proof}

\begin{rmk}\label{thm 2.1}
By the same argument as in this section, we can have a similar formula as \eqref{thm 1.6} for $\CL_{\sigma}$. Then since $\sigma$ is a discrete series, the connected component containing it does not contains other element (i.e. $\CO=\{\sigma\}$). Then by applying the same argument as in the Ginzburg-Rallis model case, we know that $m(\sigma)\neq 0\Rightarrow \CL_{\sigma}\neq 0$ (\textbf{The upshot is that since $\sigma$ is a discrete series, we don't need to worry about the unramified twist issue}). Here $m(\sigma)$ is the dimension of the Hom space $Hom_{H_{\bar{Q}}}(\sigma,\omega\otimes \xi|_{H_{\bar{Q}}})$. Therefore in oder to prove Theorem \ref{thm 2}, we only need to show that for all type II models, the multiplicity $m(\sigma)$ is always nonzero.
\end{rmk}

Now by applying Lemma \ref{reduce model}, we know $\CL_{\sigma}\neq 0$. Applying Proposition \ref{parabolic induction} and Proposition \ref{parabolic induction 4} again, we have $\CL_{\pi}\neq 0$. This proves the other direction, and finishes the proof of Theorem \ref{thm 1}.

\subsection{Some consequences of Theorem \ref{thm 1}}
\textbf{If $F=\BR$}, let $\pi$ be a tempered representation of $G(F)$ with central character $\eta$, since we are in archimedean case, there exists a tempered representation $\pi_0$ of $G_0$ such that $\pi=I_{\bar{P}}^{G}(\pi_0)$. We have the following result.
\begin{cor}\label{parabolic induction 6}
$m(\pi)=m(\pi_0).$
\end{cor}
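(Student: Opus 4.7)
The plan is to chain together the multiplicity one theorem with Theorem~\ref{thm 1} and its analogue for the trilinear $\GL_2$ model, then apply Proposition~\ref{parabolic induction 2} to bridge the two sides. Concretely, by the results of Nien \cite{N06} and Jiang--Sun--Zhu \cite{JSZ11} recalled in the introduction, both $m(\pi)$ and $m(\pi_0)$ take values in $\{0,1\}$, so it is enough to show the equivalence
\[
m(\pi) \neq 0 \iff m(\pi_0) \neq 0.
\]

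First I would apply Theorem~\ref{thm 1} to the tempered representation $\pi$ of $G(F)$, which gives $m(\pi) \neq 0 \iff \CL_{\pi} \neq 0$. Next I would invoke Proposition~\ref{parabolic induction 2} to conclude $\CL_{\pi} \neq 0 \iff \CL_{\pi_0} \neq 0$. The last link is the analogue of Theorem~\ref{thm 1} for the trilinear $\GL_2$ model $(G_0, H_0)$, namely $\CL_{\pi_0} \neq 0 \iff m(\pi_0) \neq 0$. Composing these three equivalences and using multiplicity one yields $m(\pi) = m(\pi_0)$.

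The only point that requires justification is why Theorem~\ref{thm 1} applies to $(G_0, H_0)$. The argument given in Section 5.5 for $(G,H)$ uses only: (i) Proposition~\ref{major 8} (extension of $\CP_{H,\xi}$ to the weak Harish-Chandra--Schwartz space), (ii) the asymptotic bound in Lemma~\ref{lemma 2}, (iii) the Plancherel formula and matricial Paley--Wiener theorem, and (iv) the reduction to a discrete-series Levi via Proposition~\ref{parabolic induction 4}, at which point Theorem~\ref{thm 2} finishes the argument. For the trilinear $\GL_2$ pair, step (i) is unnecessary since the defining integral of $\CL_{\pi_0}$ is already absolutely convergent by Lemma~\ref{major 2}(1); the analogues of (ii)--(iii) hold by the same proofs (indeed the estimates in Section 4 were stated uniformly for both the $(G,H)$ and $(G_0, H_0)$ pairs, see Proposition~\ref{spherical}(4) and Proposition~\ref{wavefront}); and step (iv) reduces, via Proposition~\ref{parabolic induction 3}, to the discrete-series case on a Levi subgroup of $G_0$, which for the trilinear $\GL_2$ model is exactly the situation analyzed by Prasad in \cite{P90}, where nonvanishing of the relevant local period is the classical statement that trilinear forms on discrete series exist under the expected epsilon-dichotomy.

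The main obstacle, if any, is purely bookkeeping: one must verify that the parabolic-induction lemmas in Section 5.4 (Propositions~\ref{parabolic induction 2} and~\ref{parabolic induction 3}) combine cleanly for $(G_0, H_0)$ as they do for $(G,H)$. Since these propositions were already proved by first inducing from $\bar{P}$ (which descends $(G,H)$ to $(G_0,H_0)$) and then from a good parabolic $\bar{Q}_0 \subset G_0$, the trilinear case is literally the intermediate step in the $(G,H)$ argument and requires no further work. Thus the corollary follows by a short cycle of equivalences, with no new analytic input.
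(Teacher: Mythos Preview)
Your proposal is correct and follows exactly the same chain of equivalences as the paper: multiplicity one reduces the claim to $m(\pi)\neq 0 \iff m(\pi_0)\neq 0$, which then follows from Theorem~\ref{thm 1}, Proposition~\ref{parabolic induction 2}, and the analogue of Theorem~\ref{thm 1} for $(G_0,H_0)$. One small remark: your invocation of Prasad's epsilon-dichotomy in step (iv) is unnecessary here---the analogue of Theorem~\ref{thm 1} for $(G_0,H_0)$ only needs the unramified-twist invariance of Lemma~\ref{reduce model} (and Theorem~\ref{thm 2} for the type II reductions), not any actual computation of the multiplicity; Prasad's result is used only later, in Section~\ref{the archimedean case}, to evaluate $m(\pi_0)$ itself.
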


\begin{proof}
Similar to Theorem \ref{thm 1}, we have
$m(\pi_0)\neq 0\iff \CL_{\pi_0}\neq 0.$
Then applying Proposition \ref{parabolic induction 2}, we have
$$m(\pi)\neq 0\iff \CL_{\pi}\neq 0\iff \CL_{\pi_0}\neq 0\iff m(\pi_0)\neq 0.$$
By strongly multiplicity one Theorem, $m(\pi)$ and $m(\pi_0)$ are either 1 or 0. Then the above equivalence just tells us
$m(\pi)=m(\pi_0).$
\end{proof}

\textbf{If $F$ is p-adic,} let $\pi$ be a tempered representation of $\GL_6(F)$ with central character $\eta$. We can find a good parabolic subgroup $\bar{Q}=LU_Q$ and a discrete series $\sigma$ of $L(F)$ such that $\pi=I_{\bar{Q}}^{G}(\sigma)$. By the construction of the local Jacquet-Langlands correspondence, we know that $\pi^D\neq 0$ iff $\bar{Q}$ is of type I. In fact, the local Jacquet-Langlands correspondence established in \cite{DKV84} gives a bijection between the discrete series series. Then the map can be extend naively to all the tempere representations via the parabolic induction since all tempered representations are full induced from some discrete series (note that we are in $\GL$ case). Therefore, in order to make $\pi^D\neq 0$, the Levi subgroup $L$ should have an analogy in $\GL_3(D)$, this is equivalent to say that $\bar{Q}$ is of type I.
\begin{cor}\label{main case 1}
If $\bar{Q}$ is of type II, Theorem \ref{main} holds.
\end{cor}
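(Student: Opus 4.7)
The plan is to chain together the results that are already established or cited in the excerpt. When $\bar{Q}$ is of type II, by the construction of the local Jacquet--Langlands correspondence described just before the statement, the Levi $L$ has no analog inside $\GL_3(D)$, so $\pi_D = 0$ and therefore $m(\pi_D) = 0$. Consequently, proving Conjecture~\ref{jiang} for such $\pi$ reduces to showing $m(\pi) = 1$. Since the pair $(G,H)$ is a Gelfand pair (by the results of Nien and of Jiang--Sun--Zhu cited in the introduction), we already have $m(\pi) \leq 1$, so the task is to produce a nonzero element of $\Hom_{H(F)}(\pi, \omega\otimes\xi)$.

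First I would invoke Theorem~\ref{thm 1}, which gives the equivalence $m(\pi) \neq 0 \iff \CL_\pi \neq 0$. Next, pick a good parabolic $\bar{Q} = LU_Q$ and a discrete series $\sigma \in \Pi_2(L)$ with $\pi = I_{\bar{Q}}^G(\sigma)$; Proposition~\ref{parabolic induction} then gives $\CL_\pi \neq 0 \iff \CL_\sigma \neq 0$. Finally, since $(L, H_{\bar{Q}})$ is of type II by hypothesis, Theorem~\ref{thm 2} asserts $\CL_\sigma \neq 0$ unconditionally for every discrete series $\sigma$ of $L(F)$. Reading this chain of implications backwards yields $\CL_\pi \neq 0$, hence $m(\pi) \neq 0$, hence $m(\pi) = 1$.

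Combining the two pieces gives $m(\pi) + m(\pi_D) = 1 + 0 = 1$, which is precisely the statement of Conjecture~\ref{jiang} for this case, completing the proof of Theorem~\ref{main} when $\bar{Q}$ is of type II. There is essentially no obstacle once the cited theorems are in hand; the only thing worth stressing is that the application of Proposition~\ref{parabolic induction} is valid here because we are in the $p$-adic setting and the proposition was proved precisely for this setup (the archimedean analogue, Proposition~\ref{parabolic induction 4}, would have been used otherwise). The genuinely hard input is Theorem~\ref{thm 2}, whose proof is deferred to Section~9 and relies on the fact, emphasized in the boldface passage of Section~1.2, that for type II models all semisimple elements in the small group are split, so the geometric side of the relevant trace formula reduces to the germ at the identity, which is $1$ for generic representations by Rodier's theorem. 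That reduction is outside the scope of the present corollary, however, so here one simply cites Theorem~\ref{thm 2} as a black box.
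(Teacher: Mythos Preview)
Your proof is correct and follows exactly the same chain of implications as the paper: $\pi_D=0$ from the type II hypothesis, then Theorem~\ref{thm 2} $\Rightarrow \CL_\sigma\neq 0$, Proposition~\ref{parabolic induction} $\Rightarrow \CL_\pi\neq 0$, and the easy direction of Theorem~\ref{thm 1} $\Rightarrow m(\pi)\neq 0$. The only cosmetic difference is that you phrase Theorem~\ref{thm 1} as an equivalence, whereas the paper (and your actual argument) only needs the trivial implication $\CL_\pi\neq 0\Rightarrow m(\pi)\neq 0$.
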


\begin{proof}
By the discussion above, we know $\pi^D=0$, so we only need to show that $m(\pi)=1$. By strong multiplicity one theorem, we only need to show that $m(\pi)\neq 0$. By Theorem \ref{thm 2}, we know $\CL_{\sigma}\neq 0$. Together with Proposition \ref{parabolic induction}, we have $\CL_{\pi}\neq 0$. By Theorem \ref{thm 1}, this implies $m(\pi)\neq 0$ and this proves the Corollary.
\end{proof}

Now let $\pi$ be a tempered representation of $G(F)$ with central character $\eta$ (note that $G(F)$ can be both $\GL_6(F)$ and $\GL_3(D)$), we can find a good parabolic subgroup $\bar{Q}=LU_Q$ and a discrete series $\sigma$ of $L(F)$ such that $\pi=I_{\bar{Q}}^{G}(\sigma)$. \textbf{We assume that $\bar{Q}$ is of type I.}

\begin{cor}\label{parabolic induction 5}
\begin{enumerate}
\item $m(\pi)=m(\sigma)$.
\item Let $\CK\subset Temp(G,\eta)$ be a compact subset, then there exists an element $T\in \CC(Temp(G,\eta))$ such that
$\CL_{\pi}(T_{\pi})=m(\pi)$
for all $\pi\in \CK$.
\end{enumerate}
\end{cor}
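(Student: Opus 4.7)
The plan for (1) is to chain three equivalences already established in the paper. Theorem~\ref{thm 1} applied to $(G,H)$ gives $m(\pi)\neq 0 \iff \CL_{\pi}\neq 0$. The parabolic-induction compatibility (Proposition~\ref{parabolic induction} in the $p$-adic case, Proposition~\ref{parabolic induction 4} in the archimedean case) gives $\CL_{\pi}\neq 0 \iff \CL_{\sigma}\neq 0$. Finally, the analogue of Theorem~\ref{thm 1} for the reduced model $(L,H_{\bar{Q}})$ --- valid because all three type~I reduced models (Ginzburg--Rallis, middle, trilinear $\GL_2$) are Gelfand pairs, and the entire machinery of Sections 5.1--5.5 transfers verbatim --- gives $\CL_{\sigma}\neq 0 \iff m(\sigma)\neq 0$. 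Since both multiplicities lie in $\{0,1\}$, the three biconditionals force $m(\pi)=m(\sigma)$.

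For (2), the first step is to observe that the compact set $\CK$ meets only finitely many connected components $\CO_1,\dots,\CO_n$ of $Temp(G,\eta)$, because the discrete data indexing the components varies discretely. The second step is to argue that on each component $\CO_i$, the function $\pi\mapsto m(\pi)$ is constant: by~(1) this reduces to constancy of $m(\sigma_{\lambda})$ as $\lambda$ varies over $i\Fa_{L,0}^{\ast}$, which follows directly from the definitions since the restriction $\lambda|_{Z_G}$ vanishes and the resulting unramified twist therefore acts trivially on the character defining each type~I reduced model. We may thus discard components where $m\equiv 0$ (setting $T=0$ on these) and assume $m\equiv 1$ on the remaining components.

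On such a component $\CO_i$ we have $\CL_{\pi}\neq 0$ for every $\pi\in\CO_i$. For each $\pi_0\in\CK\cap\CO_i$, choose $T^{(0)}\in End(\pi_0)^{\infty}$ with $\CL_{\pi_0}(T^{(0)})=1$; by Lemma~\ref{lemma 1}(1), the function $\pi\mapsto \CL_{\pi}(T^{(0)})$ is smooth on $\CO_i$, and hence nonzero on some open neighborhood of $\pi_0$. Cover $\CK\cap\CO_i$ by finitely many such neighborhoods $U_1,\dots,U_k$ with corresponding operators $T^{(1)},\dots,T^{(k)}$, and fix a smooth, compactly supported partition of unity $\{\phi_j\}_{j=1}^{k}$ subordinate to this cover on a neighborhood of $\CK\cap\CO_i$. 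Define
\[
T_{\pi} \;=\; \sum_{j=1}^{k}\phi_j(\pi)\,\frac{T^{(j)}}{\CL_{\pi}(T^{(j)})}
\]
for $\pi\in\CO_i$, and extend $T$ by zero to the unselected components. Then $T\in\CC(Temp(G,\eta))$ --- smoothness is inherited from Lemma~\ref{lemma 1}(1), and compactness of the support within each component yields the archimedean Schwartz conditions automatically --- and by construction $\CL_{\pi}(T_{\pi})=\sum_j \phi_j(\pi)=1=m(\pi)$ for every $\pi\in\CK$.

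The only non-routine input is the constancy of $m$ along unramified twists within a component, which amounts to a direct verification for each of the three type~I reduced models; the remainder of (2) is a standard local-to-global gluing via partition of unity, and the main hurdle is therefore checking that the reduced-model analogue of Theorem~\ref{thm 1} applies without modification.
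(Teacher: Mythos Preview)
Your proof is correct and follows essentially the same route as the paper. For (1) you chain the same three equivalences (Theorem~\ref{thm 1}, Proposition~\ref{parabolic induction}/\ref{parabolic induction 4}, and the reduced-model analogue of Theorem~\ref{thm 1}) and invoke multiplicity one, exactly as the paper does via Corollary~\ref{parabolic induction 6}. For (2) the paper argues locally---for each $\pi'$ it produces a $T$ working on a neighborhood by picking $T'\in End(\pi')^{\infty}$ with $\CL_{\pi'}(T')\neq 0$, extending to $T^0\in\CC(Temp(G,\eta))$, and multiplying by a suitable scalar function $\varphi$---leaving the compactness/gluing step implicit; you make the partition-of-unity step explicit, but the content is identical. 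One minor imprecision: when you write $\CL_{\pi}(T^{(j)})$ for varying $\pi$, you are implicitly identifying $End(\pi_{\lambda})^{\infty}\simeq End(\pi_K)^{\infty}$ and treating $T^{(j)}$ as a constant section, which is fine but worth stating (the paper handles this by first extending $T'$ to an element of $\CC(Temp(G,\eta))$ before dividing).
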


\begin{proof}
(1) follows from the same proof as Corollary \ref{parabolic induction 6}. For (2), it is enough to show that for all $\pi' \in Temp(G,\eta)$, there exist $T\in \CC(Temp(G,\eta))$ such that
$\CL_{\pi}(T_{\pi})=m(\pi)$
for all $\pi$ in some neighborhood of $\pi'$ in $Temp(G,\eta)$. Since $m(\sigma)$ is invariant under the unramified twist for type I models, combining with part (1) and Corollary \ref{main case 1}, we know that the map $\pi\rightarrow m(\pi)$ is locally constant (In fact, we even know that the map is constant on each connected components of $Temp(G,\eta)$). If $m(\pi')=0$, just take $T=0$, there is nothing to prove.

If $m(\pi')\neq 0$, then we know $m(\pi)=1$ for all $\pi$ in the connected component containing $\pi'$. By Theorem \ref{thm 1}, we can find $T'\in End(\pi')^{\infty}$ such that $\CL_{\pi'}(T')\neq 0$. Then let $T^0\in \CC(Temp(G,\eta))$ be an element with $T_{\pi'}^{0}=T'$. By Lemma \ref{lemma 1}(1), the function $\pi\rightarrow \CL_{\pi}(T^{0}_{\pi})$ is a smooth function. The value at $\pi'$ is just $\CL_{\pi'}(T')\neq 0$. Then we can definitely find a smooth and compactly supported function $\varphi$ on $Temp(G,\eta)$ such that
$\varphi(\pi)\CL_{\pi}(T^{0}_{\pi})=1$
for all $\pi$ lies inside a small neighborhood of $\pi'$. Then we just need to take $T=\varphi T^0$ and this proves the Corollary.
\end{proof}

\section{The archimedean case}\label{the archimedean case}
In this section, we prove our main theorems (Theorem \ref{main} and Theorem \ref{main 1}) for the case when $F=\BR$. The main ingredient of the proof is Corollary \ref{parabolic induction 6} which allows us to reduce the problem to the trilinear $GL_2$ model case. Then by applying the result of Prasad (\cite{P90}), we can prove the two main theorems.

\subsection{The trilinear $GL_2$ model}
In this subsection, we will recall Prasad's result on the trilinear $GL_2$ model. Let $G_0=GL_2(F)\times GL_2(F)\times GL_2(F)$, $H_0=GL_2(F)$ diagonally embed into $G_0$. For a given irreducible representation $\pi_0=\pi_1\otimes \pi_2\otimes \pi_3$ of $G_0$, assume that the central character of $\pi_0$ equals to $\chi^2$ on $Z_{H_0}(F)$ for some unitary character $\chi$ of $F^{\times}$. $\chi$ will induce an one-dimensional representation $\omega_0$ of $H_0$. Let
\begin{equation}
m(\pi_0)=dim(Hom_{H_0(F)} (\pi_0,\omega_0)).
\end{equation}
Similarly, we have the quaternion algebra version: let $G_{0,D}=GL_1(D)\times GL_1(D)\times GL_1(D)$ and $H_{0,D}=GL_1(D)$, we can still define the multiplicity $m(\pi_{0,D})$. The following theorem has been proved by Prasad in his thesis \cite{P90} under the assumption that at least one $\pi_i$ is discrete series (i=1,2,3), and by Loke in \cite{L01} for the case when $\pi_0$ is a principal series.

\begin{thm}\label{trilinear GL2 arch}
With the notation above, if $\pi_0$ is an irreducible generic representation of $G_0$, let $\pi_{0,D}$ be the Jacquet-Langlands correspondence of $\pi_0$ to $G_{0,D}$ if it exists; otherwise let $\pi_{0,D}=0$. Then we have
\begin{enumerate}
\item $m(\pi_0)+m(\pi_{0,D})=1$.
\item If the central character of $\pi$ is trivial on $Z_{H_0}(F)$, then
$$m(\pi_0)=1 \iff \epsilon(1/2,\pi_0)=1$$
and
$$m(\pi_{0,D})=1 \iff \epsilon(1/2,\pi_0)=-1.$$
\end{enumerate}
\end{thm}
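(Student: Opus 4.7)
The plan is to follow the strategy of Prasad \cite{P90} and Loke \cite{L01}, exploiting the exceptional isogeny $\mathrm{GSO}(V_4) \simeq (GL_2\times GL_2)/\Delta\mathbb{G}_m$ for a four-dimensional quadratic space $V_4$. First I would rewrite the trilinear Hom space using the see-saw dual pair
\[
\begin{array}{ccc} GO(V_4) & & GL_2\times GL_2 \\ \cup & & \cup \\ O(V_4)\times O(V_4) & & GL_2^\Delta \end{array}
\]
so that $\mathrm{Hom}_{H_0}(\pi_1\otimes\pi_2\otimes\pi_3,\omega_0)$ gets identified with a space of intertwining maps between $\pi_3$ (up to twisting) and the big theta lift $\Theta_{V_4}(\pi_1\otimes\pi_2)$ on the group $GL_2$. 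The splitting dichotomy for $V_4$ (isotropic versus anisotropic, equivalently the Hasse invariant being $\pm 1$) is precisely what converts between $G_0$ and $G_{0,D}$: the anisotropic $V_4$ is $D$ with its reduced norm form, so $\Theta_{V_4'}(\pi_{1,D}\otimes\pi_{2,D})$ controls $m(\pi_{0,D})$.

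Next I would invoke the Howe/Rallis dichotomy: exactly one of the two $V_4$'s supports a nonzero theta lift of $\pi_1\otimes\pi_2$, and the choice is governed by the local $\epsilon$-factor $\epsilon(1/2,\pi_1\times\pi_2\times\pi_3)$. Combined with multiplicity one for the theta correspondence this gives simultaneously the identity $m(\pi_0)+m(\pi_{0,D})=1$ and the refined $\epsilon$-criterion of part~(2). For the part of $\pi_0$ where at least one factor is a discrete series, the theta lift is nonvanishing and the matrix-coefficient integral defining the invariant trilinear form converges, so Prasad's computation applies verbatim.

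The hard part will be the principal series case. When all three $\pi_i$ are fully induced from characters, the invariant functional is no longer given by a convergent integral but only by analytic continuation, and one must rule out spurious cancellations. Following Loke, I would construct a meromorphic family of trilinear functionals by considering induced representations $I(\chi_{i,s})$ and the standard intertwining operators, and then identify $m(\pi_0)$ with the (non)vanishing of an explicit rational function of the inducing parameters. The key technical step is to show that the poles of the intertwining integrals defining the trilinear form cancel exactly against the poles of the normalizing factors, leaving a nonzero residue precisely on the side predicted by $\epsilon(1/2,\pi_0)$.

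Finally, having established Theorem \ref{trilinear GL2 arch}, the passage back to the Ginzburg--Rallis models is immediate: by Corollary \ref{parabolic induction 6} together with its quaternionic analogue, $m(\pi)=m(\pi_0)$ and $m(\pi_D)=m(\pi_{0,D})$ for $\pi=I_{\bar P}^G(\pi_0)$, and the epsilon factor $\epsilon(1/2,\pi,\wedge^3)$ of the induced representation factors as the triple product $\epsilon(1/2,\pi_0)$ up to abelian factors that vanish when the central character is trivial. This reduces both Theorem \ref{main} and Theorem \ref{main 1}(1) directly to Theorem \ref{trilinear GL2 arch}.
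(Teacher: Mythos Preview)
The paper does not prove Theorem \ref{trilinear GL2 arch} at all: it is stated as a known result, with the discrete-series case attributed to Prasad \cite{P90} and the principal-series case to Loke \cite{L01}. There is no ``paper's own proof'' to compare against. Your proposal is therefore not a comparison candidate but rather a sketch of what the cited references do, and in that capacity it is broadly accurate in spirit (see-saw with $GO(V_4)$, theta dichotomy governed by the triple-product $\epsilon$-factor, analytic continuation for the fully principal-series case).

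Your final paragraph, however, is not part of proving Theorem \ref{trilinear GL2 arch}; it is the content of Section 6.2, where the paper \emph{applies} the theorem to deduce Theorems \ref{main} and \ref{main 1} in the archimedean case. That deduction is carried out explicitly in the paper via Corollary \ref{parabolic induction 6} and an elementary computation showing $\epsilon(1/2,\pi,\wedge^3)=\epsilon(1/2,\pi_0)$ when the central character is trivial. So you have conflated the statement being cited with its downstream application; if asked only about Theorem \ref{trilinear GL2 arch}, the correct answer is simply that the paper quotes it without proof.
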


\begin{rmk}
Both Prasad's result and Loke's result are based on the assumption that the product of the central characters of $\pi_i$ ($i=1,2,3$) is trivial. In our case, we assume that the product of the central characters is $\chi^2$. But we can always reduce ourself to their cases by replacing $\pi_1$ with $\pi_1 \otimes (\chi^{-1}\circ \det)$. Note that twist by characters will not affect the multiplicity. On the other hand, for the epsilon factor part, we do need the assumption that the product of the central character is trivial. Otherwise the Langlands parameter of $\pi_0$ will no longer be selfdual, hence the value of the epsilon factor at $1/2$ may not be $\pm 1$.
\end{rmk}

\subsection{Proof of Theorem \ref{main} and Theorem \ref{main 1}}
Let $\pi$ be an irreducible tempered representation of $GL_6(F)$, with $F=\BR$. There exists a tempered representation $\pi_0$ of $G_0$ such that $\pi=Ind_{\bar{P}}^{G}(\pi_0)$. Let $\pi_D$ be the Jacquet-Langlands correspondence of $\pi$ to $GL_3(D)$. Similarly we can find a tempered representation $\pi_{0,D}$ of $G_{0,D}$ such that $\pi=Ind_{\bar{P}_D}^{G_D}(\pi_{0,D})$. It is easy to see that $\pi_{0,D}$ is the Jacquet-Langlands correspondence of $\pi_0$ to $G_{0,D}$. Note that $\pi_D$ and $\pi_{0,D}$ may be zero. In fact, they are nonzero iff $\pi_{0,D}$ is a discrete series. By Corollary \ref{parabolic induction 6}, $m(\pi)=m(\pi_0)$ and $m(\pi_D)=m(\pi_{0,D})$. Then by applying Theorem \ref{trilinear GL2 arch}, we have
$m(\pi)+m(\pi_D)=m(\pi_0)+m(\pi_{0,D})=1.$
This proves Theorem \ref{main}.

For Theorem \ref{main 1}, by Theorem \ref{trilinear GL2 arch}, it is enough to show that
$$\epsilon(1/2, \pi,\wedge)=\epsilon(1/2,\pi_0).$$
For $i=1,2,3$, let $\phi_i$ be the Langlands parameter of $\pi_i$. Then the Langlands parameter of $\pi$ is $\phi_{\pi_0}=\phi_1\oplus \phi_2\oplus \phi_3$, hence
\begin{eqnarray*}
&&\wedge^3(\phi_{\pi_0})=\wedge^3(\phi_1\oplus \phi_2\oplus \phi_3)\\
&=&(\phi_1\otimes \phi_2\otimes \phi_3)\oplus (\det(\phi_2) \otimes \phi_1) \oplus (\det(\phi_3) \otimes \phi_1)\\
&&\oplus (\det(\phi_1) \otimes \phi_2)\oplus (\det(\phi_3) \otimes \phi_2)\oplus (\det(\phi_1) \otimes \phi_3)\oplus (\det(\phi_2) \otimes \phi_3).
\end{eqnarray*}
By our assumption on the central character, we have $\det(\phi_{\pi_0})=\det(\phi_1)\otimes \det(\phi_2)\otimes \det(\phi_3)=1$. Therefore $(\det(\phi_2)\otimes \phi_1)^{\vee}=\det(\phi_1)^{-1} \otimes \det(\phi_2)^{-1} \otimes \phi_1=\det(\phi_3)\otimes \phi_1$, this implies
$$\epsilon(1/2,\det(\phi_2)\otimes \phi_1)\epsilon(1/2,\det(\phi_3) \otimes \phi_1)=\det(\phi_1)\otimes \det(\phi_2)^2(-1)=\det(\phi_1)(-1).$$
Similarly, we have
\begin{eqnarray*}
\epsilon(1/2,\det(\phi_1)\otimes \phi_2)\epsilon(1/2,\det(\phi_3) \otimes \phi_2)&=&\det(\phi_1)^2\otimes \det(\phi_2)(-1)=\det(\phi_2)(-1),\\
\epsilon(1/2,\det(\phi_1)\otimes \phi_3)\epsilon(1/2,\det(\phi_2) \otimes \phi_3)&=&\det(\phi_1)^2\otimes \det(\phi_3)(-1)=\det(\phi_3)(-1).
\end{eqnarray*}
Combining the three equations above, we have
\begin{eqnarray*}
\epsilon(1/2, \pi,\wedge^3)&=&\det(\phi_1)\otimes \det(\phi_2)\otimes \det(\phi_3)(-1) \epsilon(1/2,\phi_1\otimes \phi_2\otimes \phi_3)\\
&=&\epsilon(1/2,\phi_1\otimes \phi_2\otimes \phi_3)=\epsilon(1/2,\pi_0).
\end{eqnarray*}
This proves Theorem \ref{main 1}.

Now the only thing left is to prove Theorem \ref{thm 2} for the archimedean case. By Remark \ref{thm 2.1}, we only need to prove the multiplicity is nonzero. Since we are in the archimedean case, there are only three type II models: Type $(2,2,1,1)$, $(2,1,1,1,1)$ and $(1,1,1,1,1,1)$. Type $(1,1,1,1,1,1)$ case is trivial since both $L$ and $H_{\bar{Q}}$ is abelian in this case. For Type $(2,1,1,1,1)$, by canceling the $\GL_1$ part (which is abelian), we are considering the model $(\GL_2(F),T)$ where $T=\{\begin{pmatrix} a&0\\0&b \end{pmatrix}| a,b\in F^{\times} \}$ is the maximal torus. This is the Bessel model for $(\GL_2,\GL_1)$ and we know the multiplicity is always nonzero by the archimedean Rankin-Selberg theory of Jacquet and Shalika (\cite{JS90}).

For Type $(2,2,1,1)$, by canceling the $\GL_1$ part, we are considering the following model:
for $M=\GL_2(F)\times \GL_2(F)$, and
$$M_0=\{m(a,b)=\begin{pmatrix} a&0\\c&b \end{pmatrix}\times \begin{pmatrix} a&0\\c&b \end{pmatrix}|a,b\in F^{\times},\;c\in F\}.$$
The character on $M_0$ is given by $\omega(m(a,b))=\chi(ab)$. Let $B$ be the lower Borel subgroup of $\GL_2(F)$, it is isomorphic to $M_0$, hence we can also view $\omega$ as a character on $B$. Let $\pi_3=I_{B}^{G}(\omega)$, it is a principal series of $\GL_2(F)$. For any irreducible tempered representation $\pi_1\otimes \pi_2$ of $M$, by Frobenius reciprocity, we have
$$Hom_{M_0}(\pi_1\otimes \pi_2,\omega)=Hom_{\GL_2(F)}(\pi_1\otimes \pi_2,\pi_3).$$
Here $\GL_2(F)$ maps diagonally into $M$. Therefore the Hom space is isomorphic to the Hom space of the trilinear $\GL_2$ model for the representation $\pi_0=\pi_1\otimes \pi_2\otimes \pi_3$. Since $\pi_3$ is a principal series, $\pi_{0,D}=0$. By Theorem \ref{trilinear GL2 arch}, $m(\pi_0)=1\neq 0$, hence the Hom space is nonzero and this proves Theorem \ref{thm 2}. \textbf{Now the proof of our main theorems (Theorem \ref{main} and Theorem \ref{main 1}) is complete for the archimedean case.}

\section{The trace formula}\label{Spectral and Geometric Expansion}
\textbf{For the rest of this paper, we assume that $F$ is p-adic}. In this section, we will prove our trace formula. We will define the distribution on Section 7.1. In Section 7.2, we write down the geometric expansion which has already been proved in \cite{Wan15}. In Section 7.3, we prove the spectral expansion.

\subsection{The distribution}
Let $\eta=\chi^2$ be two unitary characters of $F^{\times}$, for $f\in \CC(\zg,\eta^{-1})$, define the function $I(f,\cdot)$ on $H(F)\back G(F)$ to be
$$I(f,x)=\int_{\zh} f(x^{-1}hx)\xi(h)\omega(h) dh.$$
By Lemma \ref{major 2}(2), the above integral is absolutely convergent. The following Proposition together with Proposition \ref{major 4}(3) tell us that the integral
$$I(f):=\int_{H(F)\back G(F)} I(f,x)dx$$
is also absolutely convergent for all $f\in \CC_{scusp}(\zg,\eta^{-1})$, and this defines a continuous linear form
$$\CC_{scusp}(\zg,\eta^{-1})\rightarrow \BC:\; f\rightarrow I(f).$$

\begin{prop}\label{major 6}
\begin{enumerate}
\item There exist $d>0$ and a continuous semi-norm $\nu$ on $\CC(\zg,\eta^{-1})$ such that
$$ |I(f,x)| \leq \nu(f)\hc(x)^2 \nor(x)^{d}$$
for all $f\in \CC(\zg,\eta^{-1})$ and $x\in H(F)\back G(F)$.
\item For all $d>0$, there exists a continuous semi-norm $\nu_d$ on $\CC(\zg,\eta^{-1})$ such that
$$ |I(f,x)| \leq \nu_d(f) \hc(x)^2 \nor(x)^{-d}$$
for all $f\in \CC_{scusp}(\zg,\eta^{-1})$ and $x\in H(F)\back G(F)$.
\end{enumerate}
\end{prop}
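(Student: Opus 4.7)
The plan is to obtain (1) by a direct Schwartz-type bound coupled with Proposition~\ref{major 4}(5), and to obtain (2) by exploiting the strong cuspidality of $f$ together with the exponential-decay estimate of Proposition~\ref{major 4}(7).

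For (1), the definition of the Harish-Chandra-Schwartz space gives, for every $d'>0$, a continuous semi-norm $\nu_{d'}$ on $\CC(\zg,\eta^{-1})$ such that
\[
|f(g)|\leq \nu_{d'}(f)\,\Xi^G(g)\,\sigma_0(g)^{-d'}\qquad(g\in G(F)).
\]
Plugging this into the definition of $I(f,x)$ yields
\[
|I(f,x)|\leq \nu_{d'}(f)\int_{\zh}\Xi^G(x^{-1}hx)\,\sigma_0(x^{-1}hx)^{-d'}\,dh,
\]
and for $d'$ large enough Proposition~\ref{major 4}(5) bounds the right-hand side by a constant multiple of $\hc(x)^{2}\nor(x)^{d}$ for some $d>0$. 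This settles (1).

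For (2) part (1) alone gives only polynomial growth in $\nor(x)$, while we now require arbitrary polynomial decay, so strong cuspidality must be used. Using the weak Cartan decomposition (Proposition~\ref{cartan 1}) together with Proposition~\ref{major 4}(2) and Lemma~\ref{major 1}, matters reduce to proving, for every $d>0$,
\[
|I(f,a)|\leq \nu_d(f)\,\Xi^{G_0}(a)^{2}\,\delta_P(a)\,\sigma_0(a)^{-d},\qquad a\in A_0^{+}.
\]
The key input from strong cuspidality is the vanishing of the ``$U$-constant term'' of $I(f,a)$: writing $h=uh_0$ with $u\in U(F)$ and $h_0\in H_0(F)$, a change of variables gives
\[
\int_{U(F)}f(a^{-1}uh_0\,a)\,du=\delta_P(a)\int_{U(F)}f(u\cdot a^{-1}h_0a)\,du,
\]
and the right-hand side is zero because $f$ is strongly cuspidal with respect to the parabolic $P=MU$. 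Consequently the whole contribution to $I(f,a)$ arises from the oscillation of $\xi(u)=\psi(\lambda(u))$, and one extracts additional decay in the $\lambda$-direction by an averaging argument: choose $\varphi\in C_c^{\infty}(F^{\times})$ with $\int\varphi(t)\,dt=1$, use the homomorphism $a:\BG_m\to Z_{G_0}$ and Lemma~\ref{linear form}(2) to rewrite $I(f,a)$ as an integral carrying the Schwartz factor $\hat{\varphi'}(\lambda(h))$. Combining this with Proposition~\ref{major 4}(7), whose exponential factor $e^{-\epsilon c}$ in the region $\sigma_0\geq c$ is tailored to this situation, and taking $c$ proportional to $\sigma_0(a)$, yields the desired rapid decay.

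The main obstacle will be carrying out the averaging cleanly: one has to reconcile the fact that $f$ is only $Z_G$-invariant (not $Z_{G_0}$-invariant) when acting by $Ad_a(\varphi)$, track the interaction between this averaging operator and conjugation by $x$, and bookkeep the factors of $\delta_P(a)$ and $\Xi^{G_0}(a)$ so that they combine with Proposition~\ref{major 4}(7) into the stated Schwartz-type estimate with the factor $\hc(x)^{2}\nor(x)^{-d}$.
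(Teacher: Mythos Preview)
Your treatment of (1) is correct and is exactly the standard argument: bound $|f|$ by a Schwartz seminorm times $\Xi^G\sigma_0^{-d'}$ and invoke Proposition~\ref{major 4}(5). This matches what the paper (via the reference to \cite{B15}) does.

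For (2) you have correctly identified the two essential inputs---the weak Cartan decomposition and the vanishing of the $U$-constant term coming from strong cuspidality---but the way you propose to combine them does not work as stated. Proposition~\ref{major 4}(7) is an estimate for a \emph{double} integral over $\zh\times\zh$ with the weight $(1+|\lambda(h')|)^{-\delta}$ on the inner variable; the quantity $I(f,a)$ is a single integral, and it is not clear how to manufacture the second $H$-integration from your averaging trick with $Ad_a(\varphi)$. Lemma~\ref{linear form}(2) produces a factor $\hat{\varphi'}(\lambda(h))$ on the \emph{same} variable you are already integrating, not on a new one, so you do not land in the setting of Proposition~\ref{major 4}(7). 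The obstacles you list in the last paragraph are therefore not merely bookkeeping: the mechanism that converts the vanishing of $\int_U f$ into genuine decay in $\sigma_0(a)$ is missing.

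The argument the paper invokes (Proposition~7.1.1 of \cite{B15}) proceeds differently: one introduces the \emph{weak constant term} $f^{w}_{\bar Q}$ of $f$ along each good parabolic $\bar Q$, shows that strong cuspidality forces $f^{w}_{\bar Q}=0$ for every proper $\bar Q$, and then uses an approximation of $I(f,a)$ by an expression built from these weak constant terms, with an error that already has rapid decay (this step uses Lemma~\ref{major 5} and Proposition~\ref{major 4} but not part (7) in the way you suggest). The decay thus comes from comparing $f$ to its weak constant terms, not from oscillation of $\xi$ alone. If you want to complete your approach you will need either to set up this constant-term approximation or to explain precisely how the single integral $I(f,a)$ can be bounded by the double integral of Proposition~\ref{major 4}(7).
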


\begin{proof}
The proof goes exactly the same as Proposition 7.1.1 of \cite{B15}. In the loc. cit., the author is dealing with the Gan-Gross-Prosad period, but the proof of that Proposition worked for general case except the following five results which are specified to the GGP model: Lemma 6.5.1, Lemma 6.6.1, Proposition 6.4.1, Proposition 6.7.1 and Proposition 6.8.1 in the loc. cit. But we already proved the above five results for Ginzburg-Rallis model in Section 4 for the Ginzburg-Rallis model, see Lemma \ref{norm}, Proposition \ref{spherical}, Proposition \ref{cartan 1}, Lemma \ref{major 2}, Lemma \ref{major 3}, Proposition \ref{major 4} and Lemma \ref{major 5}. Therefore the argument in the loc. cit. can be applied to our model smoothly. We refer the readers to my thesis \cite{Wan17} for details of the proof.
\end{proof}

\subsection{The geometric expansion}
Let's recall the results in the previous paper \cite{Wan15} for the geometric expansion of $I(f)$. In Section 3.4, for any strongly cuspidal $f$ belonging to the space $C_{c}^{\infty}(Z_G(F)\backslash G(F),\eta^{-1})$, we have defined a quasi-character $\theta_f$ on $G(F)$. As in Section 7.1 of loc. cit., we can define the geometric side of the trace formula via $\theta_f$.

Let $\CT$ be a subset of subtorus of $H_0$ defined as follows:
\begin{itemize}
\item If $H_0=\GL_2(F)$, then $\CT$ contain the trivial torus $\{1\}$ and the non-split torus $T_v$ for $v\in F^{\times}/(F^{\times})^2, v\neq 1$ where $T_v=\{\begin{pmatrix} a & bv \\ b & a \end{pmatrix} \in H_0(F) \mid a,b\in F,\;(a,b)\neq (0,0)\}$.
\item If $H_0=D$, then $\CT$ contain the subtorus $T_v$ for $v\in F^{\times}/(F^{\times})^2$ with $v\neq 1$, where $T_v\subset D$ is isomorphic to the quadratic extension $F(\sqrt{v})$ of $F$.
\end{itemize}
Let $\theta$ be a quasi-character on $G(F)$ with central character $\eta^{-1}$, and $T\in \CT$. If $T=\{1\}$, we are in the split case. In this case, we have a unique regular nilpotent orbit $\CO_{reg}$ in $\Fg(F)$ and take $c_\theta(1)=c_{\theta,\CO_{reg}}(1)$. If $T=T_v$ for some $v\in F^{\times}/(F^{\times})^2$
with $v\neq 1$, we take $t\in T_v$ to be a regular element. It is easy to see that in both cases $G_t(F)$ is $F$-isomorphic to $\GL_3(F_v)$ where $F_v=F(\sqrt{v})$ is the quadratic extension of $F$. Let $\CO_v$ be the unique regular nilpotent orbit in $\Fg \Fl_3(F_v)$, and take $c_\theta(t)=c_{\theta,\CO_v}(t)$. For the function $f$ above, let $c_f(t)=c_{\theta_f}(t)$ for all $t\in T_{reg}(F),\; T\in \CT$. Also we define a function $\Delta$ on $H_{ss}(F)$ to be
$$\Delta(x)=\mid \det((1-ad(x)^{-1})_{\mid U(F)/U_x(F)}) \mid_F.$$

\begin{defn}
The geometric side of the trace formula is defined by
\begin{equation}\label{geometric}
I_{geom}(f)=\sum_{T\in \CT}|W(H_0,T)|^{-1} \nu(T) \int_{Z_G(F)\backslash T(F)} c_f(t) D^H(t) \Delta(t) \chi(\det(t)) dt.
\end{equation}
The integral above is absolutely convergent by Proposition 5.2 of \cite{Wan15}.
\end{defn}

The following theorem is proved in \cite{Wan15}, it gives the geometric expansion of our trace formula.
\begin{thm}\label{geometric expansion}
For every function $f\in C_{c}^{\infty} (Z_G(F)\backslash G(F),\eta^{-1})$ that is strongly cuspidal, we have
$I(f)=I_{geom}(f).$
\end{thm}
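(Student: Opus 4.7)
The plan is to adapt the approach developed by Waldspurger in \cite{W10}, \cite{W12} for the local Gan-Gross-Prasad trace formula, as already carried out for the Ginzburg-Rallis model in the companion paper \cite{Wan15}. By Proposition \ref{major 6}(2) combined with Proposition \ref{major 4}(3), for strongly cuspidal $f$ the integral defining $I(f)$ is absolutely convergent, so the task is to re-express this single absolutely convergent quantity in geometric terms. First I would partition $G(F)$ according to semisimple conjugacy classes of $g = x^{-1}hx$, and use Harish-Chandra descent around each semisimple $t$ to reduce to a family of local computations in a good neighborhood of $0$ in $\Fg_t(F)$.

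For a regular semisimple $t \in T(F)$ with $T \in \CT$, I would invoke the germ expansion \eqref{germ 2} for the quasi-character $\theta_f$ attached to $f$ by Proposition \ref{distribution 2}. In a good neighborhood $\omega_t$ of $0$ in $\Fg_t(F)$, the weighted-orbital-integral representation of $\theta_f$ converts the localized piece of $I(f,x)$ into a linear combination of Fourier-transformed nilpotent orbital integrals $\hat{j}(\CO, \cdot)$ for $\CO \in Nil(\Fg_t)$; only the regular orbit $\CO_t$ survives after integration against the non-degenerate character $\xi\omega$ of $H$ in the unipotent directions, and its coefficient is by definition $c_f(t)$. This is where the strong cuspidality of $f$ is crucial: it ensures that contributions coming from non-elliptic semisimple classes, and from non-regular nilpotent orbits in $\Fg_t$, vanish.

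Summing over $T \in \CT$ with the Weyl group weight $|W(H_0,T)|^{-1}$ and the torus normalization $\nu(T)$, and computing the Jacobians of the various changes of variables (which produce $D^H(t)$, $\Delta(t)$, and the value $\chi(\det t)$ of the character $\omega\otimes\xi$), yields exactly the formula \eqref{geometric} for $I_{geom}(f)$.

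The main obstacle will be a rigorous justification of the formal manipulations: although $I(f)$ is absolutely convergent once the inner integral is carried out, the iterated double integral is only conditionally convergent, so one must introduce an auxiliary truncation and use the refined estimates of Lemma \ref{major 3} and Proposition \ref{major 4} to partition and rearrange it while controlling the errors. Equally delicate is showing that only the \emph{regular} germ $c_f(t) = c_{\theta_f, \CO_t}(t)$ contributes, which combines the behavior of quasi-characters under parabolic induction (Lemma \ref{quasi character parabolic}) with the cuspidality of $f$. Since the entire argument has been written out in detail in Section 7 of \cite{Wan15}, here one simply invokes the result of that paper; the novelty in the present work is the spectral side, developed in the remainder of Section 7.
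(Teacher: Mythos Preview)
Your proposal is correct and aligns with the paper's own treatment: the proof here is simply a citation to the companion paper \cite{Wan15} (Theorem 5.4 there, not Section 7), and your sketch accurately summarizes the Waldspurger-style argument carried out in that reference. The paper does not reproduce any of the details you outline, so your final sentence deferring to \cite{Wan15} is exactly what the paper does.
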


\begin{proof}
This is just Theorem 5.4 of \cite{Wan15}.
\end{proof}

\subsection{The spectral expansion}
\begin{thm}\label{spectral expansion}
For all $f\in \CC_{scusp}(\zg,\eta^{-1})$, define
$$I_{spec}(f)=\int_{Temp(G,\eta)} \theta_f(\pi)m(\bar{\pi})d\pi,$$
with $\theta_f(\pi)$ defined in \eqref{theta_f}. Then we have $I(f)=I_{spec}(f)$.
\end{thm}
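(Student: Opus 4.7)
The plan is to expand $I(f)$ spectrally along the regularized period $\CP_{H,\xi}$ via the Harish-Chandra-Plancherel formula, following the strategy Beuzart-Plessis developed for the Gan-Gross-Prasad models in \cite{B15}. The first step is to prove, for every $x\in H(F)\backslash G(F)$, the pointwise spectral expansion
\begin{equation*}
I(f,x) \;=\; \int_{Temp(G,\eta)} \CL_\pi\bigl(\pi(x)\pi(f)\pi(x^{-1})\bigr)\,\mu(\pi)\,d\pi.
\end{equation*}
When the Fourier transform $\pi\mapsto \pi(f)$ is compactly supported this is Lemma \ref{lemma 1}(4) applied to the conjugate function ${}^x f$, combined with the trace identity $\tr(\pi(x^{-1}h^{-1}x)\pi(f))=\tr(\pi(h^{-1})\pi(x)\pi(f)\pi(x^{-1}))$ and the relation $\pi({}^x f)=\pi(x)\pi(f)\pi(x^{-1})$; the general case follows by density via the matricial Paley-Wiener isomorphism.

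Next I would integrate the identity over $x\in H(F)\backslash G(F)$ and interchange orders of integration. The strong cuspidality of $f$ is used here through Proposition \ref{major 6}(2) to provide the rapid decay of $x\mapsto I(f,x)$, while Lemma \ref{lemma 2}(2) and Proposition \ref{major 4}(3) control the spectral side uniformly on compact subsets of $Temp(G,\eta)$. The equivariance \eqref{L 1} ensures that the integrand
$A_\pi(f):=\int_{H(F)\backslash G(F)} \CL_\pi(\pi(x)\pi(f)\pi(x^{-1}))\,dx$
descends to $H(F)\backslash G(F)$, and the interchange yields
\begin{equation*}
I(f) \;=\; \int_{Temp(G,\eta)} A_\pi(f)\,\mu(\pi)\,d\pi,
\end{equation*}
reducing the theorem to the identification $A_\pi(f)\,\mu(\pi)=\theta_f(\pi)\,m(\bar\pi)$ for almost every $\pi\in Temp(G,\eta)$.

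This identification proceeds in three stages. First, Theorem \ref{thm 1} together with Corollary \ref{parabolic induction 5} confines the support of $\pi\mapsto A_\pi(f)$ to those tempered $\pi$ with $m(\pi)\ne 0$ and, on any compact subset of $Temp(G,\eta)$, produces an element $T\in\CC(Temp(G,\eta))$ with $\CL_\pi(T_\pi)=m(\pi)$. Second, the factorization $\CL_\pi(S)\CL_\pi(T)=\CL_\pi(SL_\pi T)$ from Lemma \ref{lemma 1}(2), together with the at-most rank-one nature of $L_\pi$ (its image lies in the at-most one-dimensional space $(\bar\pi^{-\infty})^{H,\xi\omega}$), splits $\CL_\pi(\pi(x)\pi(f)\pi(x^{-1}))$ into a $\pi(f)$-dependent factor and an $x$-dependent factor whose integration over $H(F)\backslash G(F)$ reproduces the dual period, spectrally identified with $m(\bar\pi)$ via Lemma \ref{lemma 1}(5). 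Third, strong cuspidality of $f$, which forces $J_M^G(\tau,f)$ to vanish whenever $\tau$ is induced from a proper parabolic of $M$, together with the spectral expansion of $\theta_f$ in Proposition \ref{expansion of character}, converts the $\pi(f)$-dependent factor into the weighted-character value $\theta_f(\pi)$. The main obstacle is exactly this last identification: the spectral factor that emerges from formally integrating out $x$ by any naive Schur-type argument applied to the $G$-conjugation invariant functional $T\mapsto A_\pi(T)$ would a priori be the ordinary character $\theta_\pi(f)$ rather than the weighted character $\theta_f(\pi)$, and the substitution of one for the other is possible only because $A_\pi$ genuinely converges on its natural domain precisely when strong cuspidality is invoked; carrying this out rigorously, in parallel with the GGP analysis of \cite{B15}, uses the full package of weak Cartan decomposition, polynomial growth, and weak Harish-Chandra-Schwartz estimates assembled in Section 4, with the local trace formula (Theorem \ref{local trace formula}) providing a consistency check.
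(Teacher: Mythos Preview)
Your first step (the pointwise spectral expansion of $I(f,x)$ via Lemma \ref{lemma 1}(4)) matches the paper exactly, and you correctly isolate the real difficulty: after integrating over $x$ one needs to produce the \emph{weighted} character $\theta_f(\pi)$, not the ordinary character $\theta_\pi(f)$. However, your resolution of this difficulty is not an argument. You cannot meaningfully define $A_\pi(f)=\int_{H(F)\backslash G(F)}\CL_\pi(\pi(x)\pi(f)\pi(x^{-1}))\,dx$ for a fixed $\pi$ and then identify it: there is no reason for this integral to converge for individual $\pi$ (the rapid decay of $I(f,x)$ comes only \emph{after} the $\pi$-integration), and your ``third stage'' never exhibits a mechanism that would convert such an expression into $\theta_f(\pi)m(\bar\pi)/\mu(\pi)$. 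Invoking Proposition \ref{expansion of character} does not help, since that proposition concerns the quasi-character $\theta_f$ on $G_{reg}(F)$, not the period integral. Calling the local trace formula a ``consistency check'' signals that you have not seen its actual role.

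The paper's route is concretely different and avoids ever isolating a single $\pi$. One introduces an auxiliary $f'\in\CC(\zg,\eta)$ with $\CL_\pi(\pi(\overline{f'}))=m(\pi)$ on the support of $\pi\mapsto\pi(f)$ (this is exactly Corollary \ref{parabolic induction 5}(2), which you cite but do not use this way), and forms the doubled kernel $K_{f,f'}^A(g_1,g_2)=\int_{\zg}f(g_1^{-1}gg_2)f'(g)\,dg$ and its two successive $H$-integrations $K_{f,f'}^1,K_{f,f'}^2$. One shows $I(f,x)=K_{f,f'}^2(x,x)$ and hence $I(f)=J_{aux}(f,f'):=\int K_{f,f'}^2(x,x)\,dx$. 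The weighted character then enters by unfolding $J_{aux}$ back to an integral of $K_{f,f'}^A(g,hg)$ and applying Theorem \ref{local trace formula}(4), which for strongly cuspidal $f$ gives $\int_{\zg}K_{f,f'}^A(g,hg)\,dg=\int_{Temp(G,\eta)}\theta_f(\pi)\theta_{\bar\pi}(R(h^{-1})f')\,d\pi$; a final $H$-integration in $h$ turns $\theta_{\bar\pi}(R(h^{-1})f')$ into $\overline{\CL_\pi(\pi(\overline{f'}))}=m(\bar\pi)$. Thus Arthur's local trace formula is the \emph{engine} that produces $\theta_f(\pi)$, not a check, and the auxiliary $f'$ is what makes the unfolding possible. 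Your outline is missing precisely this doubling/unfolding construction.
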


The purpose of the rest of this section is to prove this Theorem. We follow the method developed by Beuzart-Plessis in \cite{B15} for the GGP case. We fix $f\in \CC_{scusp}(\zg,\eta^{-1})$, for all $f'\in \CC(\zg,\eta)$, define
\begin{eqnarray*}
K_{f,f'}^{A}(g_1,g_2)&=& \int_{\zg} f(g_{1}^{-1} gg_2)f'(g)dg, \;g_1,g_2\in G(F),\\
K_{f,f'}^{1}(g,x)&=&\int_{\zh}K_{f,f'}^{A}(g,hx) \xi(h) \omega(\det(h)) dh, \; g,x\in G(F),\\
K_{f,f'}^{2}(x,y)&=&\int_{\zh}K_{f,f'}^{1}(h^{-1}x,y) \xi(h) \omega(\det(h)) dh, \; x,y\in G(F),\\
J_{aux}(f,f')&=&\int_{H(F)\back G(F)} K_{f,f'}^{2}(x,x)dx.
\end{eqnarray*}

\begin{prop}\label{spectral 1}
\begin{enumerate}
\item The integral defining $K_{f,f'}^{A}(g_1,g_2)$ is absolutely convergent. For all $g_1\in G(F)$, the map
$$g_2\in G(F)\rightarrow K_{f,f'}^{A}(g_1,g_2)$$
belongs to $\CC(G(F),\eta^{-1})$. For all $d>0$, there exists $d'>0$ such that for all continuous semi-norm $\nu$ on $\CC_{d'}^{w}(G(F),\eta^{-1})$, there exists a continuous semi-norm $\mu$ on $\CC(G(F),\eta)$ such that
$$\nu(K_{f,f'}^{A}(g,\cdot)) \leq \mu(f') \Xi^G(g)\sigma_0(g)^{-d}$$
for all $f'\in \CC(G(F),\eta)$ and $g\in G(F)$.
\item The integral defining $K_{f,f'}^{1}(g,x)$ is absolutely convergent. For all $d>0$, there exists $d'>0$ and a continuous semi-norm $\nu_{d,d'}$ on
$\CC(G(F),\eta)$ such that
$$|K_{f,f'}^{1}(g,x)|\leq \nu_{d,d'}(f') \Xi^G(g)\sigma_0(g)^{-d}\Xi^{H\back G}(x)\sigma_{H\back G}(x)^{d'}$$
for all $f'\in \CC(G(F),\eta)$ and $g,x\in G(F)$.
\item The integral defining $K_{f,f'}^{2}(x,y)$ is absolutely convergent. We have
\begin{equation}\label{spectral 1.1}
K_{f,f'}^{2}(x,y)=\int_{Temp(G,\eta)} \CL_{\pi}(\pi(x)\pi(f)\pi(y^{-1}))\overline{\CL_{\pi}(\pi(\overline{f'}))} \mu(\pi)d\pi
\end{equation}
for all $f'\in \CC(G(F),\eta)$ and $x,y\in G(F)$. Since we are in p-adic case, the integrand on right hand side is compactly supported, therefore the integral is always absolutely convergent.
\item The integral defining $J_{aux}(f,f')$ is absolutely convergent. And for all $d>0$, there exists a continuous semi-norm $\nu_d$ on $\CC(G(F),\eta)$ such that
$|K_{f,f'}^{2}(x,x)|\leq \nu_d(f')\Xi^{H\back G}(x)^2 \sigma_{H\back G}(x)^{-d}$
for all $f'\in \CC(G(F),\eta)$ and $x\in H(F)\back G(F)$. Moreover, the linear map
\begin{equation}\label{spectral 1.2}
f'\in \CC(G(F),\eta)\rightarrow J_{aux}(f,f')
\end{equation}
is continuous.
\end{enumerate}
\end{prop}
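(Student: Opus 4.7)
The plan is to prove the four parts sequentially, closely following the strategy of Beuzart-Plessis for the analogous statement in the GGP setting (\cite[Proposition 8.2.2]{B15}), using as replacements the geometric and analytic estimates established in Sections 4 and 5 of the present paper.

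For part (1), absolute convergence of $K_{f,f'}^{A}$ and the claimed bounds follow directly from Theorem~\ref{local trace formula}, parts (1) and (2): these yield estimates of the form $|K_{f,f'}^{A}(g_1,g_2)|\ll \Xi^G(g_1)\Xi^G(g_2)\sigma_0(g_1)^{-d}\sigma_0(g_2)^{d'}$ in terms of continuous seminorms of $f$ and $f'$. To see that $g_2\mapsto K_{f,f'}^{A}(g_1,g_2)$ belongs to $\CC(\zg,\eta^{-1})$, I would change variables $g\mapsto g_1 g$ to rewrite the integral as a convolution-type pairing of two Harish-Chandra Schwartz functions, which lies in $\CC(\zg,\eta^{-1})$; the desired continuous seminorm estimate in the weak Schwartz space is then read off by tracking the proof of Theorem~\ref{local trace formula}.

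For parts (2) and (3), the main point is that by Proposition~\ref{major 8} the linear form $\CP_{H,\xi}$ extends continuously to $\CC^{w}(\zg,\eta^{-1})$, and by Lemma~\ref{lemma 2}(2) its precomposition with translations is controlled by $\hc(x)\hc(y)\nor(x)^{d'}\nor(y)^{d'}$ times a weak Schwartz seminorm. Writing $K_{f,f'}^{1}(g,x)=\CP_{H,\xi}(R(x)\Phi_g)$ with $\Phi_g(g_2)=K_{f,f'}^{A}(g,g_2)$, part (2) follows from part (1) combined with Lemma~\ref{lemma 2}(2). A second application of the same mechanism in the other variable (after the substitution $h\mapsto h^{-1}$, which replaces $\xi$ by $\bar\xi$) yields the analogous pointwise bound for $K_{f,f'}^{2}(x,y)$. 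To obtain the spectral identity \eqref{spectral 1.1} in part (3), I would unfold $K_{f,f'}^{2}(x,y)$ as a triple integral and perform the substitution $g\mapsto xgy^{-1}$ to transfer the translations onto a new test function $\tilde f(g)=f(x^{-1}gy)$, for which $\pi(\tilde f)=\pi(x)\pi(f)\pi(y^{-1})$; applying Lemma~\ref{lemma 1}(5) to the pair $(\tilde f,f')$ then produces the desired identity. Compact support of the integrand in $\pi$ is automatic in the p-adic setting, since by the matrical Paley--Wiener theorem recalled in Section~2.8 any Harish-Chandra Schwartz function has Fourier transform supported on only finitely many connected components of $Temp(G,\eta)$, each being a compact torus of unramified twists.

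Part (4) is where the main difficulty lies: one must upgrade the merely polynomial growth emerging from two applications of Lemma~\ref{lemma 2}(2) into the arbitrarily fast decay $|K_{f,f'}^{2}(x,x)|\ll \hc(x)^2\nor(x)^{-d}$ needed for convergence of $J_{aux}$. The strong cuspidality of $f$ is essential here through Theorem~\ref{local trace formula}(2), which supplies the much sharper diagonal estimate $|K_{f,f'}^{A}(g,g)|\ll \Xi^G(g)^2\sigma_0(g)^{-d}$ for every $d>0$. I would expand $K_{f,f'}^{2}(x,x)$ as a double integral over $H(F)\times H(F)$ of $K_{f,f'}^{A}(h_1^{-1}x,h_2x)$ against $\xi(h_1h_2)\omega(h_1h_2)$, apply a change of variables of the form $(h_1,h_2)\mapsto(kh_2^{-1},h_2)$ to bring the argument of $K^A$ into a form where the diagonal bound applies, and then exploit the oscillation of $\xi$ in the residual variable via Proposition~\ref{major 4}(7). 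The $e^{-\epsilon c}$ factor in that estimate is precisely the mechanism that converts the polynomial growth in $x$ into an arbitrary polynomial decay in $\nor(x)$. Once this bound is established, the absolute convergence of $J_{aux}(f,f')$ follows from Proposition~\ref{major 4}(3), and the continuity of $f'\mapsto J_{aux}(f,f')$ is obtained by tracking continuous seminorms throughout each of the preceding steps.
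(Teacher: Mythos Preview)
Your treatment of parts (1)--(3) matches the paper's: Theorem~\ref{local trace formula}(1) for (1), part~(1) combined with Lemma~\ref{lemma 2}(2) for (2), and Lemma~\ref{lemma 1}(5) applied to the translated function $\tilde f(g)=f(x^{-1}gy)$ for the spectral identity in (3).

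For part (4), however, your direct-integral approach has a genuine gap, and the paper proceeds along a different route. After your change of variables one indeed has
$K_{f,f'}^{2}(x,x)=\int\!\!\int K_{f,f'}^{A}(hx,h'hx)\,\xi(h')\omega(h')\,dh'\,dh$,
but two problems arise. First, Proposition~\ref{major 4}(7) requires a weight $(1+|\lambda(h')|)^{-\delta}$, not the bare oscillatory factor $\xi(h')$; converting one to the other requires the $Ad_a(\varphi)$-averaging device of Proposition~\ref{major 8}, which you do not invoke. Second, and more seriously, even granting that weight, Proposition~\ref{major 4}(7) only yields $\hc(x)^{2}\nor(x)^{d'}e^{-\epsilon c}$: the exponential decay is in the \emph{truncation parameter} $c$ for $h'$, not in $x$. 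Taking $c$ large forces the complementary region $\{\sigma_0(h')<c\}$ to grow, and on that region one is left with only the polynomial-\emph{growth} bound $\hc(x)^{2}\nor(x)^{d'}$ coming from part~(2) together with Proposition~\ref{major 4}(6). Nothing in your argument converts this into the required decay $\nor(x)^{-d}$.

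The paper's proof of (4) is spectral rather than geometric. From the identity \eqref{spectral 1.1} in part~(3), the section $\pi\mapsto\overline{\CL_{\pi}(\pi(\overline{f'}))}\,\pi(f)$ is smooth and (in the $p$-adic case) compactly supported on $Temp(G,\eta)$, hence by the matrical Paley--Wiener theorem equals $\pi\mapsto\pi(\varphi_{f'})$ for a unique $\varphi_{f'}\in\CC(Z_G\backslash G,\eta^{-1})$. Because $\pi(\varphi_{f'})$ is a scalar multiple of $\pi(f)$ for every $\pi$, strong cuspidality of $f$ forces $\varphi_{f'}$ to be strongly cuspidal as well. Lemma~\ref{lemma 1}(4) then gives $K_{f,f'}^{2}(x,x)=I(\varphi_{f'},x)$, and the desired bound $|K_{f,f'}^{2}(x,x)|\ll\hc(x)^{2}\nor(x)^{-d}$ follows immediately from Proposition~\ref{major 6}(2) applied to $\varphi_{f'}$. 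Continuity of $f'\mapsto J_{aux}(f,f')$ reduces to continuity of $f'\mapsto\varphi_{f'}$, which is Paley--Wiener together with Lemma~\ref{lemma 1}(1). The direct manipulations you sketch are in fact close to what the paper does in the \emph{next} proposition, where $J_{aux}(f,f')$ is identified spectrally --- but that argument presupposes the convergence established here.
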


\begin{proof}
(1) follows from Theorem \ref{local trace formula}(1). (2) follows from part (1) together with Lemma \ref{major 2}(2) and Lemma \ref{lemma 2}(2). For (3), the absolutely convergence follows from part (2) and Lemma \ref{major 2}(2). The equation \eqref{spectral 1.1} follows from Lemma \ref{lemma 1}(5).

For (4), by Lemma \ref{lemma 1}(1), the section
$$T(f'):\pi\in Temp(G,\eta)\mapsto \overline{\CL_{\pi}(\pi(\overline{f'}))} \pi(f)\in End(\pi)^{\infty}$$
is smooth. It is also compactly supported since we are in the p-adic case. Then by the matrical Paley-Wiener Theorem, there exists a unique element $\varphi_{f'}\in \CC(\zg,\eta^{-1})$ such that $\pi(\varphi_{f'})=\overline{\CL_{\pi}(\pi(\overline{f'}))} \pi(f)$ for all $\pi\in Temp(G,\eta)$. Since $f$ is strongly cuspidal, by Lemma 5.3.1(1) of \cite{B15}, $\varphi_{f'}$ is also strongly cuspidal. Then by \eqref{spectral 1.1}, we have
\begin{eqnarray*}
K_{f,f'}^{2}(x,x)&=&\int_{Temp(G,\eta)} \CL_{\pi}(\pi(x)\pi(f)\pi(x^{-1}))\overline{\CL_{\pi}(\pi(\overline{f'}))} \mu(\pi)d\pi \\
&=& \int_{Temp(G,\eta)} \CL_{\pi}(\pi(x)\pi(\varphi_{f'}) \pi(x^{-1})) \mu(\pi) d\pi \\
&=& \int_{\zh} \varphi_{f'}(x^{-1}hx) \xi(h) \omega(h) dh=I(\varphi_{f'},x).
\end{eqnarray*}
Here the third equation follows from Lemma \ref{lemma 1}(4). Then by Proposition \ref{major 6}, for all $d>0$, there exists a continuous semi-norm $\nu_d$ on $\CC(G(F),\eta)$ such that
$|K_{f,f'}^{2}(x,x)|\leq \nu_d(\varphi_{f'})\Xi^{H\back G}(x)^2 \sigma_{H\back G}(x)^{-d}$
for all $f'\in \CC(G(F),\eta)$ and $x\in H(F)\back G(F)$. Combining with Proposition \ref{major 4}(4), we know the integral defining $J_{aux}(f,f')$ is absolutely convergent. Finally, in order to prove the rest part of (4), it is enough to show the map
$\CC(G(F),\eta)\rightarrow \CC(G(F),\eta^{-1}):f'\mapsto \varphi_{f'}$
is continuous. By the matrical Paley-Wiener Theorem, it is enough to show the map
$$f'\in \CC(G(F),\eta) \mapsto (\pi\in Temp(G,\eta)\rightarrow \pi(\varphi_{f'})=\overline{\CL_{\pi}(\pi(\overline{f'}))} \pi(f))\in \CC(Temp(G,\eta))$$
is continuous. This just follows from Lemma \ref{lemma 1}(1). This finishes the proof of the proposition.
\end{proof}

\begin{prop}
For all $f'\in \CC(G(F),\eta)$, we have
$$J_{aux}(f,f')=\int_{Temp(G,\chi)} \theta_f(\pi) \overline{\CL_{\pi}(\pi(\overline{f'}))} d\pi.$$
\end{prop}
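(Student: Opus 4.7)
The plan is to combine the spectral identity \eqref{spectral 1.1} established in Proposition~\ref{spectral 1}(3) with an interchange of the order of integration. Starting from $J_{aux}(f,f') = \int_{H(F)\backslash G(F)} K^{2}_{f,f'}(x,x)\, dx$ and substituting
\[
K^{2}_{f,f'}(x,x) = \int_{Temp(G,\eta)} \CL_{\pi}(\pi(x)\pi(f)\pi(x^{-1}))\, \overline{\CL_{\pi}(\pi(\bar{f'}))}\, \mu(\pi)\, d\pi,
\]
the first step is to swap the $x$- and $\pi$-integrals. This interchange is justified using the asymptotic bound from Lemma~\ref{lemma 2}(2) applied to the linear form $T\mapsto \CL_{\pi}(\pi(x)T\pi(x^{-1}))$ (yielding a majorant of the form $\Xi^{H\backslash G}(x)^{2}\sigma_{H\backslash G}(x)^{d}$ times a continuous seminorm of $T$), combined with the smoothness and compact support of the section $\pi\mapsto \overline{\CL_{\pi}(\pi(\bar{f'}))}\pi(f)\in \CC(Temp(G,\eta))$ from Lemma~\ref{lemma 1}(1), and the integrability of $\Xi^{H\backslash G}(x)^{2}\sigma_{H\backslash G}(x)^{-d'}$ provided by Proposition~\ref{major 4}(3).

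After the swap, the proof reduces to establishing the identity
\[
\mu(\pi)\int_{H(F)\backslash G(F)} \CL_{\pi}(\pi(x)\pi(f)\pi(x^{-1}))\, dx = \theta_{f}(\pi),\qquad \pi\in Temp(G,\eta).
\]
To prove this, I plan to exploit the alternative representation of $J_{aux}(f,f')$ obtained inside the proof of Proposition~\ref{spectral 1}(4): namely $J_{aux}(f,f')=I(\varphi_{f'})$, where $\varphi_{f'}\in \CC_{scusp}(\zg,\eta^{-1})$ is the strongly cuspidal Harish-Chandra-Schwartz function characterized by $\pi(\varphi_{f'})=\overline{\CL_{\pi}(\pi(\bar{f'}))}\pi(f)$ for all $\pi\in Temp(G,\eta)$. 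A direct computation using the definition of the weighted character $J^{G}_{M}(\tau,\cdot)=\tr(\CR^{G}_{M}(\tau)I^{G}_{P}(\tau)(\cdot))$, together with the observation that the scalar $\overline{\CL_{\pi}(\pi(\bar{f'}))}$ commutes with the normalized intertwining operator $\CR^{G}_{M}(\tau)$, yields
\[
\theta_{\varphi_{f'}}(\pi)=\overline{\CL_{\pi}(\pi(\bar{f'}))}\,\theta_{f}(\pi).
\]
Varying $f'$ through the Paley-Wiener image of $\CC(Temp(G,\eta))$ then promotes the claimed identity to a statement about single strongly cuspidal Schwartz functions.

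The main obstacle will be identifying the inner integral with $\mu(\pi)^{-1}\theta_{f}(\pi)$: since $\theta_{f}(\pi)$ is a genuinely weighted character (not a trace) and depends on the normalization of the intertwining operators, while the inner integral superficially resembles a period, matching the two requires careful bookkeeping. The manipulation of the $\pi$- and $x$-integrations at the very beginning is itself delicate, as it demands a joint $(x,\pi)$-integrable majorant for $\mu(\pi)\cdot\CL_{\pi}(\pi(x)\pi(f)\pi(x^{-1}))\cdot\overline{\CL_{\pi}(\pi(\bar{f'}))}$; the polynomial growth of $H(F)\backslash G(F)$ established in Proposition~\ref{polynomial growth} and the function-space estimates of Proposition~\ref{major 4} and Lemma~\ref{major 5} are exactly what make this feasible. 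All the remaining steps are reorderings and applications of Plancherel inversion together with the matrical Paley-Wiener theorem recalled in Section~2.8.
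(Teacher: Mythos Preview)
There is a genuine gap in your justification of the swap of the $x$- and $\pi$-integrals. Lemma~\ref{lemma 2}(2) gives a majorant of the shape $\Xi^{H\backslash G}(x)^{2}\sigma_{H\backslash G}(x)^{d'}$ with a \emph{positive} exponent $d'$, and Proposition~\ref{major 4}(3) only guarantees integrability of $\Xi^{H\backslash G}(x)^{2}\sigma_{H\backslash G}(x)^{-d}$ for $d$ large; these two do not combine to an $L^{1}$ majorant on $H(F)\backslash G(F)$. The decay $\sigma_{H\backslash G}(x)^{-d}$ that you need is available only for the $\pi$-integrated quantity $I(f,x)$ via Proposition~\ref{major 6}(2), and that bound genuinely uses the strong cuspidality of $f$; it is not inherited by the single-$\pi$ term $\CL_{\pi}(\pi(x)\pi(f)\pi(x^{-1}))$. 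So Fubini is not available in the way you propose, and in fact there is no reason to expect $\int_{H(F)\backslash G(F)}\CL_{\pi}(\pi(x)\pi(f)\pi(x^{-1}))\,dx$ to converge for a fixed $\pi$.

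The second half of your plan is also circular. Even granting the identity $J_{aux}(f,f')=I(\varphi_{f'})$ and the easy computation $\theta_{\varphi_{f'}}(\pi)=\overline{\CL_{\pi}(\pi(\bar{f'}))}\,\theta_{f}(\pi)$, passing from $I(\varphi_{f'})$ to an expression $\int_{Temp}\theta_{\varphi_{f'}}(\pi)(\cdots)\,d\pi$ is exactly the spectral expansion of $I$ (Theorem~\ref{spectral expansion}), whose proof \emph{uses} the present Proposition. The paper avoids both difficulties by a completely different mechanism: it rewrites $J_{aux}(f,f')$ through a chain of changes of variables involving the $\BG_m$-action $a(t)$, introduces double truncations $\alpha_{N}$ on $H\backslash G$ and $\beta_{C\log N}$ on $H$, and shows the truncated expressions converge to $\int_{\zh}\hat{\varphi}'(\lambda(h))\omega(h)\int_{\zg}K^{A}_{f,f'}(g,hg)\,dg\,dh$. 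The crucial input is then Arthur's local trace formula (Theorem~\ref{local trace formula}(4)), which identifies the inner integral as $\int_{Temp}\theta_{f}(\pi)\theta_{\bar{\pi}}(R(h^{-1})f')\,d\pi$; this is where the weighted character $\theta_{f}(\pi)$ actually enters, and your proposal has no substitute for it.
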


\begin{proof}
The idea of proof comes from \cite{B15}. Let $a:\BG_m(F)\rightarrow Z_{G_0}(F)$ be a homomorphism defined by $a(t)=diag(t,t,1,1,t^{-1},t^{-1})$ in the split case, and $a(t)=diag(t,1,t^{-1})$ in the non-split case, then we know $\lambda(a(t)ha(t)^{-1})=t\lambda(h)$ for all $h\in H(F),t\in \BG_m(F)$. Fix $f'\in \CC(G(F),\eta)$, since we are in p-adic case, we can find an open compact neighborhood $K_a$ of $1$ in $F^{\times}$ such that $Ad_a(t)f'=f'$ for all $t\in K_a$. Let $\varphi\in C_{c}^{\infty}(F^{\times})$ be the characteristic function on $K_a$ divide by the measure of $K_a$, then we have $f'=Ad_a(\varphi)(f')$ and $J_{aux}(f,f')=\int_{F^{\times}} \varphi(t) J_{aux}(f,Ad_a(t)f') dt$. By the definition of $J_{aux}$, we have
$$J_{aux}(f,f')=\int_{F^{\times}}\int_{H(F)\back G(F)} \varphi(t) K_{f,Ad_a(t)f'}^{2}(x,x)dx dt.$$
By part (4) of previous proposition, the double integral is absolutely convergent. Then by changing variable $x\mapsto a(t)^{-1}x$ and switching the two integrals (note that the Jacobian of the map $h\in H(F)\mapsto a(t)ha(t)^{-1}\in H(F)$ is equal to $\delta_P(a(t))$), we have
\begin{equation}\label{8.2}
J_{aux}(f,f')=\int_{H(F)\back G(F)}\int_{F^{\times}} \varphi(t) \delta_P(a(t))^{-1} K_{f,Ad_a(t)f'}^{2}(a(t)x,a(t)x) dt dx.
\end{equation}
By the definition of $K_{f,f'}^{2}$, the inner integral is equal to
$$\int_{F^{\times}} \varphi(t) \delta_P(a(t))^{-1} \int_{\zh} K_{f,Ad_a(t)f'}^{1}(ha(t)x,a(t)x)\xi(h)^{-1} \omega(\det(h))^{-1} dh dt.$$
By part (2) of previous proposition, the double integral is still absolutely convergent. By changing variable $h\rightarrow a(t)^{-1}ha(t)$ and switching the two integrals, we have
\begin{eqnarray}\label{8.3}
&&\;\;\;\;\;\;\;\;\;\;\;\;\;\;\int_{F^{\times}} \varphi(t) \delta_P(a(t))^{-1} K_{f,Ad_a(t)f'}^{2}(a(t)x,a(t)x) dt\\
&=&\int_{\zh}\int_{F^{\times}} \varphi(t) K_{f,Ad_a(t)f'}^{1}(a(t)hx,a(t)x)\psi(-t\lambda(h)) \omega(\det(h))^{-1} dtdh \nonumber\\
&=&\int_{\zh}\int_{F^{\times}} \varphi(t) K_{f,R_a(t)f'}^{1}(hx,a(t)x)\psi(-t\lambda(h)) \omega(\det(h))^{-1} dtdh. \nonumber
\end{eqnarray}
Here $R_a(t)$ stands for the right translation by $a(t)$. By the definition of $K_{f,f'}^{1}$, the inner integral above is equal to
$$\int_{F^{\times}} \varphi(t) \int_{\zh} K_{f,R_a(t)f'}^{A}(hx,h'a(t)x)\xi(h') \omega(\det(h')) dh' \psi(-t\lambda(h)) \omega(\det(h))^{-1}dt.$$
By changing variable $h'\rightarrow a(t)^{-1}h'a(t)h^{-1}$, this equals
\begin{eqnarray*}
&&\int_{F^{\times}} \varphi(t)\int_{\zh} K_{f,R_a(t)f'}^{A}(hx,a(t)h'hx) \delta_P(a(t)) \psi(t\lambda(h')) \omega(\det(h')) dh' dt\\
&=&\int_{F^{\times}} \varphi(t)\int_{\zh} K_{f,f'}^{A}(hx,h'hx) \delta_P(a(t)) \psi(t\lambda(h')) \omega(\det(h')) dh' dt.
\end{eqnarray*}
By part (1) of previous proposition, the integral above is absolutely convergent. By switching two integrals, we have
\begin{eqnarray}\label{8.4}
&&\;\;\;\;\;\;\;\;\;\;\;\int_{F^{\times}} \varphi(t) K_{f,R_a(t)f'}^{1}(hx,a(t)x)\psi(-t\lambda(h)) \omega(\det(h))^{-1} dt\\
&=&\int_{\zh} \int_{F^{\times}} K_{f,f'}^{A}(hx,h'hx) \varphi(t)  \delta_P(a(t)) \psi(t\lambda(h')) \omega(\det(h')) dh' dt. \nonumber
\end{eqnarray}
We know $dt=|t|^{-1}d_a t$ where $d_at$ is an additive Haar measure on $F$. Let $\varphi'(t)=\varphi(t)\delta_P(a(t))|t|^{-1}$ and $\hat{\varphi}'(x)=\int_F \varphi'(t)\psi(tx)dt$ for $x\in F$. Combining \eqref{8.2}, \eqref{8.3} and \eqref{8.4}, we have
\begin{equation}\label{8.5}
J_{aux}(f,f')=\int_{H(F)\back G(F)} \int_{\zh} \int_{\zh} K_{f,f'}^{A}(hx,h'hx) \hat{\varphi}'(\lambda(h')) \omega(\det(h')) dh' dh dx.
\end{equation}

For $N,M>0$, let $\alpha_N:H(F)\back G(F)\rightarrow \{0,1\}$ (resp. $\beta_M:Z_G(F)\back G(F)\rightarrow \{0,1\}$) be the characteristic function of the set $\{x\in H(F)\back G(F)| \sigma_{H\back G}(x)\leq N \}$ (resp. $\{g\in Z_G(F)\back G(F)| \sigma_0(g)\leq M\}$). For $N\geq 1$ and $C>0$, define
\begin{eqnarray*}
J_{aux,N}(f,f')&=&\int_{H(F)\back G(F)} \alpha_N(x) \int_{\zh} \int_{\zh}\\
&& K_{f,f'}^{A}(hx,h'hx) \hat{\varphi}'(\lambda(h')) \omega(\det(h')) dh' dh dx, \\
J_{aux,N,C}(f,f')&=&\int_{H(F)\back G(F)} \alpha_N(x) \int_{\zh} \int_{\zh}\\
&& \beta_{C\log(N)}(h') K_{f,f'}^{A}(hx,h'hx) \hat{\varphi}'(\lambda(h')) \omega(\det(h')) dh' dh dx.
\end{eqnarray*}
By equation \eqref{8.5}, we have
\begin{equation}\label{8.6}
J_{aux}(f,f')=\lim_{N\rightarrow \infty} J_{aux,N}(f').
\end{equation}
We need to prove
\begin{itemize}
\item[(1)] The triple integrals defining $J_{aux,N}(f,f')$ and $J_{aux,N,C}(f,f')$ are absolutely convergent. Moreover, there exists $C>0$ such that
$$|J_{aux,N}(f,f')-J_{aux,N,C}(f,f')|\ll N^{-1}$$
for all $N\geq 1$.
\end{itemize}

In fact, since $\hat{\varphi}'$ is compactly supported on $F$, we have $|\hat{\varphi}'(\lambda)|\ll (1+|\lambda|)^{-1}$ for all $\lambda\in F$. Combining with Theorem \ref{local trace formula}, we know there exists $d>0$ such that
\begin{eqnarray*}
|J_{aux,N}(f,f')|&\ll&\int_{H(F)\back G(F)} \alpha_N(x) \int_{\zh}  \int_{\zh}\\
&& \Xi^G(hx)\Xi^G(h'hx)\sigma_0(hx)^d \sigma_0(h'hx)^d (1+|\lambda(h')|)^{-1} dh' dh dx, \\
|J_{aux,N,C}(f,f')|&\ll&\int_{H(F)\back G(F)} \alpha_N(x) \int_{\zh} \int_{\zh}\\
&& \beta_{C\log(N)}(h') \Xi^G(hx)\Xi^G(h'hx)\sigma_0(hx)^d \sigma_0(h'hx)^d (1+|\lambda(h')|)^{-1} dh' dh dx,\\
\end{eqnarray*}
and
$$|J_{aux,N}(f,f')-J_{aux,N,C}(f,f')|\ll \int_{H(F)\back G(F)} \alpha_N(x) \int_{\zh} $$
$$\times \int_{\zh} 1_{\sigma_0\geq C\log(N)}(h') \Xi^G(hx)\Xi^G(h'hx)\sigma_0(hx)^d \sigma_0(h'hx)^d (1+|\lambda(h')|)^{-1} dh' dh dx.$$
for all $N\geq 1$ and $C\geq 1$. Applying $(7)$ of Proposition \ref{major 4} to the case $c=1$, we know there exists $d'>0$ such that the first two integrals above are essentially bounded by
$$\int_{H(F)\back G(F)} \alpha_N(x)\Xi^{H\back G}(x)^2 \sigma_{H\back G}(x)^{d'} dx.$$
This is absolutely convergent since the integrand is compactly supported. Then applying $(7)$ of Proposition \ref{major 4} again, we know the third integral is essentially bounded by
$$e^{-\epsilon C\log(N)}\int_{H(F)\back G(F)} \alpha_N(x)\Xi^{H\back G}(x)^2 \sigma_{H\back G}(x)^{d'} dx, \; N\geq 1,C>0.$$
for some $\epsilon,d'>0$. By (4) of Proposition \ref{major 4}, there exists $d''>0$ such that the last integral is essentially bounded by $N^{d''}$ for all $N\geq 1$. Then once we choose $C$ larger than $(d''+1)/\epsilon$, we have the estimation in (1). This proves (1).

Now we fix some $C>0$ satisfies (1), then we have
\begin{equation}\label{8.7}
J_{aux}(f,f')=\lim_{N\rightarrow \infty} J_{aux,N,C}(f,f').
\end{equation}
Since the integral defining $J_{aux,N,C}$ is absolutely convergent, we can combine the first two parts and then switch two integrals, we have
\begin{eqnarray}\label{8.12}
J_{aux,N,C}(f,f')&=&\int_{\zg} \alpha_N(g) \int_{\zh} \nonumber \\
&&K_{f,f'}^{A}(g,h'g)\beta_{C\log(N)}(h')\hat{\varphi}'(\lambda(h'))\omega(\det(h')) dh' dg \\
&=&\int_{\zh} \beta_{C\log(N)}(h)\hat{\varphi}'(\lambda(h))\omega(\det(h))\nonumber \\
&&\times \int_{\zg} \alpha_N(g) K_{f,f'}^{A}(g,hg) dg dh. \nonumber
\end{eqnarray}
We are going to prove that for all $N\geq 1$, we have
\begin{equation}\label{8.8}
|J_{aux,N,C}(f,f')-J_{aux,C}(f,f')|\ll N^{-1}
\end{equation}
where
$$J_{aux,C}(f,f')=\int_{\zh}\beta_{C\log(N)}(h)\hat{\varphi}'(\lambda(h))\omega(\det(h)) \int_{\zg}  K_{f,f'}^{A}(g,hg) dg dh.$$
In fact, since $f$ is strongly cuspidal, by Theorem \ref{local trace formula}(3), there exists $c_1>0$ such that for all $d>0$, there exists $d'>0$ such that
$$|K_{f,f'}^{A}(g,hg)|\ll \Xi^G(g)^2 \sigma_0(g)^{-d} e^{c_1\sigma_0(h)}\sigma_0(h)^{d'}$$
for all $g\in G(F),h\in H(F)$. Fix such $c_1>0$, and choose $d_0>0$ so that the function $g\rightarrow \Xi^G(g)^2\sigma_0(g)^{-d_0}$ is integrable on $G(F)/Z_G(F)$. Then for all $d>d_0$, there exists $d'>0$ such that the left hand side of \eqref{8.8} is essentially bounded by
$$N^{c_1C-d+d_0}\log(N)^{d'}\int_{\zh} \beta_{C\log(N)}(h) dh$$
for all $N\geq 1$. It is easy to see that the integral above is essentially bounded by $N^{c_2}$ for some $c_2>0$. Therefore once we choose $d>c_1C+d_0+c_2+1$, we have the estimation in \eqref{8.8}. This proves \eqref{8.8}. Therefore we have
\begin{eqnarray}\label{8.9}
J_{aux}(f,f')&=&\lim_{N\rightarrow \infty} \int_{\zh}\beta_{C\log(N)}(h)\hat{\varphi}'(\lambda(h))\omega(\det(h)) \nonumber \\
&&\times \int_{\zg}  K_{f,f'}^{A}(g,hg) dg dh.
\end{eqnarray}
Since $f$ is strongly cuspidal, by Theorem \ref{local trace formula}(4), we have
\begin{equation}\label{8.9}
\int_{\zg} K_{f,f'}^{A}(g,hg)dg=\int_{Temp(G,\eta)} \theta_f(\pi) \theta_{\bar{\pi}}(R(h^{-1})f') d\pi.
\end{equation}
Since $\pi$ is tempered, $|\theta_{\bar{\pi}}(R(h^{-1})f')|\ll \Xi^G(h)$ for all $h\in H(F)$. Combining with the fact that $\theta_f(\pi)$ is smooth and compactly supported on $Temp(G,\eta)$, we have
$$\int_{Temp(G,\eta)} |\theta_f(\pi) \theta_{\bar{\pi}}(R(h^{-1})f')| d\pi\ll \Xi^G(h).$$
Combining with Lemma \ref{major 2}, we know the integral
$$\int_{\zh} \hat{\varphi}'(\lambda(h))\omega(\det(h)) \int_{Temp(G,\eta)} \theta_f(\pi) \theta_{\bar{\pi}}(R(h^{-1})f') d\pi dh$$
is absolutely convergent. Combining with \eqref{8.8} and \eqref{8.9}, the integral above is equal to $J_{aux}(f,f')$. Switching the two integrals and applying Lemma \ref{linear form}, we have
\begin{eqnarray*}
J_{aux}(f,f')&=& \int_{Temp(G,\eta)} \theta_f(\pi)\overline{\CL_{\pi}(\pi(\overline{Ad_a(\varphi)f'}))} d\pi\\
&=&\int_{Temp(G,\eta)} \theta_f(\pi)\overline{\CL_{\pi}(\pi(\overline{f'}))} d\pi.
\end{eqnarray*}
This finishes the proof of the Proposition.
\end{proof}

Now we are ready to prove Theorem \ref{spectral expansion}. Recall that $I(f)=\int_{H(F)\back G(F)} I(f,x)dx$ where $I(f,x)=\int_{\zh} f(x^{-1}hx)\xi(h) \omega(\det(h))dh$. Applying Lemma \ref{lemma 1}, we have
\begin{equation}\label{8.10}
I(f,x)=\int_{Temp(G,\eta)} \CL_{\pi}(\pi(x)\pi(f)\pi(x)^{-1})\mu(\pi) d\pi.
\end{equation}
By Corollary \ref{parabolic induction 5}, there exists a function $f'\in \CC(G(F),\eta)$ such that
$$\CL_{\pi}(\pi(\overline{f'}))=m(\pi)$$
for all $\pi\in Temp(G,\eta)$ with $\pi(f)\neq 0$. Applying Theorem \ref{thm 1} and Corollary \ref{parabolic induction 5}, for all $\pi\in Temp(G,\eta)$, $\CL_{\pi}\neq 0$ iff $m(\pi)=1$. Then \eqref{8.10} becomes
$$I(f,x)=\int_{Temp(G,\eta)} \CL_{\pi}(\pi(x)\pi(f)\pi(x)^{-1})\overline{\CL_{\pi}(\pi(\overline{f'}))} \mu(\pi) d\pi.$$
Combining with Proposition \ref{spectral 1}(3), we have $I(f,x)=K_{f,f'}^{2}(x,x)$. Therefore $I(f)=J_{aux}(f,f')$. By the previous Proposition, together with the fact that $\overline{\CL_{\pi}(\pi(\overline{f'}))}=m(\pi)=m(\bar{\pi})$, we have
$$I(f)=J_{aux}(f,f')=\int_{Temp(G,\eta)} \theta_f(\pi)m(\bar{\pi}) d\pi=I_{spec}(f).$$
This finishes the proof of Theorem \ref{spectral expansion}.

\section{Proof of Theorem \ref{main} and Theorem \ref{main 1}}\label{Proof of Theorem}
In this section, we prove our main theorems (Theorem \ref{main} and Theorem \ref{main 1}) for the p-adic case. In Section 8.1, we introduce the multiplicity formulas for both the Ginzburg-Rallis model and all other reduced models. We show that Theorem \ref{main} follows from the multiplicity formula. Also we prove that the multiplicity formula is compatible with the parabolic induction. In Section 8.2, by applying the trace formula in Section 7, we prove the multiplicity formula. This proves Theorem \ref{main}. In Section 8.3, we prove Theorem \ref{main 1}.

\subsection{The multiplicity formula}
Let $\pi$ be an irreducible tempered representation of $G(F)$ with central character $\eta=\chi^2$. Similar to Section 7.2, we define the geometric multiplicity to be
$$m_{geom}(\pi)=\sum_{T\in \CT}|W(H_0,T)|^{-1} \nu(T) \int_{Z_G(F)\backslash T(F)} c_{\pi}(t) D^H(t) \Delta(t) \chi(\det(t))^{-1} dt.$$
Here $c_{\pi}(t)=c_{\theta_{\pi}}(t)$ is the germ associated to the distribution character $\theta_{\pi}$. The multiplicity formula is just
\begin{equation}\label{9.1}
m(\pi)=m_{geom}(\pi).
\end{equation}

Let $\pi=I_{\bar{Q}}^{G}(\tau)$ for some good parabolic subgroup $\bar{Q}=LU_Q$ and some discrete series $\tau$ of $L(F)$. We also need the geometric multiplicity for the reduced model $(L,H_{\bar{Q}})$ where $H_{\bar{Q}}=H\cap \bar{Q}$. If $\bar{Q}$ is of Type II, we are in the $\GL_6(F)$ case, define
$$m_{geom}(\tau)=c_{\theta_{tau},\CO_{reg}}(1).$$
By the work of Rodier \cite{Rod81}, we know $m_{geom}(\tau)$ is always equal to 1 in this case. If $\bar{Q}$ is of Type I, the definition of $m_{geom}(\tau)$ is very similar as $m_{geom}(\pi)$: it is still the summation of the integrals of the germs on $Z_H(F)\back T(F)$ for $T\in \CT$. For details, see Appendix B. The following lemma tells us the relation between $m_{geom}(\pi)$ and $m_{geom}(\tau)$.

\begin{lem}\label{multiplicity formula}
With the notation above, we have
$$m_{geom}(\pi)=m_{geom}(\tau).$$
\end{lem}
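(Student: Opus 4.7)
The plan is to deduce the lemma from the parabolic induction formula for germs of quasi-characters (Lemma \ref{quasi character parabolic}), applied to the identity $\theta_\pi = I_L^G(\theta_\tau)$. The point is that $m_{geom}$ is assembled from germs of distribution characters integrated against measures on certain tori of $H_0$, and both the germs and the tori behave compatibly under parabolic induction.

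First, I would treat the Type II case, which only arises in the $\GL_6(F)$ case. By definition $m_{geom}(\tau)=c_{\theta_\tau,\CO_{reg}}(1)$, and this equals $1$ by Rodier's theorem since $\tau$ is a discrete series and hence generic. On the other hand, the defining feature of Type II models (emphasized in the introduction) is that every semisimple element in the small Levi $L$ is split, so $\CX^L(t)=\emptyset$ for any $t$ in an elliptic torus $T_v\in \CT$. Via Lemma \ref{quasi character parabolic}(ii) this forces the contribution of each $T_v$ to $m_{geom}(\pi)$ to vanish, leaving only the trivial torus. A direct computation at $t=1$, using $D^G(1)=D^L(1)=1$ and $\Delta(1)=1$, then reduces $m_{geom}(\pi)$ to $c_{\theta_\tau,\CO'}(1)$ where $\CO'$ is the unique regular orbit in $\Fl$ whose $G$-induction is $\CO_{reg}$, and this also equals $1$ by Rodier.

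For the Type I case, the Levi $L$ has a natural model of the same flavor as $(G,H)$ (built from the diagonal embedding of $H_{\bar{Q},0}:=H_0\cap L$ into the appropriate $\GL_2^3$, $\GL_2\times \GL_2$ or $\GL_2$ block, or the analogue for $D$), and $m_{geom}(\tau)$ as defined in Appendix B is again a sum over a set $\CT_L$ of subtori of $H_{\bar{Q},0}$ of integrals of the germs of $\theta_\tau$. The key observation is that every $T\in \CT$ is $H(F)$-conjugate to a torus contained in $L$, so one has a natural identification $\CT\simeq \CT_L$ up to conjugacy. Fixing $T\in \CT$ and a regular $t\in T(F)$, I would apply Lemma \ref{quasi character parabolic}(ii) to write
\[
c_\pi(t)=\sum_{t'\in \CX^L(t)}\sum_{g\in \Gamma_{t'}/G_t(F)}\sum_{\CO'} D^G(t)^{-1/2}D^L(t')^{1/2}[Z_L(t')(F):L_{t'}(F)]^{-1} c_{\theta_\tau,\CO'}(t'),
\]
where $\CO'$ runs over nilpotent orbits in $\Fl_{t'}$ whose $L$-induced orbit contains $g\CO_t$. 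Plugging this into the formula for $m_{geom}(\pi)$ and using the Weyl integration formula on $H(F)$, I would reorganize the double sum over $T\in \CT$ and $t'\in \CX^L(t)$ as a single sum over $T_L\in \CT_L$.

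The main obstacle will be the bookkeeping of all the numerical factors in the Type I case. Three things need to be checked: (a) the products $|W(H_0,T)|^{-1}\nu(T)$ and $|\Gamma_{t'}/G_t(F)|\cdot [Z_L(t')(F):L_{t'}(F)]^{-1}$ combine to yield exactly $|W(H_{\bar{Q},0},T_L)|^{-1}\nu(T_L)$ on the $L$ side; (b) the Jacobian $D^G(t)^{-1/2}D^L(t')^{1/2}$ combines with $D^H(t)\Delta(t)$ to produce the analogous $D^{H_{\bar{Q}}}(t')\Delta_L(t')$ appearing in $m_{geom}(\tau)$, which should follow from the identity $D^G(t)=D^L(t)\,|\det(1-\Ad(t)^{-1})_{|\Fg/\Fl}|_F$ together with the decomposition of $\Fg/\Fh$ compatible with $\bar{Q}$; and (c) the orbit matching, namely that among the nilpotent orbits $\CO'$ of $\Fl_{t'}$ the only one contributing is the regular one $\CO_{t'}$ of $\Fl_{t'}$. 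Point (c) follows from dimension counting since $\CO_t$ is regular and $G$-induction of a non-regular orbit cannot be regular. Once (a)--(c) are verified, the two expressions coincide term by term and the lemma is proved; this is essentially a combinatorial computation in $\GL_n$ parallel to the analogous step in the GGP case treated by Beuzart-Plessis.
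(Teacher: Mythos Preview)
Your proposal is correct and follows essentially the same route as the paper: both invoke Lemma \ref{quasi character parabolic}(2) applied to $\theta_\pi = I_L^G(\theta_\tau)$, split into Type I and Type II, and in Type II use that the elliptic contributions vanish so that only the regular germ at $1$ survives. The paper's write-up is much terser than yours: in Type I it simply asserts $c_\pi(t)=c_\tau(t)$ for every $t\in T_{\mathrm{reg}}(F)$, $T\in\CT$, as an immediate consequence of the lemma, without separating out the bookkeeping you label (a)--(c); your outline is precisely what one would do to verify that assertion in detail.

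One small correction: the ``defining feature'' you quote for Type II is that semisimple elements of the \emph{small group} $H_{\bar Q}$ are split, not of the Levi $L$. What you actually need, and correctly use, is $\CX^L(t)=\emptyset$ for elliptic $t\in T_v$; this holds because every Type II Levi contains a $\GL_m$-block with $m$ odd, while the characteristic polynomial of $t$ is a power of an irreducible quadratic.
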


\begin{proof}
This is a direct consequence of Lemma \ref{quasi character parabolic}(2). In fact, if $\bar{Q}$ is of Type I, by applying the lemma, we know the germs associated to $\pi$ and $\tau$ are the same:
$c_{\pi}(t)=c_{\tau}(t)$, $\forall t\in T_{reg}(F),T\in \CT.$
Hence $m_{geom}(\pi)=m_{geom}(\tau)$. If $\bar{Q}$ is of type II, by applying the lemma, we know the germ $c_{\pi}(t)$ is zero for all $t\in T_{reg}(F),T\in \CT$ with $t\neq 1$. Therefore we have
$m_{geom}(\pi)=c_{\pi}(1)=1=c_{\tau}(1)=m_{geom}(\tau).$
This proves the lemma.
\end{proof}

\begin{prop}
Theorem \ref{main} follows from the multiplicity formula \eqref{9.1}.
\end{prop}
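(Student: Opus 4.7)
The plan is to combine the multiplicity formula \eqref{9.1} with the local Jacquet--Langlands character identity and Rodier's theorem. Assuming \eqref{9.1} for both $\pi$ and $\pi_D$, the identity $m(\pi)+m(\pi_D)=1$ reduces to the geometric identity
$$m_{geom}(\pi)+m_{geom}(\pi_D)=1.$$
I would begin by expanding each side via the definition in Section 7.2: both $m_{geom}(\pi)$ and $m_{geom}(\pi_D)$ are finite sums over tori $T\in\CT$ of integrals on $Z_G(F)\backslash T(F)$ of the germ $c_\pi(t)$ (resp.\ $c_{\pi_D}(t)$) weighted by the common factor $|W(H_0,T)|^{-1}\nu(T) D^H(t)\Delta(t)\chi(\det t)$. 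The crucial combinatorial input is that in the split case ($H_0=\GL_2$) the set $\CT$ contains both the trivial torus $\{1\}$ and the anisotropic tori $T_v$ with $v\ne 1$, while in the quaternion case ($H_{0,D}=D^\times$) the set $\CT$ contains only the anisotropic tori $T_v$. The anisotropic tori appear symmetrically on both sides with the same underlying torus and the same weight factor.

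Next I would handle the anisotropic contributions by pairwise cancellation. For any $T_v$ and any regular $t\in T_v(F)$, both centralizers $G_t$ and $G_{D,t}$ are $F$-isomorphic to $\GL_3(F_v)$ with $F_v=F(\sqrt{v})$, so $c_\pi(t)$ and $c_{\pi_D}(t)$ are germs attached to the same regular nilpotent orbit $\CO_v$ in a common Lie algebra. The Jacquet--Langlands correspondence of \cite{DKV84} gives a character identity of the form $\theta_{\pi_D}(t)=-\theta_\pi(t)$ at matching elliptic regular semisimple elements; pulling this through the local character expansion \eqref{germ 2} yields $c_{\pi_D}(t)=-c_\pi(t)$ on $T_{v,reg}(F)$. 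Since the weight factor, the measure, and the normalizing constants $|W(H_0,T_v)|^{-1}\nu(T_v)$ match across the two cases, all $T_v$-contributions cancel in pairs.

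Finally I would evaluate the leftover term, which comes only from $T=\{1\}$ in the split case. With the chosen normalization one has $D^H(1)=\Delta(1)=\chi(\det 1)=1$ and $|W(H_0,\{1\})|^{-1}\nu(\{1\})=1$, so this contribution is simply $c_\pi(1)=c_{\theta_\pi,\CO_{reg}}(1)$, the germ at the identity attached to the regular nilpotent orbit in $\Fg\Fl_6(F)$. By the result of Rodier (\cite{Rod81}), this germ equals $1$ if $\pi$ is generic and $0$ otherwise. Since every irreducible tempered representation of $\GL_6(F)$ is generic (each tempered representation is a full parabolic induction of discrete series, and discrete series of $\GL_n$ are generic), we obtain $c_{\theta_\pi,\CO_{reg}}(1)=1$. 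Assembling these three steps gives
$$m(\pi)+m(\pi_D)=m_{geom}(\pi)+m_{geom}(\pi_D)=c_{\theta_\pi,\CO_{reg}}(1)=1,$$
which is Theorem \ref{main}.

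The main obstacle in the plan is verifying the precise germ identity $c_{\pi_D}(t)=-c_\pi(t)$ from the Jacquet--Langlands character relation: one has to track signs carefully through the relation between $\theta_\pi$ and $\theta_{\pi_D}$ on matching elliptic tori, match the orientations of the regular nilpotent orbits in $\Fg\Fl_3(F_v)$ coming from $G$ and from $G_D$, and check that the combinatorial constants $|W(H_0,T)|^{-1}\nu(T)$ and the measure normalizations on $T(F)$ truly coincide on both sides. All remaining steps are routine once this sign cancellation is secured.
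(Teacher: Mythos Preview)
Your proposal is correct and follows the same route as the paper: reduce to $m_{geom}(\pi)+m_{geom}(\pi_D)=1$, isolate the identity contribution $c_{\theta_\pi,\CO_{reg}}(1)=1$ via Rodier, and cancel the anisotropic torus terms pairwise using the Jacquet--Langlands character relation. For the step you flag as the obstacle, the paper does not pull the character identity through the full germ expansion; it instead invokes the explicit limit formula (from \cite[\S13.6]{W10} or \cite[Prop.~4.5.1]{B15}) expressing the regular germ as
\[
c_\pi(t)=D^G(t)^{-1/2}\,|W(G_t,T_{qs,t})|^{-1}\lim_{x\in T_{qs,t}(F)\to t} D^G(x)^{1/2}\theta_\pi(x),
\]
and then applies $\theta_\pi(x)+\theta_{\pi_D}(x')=0$ directly to the limit, which sidesteps the orbit-matching and sign issues you anticipated.
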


\begin{proof}
The proof already appears on the previous paper \cite{Wan15}, we include it here for completion. Let $G=GL_6(F)$ and $G_D=GL_3(D)$. Similarly we have $H_0, H_{0,D}, U$ and $U_D$. Let $\pi,\pi_D,\chi,\omega,\omega_D, \xi$ and $\xi_D$ be the same as Conjecture \ref{jiang}. We assume that $\pi$ is tempered. By \eqref{9.1}, we know
\begin{eqnarray*}
m(\pi)&=&c_{\theta_{\pi},\CO_{reg}}(1)+\Sigma_{v\in F^{\times}/(F^{\times})^2, v\neq 1} \mid W(H,T_v)\mid^{-1} \nu(T_v)\\
&&\times \int_{Z_H\backslash T_v(F)} \omega^{-1}(t) c_{\pi}(t) D^H(t) \Delta(t)dt
\end{eqnarray*}
and
\begin{eqnarray*}
m(\pi_D)&=&\Sigma_{v\in F^{\times}/(F^{\times})^2, v\neq 1} \mid W(H_D,T_v)\mid^{-1} \nu(T_v)\\
&&\times \int_{Z_{H_D}\backslash T_v(F)} \omega_{D}^{-1}(t') c_{\pi_D}(t') D^{H_D}(t') \Delta_D(t')dt.
\end{eqnarray*}
Here we use $t$ to denote elements in $\GL_6(F)$ and $t'$ to denote elements in $\GL_3(D)$. We can match $t$ and $t'$ via the characteristic polynomial: we write $t \leftrightarrow t'$ if they have the same characteristic polynomial. Since $\pi$ is tempered, it is generic. So by \cite{Rod81}, we know $c_{\theta_{\pi},\CO_{reg}}(1)=1$. Also for $v\in F^{\times}/(F^{\times})^2, v\neq 1$, we have
$$\mid W(H_{0,D},T_v)\mid=\mid W(H_0,T_v)\mid, Z_{H_0}=Z_{H_{0,D}}.$$
So in order to prove Theorem \ref{main}, we only need to show for any $v\in F^{\times}/(F^{\times})^2, v\neq 1$, the sum of
$$\int_{Z_H(F)\backslash T_v(F)} \omega^{-1}(t) c_{\pi}(t) D^H(t) \Delta(t) d_c t$$
and
$$\int_{Z_H(F)\backslash T_v(F)} \omega_{D}^{-1}(t')c_{\pi_D}(t') D^{H_D}(t') \Delta_D(t') d_c t'$$
Because for $t,t'\in T_v(F)$ regular with $t\leftrightarrow t'$, we have
$$D^H(t)=D^{H_D}(t), \Delta(t)=\Delta_D(t'), \omega(t)=\omega_D(t').$$
Therefore it is enough to show that for any $v\in F^{\times}/(F^{\times})^2, v\neq 1$, and for any $t,t'\in T_v(F)$ regular with $t\leftrightarrow t'$, we have
\begin{equation}\label{9.5}
c_{\pi}(t)+c_{\pi_D}(t')=0.
\end{equation}

By Section 13.6 of \cite{W10} or Proposition 4.5.1 of \cite{B15}, we know
$$c_{\pi}(t)=D^G(t)^{-1/2}|W(G_t, T_{qs,t}|^{-1} \lim_{x\in T_{qs,t}(F)\rightarrow t} D^G(x)^{1/2} \theta_{\pi}(x)$$
and
$$c_{\pi_D}(t')=D^{G_D}(t)^{-1/2}|W((G_D)_{t'}, T_{qs,t'}|^{-1} \lim_{x'\in T_{qs,t'}(F)\rightarrow t} D^{G_D}(x')^{1/2} \theta_{\pi_D}(x')$$
where $T_{qs,t}$ (resp. $T_{qs,t'}$) is a maximal torus contained in the Borel subgroup $B_t$ (resp. $B_{t'}$) of $G_t$ (resp. $(G_D)_{t'}$). Note that if $t,t'\in T_v$ is regular, both $G_t$ and $(G_D)_{t'}$ are isomorphic to $GL_3(F_v)$ which is quasi-split over $F$. We are able to choose the Borel subgroup $B_t$ (resp. $B_{t'}$). In particular,
$|W(G_t, T_{qs,t})|^{-1}=|W((G_D)_t, T_{qs,t})|^{-1}.$
Also for those matched $t\leftrightarrow t'$, we have
$D^G(t)=D^{G_D}(t).$
And for $x\in T_{qs,t}(F)$ (resp. $x'\in T_{qs,t'}(F)$) sufficiently close to $t$ (resp. $t'$) with $x\leftrightarrow x'$, they are also regular and we have
$D^G(x)=D^{G_D}(x').$
Therefore in order to prove \eqref{9.5}, it is enough to show that for any regular $x\in G(F)$ and $x'\in G_D(F)$ with $x\leftrightarrow x'$, we have
\begin{equation}
\theta_{\pi}(x)+\theta_{\pi_D}(x')=0.
\end{equation}
This just follows from the relations of the distribution characters under the Jacquet-Langlands correspondence, see \cite{DKV84}. This proves Theorem \ref{main}
\end{proof}

By the proposition above, we only need to prove \eqref{9.1}. We will only prove it for the $\GL_6$ case since the quaternion case follows from the same argument.

\subsection{The proof of \eqref{9.1}}
In this section, the group $G$ will be $\GL_6(F)$. Recall that $\pi$ is of the form $\pi=I_{\bar{Q}}^{G}(\tau)$ for some good parabolic subgroup $\bar{Q}=LU_Q$ and some discrete series $\tau$ of $L(F)$.

\begin{lem}\label{9.7}
If $\bar{Q}$ is of type II, the multiplicity formula \eqref{9.1} holds.
\end{lem}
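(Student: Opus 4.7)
The plan is to prove both sides of \eqref{9.1} equal $1$ by invoking results already in place. For the analytic multiplicity, I would appeal directly to Corollary \ref{main case 1}, which (via Theorem \ref{thm 2}, Proposition \ref{parabolic induction} and Theorem \ref{thm 1}) gives $m(\pi)=1$ whenever $\bar{Q}$ is of type II. So the content of the lemma is to show $m_{geom}(\pi)=1$.

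For the geometric side, the first step is to invoke Lemma \ref{multiplicity formula}, which reduces $m_{geom}(\pi)$ to the geometric multiplicity $m_{geom}(\tau)$ of the reduced model $(L,H_{\bar{Q}})$. Since $\bar{Q}$ is of type II, the definition of $m_{geom}(\tau)$ collapses to the single germ $c_{\theta_\tau,\CO_{reg}}(1)$ at the origin of $\Fl$; this is the crucial geometric feature of type II models, namely that $H_{\bar{Q}}$ contains no nontrivial elliptic tori (cf.\ the highlighted remark in the introduction and Appendix B of \cite{Wan15}). Next, I would observe that $L$ is a product of general linear groups over $F$ (type II lives in the $\GL_6(F)$ case), so the discrete series $\tau$ is automatically generic. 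Rodier's germ expansion \cite{Rod81} then gives $c_{\theta_\tau,\CO_{reg}}(1)=1$, so $m_{geom}(\tau)=1$, and combining with Lemma \ref{multiplicity formula} yields $m_{geom}(\pi)=1=m(\pi)$.

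There is no serious obstacle here: the lemma is a formal consequence of the structural fact that type II reduced models have no elliptic contribution, together with Rodier's theorem and the parabolic induction compatibility from Lemma \ref{quasi character parabolic}(2)/Lemma \ref{multiplicity formula}. The only thing worth double-checking while writing it out is that the sign conventions and normalizations in the definition of $m_{geom}(\tau)$ for type II models (as in Appendix B) match those used in Lemma \ref{multiplicity formula}, so that the reduction $m_{geom}(\pi)=m_{geom}(\tau)$ really does isolate exactly the germ at the identity with coefficient $+1$.
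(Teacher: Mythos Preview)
Your proposal is correct and follows essentially the same approach as the paper's proof: invoke Corollary \ref{main case 1} for $m(\pi)=1$, and use Lemma \ref{multiplicity formula} together with the definition $m_{geom}(\tau)=c_{\theta_\tau,\CO_{reg}}(1)$ and Rodier's theorem to get $m_{geom}(\pi)=1$. The paper's proof is simply terser, citing Lemma \ref{multiplicity formula} (whose proof already established $m_{geom}(\pi)=1$ in the type II case) and Corollary \ref{main case 1} directly.
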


\begin{proof}
In Lemma \ref{multiplicity formula}, we prove that $m_{geom}(\pi)=1$. By Corollary \ref{main case 1}, we also know $m(\pi)=1$. Hence the multiplicity formula \eqref{9.1} holds.
\end{proof}

If $\bar{Q}$ is of type I, there are only three possibilities: type $(6)$, type $(4,2)$ and type $(2,2,2)$. Since both $m(\pi)$ and $m_{geom}(\pi)$ are compatible with the parabolic induction, we only need to prove $m(\tau)=m_{geom}(\tau)$. \textbf{We first assume that \eqref{9.1} holds for the $(2,2,2)$ and $(4,2)$ case, we are going to prove \eqref{9.1} for the $(6)$ case, i.e. the case when $\pi$ is a discrete series.}

\begin{prop}\label{(6)}
Under the hypothesis above, if $\pi$ is a discrete series, the multiplicity formula \eqref{9.1} holds.
\end{prop}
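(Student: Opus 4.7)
Since $\pi$ is a discrete series, Lemma \ref{pseudo coefficient} yields a pseudo-coefficient $f \in C_c^\infty(\zg, \eta^{-1})$ of $\pi$ which is automatically strongly cuspidal. The plan is to apply the trace formula $I_{spec}(f) = I(f) = I_{geom}(f)$ of Theorem \ref{spectral expansion}, and to read off the multiplicity of $\pi$ by using the inductive hypothesis to kill every contribution coming from parabolically induced tempered representations.

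On the spectral side, $I_{spec}(f) = \int_{Temp(G,\eta)} \theta_f(\sigma) m(\bar{\sigma}) \, d\sigma$. For $\sigma = I_{\bar{Q}}^G(\tau)$ with $\tau \in \Pi_2(L)$, the pseudo-coefficient property together with \eqref{theta_f} give $\theta_f(\sigma) = (-1)^{a_G - a_L} J_L^G(\tau, f)$, which equals $\mathrm{tr}(\pi(f)) = 1$ when $\sigma = \pi$ (so $L = G$) and can a priori be nonzero for $L \subsetneq G$. For the geometric side, Proposition \ref{expansion of character} gives $\theta_f = \int \theta_f(\sigma) \bar{\theta}_\sigma \, d\sigma$. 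Substituting this expansion into
$$I_{geom}(f) = \sum_{T \in \CT} |W(H_0, T)|^{-1} \nu(T) \int_{Z_G(F) \back T(F)} c_{\theta_f}(t) D^H(t) \Delta(t) \chi(\det t) \, dt$$
and interchanging the $\sigma$-integral with the $t$-integral yields, modulo careful bookkeeping of normalizations,
$$I_{geom}(f) = \int_{Temp(G,\eta)} \theta_f(\sigma) \, m_{geom}(\bar{\sigma}) \, d\sigma.$$

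Combining the two expressions with $I_{spec}(f) = I_{geom}(f)$ produces
$$\int_{Temp(G,\eta)} \theta_f(\sigma) \bigl[ m(\bar{\sigma}) - m_{geom}(\bar{\sigma}) \bigr] \, d\sigma = 0.$$
Every $\sigma$ appearing with nonzero $\theta_f(\sigma)$ and distinct from $\pi$ is of the form $I_{\bar{Q}}^G(\tau)$ for some proper good parabolic $\bar{Q} \subsetneq G$: if $\bar{Q}$ is of type II then Lemma \ref{9.7} gives $m(\bar\sigma) = m_{geom}(\bar\sigma)$, while if $\bar{Q}$ is of type $(4,2)$ or $(2,2,2)$ the inductive hypothesis gives the same equality. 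Hence the bracketed difference vanishes identically off the orbit $\{\pi\}$. Since $\Fa_{G,0}^{\ast} = 0$, the discrete series $\pi$ sits as an isolated point of $Temp(G, \eta)$ with positive point mass, so the remaining contribution is $\theta_f(\pi)(m(\pi) - m_{geom}(\pi)) = m(\pi) - m_{geom}(\pi)$, and the desired identity follows.

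The main obstacle is justifying the interchange of integrals that produces the clean identity $I_{geom}(f) = \int \theta_f(\sigma) m_{geom}(\bar{\sigma}) d\sigma$. Concretely, one must establish the pointwise germ expansion $c_{\theta_f}(t) = \int \theta_f(\sigma) c_{\bar{\sigma}}(t) \, d\sigma$ with enough uniformity in $t \in T_{reg}(F)$ to permit swapping with the outer integration over $Z_G(F) \back T(F)$, and one must verify that the normalization factors (the central character twist $\chi(\det t)^{\pm 1}$, the Plancherel measure $d\sigma$, the sign $(-1)^{a_G - a_L}$, and the $|i\Fa_\CO^{\vee}|^{-1}$ weights of orbits) align so that the linear functional $\theta \mapsto I_{geom}(\cdot)$, evaluated on the quasi-character $\bar{\theta}_\sigma$, reproduces $m_{geom}(\bar{\sigma})$. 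This is the p-adic Ginzburg-Rallis analog of the corresponding step in the GGP multiplicity formula of Beuzart-Plessis \cite{B15}, and the argument there should transpose after using the estimates of Section 4 and the trace formula of Section 7.
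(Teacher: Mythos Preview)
Your proposal is correct and follows essentially the same route as the paper: apply the trace formula, use Proposition~\ref{expansion of character} to rewrite $I_{geom}(f)$ as $\int_{Temp(G)} \theta_f(\Pi)\,m_{geom}(\bar\Pi)\,d\Pi$, cancel the non-discrete contributions via the inductive hypothesis and Lemma~\ref{9.7}, then specialize $f$ to a pseudo-coefficient. The ``main obstacle'' you flag is not actually an obstacle here: since we are in the $p$-adic case (the standing assumption from Section~7 on), $f$ is compactly supported, so $\theta_f(\cdot)$ is supported on finitely many compact orbits in $Temp(G,\eta)$, and the interchange with the absolutely convergent torus integrals is immediate---the paper simply writes the identity \eqref{9.3} without comment. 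One cosmetic difference: the paper takes $f$ to be a pseudo-coefficient of $\bar\pi$ rather than of $\pi$, so that the trace formula (with $f\in C_c^\infty(\zg,\eta)$) runs over $Temp(G,\eta^{-1})$ and yields $m(\pi)=m_{geom}(\pi)$ directly, whereas your choice produces $m(\bar\pi)=m_{geom}(\bar\pi)$; since $\pi$ is an arbitrary discrete series this is of course equivalent.
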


\begin{proof}
By Theorem \ref{geometric expansion} and Theorem \ref{spectral expansion}, for every strongly cuspidal function $f\in C_{c}^{\infty} (Z_G(F)\backslash G(F),\eta)$, we have
\begin{equation}\label{9.2}
I_{geom}(f)=I(f)=\int_{Temp(G,\eta^{-1})} \theta_f(\Pi)m(\bar{\Pi})d\Pi.
\end{equation}
By Proposition \ref{expansion of character}, we have
\begin{equation}\label{9.3}
I_{geom}(f)=\int_{Temp(G,\eta^{-1})} \theta_f(\Pi)m_{geom}(\bar{\Pi}) d\Pi.
\end{equation}

Let $\Pi^2(G,\eta^{-1})\subset Temp(G,\eta^{-1})$ be the subset consisting of discrete series, and let $Temp'(G,\eta^{-1})=Temp(G,\eta^{-1})-\Pi^2(G, \eta^{-1})$. Then by \eqref{9.2} and \eqref{9.3}, we have
\begin{eqnarray}\label{9.6}
&&\int_{Temp'(G,\eta^{-1})} \theta_f(\Pi)m(\bar{\Pi})d\Pi+\int_{\Pi^2(G,\eta^{-1})} \theta_f(\Pi)m(\bar{\Pi})d\Pi \\
&=&\int_{Temp'(G,\eta^{-1})} \theta_f(\Pi)m_{geom}(\bar{\Pi}) d\Pi+\int_{\Pi^2(G,\eta^{-1})} \theta_f(\Pi)m_{geom}(\bar{\Pi}) d\Pi. \nonumber
\end{eqnarray}
For $\pi'\in Temp'(G,\eta^{-1})$, by our assumption and Lemma \ref{9.7}, we have $m(\pi')=m_{geom}(\pi')$. Therefore \eqref{9.6} becomes
\begin{equation}\label{9.8}
\int_{\Pi^2(G,\eta^{-1})} \theta_f(\Pi)m(\bar{\Pi})d\Pi=\int_{\Pi^2(G,\eta^{-1})} \theta_f(\Pi)m_{geom}(\bar{\Pi}) d\Pi.
\end{equation}

Now take $f\in C_{c}^{\infty}(\zg,\eta)$ be the pseudo coefficient of $\bar{\pi}$, this means that $trace(\bar{\pi}(f))=1$ and $trace(\sigma(f)) = 0$ for all $\sigma \in Temp(G, \eta^{-1})$ with $\sigma\neq \bar{\pi}$. The existence of such f was proved in Lemma \ref{pseudo coefficient}, the lemma also shows that $f$ is strongly cuspidal. For such $f$ and for any $\Pi\in \Pi^2(G,\eta)$, we have $\theta_f(\Pi)=trace(\Pi(f))$. Hence it is nonzero if and only if $\Pi=\bar{\pi}$. Therefore \eqref{9.8} becomes
$$\theta_f(\bar{\pi})m(\pi)=\theta_f(\bar{\pi})m_{geom}(\pi).$$
Hence $m_{geom}(\pi)=m(\pi)$, and this proves the Proposition.
\end{proof}

By the proposition above, we only need to prove \eqref{9.1} for the $(2,2,2)$ and $(4,2)$ cases. For the $(4,2)$ case, we only need to prove \begin{equation}\label{9.9}
m(\tau)=m_{geom}(\tau)
\end{equation}
where $\tau$ is a discrete series of $\GL_4(F)\times \GL_2(F)$. By the same argument as in the previous proposition, we are reducing to prove \eqref{9.9} for all tempered representations $\tau$ which are not discrete series. Then by applying Lemma \ref{9.7}, we are reducing to the $(2,2,2)$ case.

For the $(2,2,2)$ case, we need to prove
\begin{equation}\label{9.10}
m(\tau)=m_{geom}(\tau)
\end{equation}
where $\tau$ is a discrete series of $\GL_2(F)\times \GL_2(F)\times \GL_2(F)$. By Lemma \ref{9.7}, we know \eqref{9.10} holds for all tempered representations $\tau$ which are not discrete series (note that there is no Type I reduced model for the trilinear $\GL_2$ model expect itself). Then \eqref{9.10} follows from the same argument as in the previous proposition. This finishes the proof of the multiplicity formula \eqref{9.1}, and hence the proof of Theorem \ref{main}.

\subsection{The proof of Theorem \ref{main 1}}
Let $\pi$ be an irreducible tempered representation of $\GL_6(F)$ with trivial central character. Let $\pi=I_{\bar{Q}}^{G}(\tau)$ for some good parabolic subgroup $\bar{Q}=LU_Q$ and some discrete series $\tau$ of $L(F)$. By our assumptions in Theorem \ref{main 1}, $\bar{Q}$ cannot be of type $(6)$ or type $(4,2)$. Then there are two possibilities: $\bar{Q}$ is of type $(2,2,2)$ or $\bar{Q}$ is of Type II.

If $\bar{Q}$ is of type $(2,2,2)$. By a similar argument as in Section 6.2, we know
$\epsilon(1/2,\pi,\wedge^3)=\epsilon(1/2,\tau).$
Combining with Prasad's results for the trilinear $\GL_2$ model (\cite{P90}) and the fact that $m(\pi)=m(\tau)$, we prove Theorem \ref{main 1}.

If $\bar{Q}$ is of type II, by Corollary \ref{main case 1}, $m(\pi)=1$. Hence it is enough to prove the following proposition.

\begin{prop}
If $\bar{Q}$ is of type II, we have
$\epsilon(1/2,\pi,\wedge^3)=1.$
\end{prop}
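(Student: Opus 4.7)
The plan is to compute $\epsilon(1/2,\pi,\wedge^3)$ directly from the Langlands parameter. Write $\phi = \bigoplus_i \phi_i$ where $\phi_i$ is the parameter of the $i$-th factor of the Levi $L$ of $\bar{Q}$. Triviality of the central character of $\pi$ translates to $\det\phi = \prod_i \det\phi_i = 1$, which immediately gives $(\wedge^3\phi)^\vee \cong \wedge^3\phi \otimes (\det\phi)^{-1} = \wedge^3\phi$. In particular $\epsilon(1/2,\pi,\wedge^3)\in\{\pm 1\}$, so the task is to pin down the sign.

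Next I would decompose along the Levi: $\wedge^3\phi = \bigoplus_\alpha V_\alpha$ with $V_\alpha = \bigotimes_i \wedge^{a_i}\phi_i$, indexed by tuples $\alpha = (a_1,\ldots,a_r)$ satisfying $0 \le a_i \le \dim\phi_i$ and $\sum_i a_i = 3$. For each $\alpha$, set $\alpha^c := (\dim\phi_i - a_i)_i$; the wedge pairings $\wedge^{a_i}\phi_i \otimes \wedge^{\dim\phi_i - a_i}\phi_i \to \det\phi_i$ tensor together to a perfect pairing $V_\alpha \otimes V_{\alpha^c} \to \det\phi = 1$, so $V_{\alpha^c} \cong V_\alpha^\vee$.

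The Type II hypothesis enters through a parity argument: $\alpha = \alpha^c$ would force $\dim\phi_i = 2a_i$ for every $i$, and hence every $\dim\phi_i$ to be even. Inspection of the admissible Type II Levi shapes — namely $(5,1)$, $(4,1,1)$, $(3,3)$, $(3,2,1)$, $(3,1,1,1)$, $(2,2,1,1)$, $(2,1,1,1,1)$, and $(1,1,1,1,1,1)$ — shows that in every case at least one factor is odd-dimensional (indeed Type II is exactly the condition that the Levi does not decompose into blocks of even size, i.e.\ has no analogue in $\GL_3(D)$), so no index satisfies $\alpha=\alpha^c$. The components of $\wedge^3\phi$ therefore partition into distinct dual pairs $\{V_\alpha, V_{\alpha^c}\}$, and applying the standard identity $\epsilon(1/2,V)\epsilon(1/2,V^\vee) = \det V(-1)$ to each pair gives
$$\epsilon(1/2,\pi,\wedge^3) = \prod_{\{\alpha,\alpha^c\}} \det V_\alpha(-1).$$

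The last step is to verify that this product equals $1$ in each Type II case. Using $\det(\wedge^k W) = (\det W)^{\binom{\dim W - 1}{k-1}}$ and $\det(W\otimes W') = (\det W)^{\dim W'}(\det W')^{\dim W}$, each $\det V_\alpha(-1)$ becomes an explicit monomial in the values $\det\phi_i(-1)$ and $\chi_j(-1)$; after collecting exponents the relation $\prod_i \det\phi_i = 1$ forces the product to collapse. For instance in type $(5,1)$, with $\phi = \phi_5 \oplus \chi$ and $\chi\det\phi_5 = 1$, the only pair is $(\wedge^3\phi_5,\,\wedge^2\phi_5 \otimes \chi)$ and the product reduces to $\det(\wedge^3\phi_5)(-1) = \det(\phi_5)^6(-1) = 1$; the remaining cases are handled in the same spirit. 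The main obstacle is that I do not see a uniform closed form for this collapse, so the bookkeeping must be carried out case by case; the shapes $(2,2,1,1)$ and $(1,1,1,1,1,1)$ are the most delicate, since they involve the largest number of pairs and require using all of the central character relations simultaneously.
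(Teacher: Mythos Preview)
Your approach is sound and shares the paper's core mechanism: decompose $\wedge^3\phi$ along the Levi, pair the summands into dual pairs using the wedge pairings (no fixed points since type II forces an odd block), and use $\epsilon(1/2,V)\epsilon(1/2,V^\vee)=\det V(-1)$. Where you diverge is in the bookkeeping step. You decompose along the full Levi $L$ of $\bar{Q}$, which leaves you with up to eight Levi shapes and, in the worst cases, many pairs $\{V_\alpha,V_{\alpha^c}\}$; you acknowledge this by flagging $(2,2,1,1)$ and $(1,1,1,1,1,1)$ as delicate and leaving the product $\prod \det V_\alpha(-1)=1$ to be checked case by case.

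The paper sidesteps this entirely with one preliminary observation: any type II parabolic $\bar{Q}$ is contained in a type II \emph{maximal} parabolic, and the only such are types $(5,1)$ and $(3,3)$. (Reason: if some block $n_j$ is odd, the partial sum $n_1+\cdots+n_j$ is odd, so the coarsening to $(n_1+\cdots+n_j,\,6-n_1-\cdots-n_j)$ has both parts odd.) One then writes $\pi$ as induced from that maximal Levi and decomposes $\wedge^3\phi$ there: two cases, each with at most two dual pairs, and the determinant identities collapse in one line. Your framework is correct, but this reduction to maximal parabolics is the trick that turns a tedious case analysis into a two-line computation.
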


\begin{proof}
Since $\bar{Q}$ is of type II, it is contained in some type II maximal parabolic subgroups. There are only two type II maximal parabolic subgroups: type $(5,1)$ and type $(3,3)$.

If $\bar{Q}$ is contained in the parabolic subgroup $Q_{5,1}$ of type $(5,1)$, then there exists a tempered representation $\sigma=\sigma_1\times \sigma_2$ of $\GL_5(F)\times \GL_1(F)$ such that $\pi=I_{Q_{5,1}}^{G}(\sigma)$. Let $\phi_i$ be the Langlands parameter of $\sigma_i$ for $i=1,2$. Then $\phi=\phi_1\oplus \phi_2$ is the Langlands parameter for $\pi$. Then we have
\begin{eqnarray*}
\wedge^3(\phi)&=&\wedge^3(\phi_1\oplus \phi_2)=\wedge^3(\phi_1)\oplus (\wedge^2(\phi_1)\otimes \phi_2).
\end{eqnarray*}
Since the central character of $\pi$ is trivial, $\det(\phi)=\det(\phi_1)\otimes \det(\phi_2)=1$. Therefore $(\wedge^3(\phi_1))^{\vee}=\wedge^2(\phi_1)\otimes \det(\phi_1)^{-1}=\wedge^2(\phi_1)\otimes \det(\phi_2)=\wedge^2(\phi_1)\otimes \phi_2$, this implies
$$\epsilon(1/2,\pi,\wedge^3)=\det(\wedge^3(\phi_1))(-1)=(\det(\phi_1))^6(-1)=1.$$

If $\bar{Q}$ is contained in the parabolic subgroup $Q_{3,3}$ of type $(3,3)$, then there exists a tempered representation $\sigma=\sigma_1\times \sigma_2$ of $\GL_3(F)\times \GL_3(F)$ such that $\pi=I_{Q_{3,3}}^{G}(\sigma)$. Let $\phi_i$ be the Langlands parameter of $\sigma_i$ for $i=1,2$. Then $\phi=\phi_1\oplus \phi_2$ is the Langlands parameter for $\pi$. Then we have
\begin{eqnarray*}
\wedge^3(\phi)&=&\wedge^3(\phi_1\oplus \phi_2)\\
&=& (\wedge^2(\phi_1)\otimes \phi_2)\oplus(\phi_1\otimes \wedge^2(\phi_2)) \oplus \det(\phi_1)\oplus \det(\phi_2).
\end{eqnarray*}
Since the central character of $\pi$ is trivial, $\det(\phi)=\det(\phi_1)\otimes \det(\phi_2)=1$. Therefore $(\wedge^2(\phi_1)\otimes \phi_2)^{\vee}=(\phi_1\otimes \det(\phi_1)^{-1})\otimes (\wedge^2(\phi_2)\otimes \det(\phi_2)^{-1})=\phi_1\otimes \wedge^2(\phi_2)$ and $(\det(\phi_1))^{\vee}=\det(\phi_2)$, this implies
\begin{eqnarray*}
\epsilon(1/2,\pi,\wedge^3)&=&\det(\wedge^2(\phi_1)\otimes \phi_2)(-1)\times \det(\phi_1)(-1) \\
&=& \det(\wedge^2(\phi_1))^3(-1)\times \det(\phi_2)^3(-1)\times \det(\phi_1)(-1)\\
&=&(\det(\phi_1)^2(-1))^3\times (\det(\phi_2)(-1))^3\times \det(\phi_1)(-1)=1.
\end{eqnarray*}
This finishes the proof of the proposition and hence the proof of Theorem \ref{main 1}.
\end{proof}

Now the proof of Theorem \ref{main} and Theorem \ref{main 1} is complete. The only thing left is to prove Theorem \ref{thm 2} for the p-adic case. This will be done in the next section.

\section{The proof of Theorem \ref{thm 2}}\label{reduce model section}
In this section, we will prove Theorem \ref{thm 2} for the p-adic case, the archimedean case has already been proved in Section 6. By remark \ref{thm 2.1}, we only need to prove the multiplicity is nonzero for all type II reduced models $(L,H_{\bar{Q}})$. Our method is similar to the Ginzburg-Rallis model case we considered in previous sections. In other word, we need a local relative trace formula for such models, this gives us a multiplicity formula for such models in terms of the germs associated to the distribution characters of the representations. \textbf{The key feather for type II models is that all semisimple elements in $H_{\bar{Q}}$ is split. As a result, in the geometric side of our trace formula, we only have the germ at the identity element. Hence the multiplicity formula is always 1 by the work of Rodier \cite{Rod81}.}

Let $(L,H_{\bar{Q}})$ be a type II reduced model. For simplicity, we use $\omega\otimes \xi$ to denote the character $\omega\otimes \xi|_{H_{\bar{Q}}}$ on $H_{\bar{Q}}$. Given an irreducible tempered representation $\tau$ of $L(F)$ whose central character $\alpha$ equals to $\eta$ on $Z_G(F)$, we want to show that
\begin{equation}\label{q.1}
m(\tau):=dim(Hom_{H_{\bar{Q}}(F)}(\tau,\omega\otimes \xi))\neq 0.
\end{equation}

As in the Ginzburg-Rallis model case, for $f\in C_{c}^{\infty}(Z_L(F)\back L(F),\alpha^{-1})$ strongly cuspidal, we define our distribution $I_Q(f)$ to be
$$I_Q(f)=\int_{H_{\bar{Q}}(F)\back L(F)}I_Q(f,x) dx$$
where
$$I_Q(f,x)=\int_{Z_{H_{\bar{Q}}}(F)\back H_{\bar{Q}}(F)} f(x^{-1}hx)\xi(h)\omega(h)dh,\; x\in H_{\bar{Q}}(F)\back L(F).$$
By a similar argument as in Proposition \ref{major 6}, we know the integral defining $I_Q(f)$ is absolutely convergent. The next proposition gives the trace formula for such model.

\begin{prop}\label{q.2}
For all $f\in C_{c}^{\infty}(Z_L(F)\back L(F),\alpha^{-1})$ strongly cuspidal, we have
$$I_{Q,geom}(f)=I_Q(f)=I_{Q,spec}(f)$$
where
$I_{Q,geom}(f)=c_{\theta_f,\CO_{reg}}(1)$, and
$$I_{Q,spec}(f)=\int_{Temp(L,\alpha)} \theta_f(\tau)m(\bar{\tau}) d\tau.$$
\end{prop}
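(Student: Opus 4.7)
My proof will parallel the proof of Theorems \ref{geometric expansion} and \ref{spectral expansion} for the Ginzburg--Rallis model $(G,H)$, with substantial simplifications coming from the type II hypothesis. The starting point is to verify that the analytic framework of Section 4 carries over to the reduced pair $(L,H_{\bar Q})$: namely, the weak Cartan decomposition, the polynomial growth of $H_{\bar Q}(F)\backslash L(F)$, and the family of majorization estimates stated in Lemma \ref{major 2}, Lemma \ref{major 3}, Proposition \ref{major 4}, and Lemma \ref{major 5}. Since $(L,H_{\bar Q})$ arises from $(G,H)$ by passing to a Levi factor of a good parabolic, the arguments of Section 4 (Proposition \ref{spherical}, Proposition \ref{cartan 1}, Proposition \ref{major 4}, etc.) go through verbatim. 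Consequently, the integral defining $I_Q(f)$ is absolutely convergent for strongly cuspidal $f$ and defines a continuous linear form on $\CC_{scusp}(Z_L(F)\backslash L(F),\alpha^{-1})$, by the same reasoning as Proposition \ref{major 6}.

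For the geometric side $I_Q(f)=I_{Q,geom}(f)$, I would invoke the geometric expansion argument from Section 5 of \cite{Wan15}, which in its general form expresses $I_Q(f)$ as a sum
\[
I_Q(f)=\sum_{T\in\CT_{Q}}|W(H_{\bar Q,0},T)|^{-1}\nu(T)\int_{Z_L(F)\backslash T(F)}c_{\theta_f}(t)\,D^{H_{\bar Q}}(t)\,\Delta(t)\,\alpha(t)^{-1}\,dt,
\]
where $\CT_Q$ runs over conjugacy classes of maximal subtori of the analogue of $H_0$ inside $H_{\bar Q}$. Here the type II hypothesis enters decisively: by assumption all semisimple elements of $H_{\bar Q}(F)$ modulo $Z_G(F)$ are split, so $\CT_Q$ contains only the trivial torus $T=\{1\}$. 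The sum collapses to its single contribution at the identity, and evaluating $D^{H_{\bar Q}}(1)\Delta(1)=1$ leaves precisely $I_Q(f)=c_{\theta_f,\CO_{reg}}(1)=I_{Q,geom}(f)$.

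For the spectral side $I_Q(f)=I_{Q,spec}(f)$, I would follow Section 7.3 line by line, with $(G,H,\xi,\omega)$ replaced by $(L,H_{\bar Q},\xi|_{H_{\bar Q}},\omega|_{H_{\bar Q}})$. The necessary ingredients are: the construction of the normalized linear form $\CP_{H_{\bar Q},\xi}$ on $\CC^w(Z_L(F)\backslash L(F),\alpha^{-1})$ as in Proposition \ref{major 8}; the explicit element $\CL_\tau\in End(\tau)^{-\infty}$ and its formal properties of Lemma \ref{lemma 1}; the kernel decomposition $K^A_{f,f'}$, $K^1_{f,f'}$, $K^2_{f,f'}$ of Proposition \ref{spectral 1}; and Arthur's local trace formula (Theorem \ref{local trace formula}). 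Combining these, one computes $I_Q(f)$ via a suitable auxiliary function $f'$ chosen so that $\overline{\CL_\tau(\tau(\bar{f'}))}=m(\bar\tau)$ for all $\tau$ in the support of $\tau\mapsto\tau(f)$.

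The main obstacle in this last step is the analogue of Theorem \ref{thm 1} and Corollary \ref{parabolic induction 5} for $(L,H_{\bar Q})$, which is needed to produce the test function $f'$. However, Theorem \ref{thm 2} itself (once proved for all type II models of $L$) combined with Remark \ref{thm 2.1} supplies exactly the required nonvanishing statement, and the parabolic induction machinery of Sections 5.3--5.5 adapts with no change to good parabolic subgroups of $L$. Since all further reduced models of a type II model remain of type II, the argument is self-contained within the type II world; the routine verification of these analogues for $(L,H_{\bar Q})$ can be carried out exactly as in the Ginzburg--Rallis setting, and we refer to \cite{Wan17} for the technical details.
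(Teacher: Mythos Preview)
Your proposal is correct and follows essentially the same approach as the paper. The paper's own proof is extremely terse: it cites Appendix B of \cite{Wan15} for the geometric side (not Section 5 as you wrote, though the underlying argument is the same method transported to the reduced setting) and simply states that the spectral side ``is similar to the Ginzburg--Rallis case we proved in Theorem \ref{spectral expansion}, we will skip it here.'' Your write-up fleshes out what that similarity actually entails, and in particular you correctly identify the one point that requires care---the apparent circularity in invoking the analogue of Corollary \ref{parabolic induction 5}(2) for $(L,H_{\bar Q})$, which ultimately rests on Theorem \ref{thm 2} for the further reduced models of $L$---and resolve it by induction on the size of the model, using the fact that reduced models of a type II model are again of type II.
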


\begin{proof}
The geometric expansion $I_Q(f)=I_{Q,geom}(f)$ is proved in Appendix B of the previous paper \cite{Wan15}. The proof for the spectral expansion is similar to the Ginzburg-Rallis case we proved in Theorem \ref{spectral expansion}, we will skip it here.
\end{proof}

By applying the proposition above, together with the same arguments as in Section 8.2, we have a multiplicity formula for type II models:
$$m(\tau)=m_{geom}(\tau):=c_{\theta_{\tau},\CO_{reg}}(1)$$
for all tempered representations $\tau$ of $L(F)$. Then by the work of Rodier, we have $m(\tau)=1\neq 0$. This finishes the proof of Theorem \ref{thm 2}.

\appendix
\section{The Cartan Decomposition}
\subsection{The problem}
In this Appendix, we are going to prove the Cartan decomposition for the trilinear $\GL_2$ model (as in Proposition \ref{cartan 1}). Let $F$ be a p-adic field, $\CO_F$ be the ring of integers, $\varpi_F$ be the uniformizer, $|\;|=|\;|_F$, and $\BF_q$ be the residue field with $q=p^n$. Let $G=\GL_2(F)\times \GL_2(F)\times \GL_2(F)$, $H=\GL_2(F)$ diagonally embedded into $G$, $K'=\GL_2(\CO_F) \cup \GL_2(\CO_F) \begin{pmatrix} 1&0\\0&\varpi_F\end{pmatrix}$, $K_0=\GL_2(\CO_F)\times \GL_2(\CO_F)\times \GL_2(\CO_F)$ be the maximal compact subgroup of $G$, $K=K_0(K'\times K'\times K')K_0$ be a compact subset of $G$ with $K=K_0KK_0$, and
$$A^{+}=\{(\begin{pmatrix} 1&-1\\ 0&1\end{pmatrix} a_1 \begin{pmatrix} 1&1 \\ 0&1\end{pmatrix},a_2,a_3)|a_1,a_2\in A_{0}^{-},\;a_3\in A_{0}^{+}\}$$
where $A_{0}^{+}=\{\begin{pmatrix} a&0\\0&b\end{pmatrix} |a,b\in F^{\times},\; |a|\geq |b|  \}$ and $A_{0}^{-}=\{\begin{pmatrix} a&0\\0&b\end{pmatrix} |a,b\in F^{\times},\; |a|\leq |b|  \}$. Our goal is to show that
\begin{equation}\label{A.1}
G=HA^{+}K.
\end{equation}

We first do some reductions. For $(g_1,g_2,g_3)\in G$, by timing some elements on $K^{-1}$ on the right and by timing some elements in the center (which is contained in $A^{+}$), may assume that $\det(g_1)=\det(g_2)=\det(g_3)=1$. Then by timing $(g_{1}^{-1},g_{1}^{-1},g_{1}^{-1})\in H$ on the left, we only need to consider elements of the form $(1,g,g')$. Applying the Cartan decomposition $\GL_2(F)=\GL_2(\CO)A_{0}^{+}\GL_2(\CO)$ to $g$ and $g'$, then absorb the right $\GL_2(\CO_F)$ part by elements in $K_0$, we only need to consider elements of the form $(1,ka,k'a')$ with $k,k'\in \GL_2(\CO_F)$ and $a,a'\in A_{0}^{+}$. Then by timing $(a^{-1}k^{-1},a^{-1}k^{-1},a^{-1}k^{-1})\in H$ on the left, and absorbing $k^{-1}$ by elements in $K_0$, we only need to consider elements of the form $(a,1,g)$ with $a\in A_{0}^{-}$ and $g\in \GL_2(F)$. Applying the Iwasawa decomposition to $g$, we may assume that $g$ is upper triangular. Therefore we only need to consider elements of the form
$$(a,a',b)$$
where $a\in A_{0}^{-}$ with $\det(a)=1$, $a'=I_2$, and $b$ is upper triangular with $\det(b)=1$. Then by timing $(u,u,u)\in H$ on the left with $u=\begin{pmatrix} 1&-1\\0&1\end{pmatrix}$, and absorbing the $u$ in the second coordinate by elements in $K_0$, we only need to consider elements of the form
\begin{equation}\label{A.2}
(ua,a',b)
\end{equation}
where $a\in A_{0}^{-}$ with $\det(a)=1$, $a'=I_2$, and $b$ is upper triangular with $\det(b)=1$.
Now \eqref{A.1} has been reduced to the following proposition.

\begin{prop}\label{A.3}
For all elements $g=(ua,a',b)$ of the form \eqref{A.2}, there exists $h\in H(F),\; t\in A^{+}$ and $k\in K_0$ such that
$$g=htk.$$
\end{prop}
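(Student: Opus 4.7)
The plan is to produce the decomposition $g = htk$ explicitly, by parametrizing $a$ and $b$ and then solving the three coordinate equations through an informed ansatz on $h_0$.

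First I would write $a = \diag(\varpi_F^m, \varpi_F^{-m})$ with $m \geq 0$ (since $a \in A_0^-$ has $\det a = 1$), and $b = \begin{pmatrix} \varpi_F^r & x \\ 0 & \varpi_F^{-r} \end{pmatrix}$ with $r \in \BZ$ and $x \in F$. Multiplying $g$ on the right by an element of the form $(I_2, I_2, n)$ with $n$ an upper-triangular unipotent in $\GL_2(\CO_F) \subset K_0$ reduces to the case where $v_F(x)$ lies in a fixed canonical interval (or $x = 0$).

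Writing $h = (h_0, h_0, h_0)$, $t = (u \alpha u^{-1}, \beta, \gamma)$ with $\alpha, \beta \in A_0^-$ and $\gamma \in A_0^+$, and $k = (k_1, k_2, k_3)$, the equation $g = htk$ gives three coordinate identities
\begin{equation*}
ua = h_0 u \alpha u^{-1} k_1, \qquad I_2 = h_0 \beta k_2, \qquad b = h_0 \gamma k_3.
\end{equation*}
The middle equation forces $h_0 = k_2^{-1} \beta^{-1}$, so $h_0$ lies in the cell $K_0 \cdot A_0^+$; the strategy is to use the remaining freedom in this cell to simultaneously solve the first and third equations. The simplest ansatz $h_0 = I_2$ yields $\alpha = a$, $k_1 = u \in K_0$, $\beta = k_2 = I_2$, and reduces the third equation to the Cartan decomposition of the upper-triangular matrix $b$: this ansatz succeeds precisely when $b \in A_0^+ K_0$, i.e.\ when $r \geq 0$ and $v_F(x) \geq r$. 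In the remaining cases I would take $h_0$ of the form $\diag(\varpi_F^j, \varpi_F^{-j})$ (or its product with the Weyl element) to absorb the imbalance between the Cartan invariants of $ua$ and of $b$; the enlarged compact set $K$ (containing $K_0 \diag(1, \varpi_F)$ via $K'$) is designed to accommodate a parity shift when the invariants of the first and third coordinates differ by an odd integer.

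The main obstacle will be the exhaustive case analysis, indexed by the sign of $r$, the comparison of $v_F(x)$ with $\pm r$, and the magnitude of $m$: in each regime one must exhibit an explicit $h_0$ that balances the three coordinates and verify that the residual factor lies in $K_0$. The combinatorics is routine but voluminous, and the careful tracking of the $u$-conjugation in the first coordinate of $A^+$ is what makes the bookkeeping more delicate than in the standard Cartan decomposition of $\GL_2(F)$.
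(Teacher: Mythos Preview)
Your overall framework — solving the three coordinate equations $ua = h_0 u\alpha u^{-1}k_1$, $I_2 = h_0\beta k_2$, $b = h_0\gamma k_3$ simultaneously by a case analysis — is exactly what the paper does. The difficulty is hidden in your ansatz for $h_0$, and that ansatz is too restrictive to work.

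You propose that beyond $h_0=I_2$ one can take $h_0=\diag(\varpi^j,\varpi^{-j})$ or its product with the Weyl element. This fails already in a basic case. Take $a=\diag(x^{-1},x)$ with $|x|>1$ and $b=\begin{pmatrix}1&t\\0&1\end{pmatrix}\begin{pmatrix}y&0\\0&y^{-1}\end{pmatrix}$ with $|t|>|y|^2\geq 1$. If $h_0$ is diagonal with $j>0$, the first equation forces the $(1,2)$-entry of $k_1$ to have valuation dominated by $|x|^2>1$, so $k_1\notin K_0$; if $j=0$ then $b\notin A_0^+K_0$ because $|ty^{-2}|>1$; and for $h_0=w\cdot\diag(\varpi^j,\varpi^{-j})$ (which forces $j\leq 0$ from the second equation) the third equation gives $|d|\geq q^{-j}|ty^{-1}|>1$ while $|c|=|d|^{-1}<1\leq|d|$, contradicting $\gamma\in A_0^+$. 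So no element of the normalizer of the diagonal torus works as $h_0$ here. The paper handles this case with $h_0=\begin{pmatrix}1-t^{-1}&0\\t^{-1}&1\end{pmatrix}\in K_0$, a lower-triangular element that is neither diagonal nor antidiagonal. Several other cases require similarly ad hoc choices of $h_0\in K_0$ (for instance $\begin{pmatrix}0&-1\\1&2\end{pmatrix}$ when $b\in A_0^-$), and these are not ``routine combinatorics'' from your ansatz — they are the actual content of the proof.

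One further correction: the proposition asks for $k\in K_0$, not in the larger set $K$, and the paper's proof stays entirely within $K_0$. The enlargement $K'$ plays no role here (it was absorbed in the determinant-normalization step of the reduction preceding the proposition), so your remark about a ``parity shift'' being accommodated by $K'$ is off target.
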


\subsection{The case when $b$ is diagonal}
In this section, we prove Proposition \ref{A.3} for the case when $b$ is a diagonal matrix. We let $a=\begin{pmatrix} x^{-1}&0\\0&x\end{pmatrix}$ with $|x|\geq 1$. By our assumption, $b=\begin{pmatrix} y&0\\0&y^{-1}\end{pmatrix}$ or $\begin{pmatrix} y^{-1}&0\\0&y\end{pmatrix}$ with $|y|\geq 1$.

\textbf{Case 1:} If $b=\begin{pmatrix} y&0\\0&y^{-1}\end{pmatrix}$, let
$$h=(I_2,I_2,I_2),\;t=(uau^{-1},I_2,b)\in A^{+},\;k=(u,I_2,I_2). $$
Then we have
$$g=htk.$$

\textbf{Case 2:} If $b=\begin{pmatrix} y^{-1}&0\\0&y\end{pmatrix}$, let
$$h=(\begin{pmatrix} 0&-1\\1&2\end{pmatrix},\begin{pmatrix} 0&-1\\1&2\end{pmatrix},\begin{pmatrix} 0&-1\\1&2\end{pmatrix}),\; t=(uau^{-1},I_2,b^{-1})$$
and
$$k=(u\begin{pmatrix} 1&0\\-\frac{1}{x^2}&1\end{pmatrix},\begin{pmatrix} 2&1\\-1&0\end{pmatrix},\begin{pmatrix} \frac{2}{y^2}&1\\-1&0\end{pmatrix}).$$
Then we have
$$g=htk.$$

\subsection{The general situation}
In this section, we prove Proposition \ref{A.3} for the general case (i.e. $b$ is a upper triangular matrix). We still let $a=\begin{pmatrix} x^{-1}&0\\0&x\end{pmatrix}$ with $|x|\geq 1$. The proof breaks into four cases.

\textbf{Case 1:} If $b=\begin{pmatrix} a&b\\0&c\end{pmatrix}$ with $|a|\geq |b|$, then $b=\begin{pmatrix} a&0\\0&c\end{pmatrix}\begin{pmatrix} 1&\frac{b}{a}\\0&1\end{pmatrix}$ with $\begin{pmatrix} 1&\frac{b}{a}\\0&1\end{pmatrix}\in \GL_2(\CO_F)$. Then by timing some elements in $K_0$, we reduce to the case when $b$ is a diagonal matrix, which has been considered in previous section.

\textbf{Case 2:} If $b=\begin{pmatrix} 1&t\\0&1\end{pmatrix}\begin{pmatrix} y&0\\0&y^{-1}\end{pmatrix}$ with $|y|\geq 1$. If $|t|\leq |y|^2$, we are back to case 1. So assume that $|t|>|y|^2\geq 1$. Let
$$h=(\begin{pmatrix} 1-t^{-1}&0\\t^{-1}&1\end{pmatrix},\begin{pmatrix} 1-t^{-1}&0\\t^{-1}&1\end{pmatrix},\begin{pmatrix} 1-t^{-1}&0\\t^{-1}&1\end{pmatrix}),\; t=(uau^{-1},I_2, \begin{pmatrix} \frac{t}{y}&0\\0&\frac{y}{t}\end{pmatrix})$$
and
$$k=(u\begin{pmatrix} 1&0\\\frac{1}{x^2t}&1-t^{-1}\end{pmatrix}^{-1},\begin{pmatrix} 1-t^{-1}&0\\t^{-1}&1\end{pmatrix}^{-1},\begin{pmatrix} -\frac{1}{y^2}&-1\\1&\frac{y^2}{t}\end{pmatrix}^{-1}).$$
Then we have
$$g=htk.$$

\textbf{Case 3:} If $b=\begin{pmatrix} 1&t\\0&1\end{pmatrix}\begin{pmatrix} y^{-1}&0\\0&y\end{pmatrix}$ with $|y|\geq 1$ and $|t|>1$. Let
$$h=(\begin{pmatrix} \frac{t}{t+1}&0\\ \frac{1}{t+1}&1\end{pmatrix},\begin{pmatrix} \frac{t}{t+1}&0\\ \frac{1}{t+1}&1\end{pmatrix},\begin{pmatrix} \frac{t}{t+1}&0\\ \frac{1}{t+1}&1\end{pmatrix}),\; t=(uau^{-1},I_2, \begin{pmatrix} yt&0\\0&\frac{1}{yt}\end{pmatrix})$$
and
$$k=(u\begin{pmatrix} 1&0\\\frac{1}{x^2(t+1)}&\frac{t}{t+1}\end{pmatrix}^{-1},\begin{pmatrix} \frac{t}{t+1}&0\\\frac{1}{t+1}&1\end{pmatrix}^{-1},\begin{pmatrix} 0&-1\\ \frac{t}{t+1}&\frac{1}{ty^2}\end{pmatrix}^{-1}).$$
Then we have
$$g=htk.$$

\textbf{Case 4:} If $b=\begin{pmatrix} 1&t^{-1}\\0&1\end{pmatrix}\begin{pmatrix} y^{-1}&0\\0&y\end{pmatrix}$ with $|y|,|t|\geq 1$. If $|t|\geq |y|^2$, we are back to case 1. So assume that $1\leq |t|<|y|^2$. There are two subcases.

\textbf{Case 4(a):} If $|t|\geq |x|^2$. Times g by $(\begin{pmatrix} 1&-t^{-1}\\0&1\end{pmatrix},\begin{pmatrix} 1&-t^{-1}\\0&1\end{pmatrix},\begin{pmatrix} 1&-t^{-1}\\0&1\end{pmatrix})$ on the left, note that $a^{-1}u^{-1}\begin{pmatrix} 1&-t^{-1}\\0&1\end{pmatrix}ua=\begin{pmatrix} 1&x^2t^{-1}\\0&1\end{pmatrix} \in \GL_2(\CO_F)$, hence by modulo an element in $K_0$, we may assume that $b=\begin{pmatrix} y^{-1}&0\\0&y\end{pmatrix}$ is a diagonal matrix, which has been considered in previous section.

\textbf{Case 4(b):} If $1\leq |t|< |x|^2$. We have three subcases.

\textbf{Case 4(b)(i):} If $|t+1|\geq 1$. Let
$$h=(\begin{pmatrix} \frac{1}{t}&1+\frac{1}{t(t+1)}\\1& \frac{1}{t+1}\end{pmatrix},\begin{pmatrix}\frac{1}{t}&1+\frac{1}{t(t+1)}\\1& \frac{1}{t+1} \end{pmatrix},\begin{pmatrix} \frac{1}{t}&1+\frac{1}{t(t+1)}\\1& \frac{1}{t+1} \end{pmatrix}),\; t=(uau^{-1},I_2, \begin{pmatrix} y&0\\0&\frac{1}{y}\end{pmatrix})$$
and
$$k=(u\begin{pmatrix} \frac{t+1}{t}&0\\\frac{1}{x^2}&-\frac{t}{t+1}\end{pmatrix}^{-1},\begin{pmatrix} \frac{1}{t}&1+\frac{1}{t(t+1)}\\1& \frac{1}{t+1}\end{pmatrix}^{-1},\begin{pmatrix} 0&1\\1&\frac{1}{(t+1)y^2}\end{pmatrix}^{-1}).$$
Then we have
$$g=htk.$$

\textbf{Case 4(b)(ii):} If $t=-1$. Times $g$ by $(u^{-1},u^{-1},u^{-1})$ on the left and absorb the second $u^{-1}$ by some elements in $K_0$, we may assume that $g=(\begin{pmatrix} x^{-1}&0\\0&x\end{pmatrix},I_2,\begin{pmatrix} y^{-1}&0\\0&y\end{pmatrix})$ with $|x|,|y|\geq 1$. If $|y|\geq |x|$, let
$$h=(\begin{pmatrix} 0&x^{-1}\\x&0\end{pmatrix},\begin{pmatrix} 0&x^{-1}\\x&0\end{pmatrix},\begin{pmatrix} 0&x^{-1}\\x&0\end{pmatrix}),\; t=(I_2,\begin{pmatrix} x^{-1}&0\\0&x\end{pmatrix},\begin{pmatrix} x^{-1}y&0\\0&xy^{-1}\end{pmatrix})$$
and
$$k=(\begin{pmatrix} 0&1\\1&0\end{pmatrix},\begin{pmatrix} 0&1\\1&0\end{pmatrix},\begin{pmatrix} 0&1\\1&0\end{pmatrix}).$$
If $|y|<|x|$, let
$$h=(\begin{pmatrix} y^{-1}&y^{-1}\\-y&0\end{pmatrix},\begin{pmatrix} y^{-1}&y^{-1}\\-y&0\end{pmatrix},\begin{pmatrix} y^{-1}&y^{-1}\\-y&0\end{pmatrix}),\; t=(u\begin{pmatrix} x^{-1}y&0\\0&xy^{-1}\end{pmatrix}u^{-1},\begin{pmatrix} y^{-1}&0\\0&y\end{pmatrix},I_2)$$
and
$$k=(u\begin{pmatrix} 1&0\\x^{-2}y^2&1\end{pmatrix},\begin{pmatrix} y^{-2}&1\\-1&0\end{pmatrix}^{-1},\begin{pmatrix} 1&1\\-1&0\end{pmatrix}^{-1}).$$
Then in both cases, we have
$$g=htk.$$

\textbf{Case 4(b)(iii):} If $|t+1|<1$ with $t\neq -1$, then $|t|=1$. If $|(t+1)y^2|\leq 1$, we have
\begin{eqnarray*}
b&=&\begin{pmatrix} 1&t^{-1}\\0&1\end{pmatrix}\begin{pmatrix} y^{-1}&0\\0&y\end{pmatrix} \\
&=& \begin{pmatrix} 1&-1\\0&1\end{pmatrix} \begin{pmatrix} 1&t^{-1}+1\\0&1\end{pmatrix} \begin{pmatrix} y^{-1}&0\\0&y\end{pmatrix}\\
&=& \begin{pmatrix} 1&-1\\0&1\end{pmatrix}\begin{pmatrix} y^{-1}&0\\0&y \end{pmatrix} \begin{pmatrix} 1&(t^{-1}+1)y^2\\0&1\end{pmatrix}
\end{eqnarray*}
with $\begin{pmatrix} 1&(t^{-1}+1)y^2\\0&1\end{pmatrix}\in \GL_2(\CO_F)$. Then up to modulo an element in $K_0$, we can eliminate $\begin{pmatrix} 1&(t^{-1}+1)y^2\\0&1\end{pmatrix}$, and we have reduced to Case 4(b)(ii).

If $|(t+1)x^2|\leq 1$, times $g$ by $(\begin{pmatrix} 1&-t^{-1}+1\\0&1\end{pmatrix},\begin{pmatrix} 1&-t^{-1}+1\\0&1\end{pmatrix},\begin{pmatrix} 1&-t^{-1}+1\\0&1\end{pmatrix})\in H$ on the left, then modulo an element in $K_0$ to eliminate $a^{-1}u^{-1}\begin{pmatrix} 1&t^{-1}+1\\0&1\end{pmatrix}ua=\begin{pmatrix} 1&(t^{-1}+1)x^2\\0&1\end{pmatrix}\in \GL_2(\CO_F)$ in the first coordinate and $\begin{pmatrix} 1&t^{-1}+1\\0&1\end{pmatrix}\in \GL_2(\CO_F)$ in the second coordinate, we still reduce to Case 4(b)(ii).

Now the only case left is $|(t+1)y^2|,|(t+1)x^2|>1$. Let
$$h=(\begin{pmatrix} \frac{1}{t+1}&0\\ \frac{t}{t+1}&1 \end{pmatrix},\begin{pmatrix}\frac{1}{t+1}&0\\ \frac{t}{t+1}&1 \end{pmatrix},\begin{pmatrix} \frac{1}{t+1}&0\\ \frac{t}{t+1}&1 \end{pmatrix}),$$
$$t=(u \begin{pmatrix} x^{-1}&0\\0&x(t+1) \end{pmatrix}u^{-1},\begin{pmatrix} t+1&0\\0&1 \end{pmatrix}, \begin{pmatrix} \frac{y(t+1)}{t}&0\\0&-\frac{t}{y}\end{pmatrix})$$
and
$$k=(u\begin{pmatrix} 1&0\\-\frac{t}{x^2(t+1)}&1\end{pmatrix},\begin{pmatrix} 1&0\\-t& 1\end{pmatrix},\begin{pmatrix} 0&1\\1&-y^{-2}t \end{pmatrix}^{-1}).$$
Then we have
$$g=htk.$$

The proof of Proposition \ref{A.3} is finally complete.

\section{The Reduced Model}
In this section, we will state some similar results for the reduced models of the Ginzburg-Rallis model, which appear naturally under the parabolic induction. To be specific, we can have analogy results of Theorem \ref{main} and Theorem \ref{main 1} for those models. Since most of proof goes exactly the same as the Ginzburg-Rallis model case which we discussed in previous Sections, we will skip it here. We refer the readers to my thesis \cite{Wan17} for details of the proof. We still use $(L,H_{\bar{Q}})$ to denote the reduced model, and use $m(\tau)$ to denote the multiplicity.

\subsection{Type II models}
If $(L,H_{\bar{Q}})$ is of type II, the following result has already been proved in Section 9.
\begin{thm}\label{type II}
With the assumption above, we have
$$m(\tau)=c_{\theta_{\tau},\CO_{reg}}(1)=1$$
for all tempered representation $\tau$ of $L(F)$.
\end{thm}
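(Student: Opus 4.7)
\medskip

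The plan is to follow, for a general type II reduced model, the same three-step strategy that was used for the Ginzburg-Rallis model itself: (i) establish a local relative trace formula for $(L,H_{\bar{Q}})$; (ii) deduce a multiplicity formula; (iii) evaluate the geometric side using Rodier's theorem. The crucial simplification in the type II situation is that the small group $H_{\bar{Q}}$ contains no elliptic maximal torus modulo the center, so the set $\CT$ entering the geometric expansion collapses to the trivial torus $\{1\}$, and the only germ that survives is the one at the identity.

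First, for a strongly cuspidal $f\in C_c^\infty(Z_L(F)\back L(F),\alpha^{-1})$, I would define the distribution
\[
I_Q(f)=\int_{H_{\bar{Q}}(F)\back L(F)}\int_{Z_{H_{\bar{Q}}}(F)\back H_{\bar{Q}}(F)} f(x^{-1}hx)\xi(h)\omega(h)\,dh\,dx,
\]
after checking convergence via the obvious analogues of Lemmas \ref{major 2}--\ref{major 5} and Proposition \ref{major 6} for the pair $(L,H_{\bar{Q}})$. The geometric expansion $I_Q(f)=c_{\theta_f,\CO_{reg}}(1)$ has already been established in Appendix B of \cite{Wan15}; the key input there is exactly that all semisimple elements of $H_{\bar{Q}}$ are $F$-split, so only the trivial torus contributes. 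For the spectral side, I would repeat verbatim the argument of Section~7.3: introduce the auxiliary kernels $K^A_{f,f'}$, $K^1_{f,f'}$, $K^2_{f,f'}$ and the distribution $J_{aux}(f,f')$, then combine Theorem \ref{local trace formula}, Proposition \ref{expansion of character}, and the Plancherel formula to obtain
\[
I_Q(f)=\int_{\mathrm{Temp}(L,\alpha)}\theta_f(\tau)\,m(\bar{\tau})\,d\tau.
\]
This yields Proposition \ref{q.2}.

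Next, mimicking Section 8.2, I would promote the equality $I_{Q,spec}(f)=I_{Q,geom}(f)$ into a pointwise multiplicity formula $m(\tau)=c_{\theta_\tau,\CO_{reg}}(1)$ by a bootstrap on the Levi series of $L$: for any tempered $\tau$ that is fully parabolically induced from a proper Levi, the identity reduces to the formula on the smaller Levi, and for discrete series one tests against a pseudo-coefficient exactly as in Proposition \ref{(6)}, using Lemma \ref{pseudo coefficient} to ensure the pseudo-coefficient is strongly cuspidal. Because in the type II setting $\CT=\{1\}$, no elliptic orbital integrals appear, which makes the induction strictly easier than in the Ginzburg-Rallis case.

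Finally, having reduced the theorem to $c_{\theta_\tau,\CO_{reg}}(1)=1$ for every tempered $\tau\in \mathrm{Temp}(L,\alpha)$, I would invoke Rodier's theorem \cite{Rod81}: this germ equals $1$ if $\tau$ is generic and $0$ otherwise. Since $L$ is a product of general linear groups (this is what being a Levi of $\GL_6(F)$ amounts to), every tempered representation of $L(F)$ is generic, so the germ is $1$ and the theorem follows. The main technical obstacle is really just the bookkeeping in step~(i)--(ii): verifying that every convergence and majorization estimate from Sections~4--7 carries over to the reduced pair $(L,H_{\bar{Q}})$, in particular the wavefront/spherical property and the weak Cartan decomposition, together with the fact that the analogue of Theorem \ref{thm 1} relating nonvanishing of the explicit intertwining operator to nonvanishing of the Hom space still holds for $H_{\bar{Q}}$; once these are in place, the Rodier step is immediate.
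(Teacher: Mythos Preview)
Your proposal is correct and follows essentially the same route as the paper: Section~9 establishes the trace formula $I_{Q,geom}(f)=I_Q(f)=I_{Q,spec}(f)$ for the type~II pair $(L,H_{\bar Q})$ (the geometric side being $c_{\theta_f,\CO_{reg}}(1)$ by Appendix~B of \cite{Wan15}, the spectral side by repeating the argument of Theorem~\ref{spectral expansion}), then transfers this to the multiplicity formula $m(\tau)=c_{\theta_\tau,\CO_{reg}}(1)$ via the pseudo-coefficient bootstrap of Section~8.2, and concludes by Rodier's theorem since all tempered representations of a product of general linear groups are generic. The only point worth flagging is the apparent circularity you allude to at the end --- the spectral expansion for $(L,H_{\bar Q})$ needs the analogue of Corollary~\ref{parabolic induction 5}, hence of Theorem~\ref{thm 2}, for the \emph{further} reduced models of $(L,H_{\bar Q})$ --- but these are strictly smaller type~II models, so your ``bootstrap on the Levi series of $L$'' handles this by induction on the rank, exactly as implicit in the paper's terse ``same arguments as in Section~8.2''.
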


\subsection{The trilinear model}
Choose $Q$ be the parabolic subgroup of $\GL_6(F)$ (resp. $\GL_3(D)$) of $(2,2,2)$ type (resp. $(1,1,1)$ type) which contains the lower minimal parabolic subgroup. Then it is easy to see the model $(L,H_Q)$ is just the trilinear $\GL_2$ model. To make our notation simple, in this section we will temporarily let $G=GL_2(F)\times GL_2(F)\times GL_2(F)$ and $H=GL_2(F)$ diagonally embeded into $G$. For a given irreducible representation $\pi$ of $G$, assume the central character $\omega_{\pi}$ equals to $\chi^2$ on $Z_H(F)$ for some character $\chi$ of $F^{\times}$. $\chi$ will induce a one-dimensional representation $\omega$ of $H$. Let
\begin{equation}
m(\pi)=\dim \Hom_{H(F)} (\pi,\omega).
\end{equation}

Similarly, we have the quaternion algebra version with the pair $G_D=GL_1(D)\times GL_1(D)\times GL_1(D)$ and $H_D=GL_1(D)$. We can still define the multiplicity $m(\pi_D)$. The following theorem has been proved by Prasad in his thesis \cite{P90} for general generic representation using different method. This can also be deduced essentially from Waldspurger's result on the model $SO(4)\times SO(3)/SO(3)$. By using our method in this paper, we can prove the tempered case.

\begin{thm}
If $\pi$ is a tempered representation of $G$, let $\pi_D$ be the Jacquet-Langlands correspondence of $\pi$ to $G_D$. Then
$$m(\pi)+m(\pi_D)=1.$$
\end{thm}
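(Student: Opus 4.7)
\section*{Proof proposal}

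The plan is to mimic the strategy used for the Ginzburg--Rallis model in Sections 7--8, but in the simpler setting of the trilinear $\GL_2$ model, where there is no unipotent radical to contend with and no additive character $\xi$. Concretely, fix a unitary character $\chi$ of $F^{\times}$ with $\chi^2|_{Z_H(F)} = \omega_\pi|_{Z_H(F)}$, and for a strongly cuspidal $f \in C_c^{\infty}(Z_G(F)\backslash G(F), \chi^{-2})$ define
\begin{equation*}
I(f) = \int_{H(F)\backslash G(F)} \int_{Z_H(F)\backslash H(F)} f(x^{-1} h x)\, \omega(h)\, dh\, dx.
\end{equation*}
Because the trilinear $\GL_2$ model is strongly tempered (the matrix coefficient integral over $Z_H \backslash H$ is absolutely convergent for every tempered representation, a standard fact using $\Xi^{G_0}(h)=(\Xi^{H_0}(h))^3 \in L^{2+\varepsilon}$), there is no need for a normalized integral $\CP_{H,\xi}$ and all the analytic estimates of Section 4 simplify considerably. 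The first step is to carry out the two expansions of $I(f)$: a spectral expansion $I(f)=I_{spec}(f)=\int_{\Temp(G,\chi^2)} \theta_f(\pi) m(\bar\pi)\, d\pi$ following the same outline as Theorem 7.3.1, and a geometric expansion $I(f)=I_{geom}(f)$ following the method of \cite{Wan15}. Both are direct and in fact substantially easier than the Ginzburg--Rallis case because $U=1$.

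The second step is to deduce the multiplicity formula $m(\pi)=m_{geom}(\pi)$ for every tempered $\pi$, where
\begin{equation*}
m_{geom}(\pi) = c_{\theta_\pi,\CO_{reg}}(1) + \sum_{T\in \CT,\, T\neq \{1\}} |W(H,T)|^{-1} \nu(T) \int_{Z_H(F)\backslash T(F)} c_\pi(t) D^H(t) \chi(\det t)^{-1} dt,
\end{equation*}
with $\CT$ consisting of the trivial torus and the non-split two-dimensional tori in $H=\GL_2$. (In the quaternion version the trivial torus is absent; $H_D=\GL_1(D)$ has only anisotropic maximal tori.) The argument is identical to Section 8.2: apply the trace formula to pseudo-coefficients, and bootstrap from the non-discrete-series part (which reduces via parabolic induction to type II reduced models of the trilinear $\GL_2$ model, handled by Theorem \ref{type II}) up to the discrete series case. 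Since tempered representations of $\GL_2$ are either discrete series or fully induced from the split torus, the induction closes.

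The third and final step is to add the two multiplicity formulas for $\pi$ and $\pi_D$ and show the sum equals $1$. For each non-trivial $T\in \CT$ the torus $T_v$ embeds into both $H$ and $H_D$, and for matching regular elements $t\leftrightarrow t'$ the characteristic polynomials coincide, so $D^H(t)=D^{H_D}(t')$, $\chi(\det t)=\chi(\det t')$ and the Weyl group factors agree. The Harish-Chandra--Howe germ expansion together with the local Jacquet--Langlands character identity $\theta_\pi(t)+\theta_{\pi_D}(t')=0$ on the matching regular semisimple locus (valid for tempered $\pi$ and its JL lift, cf.\ \cite{DKV84}) forces $c_\pi(t)+c_{\pi_D}(t')=0$ for every regular $t\in T_v(F)$. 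Hence all torus integrals cancel pairwise in $m_{geom}(\pi)+m_{geom}(\pi_D)$, leaving only the germ at the identity, $c_{\theta_\pi,\CO_{reg}}(1)$, which equals $1$ by Rodier's theorem since every tempered representation of $\GL_2(F)\times \GL_2(F)\times \GL_2(F)$ is generic.

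The main obstacle I anticipate is not the cancellation (which is formally identical to the Ginzburg--Rallis case) but verifying the full spectral expansion $I(f)=I_{spec}(f)$ for the trilinear model, specifically showing that the Plancherel-type computation of Section 7.3 goes through without the auxiliary torus $a: \BG_m \hookrightarrow Z_{G_0}$ that was used to regularize the $\xi$-integral in Proposition 5.1.1. Here strong temperedness replaces that regularization, and one expects the argument to become shorter; the bookkeeping with unramified twists of the Levi subgroups, however, still has to be done carefully because the induction in the second step encounters non-discrete-series tempered representations of $\GL_2(F)\times \GL_2(F)\times \GL_2(F)$ whose component groups are non-trivial.
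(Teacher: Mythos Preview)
Your proposal is correct and follows essentially the same approach as the paper. The paper itself does not spell out a proof of this theorem in Appendix B.2 but states that ``most of proof goes exactly the same as the Ginzburg-Rallis model case which we discussed in previous Sections,'' and indeed writes down the same multiplicity formula $m(\pi)=m_{geom}(\pi)$ that you derive; your three-step outline (trace formula with geometric and spectral expansions, deduction of the multiplicity formula via pseudo-coefficients and induction, cancellation of torus terms via the Jacquet--Langlands character relation plus Rodier at the identity) is precisely the intended argument.
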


We can also prove a multiplicity formula for this model in the p-adic case. Let $(G,H)$ be either $(G,H)$ or $(G_D,H_D)$ defined as above, and $F$ be a p-adic field. Let $\CT$ be the subset of subtorus $T$ in $H_0$ defined in Section 7.2. Let $\theta$ be a quasi-character on $Z_G(F)\backslash G(F)$ with central character $\eta=\chi^2$, and $T\in \CT$. If $T=\{1\}$, then we are in the split case. Since there is a unique regular nilpotent orbit in $\Fg(F)$, we let $c_\theta(t)=c_{\theta,\CO_{reg}}(t)$. If $T=T_v$ for some $v\in F^{\times}/(F^{\times})^2, v\neq 1$, and $t\in T_v$ is a regular element, then $G_t$ is abelian. Since in this case the germ of quasi-character is just itself, we define $c_\theta(t)=\theta(t)$. Then for all irreducible tempered representation $\pi$ of $G(F)$ with central character $\eta$, we have
$$m(\pi)=m_{geom}(\pi):=\sum_{T\in \CT} \mid W(H,T)\mid^{-1} \nu(T) \int_{Z_G(F)\backslash T(F)} c_{\pi}(t) D^H(t) \chi(\det(t))^{-1} dt$$
where $c_{\pi}(t)=c_{\theta_{\pi}}(t)$.

\subsection{The middle model}
Choose $Q$ be the parabolic subgroup of $\GL_6(F)$ (resp. $\GL_3(D)$) of $(4,2)$ type (resp. $(2,1)$ type) which contains the lower minimal parabolic subgroup. Then the reduced model $(L,H_Q)$ we get is the following (Once again to make our notation simple, we will use $(G,\;H,\;U)$ instead of $(L,H_Q)$):  Let $G=GL_4(F)\times GL_2(F)$ and $P=MU$ be the parabolic subgroup of $G(F)$ with the Levi part $M$ isomorphic to $GL_2(F)\times GL_2(F)\times GL_2(F)$ (i.e. $P$ is the product of the second $GL_2(F)$ and the parabolic subgroup $P_{2,2}$ of the first $GL_4(F)$). The unipotent radical $U$ consists of elements of the form
\begin{equation}
u=u(X):=\begin{pmatrix} 1 & X & 0 \\ 0 & 1 & 0 \\ 0 & 0 & 1 \end{pmatrix},\; X\in M_2(F).
\end{equation}
The character $\xi$ on $U$ is defined to be $\xi(u(X))=\psi(\tr(X))$. Let $H_0=\GL_2(F)$ diagonally embeded into $M$. For a given irreducible representation $\pi$ of $G$, assume the central character $\omega_{\pi}$ equals to $\chi^2$ on $Z_H(F)$ for some character $\chi$ of $F^{\times}$. $\chi$ will induce a one-dimensional representation $\omega$ of $H_0$. Combining $\xi$ and $\omega$, we have a one-dimensional representation $\omega\otimes \xi$ of $H:=H_0\ltimes U$. Let
\begin{equation}
m(\pi)=\dim\Hom_{H(F)} (\pi,\omega\otimes \xi).
\end{equation}
This model can be thought as the "middle one" between the Ginzburg-Rallis model and the trilinear model of $\GL_2$.

Similarly, for the quaternion algebra case, we can define the multiplicity $m(\pi_D)$. The following theorem is an analogy of Theorem \ref{main} and Theorem \ref{main 1} for this model.

\begin{thm}
\begin{enumerate}
For any tempered representation $\pi$ of $G$, the following hold.
\item Let $\pi_D$ be the Jacquet-Langlands correspondence of $\pi$ to $G_D$. Then
$$m(\pi)+m(\pi_D)=1.$$
\item If $F=\BR$ and the central character of $\pi$ is trivial on $Z_H(F)$, we have
\begin{eqnarray*}
m(\pi)=1 &\iff& \epsilon(1/2,\pi,\wedge^3)=1,\\
m(\pi)=0 &\iff& \epsilon(1/2,\pi,\wedge^3)=-1.
\end{eqnarray*}
\item If $F$ is p-adic and the central character of $\pi$ is trivial on $Z_H(F)$, we have
\begin{eqnarray*}
m(\pi)=1 &\iff& \epsilon(1/2,\pi,\wedge^3)=1,\\
m(\pi)=0 &\iff& \epsilon(1/2,\pi,\wedge^3)=-1.
\end{eqnarray*}
if $\pi$ is not a discrete series.
\end{enumerate}
\end{thm}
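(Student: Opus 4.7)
\medskip

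\noindent\textbf{Proof proposal.} My plan is to follow the same three-step strategy that the paper develops for the Ginzburg-Rallis model itself, transported to the middle model $(G,H)=(\GL_4\times\GL_2,\; H_0\ltimes U)$ and its quaternionic avatar $(G_D,H_D)$. The reduced model is again the semidirect product of a diagonally embedded $\GL_2$ with a unipotent $U\simeq M_2$, carrying the character $\xi(u(X))=\psi(\tr X)$; in particular it is strongly tempered of a shape formally identical to the Ginzburg-Rallis setup except that the $Y$- and $Z$-coordinates have disappeared. Because of this, the entire analytic package of Section~4---the fact that $(G,H)$ is a wavefront spherical variety, the weak Cartan decomposition $G(F)=H(F)A_0^+\CK$, the polynomial growth of $H\backslash G$, and all the weighted estimates for $\Xi^G$, $\Xi^{H\backslash G}$ and the integrals $I^0_{\epsilon,\delta},\;I^1_{\epsilon,\delta},\;I^2_{\epsilon,\delta}$---goes through with cosmetic changes (fewer integrations). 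So does the entire construction of $\CL_\pi$ in Section~5, giving Theorem~\ref{thm 1} for this model, the parabolic-induction compatibility (Proposition~\ref{parabolic induction}, \ref{parabolic induction 4}) and Corollaries~\ref{parabolic induction 6},~\ref{parabolic induction 5}.

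Once the analytic machinery is in place I would establish the local relative trace formula, in complete analogy with Theorems~\ref{geometric expansion} and~\ref{spectral expansion}: for $f\in C_c^\infty(Z_G(F)\backslash G(F),\eta^{-1})$ strongly cuspidal,
\begin{equation*}
I(f)=I_{geom}(f)=I_{spec}(f),
\end{equation*}
where the geometric side is a sum over a set $\CT$ of subtori of $H_0$ of integrals of the germs $c_{\theta_f}(t)$, and the spectral side is $\int_{Temp(G,\eta)}\theta_f(\pi)m(\bar\pi)\,d\pi$. The geometric side is already covered in Appendix~B of \cite{Wan15} for this reduced model, so the real work is the spectral expansion, which follows word for word the argument of Section~7.3 (using Proposition~\ref{major 6} and Proposition~\ref{spectral 1}). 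Then, exactly as in Section~8.1--8.2, I would deduce the multiplicity formula $m(\pi)=m_{geom}(\pi)$ in the p-adic case by cascading from type~II models (where $m_{geom}=1$ by Rodier) up through the type~I reductions using Proposition~\ref{(6)}-style arguments with pseudo-coefficients of discrete series. For $F=\BR$ I would invoke the same reduction via Corollary~\ref{parabolic induction 6} together with the already-established Ginzburg-Rallis / trilinear $\GL_2$ cases.

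Part (1) then follows exactly as in Section~8.1: the $T=\{1\}$ terms cancel via Rodier ($c_{\theta_\pi,\CO_{reg}}(1)=1$ for all tempered $\pi$), the elliptic torus terms match under Jacquet-Langlands because $\theta_\pi(t)+\theta_{\pi_D}(t')=0$ for matched regular $t\leftrightarrow t'$ in the elliptic tori $T_v$, and the remaining Weyl and measure factors agree. For parts (2) and (3), the epsilon-factor assertions, I would proceed by parabolic induction from the known cases. If $\pi=\pi_1\otimes\pi_2$ with $\pi_1$ not discrete series on $\GL_4$, then $\pi_1$ is fully induced from a proper parabolic and $\pi$ is the parabolic induction of either a trilinear $\GL_2$ datum or a type II datum; in the former case the epsilon-factor identity reduces (via a character computation of $\wedge^3(\phi_1\oplus\phi_2)$ paralleling Section~8.3, using $\det\phi=1$ to pair up summands into complex-conjugate pairs whose epsilon factors multiply to squares of central characters at $-1$) to Prasad's theorem for the trilinear model, and in the latter case the same $\wedge^3$ computation yields $\epsilon(1/2,\pi,\wedge^3)=1$ while $m(\pi)=1$ by Theorem~\ref{type II}. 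If $F=\BR$, every tempered $\pi_1$ is induced from $\GL_2\times\GL_2$ or $\GL_1$-factors, so the entire statement reduces to the trilinear $\GL_2$ case via Corollary~\ref{parabolic induction 6}.

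The main obstacle I anticipate is the careful bookkeeping of the epsilon-factor computation for $\wedge^3$ of the middle-model parameter: one must verify that every dual-pair summand in $\wedge^3(\phi_1\oplus\phi_2)$ produces a central-character contribution at $-1$ that collapses to $1$ under the triviality assumption $\det\phi_1\cdot\det\phi_2=1$. A secondary, mostly technical, obstacle is that in the p-adic case the parabolic subgroups of $\GL_4$ producing a middle-model datum include one whose Levi is $\GL_2\times\GL_2$, so the reduction to previously known models requires checking that the induced multiplicity matches the Ginzburg-Rallis / trilinear answer when the datum itself is a discrete series on that Levi---hence the restriction in (3) to $\pi_1$ not a discrete series of $\GL_4$, exactly mirroring the restriction in Theorem~\ref{main 1}(2).
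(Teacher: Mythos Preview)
Your proposal is correct and matches the paper's approach exactly. The paper itself does not give a proof of this theorem in Appendix~B: it simply states that ``most of the proof goes exactly the same as the Ginzburg-Rallis model case which we discussed in previous Sections, we will skip it here'' and refers to the author's thesis for details. Your outline---transporting the analytic estimates of Section~4, the construction of $\CL_\pi$ and Theorem~\ref{thm 1}, the trace formula (geometric side from Appendix~B of \cite{Wan15}, spectral side as in Section~7.3), and then the multiplicity formula and epsilon-factor reduction arguments of Section~8---is precisely the template the paper intends, and the restriction in part~(3) indeed arises for the same reason as in Theorem~\ref{main 1}(2).
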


We can also prove a multiplicity formula for this model in the p-adic case. Let $(G,H)$ be either $(G,H)$ or $(G_D,H_D)$ defined as above, and $F$ be a p-adic field. Let $\CT$ be the subset of subtorus $T$ in $H_0$ defined in Section 7.2. Let $\theta$ be a quasi-character on $Z_G(F)\backslash G(F)$ with central character $\eta=\chi^2$, and $T\in \CT$. If $T=\{1\}$, then we are in the split case. Since there is a unique regular nilpotent orbit in $\Fg(F)$, we let $c_\theta(t)=c_{\theta,\CO_{reg}}(t)$. If $T=T_v$ for some $v\in F^{\times}/(F^{\times})^2, v\neq 1$, and $t\in T_v$ is a regular element, $G_t=GL_2(F_v)\times GL_1(F_v)$. Let $\CO=\CO_1\times \CO_2$ where $\CO_1$ is the unique regular nilpotent orbit in $\Fg \Fl_2(F_v)$ and $\CO_2=\{0\}$ is the unique nilpotent orbit in $\Fg \Fl_1(F_v)$, define $c_\theta(t)=c_{\theta,\CO}(t)$. Note that $\Fg \Fl_1(F_v)$ is abelian, the only nilpotent element is zero, the germ expansion is just evaluation. Then for all irreducible tempered representation $\pi$ of $G(F)$ with central character $\eta$, we have
$$m(\pi)=m_{geom}(\pi):=\sum_{T\in \CT} \mid W(H,T)\mid^{-1} \nu(T) \int_{Z_G(F)\backslash T(F)} c_{\pi}(t) D^H(t) \chi(\det(t))^{-1} dt$$
where $c_{\pi}(t)=c_{\theta_{\pi}}(t)$.

%\newpage

\end{document}